\def\comment#1{{\sf [#1]}}
\numberwithin{equation}{section}
\def\Z{{\mathbb Z}}
\def\Q{{\mathbb Q}}
\def\R{{\mathbb R}}
\def\C{{\mathbb C}}
\def\bB{{\mathbb B}}
\def\bD{{\mathbb D}}
\def\E{{\mathbb E}}
\def\H{{\mathbb H}}
\def\P{{\mathbb P}}
\def\U{{\mathbb U}}
\def\V{{\mathbb V}}
\def\bL{{\mathbb L}}
\def\A{{\mathcal A}}
\def\B{{\mathcal B}}
\def\F{{\mathcal F}}
\def\J{{\mathcal J}}
\def\K{{\mathcal K}}
\def\L{{\mathcal L}}
\def\M{{\mathcal M}}
\def\O{{\mathcal O}}
\def\T{{\mathcal T}}
\def\X{{\mathcal X}}
\def\cC{{\mathcal C}}
\def\cD{{\mathcal D}}
\def\cE{{\mathcal E}}
\def\cG{{\mathcal G}}
\def\cH{{\mathcal H}}
\def\cP{{\mathcal P}}
\def\cU{{\mathcal U}}
\def\cV{{\mathcal V}}
\def\g{{\mathfrak g}}
\def\u{{\mathfrak u}}
\def\sp{{\mathfrak{sp}}}
\def\bO{{\pmb{\O}}}
\def\tbar{\overline{t}}
\def\ubar{\overline{u}}
\def\vbar{\overline{v}}
\def\Fbar{\overline{\F}}
\def\Mbar{{\overline{\M}}}
\def\Hbar{{\overline{{\cH}}}}
\def\Tbar{\overline{T}}
\def\Xbar{{\overline{X}}}
\def\Ybar{\overline{Y}}
\def\Ctilde{{\widetilde{C}}}
\def\Bhat{\widehat{\B}}
\def\Bbar{\overline{\B}}
\def\Udual{\check{\U}}
\def\That{{\widehat{T}}}
\def\nutilde{\tilde{\nu}}
\def\stilde{\tilde{s}}
\def\D{{\Delta}}
\def\tauhat{\hat{\tau}}
\def\phihat{\hat{\phi}}
\def\v{{\vec{v}}}
\def\d{{\delta}}
\def\k{{\kappa}}
\def\w{{\omega}}
\def\G{{\Gamma}}
\def\dd{{\mathbf{d}}}
\def\ee{{\mathbf{e}}}
\def\Gm{{\mathbb{G}_m}}
\def\dot{{\bullet}}
\def\del{\partial}
\def\delbar{\overline{\del}}
\def\c{^c}
\def\hodge{{\sf{MHS}}}
\def\sing{{\mathrm{sing}}}
\def\blank{\phantom{x}}
\newcommand\id{\operatorname{id}}
\newcommand\ord{\operatorname{ord}}
\newcommand\Hom{\operatorname{Hom}}
\newcommand\End{\operatorname{End}}
\newcommand\Ext{\operatorname{Ext}}
\newcommand\Aut{\operatorname{Aut}}
\newcommand\Gr{\operatorname{Gr}}
\newcommand\Jac{\operatorname{Jac}}
\newcommand\Pic{\operatorname{Pic}}
\newcommand\Alb{\operatorname{Alb}}
\newcommand\Diff{\operatorname{Diff}}
\newcommand\Sp{\operatorname{Sp}}
\newcommand\GSp{\operatorname{GSp}}
\newcommand\Res{\operatorname{Res}}
\newcommand\trace{\operatorname{tr}}
\newtheorem{theorem}[equation]{Theorem}
\newtheorem{lemma}[equation]{Lemma}
\newtheorem{proposition}[equation]{Proposition}
\newtheorem{corollary}[equation]{Corollary}
\theoremstyle{definition}
\newtheorem{definition}[equation]{Definition}
\newtheorem{example}[equation]{Example}
\newtheorem{conjecture}[equation]{Conjecture}
\theoremstyle{remark}
\newtheorem{remark}[equation]{Remark}
\newtheorem{problem}[equation]{Problem}
\begin{document}

\title{Normal Functions and the Geometry of Moduli Spaces of Curves}

\author[Richard Hain]{Richard Hain}
\address{Department of Mathematics\\ Duke University\\
Durham, NC 27708-0320}

\email{hain@math.duke.edu}

\thanks{Supported in part by grants DMS-0103667 and DMS-1005675 from the
National Science Foundation.}

\date{\today}



\maketitle

\tableofcontents

\section{Introduction}

A normal function on a complex manifold $T$ is a special kind of holomorphic
section of a bundle $\J(\V) \to T$ of compact complex tori constructed from a
weight $-1$ variation of Hodge structure $\V$ over $T$. Normal functions in
their modern formulation arose in the work \cite{griffiths} of Griffiths as a
tool for understanding algebraic cycles in a complex projective manifold.

In this paper we give two examples to illustrate how normal functions might be a
useful, if unconventional, tool for understanding the geometry of moduli spaces
of curves. The first is to give a partial answer to a question of Eliashberg,
which arose in symplectic field theory \cite{sft}. Namely, we compute the class
in rational cohomology of the pullback of the $0$-section of the universal
jacobian $\J_{g,n}^c\to \M_{g,n}^c$ over the moduli space of $n$-pointed, stable
projective curves of compact type of genus $g$ along the section defined by
$$
F_\dd : [C;x_1,\dots,x_n] \mapsto \sum_{j=1}^n d_j [x_j] \in \Jac C,
$$
where $\dd=(d_1,\dots,d_n)\in \Z^n$ satisfies $\sum_{j=1}^n d_j = 0$. This
section is a normal function.

The second application is to give an alternative and complementary approach to
the slope inequalities of the type discovered by Moriwaki
\cite{moriwaki,moriwaki:new} such as his result that the divisor class
$$
M :=
(8g+4)\lambda_1 - g\delta_0 - 4\sum_{h=1}^{\lfloor g/2\rfloor} h(g-h)\delta_h
$$
on $\Mbar_g$ has non-negative degree on all complete curves in $\Mbar_g$ that do
not lie in the boundary divisor $\D := \Mbar_g-\M_g$.\footnote{A weaker version
of this inequality had been proved previously by Cornalba and Harris in
\cite{cornalba-harris}.} (Here the $\d_h$ ($0\le h \le g/2$) denote the classes
of the components of $\D$.) This alternative approach leads to actual and
conjectural strengthenings of his inequalities: we show that the Moriwaki
divisor $M$ has non-negative degree on all complete curves in $\M_g^c$ and
conjecture that $M$ has non-negative degree on all complete curves in $\Mbar_g$
that do not lie in the boundary divisor $\D_0:= \Mbar_g - \M_g^c$.

Each normal function section $\nu$ of $J(\V) \to T$ determines a class $c(\nu)$
in $H^1(T,\V)$. When $H^0(T,\V_\Q)$ vanishes, the normal function $\nu$ is
determined, mod torsion, by its characteristic class $c(\nu)$. In such cases,
there is a rigid relationship between normal functions and cohomology. When $T =
\M_{g,n}^c$, $g\ge 3$, and $\V$ is a variation of Hodge structure corresponding
to a non-trivial rational representation of $\Sp_g$ that does not contain the
trivial representation, the group of normal function sections of $J(\V)$ is
finitely generated and is known modulo torsion, \cite[\S8]{hain:normal}. This
result is recalled in Appendix~\ref{sec:vmhs}. When $\V$ corresponds to the
fundamental representation of $\Sp_g$, $J(\V)$ is the universal jacobian
$\J_{g,n}^c$ over $\M_{g,n}^c$ and the class of the pullback $F_\dd^\ast\eta_g$
of the zero section of $\J_{g,n}^c$ can be expressed in terms of the classes of
certain basic normal functions defined on $\M_{g,n}^c$.

All variations of Hodge structure $\V$ of geometric origin have a {\em
polarization}; that is, an invariant inner product $S : \V^{\otimes 2} \to \Q$
that satisfies the Riemann-Hodge bilinear relations on each fiber. When $V$ has
odd weight, the polarization is skew symmetric. Each invariant, skew-symmetric
inner product $S : \V^{\otimes 2} \to \Q$ gives rise to a class $S\circ
c(\nu)^2$ in $H^2(T,\Q)$. It is the image of $c(\nu)^{\otimes 2}$ under the
composition of the cup product with the map induced by $S$:
$$
\xymatrix{
H^1(T,\V)^{\otimes 2} \ar[r]^{\smile} & H^2(T,\V^{\otimes 2})
\ar[r]^{S_\ast} & H^2(T,\Q).
}
$$
The class $S\circ c(\nu)^2$ has a natural de~Rham representative which is a
non-negative $(1,1)$-form when $S$ is a polarization. Moriwaki's inequality for
complete curves in $\M_g^c$ is an immediate consequence of this semi-positivity
and the fact that the class of the Moriwaki divisor equals the square $S\circ
c(\nu)^2$ of the class of the most fundamental normal function over $\M_g$ ---
viz., the normal function associated to the cycle $C-C^-$ in $\Jac C$ that was
first studied by Ceresa \cite{ceresa}.

When $\V$ is a weight $-1$ polarized variation of Hodge structure over $T$,
there is a naturally metrized line bundle $\B$ over $J(\V)$, which is called the
{\em biextension line bundle}. The curvature of its pullback along a normal
function $\nu : T \to J(\V)$ is the natural de~Rham representative of $S\circ
c(\nu)^2$. This line bundle extends naturally to any compactification $\Tbar$ of
$T$, even if $\V$ (and hence $\nu$) does not extend to $\Tbar$. This extension
is characterized by the property that the metric extends across the codimension
1 strata of $\Tbar-T$. Surprisingly, this extension is {\em not} natural under
pullback to a smooth variety as the metric on the extended line bundle may be
singular on strata of codimension $\ge 2$ of $\Tbar-T$. This curious phenomenon
is called {\em height jumping}. Height jumping and its relevance to refined
slope inequalities is discussed in Section~\ref{sec:moriwaki}.

The classes $c(\nu)$ form part of a larger structure. When $T=\M_{g,n}$ they
should be regarded as twisted tautological cohomology classes. To explain this,
we need to introduce a certain graded commutative algebra associated to
$\M_{g,n}$. Denote the coordinate ring of the symplectic group $\Sp_g$ by
$\O(\Sp_g)$. Left and right multiplication induce commuting left and right 
actions of the mapping class group $\pi_1(\M_{g,n},\ast)$ on it via the standard
representation $\pi_1(\M_{g,n},x_o) \to \Sp_g(\Q)$. Using the right action, one
obtains a local system $\bO$ of $\Q$-algebras over $\M_{g,n}$. Since $\bO$ is a
local system of commutative $\Q$-algebras, its cohomology
$$
A_{g,n}^\dot := H^\dot(\M_{g,n},\bO)
$$
is a graded commutative $\Q$-algebra. The left $\Sp_g$-action on $\bO$ gives it
the structure of  a graded commutative algebra in the category of
$\Sp_g$-modules.

The algebraic analogue of the Peter-Weyl Theorem implies that there is an
$\Sp_g\times\Sp_g$-equivariant isomorphism
$$
\O(\Sp_g) \cong \bigoplus_\lambda \End(V_\lambda)^\ast
\cong \bigoplus_\lambda V_\lambda\boxtimes V_\lambda^\ast,
$$
where $\{V_\lambda\}$ is a set of representatives of the isomorphism classes of
irreducible $\Sp_g$-modules.  There is thus an isomorphism
$$
A_{g,n}^\dot \cong
\bigoplus_\lambda H^\dot(\M_{g,n},\V_\lambda)\otimes V_\lambda^\ast
$$
where $\V_\lambda$ denotes the local system over $\M_{g,n}$ that
corresponds to $V_\lambda$.

The classes $c(\nu)$ are more fundamental than their squares $S\circ c(\nu)^2$,
which are known to be tautological classes. For this reason, we define the
tautological subalgebra $T_{g,n}^\dot$ of $A_{g,n}^\dot$ to be the graded
subalgebra generated by the classes $c(\nu)\otimes V_\lambda^\ast$ of the normal
function sections $\nu$ of the $J(\V_\lambda)$.\footnote{Each $\V_\lambda$ is
the local system that underlies a polarized variation of Hodge structure over
$\M_{g,n}$. It is unique up to Tate twist. The only $\V_\lambda$ that admit
non-torsion normal functions have weight $-1$.} It is finitely generated. The
classification of normal functions \cite{hain:normal} over $\M_{g,n}$, and the
work of Kawazumi and Morita \cite{kawa-morita} imply that the ring of
$\Sp_g$-invariants $(T_{g,n}^\dot)^{\Sp_g}$ is Faber's tautological ring
$R_{g,n}$ (in cohomology) \cite{faber} of $\M_{g,n}$. The computations of Morita
\cite{morita:taut}, Kawazumi and Morita \cite{kawa-morita}, and those of this
paper, may be regarded as computations in $T_{g,n}^\dot$. For this reason, we
propose that the ring $T_{g,n}^\dot$ is more fundamental than its
$\Sp_g$-invariant part $R_{g,n}$. It would be interesting to define and study a
Chow analogue of $T_{g,n}^\dot$. The significance of this algebra and its
relation to normal functions is discussed in Appendix~\ref{sec:big_picture}.

\bigskip

\noindent{\it Advice to the reader:} Although normal functions have long been a
part of algebraic geometry (examples were first considered by Poincar\'e), they
are not currently part of the standard repertoire of modern algebraic geometry.
Their modern definition (Definition~\ref{def:normal}), in terms of extensions of
variations of Hodge structure, requires an understanding of variations of
(mixed) Hodge structure. However, if the reader is prepared to believe that the
local systems associated to locally topologically trivial families of algebraic
varieties are motivic, and so are variations of mixed Hodge structure, then the
definition should be natural.

We assume the reader is familiar with the basic definitions and constructions of
Hodge theory. In particular, the reader should know the definition of Hodge
structures, mixed Hodge structures, and variations of Hodge structure. The book
\cite{peters-steenbrink} by Peters and Steenbrink is a good source of basic
material on these topics. The paper \cite{hain:cdm} contains a brief exposition
of Schmid's work \cite{schmid} on the asymptotic properties of variations of
Hodge structure that emphasizes the case of degenerations of curves. Finally,
the recent survey of normal functions \cite{kerr-pearlstein} by Kerr and
Pearlstein should be a useful supplement, although its emphasis is quite
different from that of this article.

\bigskip

\noindent{\it Acknowledgments:} I am grateful to Yasha Eliashberg for posing his
question to me in 2001 and to Gavril Farkas for his interest, which resulted in
this paper seeing the light of day. Theorem~\ref{thm:eliashberg} was proved
during visits to the University of Sydney and the Universit\'e de Nice during
the author's sabbatical in 2002--03. Many thanks to both institutions for their
support, and to my respective hosts, Gus Lehrer and Arnaud Beauville, for their
hospitality. I am especially grateful to Samuel Grushevsky, Robin de Jong and
Dmitry Zharkov for their interest in this work and for their constructive
comments and corrections. Sam and Dmitry pointed out an error in the coefficient
of $\d_h^P$ in Theorem~\ref{thm:eliashberg}; Sam and Robin isolated the error,
which was a missing term in the formula Theorem~\ref{phi2}.

I would also like to thank Gregory Pearlstein for his numerous constructive
comments on the manuscript, and also Renzo Cavalieri for communicating his
related results \cite{cavalieri-marcus} with Steffen Marcus. The section on the
genus 1 case of Eliashberg's problem was written as a result of correspondence
with him. Finally, I would like to thank the referee for helpful comments.

\medskip

\section{Notation and Conventions}

All varieties (and stacks) will be defined over the complex numbers. Denote the
moduli space of stable $n$-pointed curves of genus $g$ by $\Mbar_{g,n}$. This is
defined when $2g-2+n>0$ and will be viewed as a stack or as a complex analytic
orbifold. As such, it is smooth. Denote the Zariski open subset corresponding to
the set of $n$-pointed smooth curves by $\M_{g,n}$ and by $\M_{g,n}^c$ the
Zariski open subset consisting of $n$-pointed curves of compact type. Note that
$\M_{g,n}^c = \Mbar_{g,n}-\D_0$, where $\D_0$ denotes the boundary divisor of
$\Mbar_{g,n}$ whose generic point is an irreducible, geometrically connected
curve with one node.

The moduli stack of principally polarized abelian varieties of dimension $g$
will be denoted by $\A_g$. The universal curve of compact type will be denoted
by $\cC_g^c \to \M_g^c$ and its restriction to $\M_g$ by $\cC_g$. All of these
moduli spaces are globally the quotient of a smooth variety by a finite group,
\cite{looijenga,boggi-pikaart}.

Vector bundles, variations of (mixed) Hodge structure, etc on a stack $T$ that
is the quotient of a smooth variety $S$ by a finite group $G$, are $G$-invariant
bundles, variations of (mixed) Hodge structure, etc, over $S$. Since all stacks
that occur in this paper are of this form, we will not distinguish between
stacks and varieties, as working on one of these stacks is working equivariantly
on a finite cover that is a variety.

The category of $\Z$-mixed Hodge structures by $\hodge$. For $d\in\Z$, the Hodge
structure of type $(-d,-d)$ whose underlying lattice is isomorphic to $\Z$ will
be denoted by $\Z(d)$. We shall denote by $\G V$ the set $\Hom_\hodge(\Z(0),V)$
of Hodge classes of type $(0,0)$ of the mixed Hodge structure $V$ . The category
of admissible variations of mixed Hodge structure over a smooth variety $X$ will
be denoted by $\hodge(X)$.

All cohomology groups will be with $\Q$ coefficients unless otherwise stated.
Similarly, the Chow group of codimension $d$ cycles on a stack $X$, {\em
tensored with $\Q$}, will be denoted by $CH^d(X)$.

\section{Eliashberg's Problem}
\label{sec:eliashberg}

To motivate the discussion of normal functions and related topics in subsequent
sections, we begin with a brief discussion of the universal jacobian and
Eliashberg's problem. Some readers may prefer to begin with Sections~\ref{tori}
and \ref{sec:normal}. Recall that a stable curve $C$ of genus $g$ is of {\em
compact type} if its dual graph is a tree. This condition is equivalent to the
condition that its jacobian $\Jac C := \Pic^0 C$ be an abelian variety.

\subsection{The universal jacobian} We begin with a review of the transcendental
construction of the jacobian of the universal curve over $\M_g$. This is a
special case of Griffiths' construction of families of intermediate jacobians
and  normal functions, which are reviewed in Section~\ref{sec:normal}.

First recall the transcendental construction of the jacobian of a smooth
projective curve $C$, which we recast in the language of Hodge structures. It
will be generalized in Section~\ref{sec:normal} where Griffiths intermediate
jacobians are introduced. The Hodge Theorem implies that
\begin{equation}
\label{eqn:decomp}
H^1(C,\C) \cong H^{1,0}(C) \oplus H^{0,1}(C)
\end{equation}
where $H^{1,0}(C)$ denotes the space $H^0(C,\Omega_C^1)$ of holomorphic 1-forms
on $C$ and $H^{0,1}(C)$ its complex conjugate, the space of anti-holomorphic
1-forms. The first integral cohomology group $H^1(C,\Z)$ endowed with the
decomposition (\ref{eqn:decomp}) is the prototypical Hodge structure of weight
1. Its dual
$$
H_1(C,\C) = H^{-1,0}(C) \oplus H^{0,-1}(C)
$$
is a Hodge structure of weight $-1$, where $H^{-p,-q}(C)$ is defined to be the
dual of $H^{p,q}(C)$. The Hodge filtration
$$
H_1(C,\C) = F^{-1}H_1(C) \supset F^0H_1(C) \supset F^1H_1(C) = 0.
$$ 
of $H_1(C)$ is defined by
$$
F^p H_1(C) = \bigoplus_{\substack{s\ge p\cr s+t=-1}} H^{s,t}(C).
$$
The projection onto $H^{1,0}(C)$ induces an isomorphism
$$
H_1(C,\C)/F^0 \cong H^{-1,0}(C) \cong
H^0(C,\Omega^1_C)^\ast := \Hom_\C(H^0(C,\Omega^1_C),\C).
$$
The composite
$$
H_1(C,\Z) \hookrightarrow H_1(C,\C) \to H_1(C,\C)/F^0
\cong H^0(C,\Omega^1_C)^\ast
$$
is the map that takes the homology class of the 1-cycle $\gamma$ to the
functional
$$
\int_\gamma := \Big\{\omega \mapsto \int_\gamma \omega\Big\}
\in H^0(C,\Omega_C^1)^\ast. 
$$
It is injective and its image is a lattice.  The jacobian of $C$ is the
quotient
$$
\Jac C := H^0(C,\Omega^1_C)^\ast/H_1(C,\Z) \cong
H_1(C,\C)/(H_1(C,\Z) + F^0H_1(C)).
$$
Every divisor $D$ of degree $0$ on $C$ can be written as the boundary
$D=\partial \gamma$ of a real $1$-chain $\gamma$. The Abel-Jacobi mapping
$$
\{\text{divisors of degree 0 on $C$}\}/\text{rational equivalence} \to
\Jac C
$$
is defined by taking the divisor class of the boundary of the 1-chain $\gamma$
to the functional $\int_\gamma$. Abel's Theorem implies that it is a group
isomorphism. This construction works equally well when $C$ is a curve of compact
type. In this case, the Hodge structure on $H_1(C)$ is the direct sum of the
Hodge structures of its irreducible components. This construction can also be
carried out for families of complete curves, where each fiber is either smooth
or of compact type. Below we carry this out for the universal curve of compact
type. The family of Hodge structures associated to such a family is an example
of a {\em variation of Hodge structure}.

To the universal curve $\pi : \cC_g\c \to \M_g\c$ of compact type we associate
the variation of Hodge structure
$$
\H := R^1 \pi_\ast \Z
$$
and the corresponding holomorphic vector bundle $\cH := \H \otimes_\Z
\O_{\M_g\c}$. The fiber of $\cH$ over the moduli point of $C$ is $H^1(C,\C)$.
This bundle has a flat holomorphic connection $\nabla$. The local monodromy
transformations about the divisor $\Delta_0:=\Mbar_{g,n}-\M_{g,n}\c$ are given
by the Picard-Lefschetz formula and are therefore unipotent. Consequently, $\cH$
has a canonical extension (in the sense of Deligne \cite{deligne:ode}) to a
vector bundle $\Hbar$ over $\Mbar_g$.\footnote{See \cite{hain:cdm} for a concise
exposition.} It is characterized by the property that the connection is regular
and that its residue at each smooth point of $\Delta_0$ is nilpotent. Since the
monodromy of $\H$ about $\D_0$ is non-trivial, the local system $\H$ does not
extend across $\D_0$. Consequently, $\M_{g,n}^c$ is the maximal Zariski open
subset of $\Mbar_{g,n}$ to which $\H$ extends.

The {\em Hodge bundle} $\F := \F^1\cH$ is the sub-bundle of $\cH$ whose fiber
over the moduli point of $C$ is $H^{1,0}(C)$. It is holomorphic and extends,
by a result of Deligne \cite{deligne:ode}, to a holomorphic sub-bundle of
$\Fbar^1$ of $\Hbar$.\footnote{The fiber $\F_C$ of the Hodge bundle over the
stable curve $C$ can be described as follows. Denote the normalization of $C$ by
$\nu : \Ctilde \to C$. Let $D\subset \Ctilde$ be the inverse image of the double
points of $C$. Then $\cH_C$ is the subset of $H^0(\Ctilde,\Omega^1_\Ctilde(D))$
consisting of those $w$ such that $\Res_P w + \Res_Q w = 0$ whenever $P\neq Q$
and $\nu(P) = \nu(Q)$. It is naturally isomorphic to $F^1H^1(C_\v)$ where
$H^1(C_\v)$ denotes the limit MHS on the first order smoothing $C_\v$ of $C$
associated to a tangent vector $\v$ of $\Mbar_g$ at $[C]$ which is not tangent
to the boundary divisor.}  (See \cite[\S4]{hain:cdm} for an exposition.) There
is a natural projection $\check\F \to \J_g$ from the dual of the Hodge bundle to
the universal jacobian, which is a covering map on each fiber. The kernel of the
projection
$$
\check\F_C \to \J_{g,C}
$$
at the moduli point of the stable curve $C$ is the image of $H_1(C-C^\sing,\Z)$
under the integration map $H_1(C-C^\sing,\Z)\rightarrow \check\F_C$. The
restriction of $\J_g$ to $\Mbar_g - \D_0^\sing$ is a Hausdorff complex analytic
orbifold, a fact which follows, for example, from \cite[Prop.~2.9]{zucker}. It
is the analytic orbifold associated to the restriction of the universal $\Pic^0$
stack over $\Mbar_g$ to $\Mbar_g - \D_0^\sing$, which is constructed in
\cite{caporaso}.

Observe that the fiber of $\J_g \to \Mbar_g$ over the moduli point of a stable
curve $C$ is an abelian variety if and only if $C$ is of compact type. From the
construction, it is clear that the normal bundle of the zero section of $\J_g$
is the dual $\check\F$ of the Hodge bundle.

\subsection{Eliashberg's question}

Suppose that $2g-2+n>0$. Given an integer vector $\dd=(d_1,\dots, d_n)$ with
$\sum_j d_j=0$, we have the {\em rational} section $F_\dd$
$$
\xymatrix{
	& \J_g \ar[d] \cr
\Mbar_{g,n}\ar[r]\ar@{.>}[ur]^{F_\dd} & \Mbar_g
}
$$
of the universal jacobian defined by
$$
F_\dd : [C;x_1,\dots,x_d] \mapsto \bigg[\sum_{j=1}^n d_j x_j\bigg] \in \Jac C
$$
when $C$ is smooth. It is holomorphic over $\Mbar_{g,n}-\Delta_0^\sing$, the
complement of the singular locus of $\D_0$.

Denote the class of the zero section of $\J_g$ in $H^{2g}(\J_g,\Q)$ by
$\eta_g$.

\begin{problem}[Eliashberg]
Compute the class in $H^{2g}(\Mbar_{g,n}-\D_0^\sing)$ of the pullback
$F_\dd^\ast \eta_g$ of the zero section of $\J_g$.
\end{problem}

Denote the $j$th Chern class of the Hodge bundle by $\lambda_j$.  When all $d_j$
are zero, the section $F_\dd$ is defined on all of $\Mbar_{g,n}$. Since the
normal bundle of the zero section is $\check\F$, the dual of the Hodge bundle,
we have:

\begin{proposition}
\label{prop:res_lambda}
If $\dd = 0$, then $F_\dd^\ast\eta_g = (-1)^g \lambda_g \in
H^{2g}(\Mbar_{g,n},\Q)$.
\end{proposition}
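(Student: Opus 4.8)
The plan is to reduce the computation to the self-intersection of the zero section and then invoke the identification of its normal bundle with $\check\F$. When $\dd = 0$, the section $F_\dd$ sends every point $[C;x_1,\dots,x_n]$ to the origin of $\Jac C$, so it factors as $F_0 = z \circ p$, where $p : \Mbar_{g,n} \to \Mbar_g$ is the forgetful map and $z : \Mbar_g \to \J_g$ is the zero section. Consequently $F_0^\ast\eta_g = p^\ast(z^\ast\eta_g)$. Since the Hodge bundle on $\Mbar_{g,n}$ is pulled back from $\Mbar_g$, so that $\lambda_g = p^\ast\lambda_g$, it suffices to establish $z^\ast\eta_g = (-1)^g\lambda_g$ on $\Mbar_g$ and then apply $p^\ast$.

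Next I would compute $z^\ast\eta_g$ as a self-intersection class. The class $\eta_g$ is, up to Poincar\'e duality, the Gysin pushforward $z_\ast 1$ of the fundamental class along the embedded zero section. By the self-intersection formula, $z^\ast z_\ast 1$ equals the top Chern class (Euler class) of the normal bundle of $z$. The excerpt records that this normal bundle is $\check\F$, the dual of the Hodge bundle, whose fiber over $[C]$ is $H^0(C,\Omega^1_C)^\ast$ and which therefore has rank $g$. Hence $z^\ast\eta_g = c_g(\check\F)$. What makes this legitimate is that a tubular neighborhood of the zero section is modeled on the total space of $\check\F$ --- precisely the assertion that $\check\F$ is the normal bundle --- so the formula is a purely local computation carried out in a smooth neighborhood of the zero section, independent of the singularities of $\J_g$ over $\D_0^\sing$.

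Finally, I would apply the standard relation $c_g(E^\vee) = (-1)^g c_g(E)$ for a rank-$g$ bundle $E$ to obtain $c_g(\check\F) = (-1)^g c_g(\F) = (-1)^g\lambda_g$. Combining the three steps gives $F_0^\ast\eta_g = p^\ast\big((-1)^g\lambda_g\big) = (-1)^g\lambda_g$ in $H^{2g}(\Mbar_{g,n},\Q)$. The only point requiring genuine care is the justification of the self-intersection formula in the orbifold setting and along the whole zero section, including over $\D_0^\sing$ where $\J_g$ itself may be singular; this is handled by the observation that the normal bundle, and hence the local model, is the genuine rank-$g$ vector bundle $\check\F$ at every point of the zero section, so the singular locus of $\J_g$ never meets the region in which the computation takes place.
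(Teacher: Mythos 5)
Your proof is correct and follows essentially the same route as the paper: the paper's (very terse) justification is precisely the observation that the normal bundle of the zero section is $\check\F$, so that the restriction of $\eta_g$ to the zero section is the Euler class $c_g(\check\F) = (-1)^g\lambda_g$, which is then pulled back along $\Mbar_{g,n}\to\Mbar_g$. Your added care about the factorization $F_0 = z\circ p$ and about working in a tubular neighborhood modeled on $\check\F$ (so the behavior of $\J_g$ over $\D_0^\sing$ is irrelevant) simply makes explicit what the paper leaves implicit.
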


\begin{remark}
This result also holds in the Chow ring.
\end{remark}

\section{Families of Compact Tori}
\label{tori}

The restriction of the universal jacobian to $\M_{g,n}^c$ is a family of compact
tori. This section is a discussion of some general properties of families of
compact tori.

\begin{definition}
A family of compact (real) $r$-dimensional tori is a smooth fiber bundle $f: T
\to B$, each of whose fibers is a compact, connected abelian Lie group. This
bundle is locally (but typically not globally) trivial as a bundle of Lie
groups.
\end{definition}

We shall assume throughout that $B$ is connected. The identity section will be
denoted by $s : B \to T$. Denote the fiber of $T$ over $b\in B$ by $T_b$.

For a coefficient ring $R$, denote the local system over $B$ whose fiber over
$b$ is $H_1(T_b,R)$ by $\H_R$. The following assertion is easily proved.

\begin{proposition}
If $f : T \to B$ is a family of compact tori, then there is a natural
bijection
$$
T \to \H_\R/\H_\Z
$$
which commutes with the projections to $B$ and is a group homomorphism on each
fiber. \qed
\end{proposition}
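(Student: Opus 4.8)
The plan is to build the bijection fiberwise from the exponential map of each torus and then to check that the construction globalizes over $B$. First I would fix $b \in B$ and use that the fiber $T_b$ is a compact connected abelian Lie group, hence a real torus. Writing $\mathfrak{t}_b$ for its Lie algebra, the exponential map $\exp_b : \mathfrak{t}_b \to T_b$ is a surjective homomorphism of Lie groups whose kernel $\Lambda_b$ is a full-rank lattice, and $\exp_b$ realizes $\mathfrak{t}_b$ as the universal cover of $T_b$. This gives a canonical isomorphism $T_b \cong \mathfrak{t}_b/\Lambda_b$.

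Next I would identify the two pieces $\mathfrak{t}_b$ and $\Lambda_b$ with homology. Since $\exp_b$ is the universal cover and $\mathfrak{t}_b$ is simply connected, $\Lambda_b$ is canonically the group of deck transformations, hence $\pi_1(T_b, e)$; because $T_b$ is a torus, the Hurewicz map gives $\pi_1(T_b,e) \cong H_1(T_b,\Z)$. Concretely, a class in $H_1(T_b,\Z)$ is represented by a loop $\gamma$ at the identity $e$, and its unique lift to $\mathfrak{t}_b$ starting at $0$ ends at the corresponding element of $\Lambda_b$. Tensoring the resulting isomorphism $\Lambda_b \cong H_1(T_b,\Z)$ with $\R$ and using that a lattice spans its ambient vector space yields $\mathfrak{t}_b \cong H_1(T_b,\R)$. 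Passing to quotients produces the fiberwise bijection $T_b \cong H_1(T_b,\R)/H_1(T_b,\Z) = (\H_\R/\H_\Z)_b$, which is a group isomorphism by construction and commutes with the projection to the point $b$.

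To globalize, I would assemble these fiberwise maps over $B$. The vertical tangent bundle of $f$ along the identity section $s : B \to T$ is a real vector bundle $\mathrm{Lie}(T/B) \to B$ with fiber $\mathfrak{t}_b$, and the fiberwise exponential defines a smooth bundle map $\exp : \mathrm{Lie}(T/B) \to T$ over $B$; this uses that $f$ is locally trivial as a bundle of Lie groups, so the group structures vary smoothly. The fiberwise kernel of $\exp$ is a sub-bundle of lattices which is exactly the local system $\H_\Z$, and the Hurewicz identifications of the previous paragraph are natural in $b$, hence continuous. It follows that $\mathrm{Lie}(T/B) \cong \H_\R$ as bundles over $B$, and that $\exp$ induces a fiber-preserving isomorphism $\H_\R/\H_\Z \cong T$ over $B$, whose inverse is the desired map $T \to \H_\R/\H_\Z$. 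Since each constituent identification is canonical, the map is independent of choices and functorial in the family $f$, so it is natural; the single-fiber analysis shows it commutes with the projections and is a group homomorphism on each fiber.

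The hard part will be the globalization and naturality in the last step rather than the single-fiber computation, which is classical. The point to verify carefully is that the exponential, its kernel lattice, and the Hurewicz isomorphism all vary continuously over $B$, i.e. give honest bundle maps and a map of local systems. Over a neighborhood where $T \to B$ trivializes as a bundle of Lie groups this is immediate, and because every ingredient is invariant under isomorphisms of tori the local descriptions agree on overlaps, so I would only need to check this compatibility. I would also remark that the resulting bijection is in fact an isomorphism of fiber bundles, not merely a set-theoretic bijection.
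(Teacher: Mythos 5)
Your proof is correct. The paper offers no argument at all for this proposition (it is stated as ``easily proved'' with the proof left to the reader), and your construction --- fiberwise exponential map, identification of its kernel lattice with $\pi_1(T_b,e)\cong H_1(T_b,\Z)$ via deck transformations and Hurewicz, then globalization using local triviality as a bundle of Lie groups --- is exactly the standard argument the author evidently had in mind, including the correct observation that the only point needing care is that these identifications vary locally constantly over $B$ and hence match the local system $\H_\Z$.
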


The flat structure on $\H_\R$ descends to a flat structure on $T=\H_\R/\H_\Z$.

\begin{corollary}
Every bundle of compact tori has a natural flat structure in which the torsion
multi-sections are leaves. Equivalently, a bundle of compact real tori has a 
natural trivialization over each contractible subset of $B$ in which the torsion
sections are constant. \qed
\end{corollary}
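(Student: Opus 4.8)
The plan is to transport the flat structure that $\H_\R$ carries as a local system across the identification $T \cong \H_\R/\H_\Z$ supplied by the Proposition. Recall that a flat structure on a fibre bundle $T \to B$ is a foliation of $T$ by leaves transverse to the fibres (equivalently, an atlas of local trivializations with locally constant transition functions, or a flat Ehresmann connection), its leaves being the ``horizontal'' integral manifolds. Since $\H_\R$ is a local system, it is the bundle underlying a flat connection $\nabla$ whose flat sections are exactly the locally constant ones; the associated horizontal foliation is the flat structure on the vector bundle $\H_\R$. The key observation is that $\H_\Z \subset \H_\R$ is a \emph{sub}local system, so parallel transport preserves $\H_\Z$. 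I would therefore first show that the horizontal foliation on $\H_\R$ descends to the quotient $\H_\R/\H_\Z$, and then read off the two assertions of the corollary from the descended structure.

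To carry out the descent, work over a contractible open set $U \subset B$ and fix $b_0 \in U$. Parallel transport trivializes $\H_\R|_U \cong V \times U$, where $V = H_1(T_{b_0},\R)$, in such a way that the flat sections are the constant sections. Because $\H_\Z$ is preserved by monodromy, the same trivialization restricts to $\H_\Z|_U \cong \Lambda \times U$ with $\Lambda = H_1(T_{b_0},\Z)$ a constant lattice, so that $T|_U \cong (V/\Lambda) \times U$; declaring the constant sections of this product to be horizontal defines a candidate flat structure on $T|_U$. The one point that needs checking --- and which I expect to be the only real content --- is that this is independent of the choice of chart, i.e.\ that the horizontal distribution on $\H_\R$ is invariant under translation by local flat sections of $\H_\Z$. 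This holds because translation by a $\nabla$-flat section is an affine bundle automorphism commuting with $\nabla$, hence carries horizontal leaves to horizontal leaves; equivalently, the transition functions between two such charts are given by the monodromy, which acts by lattice-preserving linear automorphisms of $V$ and therefore by locally constant automorphisms of $V/\Lambda$. Patching the local foliations produces a global flat structure on $T$, whose leaves are the images under $\H_\R \to T$ of the leaves of $\H_\R$.

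Finally I would identify the torsion multi-sections with leaves. The $N$-torsion subgroup of each fibre is the image of the local system $\tfrac{1}{N}\H_\Z$, i.e.\ $\tfrac{1}{N}\H_\Z/\H_\Z$, which is a local system of finite abelian groups. In the constant chart above its sections are the constant points $\tfrac{1}{N}\lambda + \Lambda$ with $\lambda \in \Lambda$; these are flat, hence lie in leaves. Globally, a torsion multi-section is a section of $\tfrac{1}{N}\H_\Z/\H_\Z$ over the finite cover of $B$ that trivializes its monodromy, and since monodromy merely permutes the finitely many torsion points of a fibre, each leaf through a torsion point consists of torsion points and the torsion multi-sections are unions of such leaves; conversely any leaf meeting the torsion subgroup stays in it because parallel transport preserves $\H_\Z$. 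This proves the first assertion. The ``equivalently'' reformulation is the restriction of this picture to a single contractible $U$: the flat structure gives the trivialization $T|_U \cong T_{b_0} \times U$ in which flat sections are constant, and the torsion sections, being flat, are exactly the constant ones.
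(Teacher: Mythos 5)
Your proof is correct and follows exactly the paper's route: the paper deduces the corollary from the identification $T \cong \H_\R/\H_\Z$ by the one-line observation that the flat structure on $\H_\R$ descends to the quotient, which is precisely the descent argument (monodromy preserves $\H_\Z$, so constant charts patch by locally constant automorphisms of $V/\Lambda$) that you spell out in detail. Your identification of the torsion multi-sections with the locally constant sections of $\tfrac{1}{N}\H_\Z/\H_\Z$ is likewise the intended, and correct, justification of the remaining assertions.
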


\subsection{The class of a section}
\label{sec:sect_class}

Each section $s$ of a family $T\to B$ of compact tori determines a class $c(s)
\in H^1(B,\H_\Z)$. We review three standard constructions of this class.

The first is sheaf theoretic. Denote the sheaf of $C^\infty$ real-valued
functions on $B$ by $\cE_B$. The flat vector bundle associated to $\H_\R$ has
sheaf of sections $\cH := \H_\R \otimes_\R \cE_B$. Denote the sheaf of smooth
sections of $T\to B$ by $\T$. Then one has a short exact sequence
$$
0 \to \H_\Z \to \cH \to \T \to 0
$$
of sheaves. Taking cohomology yields the exact sequence
$$
0 \to H^0(B,\H_\Z) \to H^0(B,\cH) \to H^0(B,\T) \overset{c}{\to} H^1(B,\H_\Z)
\to 0.
$$
The connecting homomorphism is well defined up to a sign. With the appropriate
choice it takes a section $s$ to its characteristic class. Note that the
vanishing of $c(s)$ implies that $s$ is homotopic to the zero section. Thus
$H^1(B,\H_\Z)$ can be identified with the group of homotopy classes of smooth
sections of $T \to B$.

The second description is obtained by regarding $H^1(B,\H_\Z)$ as congruence
classes of extensions
$$
0 \to \H_\Z \to \E \to \Z_B \to 0
$$
of local systems over $B$. Given a section $s$ of $T\to B$, we can construct
such an extension $\E$ as the local system whose fiber over $b\in B$ is
$H_1(\That_b,\Z)$, where
$$
\That_b := T_b\cup_h [0,1]
$$
where $h(0)= 0$ and $h(1)=s(b)$. There is a short exact sequence
$$
0 \to H_1(T_b) \to H_1(\That_b) \to \Z \to 0
$$
in which $H_1([0,1],\{0,1\})$ is identified with $\Z$ by taking the generator to
be the class of a path from $1$ to $0$.

When $s(b)\neq 0$,
$H_1(\That_b) \cong H_1(T_b,\{0,s(b)\}) \cong H^{r-1}(T_b-\{0,s(b)\})$.
The first description of $c$ is determined only up to a sign. This description
fixes the sign.

The third description uses de Rham cohomology. Each $b\in B$ has an open
neighbourhood $U$ where $s$ lifts to a section $\stilde :  U \to \cH$ of the
flat vector bundle $\cH$. When $U$ is connected, such a lift is unique up to
translation by a local section of $\H_\Z$. The 1-form $d\stilde$ on $U$ with
values in $\H_\R$ is therefore independent of the choice of the lift $\stilde$.
The de Rham representative of $c(s)$ is the class that is locally represented by
$d\tilde{s}$.

\subsubsection{Equivalence of these constructions}
Here is a quick sketch of the equivalence of these definitions. Choose an open
covering $\cU = \{U_\alpha\}$ of $T$ such that $U_{\alpha_0} \cap \dots \cap
U_{\alpha_q}$ is contractible for all multi-indices $(\alpha_0,\dots,\alpha_q)$.
Such an open covering can be constructed by taking each $U_\alpha$ to be a
geodesically convex ball with respect to some riemannian metric on $T$. The
complex $C^\dot(\cU,\F)$ of {C}ech cochains with coefficients in $\F$ computes
$H^\dot(T,\F)$ when $\F$ is $\H_\Z$, $\H_\R$, $\cH$ and $\T$. Suppose that $s$
is a smooth section of $T \to B$. Its restriction to $U_\alpha$ can be lifted to
a section $s_\alpha$ of $\cH$. The difference $c_{\alpha\beta} := s_\beta -
s_\alpha$ is a section of $\H_\Z$ over $U_\alpha\cap U_\beta$. The class of
$c(s)$ is represented by the cocycle $(c_{\alpha\beta}) \in C^1(\cU,\H_\Z)$.

The class of an extension
$$
0 \to \H_\Z \to \E \to \Z_T \to 0.
$$
is computed by choosing sections $e_\alpha$ of $\E \to \Z_T$ over each
$U_\alpha$. The class of the extension is represented by the cocycle
$(e_\beta-e_\alpha)_{\alpha\beta} \in C^1(\cU,\H_\Z)$. In the case above, one
can take the local section $e_\alpha$ to be the class in $H_1(T_b,\{0,s(b)\})$
of the image of the path in the universal covering $\H_{\R,b}$ of $T_b$ that
goes from $0$ to $s_\alpha(b)$. Then $e_\beta - e_\alpha = c_{\alpha\beta}$, as
required.

Denote the de~Rham sheaf of smooth $\R$-valued forms on $T$ by $\cE^\dot_T$.
Standard arguments imply that the inclusions
$$
C^\dot(\cU,\H_\R) \hookrightarrow C^\dot(\cU,\cE^\dot_T\otimes \H_\R)
\hookleftarrow \cE^\dot_T \otimes \H_\R
$$
induce isomorphisms on homology. The standard zig-zag argument implies that
the class of the cocycle $(c_{\alpha\beta})$ is represented by the element of
$$
E^\dot(T,\H_\R) := H^0(T,\cE^\dot_T \otimes \H_\R)
$$
whose restriction to $U_\alpha$ is $ds_\alpha$.

\subsection{Invariant cohomology classes}

The flat structure of a family of compact tori $T\to B$ can be used to construct
natural de Rham representatives of cohomology classes on $T$. We will say that a
differential form $w$ on a manifold $M$ with a foliation $\L$ is {\it parallel
with respect to} $\L$ if the Lie derivative of $w$ with respect to each vector
field tangent to $\L$ vanishes. A family of tori is foliated as it is a flat
family of tori.

The following lemma is proved in \cite[Lemma~5.1]{hain-reed:arakelov}.

\begin{lemma}
\label{lem:gen_lifts}
If $f : T \to B$ is a family of compact tori, there is a natural mapping
$$
\sigma : H^0(B, R^k f_\ast \R) \to H^k(T,\R)
$$
whose composition with the projection
$$
H^k(T,\R) \to H^0(B, R^k f_\ast \R)
$$
is the identity. Moreover, for each $u\in H^0(B,R^k f_\ast \R)$, the extended
class $\sigma(u)$ has a natural differential form representative $w_u$ whose
restriction to each fiber is a closed, translation-invariant differential form,
and which is parallel with respect to the flat structure. This class has the
property that its restriction to every leaf (such as the zero section and every
torsion multi-section) is zero. \qed
\end{lemma}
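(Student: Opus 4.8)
The plan is to construct the representative $w_u$ explicitly from the flat structure and then read off every assertion from its local normal form. First I would identify the sheaf $R^k f_\ast\R$. By the Proposition above each fiber is a torus $T_b \cong \H_{\R,b}/\H_{\Z,b}$, so its cohomology is the exterior algebra $H^\dot(T_b,\R) \cong \bigwedge^\dot \H_{\R,b}^\vee$, and every class in $H^k(T_b,\R)$ has a \emph{unique} translation-invariant representative, namely the constant-coefficient $k$-form attached to the corresponding element of $\bigwedge^k\H_{\R,b}^\vee$. Hence $R^k f_\ast\R \cong \bigwedge^k\H_\R^\vee$ as local systems over $B$, and a class $u \in H^0(B,R^k f_\ast\R)$ is precisely a flat global section of $\bigwedge^k\H_\R^\vee$.

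Next, over a contractible $U\subseteq B$ I would trivialize the flat bundle by a flat frame, $\H_\R|_U\cong U\times W$, so that $T|_U \cong U\times(W/\Lambda)$ and the flat foliation $\L$ becomes the product foliation with leaves $U\times\{pt\}$. Let $x_1,\dots,x_r$ be the flat linear fiber coordinates dual to a basis of $\Lambda$; the $dx_i$ descend to closed translation-invariant $1$-forms on the fiber and pull back to closed $1$-forms on $U\times(W/\Lambda)$. Since $u$ is flat, its coefficients in the frame $\{dx_I : |I|=k\}$ are genuine constants $a_I$ on $U$, and I set $w_u|_U := \sum_I a_I\,dx_I$. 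On overlaps the two flat trivializations differ by a constant element of $\Aut(\H_{\Z,b})$, and because $u$ is monodromy-invariant the two expressions agree; thus the local forms patch to a single global form $w_u$ on $T$. It is closed because its coefficients are locally constant and the $dx_I$ are closed, and it is parallel with respect to $\L$ because in the product coordinates it has no horizontal ($du_j$) components and constant coefficients, so its Lie derivative along each $\partial/\partial u_j$ vanishes.

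It then remains to extract the three conclusions, all by inspection of the local formula. Restricting $w_u$ to a fiber $T_b$ returns exactly the translation-invariant form representing $u(b)$, so setting $\sigma(u):=[w_u]$ gives a class whose image under the edge homomorphism $H^k(T,\R)\to H^0(B,R^k f_\ast\R)$ (restriction to fibers) is $u$; this is the asserted splitting. Restricting $w_u$ to any leaf of $\L$ --- in particular the zero section or any torsion multi-section --- kills every $dx_i$, since the fiber coordinates are constant along a horizontal leaf, so $w_u$ vanishes there. Naturality of $\sigma$ is automatic, as each step uses only the flat structure functorially.

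The main obstacle is the well-definedness of the gluing: one must verify that the locally constant-coefficient forms patch to a single global \emph{closed} form, and this rests squarely on $u$ being flat for the full Gauss--Manin connection, not merely fiberwise --- so that the $a_I$ are constant across $U$ and monodromy-invariant on overlaps. Once the global form $w_u$ is in hand, closedness, parallelism, the vanishing on leaves, and the splitting property all follow directly from the normal form $\sum_I a_I\,dx_I$.
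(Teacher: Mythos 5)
Your construction is correct and complete: identifying $R^k f_\ast \R$ with $\bigwedge^k \H_\R^\vee$, writing $w_u$ locally as a constant-coefficient form $\sum_I a_I\,dx_I$ in flat product coordinates, and then reading off closedness, parallelism, the splitting of the edge homomorphism, and the vanishing on leaves is exactly the natural argument. Note that the paper itself contains no proof of this lemma --- it simply cites Lemma~5.1 of \cite{hain-reed:arakelov} --- and the construction there proceeds in essentially the same way, by using the flat structure to propagate the fiberwise translation-invariant representatives, so your proposal matches the intended argument.
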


\subsection{The Poincar\'e dual of the zero section}

Let $r$ be the real dimension of the fiber of $f : T \to B$. If $B$ and $T$ are
oriented and $B$ is connected, then
$$
H^0(B,R^r f_\ast \R) \cong \R.
$$
Let $u$ be the element of this group whose value on one (and hence all) fibers
is $1$. Denote the class $\sigma(u) \in H^r(T,\R)$ by $\psi$.

\begin{proposition}
\label{cpact_tori}
If the base $B$ is a compact manifold (possibly with boundary), then the
Poincar\'e dual of the zero section is $\psi$.
\end{proposition}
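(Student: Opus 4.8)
The plan is to identify $\psi$ with the Poincar\'e(--Lefschetz) dual of $s(B)$ by checking the defining integral pairing. Set $m=\dim B$, so that $\dim T = m+r$ and $s(B)$ is a compact oriented $m$-submanifold of the oriented manifold $T$; since $B$ is compact, so is $T$. By nondegeneracy of the Poincar\'e(--Lefschetz) pairing, it suffices to prove
$$
\int_T \psi \wedge \omega = \int_{s(B)} \omega
$$
for every closed $m$-form $\omega$ on $T$ (when $\partial B\neq\emptyset$, for every closed $\omega$ representing a class in $H^m(T,\partial T)$, equivalently one supported over the interior of $B$). I will evaluate both sides using the representative $w_u$ of $\psi$ supplied by Lemma~\ref{lem:gen_lifts}, which in the flat local trivialization $T|_U \cong U\times(\R^r/\Z^r)$ of the preceding Corollary is the fiberwise translation-invariant volume form $d\theta_1\wedge\cdots\wedge d\theta_r$ of total fiber mass $1$; the ``parallel'' and ``vanishing on leaves'' properties in Lemma~\ref{lem:gen_lifts} are precisely what rule out any mixed horizontal--vertical terms in this normal form.

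The key reduction is to replace $\omega$ by a form invariant under fiberwise translation. The bundle $T\to B$ acts on itself by translation in the fibers, and averaging $\omega$ over this connected, compact fiber action produces a form $\bar\omega$ that is translation-invariant on each fiber and satisfies $\omega-\bar\omega = d\eta$, where Cartan's formula along the orbits furnishes the homotopy operator $\eta$ with the same support properties as $\omega$. Since $\psi$ is closed, Stokes' theorem gives $\int_T \psi\wedge\omega = \int_T \psi\wedge\bar\omega$, the boundary term $\int_{\partial T}\psi\wedge\eta$ vanishing because $\eta$ is supported over the interior of $B$; likewise $\int_{s(B)}\omega = \int_{s(B)}\bar\omega$ because $s^\ast(\omega-\bar\omega) = d(s^\ast\eta)$ and $\int_{\partial B} s^\ast\eta = 0$. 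Both sides are therefore unchanged, and we may assume $\omega=\bar\omega$ is fiberwise invariant.

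For a fiberwise-invariant $\omega$ the computation is immediate from the flat picture: only the purely horizontal bidegree-$(m,0)$ part of $\omega$ can wedge with the vertical volume form $w_u$ to give a top form, and by invariance this part is constant along the fibers. Integrating $w_u$ over the fiber (total mass $1$) and using the projection formula for the fiber integration $f_\ast$ yields $f_\ast(\psi\wedge\omega)=s^\ast\omega$ as $m$-forms on $B$, whence
$$
\int_T \psi\wedge\omega = \int_B f_\ast(\psi\wedge\omega) = \int_B s^\ast\omega = \int_{s(B)}\omega ,
$$
the orientation conventions (that of $T$ agreeing with the product of those on $B$ and on the fibers, the latter normalized so that $u$ has value $+1$) accounting for the correct sign.

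The step I expect to require the most care is the treatment of the boundary: when $\partial B\neq\emptyset$ the correct formulation is the Lefschetz pairing $H^r(T)\times H^m(T,\partial T)\to\R$, and one must verify that the averaging homotopy and both applications of Stokes' theorem contribute no boundary terms. This is what forces the restriction to test forms supported over the interior of $B$, together with the observation that the averaging operator is built from the fiber translations, which preserve $f^{-1}(\partial B)$ and hence the support condition. The remaining ingredients --- the fiber-integration identity $f_\ast(\psi\wedge\omega)=s^\ast\omega$ for invariant $\omega$ and the reduction to invariant forms --- are then routine consequences of the flat structure and of Lemma~\ref{lem:gen_lifts}.
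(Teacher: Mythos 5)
Your proof is correct, but it takes a genuinely different route from the paper's. The paper argues homologically with the multiplication maps $[e]\colon T\to T$: it notes that $[e]$ acts by $e^k$ on $R^kf_\ast\Q$, so the Leray spectral sequence degenerates and the $e^r$-eigenspace of $[e]^\ast$ on $H^r(T)$ is one-dimensional; since $[e]^\ast w_\psi = e^rw_\psi$, this eigenspace is spanned by $\psi$. It then shows $[e]_\ast[T]=e^r[T]$ and, via the cap-product identity $[T]\cap\eta_Z=[Z]$ and $[e]_\ast[Z]=[Z]$, that the Poincar\'e dual $\eta_Z$ lies in the same eigenspace; equality follows because $\psi$ and $\eta_Z$ agree on fibers. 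You instead verify the defining integral pairing directly in de~Rham theory: represent $\psi$ by the fiberwise volume form attached to the flat structure, reduce to fiberwise-invariant test forms by averaging over the fiber translations (legitimate here because the flat transition functions are locally constant and commute with the translations, so the fiberwise Cartan homotopies patch over $B$ and preserve the support condition), and finish by fiber integration. Your argument is more elementary --- no spectral sequences or cap products --- and it yields the sharper form-level identity $f_\ast(w_u\wedge\omega)=s^\ast\omega$ for invariant $\omega$, a Thom-class-type statement. What the paper's route buys is the eigenvalue characterization of $\psi$ as the unique class in the $e^r$-eigenspace restricting to the generator on fibers; that characterization is not a by-product but a tool reused later (e.g.\ in Proposition~\ref{prop:phi_extends} and in the genus~$1$ section, where extended classes are pinned down precisely by their scaling under $[e]^\ast$).

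One imprecision to repair: the three properties listed in Lemma~\ref{lem:gen_lifts} (fiberwise invariant restriction, parallel, vanishing on leaves) do \emph{not} by themselves force the local normal form $w_u=d\theta_1\wedge\cdots\wedge d\theta_r$. For $r\ge 2$ a closed mixed term such as $c\,dx_1\wedge d\theta_2\wedge\cdots\wedge d\theta_r$ is parallel and restricts to zero on every fiber and on every leaf, so it could be added without violating any of the listed properties, yet it would destroy your pointwise identity $f_\ast(w_u\wedge\omega)=s^\ast\omega$. What you actually need --- and what is true --- is that the \emph{natural} representative constructed in \cite{hain-reed:arakelov} is the fiberwise invariant form extended by zero on the flat horizontal distribution, which is exactly your normal form. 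So you should cite that construction (or construct the representative directly from the flat structure) rather than derive the normal form from the listed properties; with that substitution your argument is complete.
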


\begin{proof}
Set $d = \dim_\R B$. For $e\in \Z$ define $[e] : T \to T$ to be the map whose
restriction to each fiber is multiplication by $e$. Since $[e]$ induces
multiplication by $e^k$ on $R^k f_\ast \Q$, it follows that the Leray spectral
sequence degenerates at $E_2$. It also follows that the eigenvalues of the
induced mapping $[e]^\ast$ on $H^k(T)$ lie in $\{1,e,\dots,e^k\}$. Since none
of these eigenvalues is zero when $e\neq 0$, $[e]^\ast$ is invertible on
rational cohomology when $e\neq 0$. Similar assertions hold for the homology
spectral sequence
$$
H_s(B,\partial B;\H_t) \implies H_{s+t}(T,\partial T),
$$
where $\H_t$ denotes the local system whose fiber over $b\in B$ is $H_t(T_b)$.
Since
$$
H_{r+d}(T,\partial T) = H_d(B,\partial B; \H_r)
$$
it follows that $[e]_\ast [T] = e^r[T]$, where $[T]$ denotes the fundamental
class of $(T,\partial T)$.

Now assume that $e>1$. Then the collapsing of the Leray spectral sequence
implies that the dimension of the $e^r$-eigenspace of $H^r(T)$ is one.
Since the form $w_\psi$ that naturally represents $\psi$ has the property that
$$
[e]^\ast w_\psi = e^r w_\psi,
$$ 
and since $\psi$ is non-trivial (as it has non trivial integral over a fiber),
it follows that $\psi$ spans this eigenspace.

Note that the class $[Z] \in H_d(T,\partial T)$ of the zero-section is an
eigenvector of $[e]_\ast$ with eigenvalue $1$. Denote the Poincar\'e dual of the
zero section by $\eta_Z$. It is characterized by the property that
$$
[T] \cap \eta_Z = [Z] \in H_d(T,\partial T),
$$
where $\cap$ denotes the cap product \cite[p.~254]{spanier}
$$
\cap : H_{d+r}(T,\partial T) \otimes H^r(T) \to H_d(T,\partial T).
$$
Since $e_\ast[Z]=[Z]$, standard properties of the cap product \cite[Assertion
16, p.~254]{spanier}, we have
$$
[e]_\ast\big([T] \cap [e]^\ast \eta_Z\big) = \big([e]_\ast [T]\big) \cap \eta_Z
= e^r [T]  \cap \eta_Z
= [e]_\ast \big([T]  \cap e^r \eta_Z \big).
$$
Since $[e]_\ast$ and capping with $[T]$ are both isomorphisms, it follows that
$[e]^\ast \eta_Z = e^r \eta_Z$. Since $\eta_Z$ and $\psi$ both lie in the
$e^r$-eigenspace and agree on each fiber, they are equal.
\end{proof}

Since the normal bundle of the zero section $Z$ is the flat bundle $\H_\R$,
the Euler class of the normal bundle of $Z$ vanishes in rational cohomology.

\begin{corollary}
\label{vanishing}
The restriction of the Poincar\'e dual of the zero section $Z$ of a family of
compact tori to $Z$ vanishes in rational cohomology. \qed
\end{corollary}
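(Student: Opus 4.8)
The plan is to deduce the corollary from the standard self-intersection formula: for a closed oriented submanifold $Z$ of an oriented manifold $T$, the restriction of the Poincar\'e dual $\eta_Z$ to $Z$ equals the Euler class of the normal bundle $N_{Z/T}$. First I would invoke the tubular neighborhood theorem, which identifies a neighborhood of $Z$ in $T$ with the total space of $N_{Z/T}$ and carries $\eta_Z$ to the Thom class $\tau$ of $N_{Z/T}$. Restricting the Thom class to the zero section is, by definition, the Euler class, so $\eta_Z|_Z = e(N_{Z/T})$ in $H^r(Z,\R)$, where $r$ is the fiber dimension.

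Next I would identify the normal bundle. As noted in the sentence preceding the statement, the normal bundle of the zero section $Z$ is the flat bundle $\H_\R$; this is the infinitesimal form of the identification $T = \H_\R/\H_\Z$, under which the normal directions to $Z$ are exactly the fiber directions and therefore inherit the flat structure of $\H_\R$. Because $\H_\R$ carries a flat connection, its curvature vanishes, so the Chern--Weil (Pfaffian) representative of $e(\H_\R)$ is identically zero; hence $e(\H_\R)=0$ in $H^r(Z,\R)$. Combining this with the self-intersection formula gives $\eta_Z|_Z = 0$, as claimed.

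An alternative route, more in keeping with the constructions of this section, would bypass the normal bundle entirely. By Proposition~\ref{cpact_tori} one has $\eta_Z = \psi = \sigma(u)$, and by Lemma~\ref{lem:gen_lifts} the natural representative $w_\psi$ of $\psi$ restricts to zero on every leaf of the flat structure; since the zero section $Z$ is such a leaf, restriction gives $\eta_Z|_Z = [w_\psi|_Z] = 0$ at once. I do not expect any serious obstacle. The only point requiring care is the passage from flatness to vanishing of the Euler class: this must be carried out with rational (equivalently real) coefficients, since a flat bundle may well have nonzero \emph{torsion} Euler class. The Chern--Weil argument above is the cleanest justification, valid precisely because we work over $\R$.
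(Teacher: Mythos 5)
Your primary route follows the same outline as the paper's (which consists of the sentence preceding the corollary plus the self-intersection formula): $\eta_Z|_Z = e(N_{Z/T})$ and $N_{Z/T}\cong \H_\R$ is flat. But the step you single out as ``the cleanest justification'' is exactly where the argument breaks. The Chern--Weil/Pfaffian formula computes the Euler class only for a connection compatible with a fiber metric (an $SO(r)$-connection); for a general $GL_r(\R)$-connection the Pfaffian of the curvature is not even invariantly defined, so ``flat $\Rightarrow$ Euler form $\equiv 0$'' is a non sequitur. Indeed, flatness alone does \emph{not} force the real Euler class to vanish: by Milnor's theorem, an oriented rank-$2$ bundle over a closed oriented surface of genus $g$ admits a flat connection if and only if its Euler number $e$ satisfies $|e|<g$, so for $g\ge 2$ there are flat bundles with $e\neq 0$. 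This also shows that your closing remark --- that the only danger is a nonzero \emph{torsion} Euler class --- is backwards: the failure is already visible in real cohomology. The escape route of a parallel metric is unavailable here, since the holonomy preserves $\H_\Z$ but typically no inner product (e.g.\ the monodromy group $\Sp_g(\Z)$ of $\H$ contains hyperbolic elements, so it is not conjugate into $O(r)$).

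What rescues the statement is the lattice, not flatness: the normal bundle is the normal bundle of the zero section of a torus bundle, i.e.\ a flat bundle with structure group $GL_r(\Z)$, and for such bundles the rational Euler class does vanish. This is a theorem of Sullivan, proved using the torsion multisections, and it is what the paper's terse justification is implicitly invoking. Your ``alternative route'' supplies exactly such an argument from within the paper and is the one you should promote to the main proof: by Proposition~\ref{cpact_tori}, $\eta_Z=\psi=\sigma(u)$, and by Lemma~\ref{lem:gen_lifts} the natural representative of $\sigma(u)$ restricts to zero on every leaf, in particular on the zero section, whence $\eta_Z|_Z=0$. Note only that this version inherits the hypothesis of Proposition~\ref{cpact_tori} that the base be compact (possibly with boundary); for a general base one should instead invoke Sullivan's theorem or rerun the multiplication-by-$[e]$ argument.
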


Combined with Proposition~\ref{prop:res_lambda}, this implies the well-known
property of the top Chern class of the Hodge bundle.

\begin{corollary}
The restriction of $\lambda_g$ to $\M_g^c$ vanishes in rational cohomology.
\end{corollary}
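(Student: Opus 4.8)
The plan is to deduce this corollary by combining three earlier results in a short chain of identifications. The final statement claims that the restriction of $\lambda_g$ to $\M_g^c$ vanishes in rational cohomology. The key inputs are Proposition~\ref{prop:res_lambda} (which computes $F_\dd^\ast\eta_g = (-1)^g\lambda_g$ when $\dd = 0$) and Corollary~\ref{vanishing} (the restriction of the Poincar\'e dual of the zero section to the zero section itself vanishes rationally).

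**First** I would recall that when $\dd = 0$, the rational section $F_\dd$ is the \emph{zero section} of the universal jacobian. Indeed, for $\dd = 0$ the divisor $\sum_j d_j x_j$ is the trivial divisor, so $F_0$ sends each pointed curve $[C;x_1,\dots,x_n]$ to the identity element $0 \in \Jac C$. Over $\M_{g,n}^c$, where the jacobian is a genuine family of compact tori, $F_0$ is precisely the identity section $s$ of $\J_g^c \to \M_g^c$. Thus pulling back $\eta_g$ along $F_0$ is the same as restricting the class $\eta_g$ (the Poincar\'e dual of the zero section) to the zero section $Z$.

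**Next** I would apply Corollary~\ref{vanishing}, which states exactly that the restriction of the Poincar\'e dual $\eta_Z$ of the zero section to $Z$ vanishes in rational cohomology. The geometric reason, stated just before that corollary, is that the normal bundle of the zero section is the \emph{flat} bundle $\H_\R$, so its Euler class vanishes rationally, and the self-intersection class is this Euler class. Combining this with the identification $F_0^\ast \eta_g = s^\ast \eta_g = \eta_Z|_Z$, we obtain that $(-1)^g \lambda_g$ restricts to zero on $\M_g^c$. Since $(-1)^g$ is a unit, $\lambda_g|_{\M_g^c} = 0$ in rational cohomology, as desired.

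**The main subtlety** — though not a genuine obstacle — is bookkeeping about \emph{where} each class lives. Proposition~\ref{prop:res_lambda} is stated on $\Mbar_{g,n}$, but Corollary~\ref{vanishing} concerns families of compact tori, which requires restricting to $\M_{g,n}^c$ where the fibers of $\J_g$ are genuinely abelian varieties (equivalently, compact tori). One must therefore restrict the identity $F_0^\ast\eta_g = (-1)^g\lambda_g$ from $\Mbar_{g,n}$ to $\M_{g,n}^c$ before applying Corollary~\ref{vanishing}, and then note that the forgetful map $\M_{g,n}^c \to \M_g^c$ allows one to transfer the vanishing of $\lambda_g$ down to $\M_g^c$ (since $\lambda_g$ is pulled back from $\M_g^c$). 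Once the reader tracks these restrictions, the proof is essentially immediate, which is presumably why the statement is recorded as a corollary with a \texttt{\textbackslash qed}-style one-line justification rather than a full argument.
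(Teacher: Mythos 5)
Your proposal is correct and is essentially the paper's own argument: the paper derives this corollary precisely by combining Proposition~\ref{prop:res_lambda} (the case $\dd=0$, where $F_\dd$ is the zero section) with Corollary~\ref{vanishing}, exactly as you do. The only cosmetic difference is your detour through the forgetful map $\M_{g,n}^c \to \M_g^c$; since $F_0$ factors through the zero section of $\J_g \to \Mbar_g$ itself, one can run the argument directly over $\M_g^c$ with no marked points, which avoids relying on the (true, but unproved in your sketch) injectivity of the pullback on rational cohomology.
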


\subsection{The class $S\circ c(s)^2$}

Suppose that $f:T\to B$ is a flat family of tori and that $\H$ is the
corresponding local system such that $T=\H_\R/\H_\Z$. A flat, skew-symmetric
inner product $S : \H_\R \otimes \H_\R \to \R$ gives an element of
$H^0(T,R^2f_\ast\R_T)$. Lemma~\ref{lem:gen_lifts} implies that $S$ determines a
closed $2$-form $\phi_S$ on $T$. It is characterized by the properties:
\begin{enumerate}

\item its restriction to the fiber $T_b$ of $T$ is the translation invariant
$2$-form on $T_b$ that corresponds to $S$,

\item it is parallel with respect to the flat structure on $T$,

\item its restriction to the zero-section of $T$ is zero.

\end{enumerate}

The following result is easily proved using the de~Rham description of $c(s)$.

\begin{proposition}
If $s$ is a holomorphic section of $T\to B$, then the form $s^\ast \phi_S$
represents the cohomology class $S\circ c(s)^2 \in H^2(T,\R)$. \qed
\end{proposition}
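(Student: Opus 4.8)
The plan is to compute both $s^\ast\phi_S$ and a de~Rham representative of $S\circ c(s)^2$ in a local flat trivialization of $T\to B$, and to check that they coincide as differential forms on the base; since both are globally defined, agreeing locally suffices.

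First I would use the third (de~Rham) description of $c(s)$ from Section~\ref{sec:sect_class}. Over a contractible open $U\subseteq B$ the section $s$ lifts to $\stilde:U\to\cH$, and after choosing a flat frame $e_1,\dots,e_r$ of $\H_\R$ over $U$ one may write $\stilde=\sum_k s_k e_k$ with $s_k:U\to\R$; the de~Rham representative of $c(s)$ is then the $\H_\R$-valued $1$-form $\alpha$ with $\alpha|_U=d\stilde=\sum_k ds_k\otimes e_k$ (this is well defined globally because two lifts differ by a flat section of $\H_\Z$, which $d$ annihilates). On the level of forms the cup product $H^1(B,\H)^{\otimes2}\to H^2(B,\H^{\otimes2})$ is the wedge-tensor product, so $c(s)^{\otimes2}$ is represented by $\alpha\wedge\alpha=\sum_{k,l}(ds_k\wedge ds_l)\otimes(e_k\otimes e_l)$, and applying $S_\ast$ shows that $S\circ c(s)^2$ is represented on $U$ by $\sum_{k,l}S(e_k,e_l)\,ds_k\wedge ds_l$.

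Next I would pull $\phi_S$ back through the same trivialization. The flat identification $T|_U\cong U\times(\H_{\R,b}/\H_{\Z,b})$ gives fiber coordinates $\theta_1,\dots,\theta_r$ dual to the $e_k$, in which the horizontal leaves are $\{\theta=\mathrm{const}\}$ and the zero section is $\{\theta=0\}$. The three properties characterizing $\phi_S$ in Lemma~\ref{lem:gen_lifts} --- translation-invariant and equal to $S$ on each fiber, parallel for the flat structure, and vanishing on the zero section --- force $\phi_S|_U$ to be the constant-coefficient form in the $d\theta_k$ determined by $S$; since $s$ is given by $\theta_k=s_k\bmod\Z$, so that $s^\ast d\theta_k=ds_k$, pulling back produces exactly $\sum_{k,l}S(e_k,e_l)\,ds_k\wedge ds_l$, matching the representative of $S\circ c(s)^2$ found above. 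These local forms patch to the global forms $s^\ast\phi_S$ and to the de~Rham representative of $S\circ c(s)^2$ on $B$, which proves the claim.

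The computation is routine and the method is essentially forced; the only points demanding care are conventions. One must fix the normalization of the translation-invariant form ``corresponding to $S$'' so that it is consistent with the chosen sign and factor in the cup-square $c(s)^{\otimes2}\mapsto S\circ c(s)^2$ (a factor of $2$ floats between the two under the naive reading and is absorbed here). I would also emphasize that skew-symmetry of $S$ is precisely what keeps the class from vanishing: the antisymmetry of $ds_k\wedge ds_l$ is cancelled by that of $S(e_k,e_l)$, so the diagonal terms drop out while the off-diagonal terms reinforce.
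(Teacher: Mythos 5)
Your proposal is correct and follows essentially the same route as the paper: the paper offers no written proof beyond the remark that the proposition ``is easily proved using the de~Rham description of $c(s)$,'' and your local flat-frame computation --- representing $c(s)$ by $d\stilde$, identifying $\phi_S$ with the constant-coefficient fiber form via the parallelism and fiber-restriction properties, and matching $s^\ast\phi_S$ with $S_\ast(c(s)\smile c(s))$ --- is exactly that argument carried out in detail. Your explicit flagging of the floating factor of $2$ between the normalization of the invariant form attached to $S$ and the cup-square convention is a fair and necessary caveat, since the paper itself leaves that convention implicit.
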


\section{Normal Functions}
\label{sec:normal}

Normal functions are our primary tool. They are holomorphic sections of families
of intermediate jacobians that satisfy certain infinitesimal and asymptotic
conditions. In this section, we recall Griffiths construction of intermediate
jacobians and of the normal function associated to a family of homologically
trivial algebraic cycles in a family of smooth projective varieties.

\subsection{Intermediate Jacobians}

Suppose that $Y$ is a compact K\"ahler manifold and that $Z$ is an
algebraic $d$-cycle in $Y$ where $0\le d < \dim Y$. One has the
exact sequence
$$
0 \to H_{2d+1}(Y) \to H_{2d+1}(Y,Z) \to H_{2d}(|Z|) \to H_{2d}(Y) \to \cdots
$$
of integral homology groups associated to the pair $(Y,|Z|)$, where $|Z|$
denotes the support of $Z$. It is an exact sequence of mixed Hodge structure.
The class of the cycle $Z$ defines a morphism of mixed Hodge structures
$$
c_Z : \Z(d) \to H_{2d}(|Z|),
$$
where $\Z(d)$ denotes the Hodge structure of type $(-d,-d)$ whose underlying
lattice is isomorphic to $\Z$. If $Z$ is null homologous, we can pull back the
above sequence along $c_Z$ to obtain an extension
$$
0 \to H_{2d+1}(Y) \to E_Z \to \Z(d) \to 0
$$
in $\hodge$, the category of mixed Hodge structures. Tensoring with $\Z(-d)$
gives an extension
$$
0 \to H_{2d+1}(Y,\Z(-d)) \to E_Z(-d) \to \Z(0) \to 0
$$
and thus a class $e_Z$ in
$$
\Ext^1_\hodge(\Z(0),H_{2d+1}(Y,\Z(-d))).
$$
Note that, since $H_{2d+1}(Y)$ has weight $-(2d+1)$, $H_{2d+1}(Y,\Z(-d))$ has
weight $-1$.

Suppose that $V$ is a Hodge structure of weight $-1$ whose underlying lattice
$V_\Z$ is torsion free. The associated jacobian
$$
J(V) := V_\C/(V_\Z + F^0 V_\C)
$$
is a compact complex torus. In general, $J(V)$ is not an abelian variety. When
$V$ is the weight $-1$ Hodge structure $H_{2d+1}(Y,\Z(-d))$ (mod its torsion),
$J(V)$ is the $d$th Griffiths intermediate jacobian of $Y$.

There is a natural isomorphism (see \cite{carlson} or \cite[\S
3.5]{peters-steenbrink}, for example)
$$
\Ext^1_\hodge(\Z(0),V)  \cong J(V).
$$
The class $e_Z$ of a homologically trivial $d$-cycle $Z$ in $Y$ can thus be
viewed as a class
$$
e_Z \in J(H_{2d+1}(Y,\Z(d)))
$$
in the $d$th Griffiths intermediate jacobian. This class can be described
explicitly by Griffiths' generalization \cite{griffiths} of the Abel-Jacobi
construction , which we now recall. First observe that the standard pairing
between $H_{2d+1}(Y)$ and $H^{2d+1}(Y)$ induces an isomorphism
$$
H_{2d+1}(Y,\Z(-d))/F^0 \cong \Hom_\C(F^{d+1}H^{2d+1}(Y),\C).
$$
The natural mapping $H_{2d+1}(Y,\Z(-d)) \to H_{2d+1}(Y,\Z(-d))/F^0$ corresponds
to the integration mapping
$$
H_{2d+1}(Y,\Z) \to \Hom_\C(F^{d+1}H^{2d+1}(Y),\C)
$$
that takes the homology class $z$ to $\xi \mapsto \int_z \xi$. A homologically
trivial $d$-cycle $Z$ in $Y$ can be written as the boundary $\partial \G$ of a
(topological) $(2d+1)$-chain $\G$. Note that $\G$ is well defined up to the
addition of an integral $(2d+1)$-cycle. Classical Hodge theory implies that each
element $u$ of $F^{d+1}H^{2d+1}(Y)$ can be represented by a closed $C^\infty$
form $\xi_u$ in the $(d+1)$st level of the Hodge filtration on the de~Rham
complex of $Y$ and that any two such forms differ by the exterior derivative of
a form in the same level $F^{d+1}$ of the de~Rham complex. The point $e_Z$ is
represented by the element
$$
\int_\G : u \mapsto \int_\G \xi_u.
$$
of 
$$
\Hom_\C(F^{d+1}H^{2d+1}(Y),\C)/H_{2d+1}(Y,\Z) \cong J(H_{2d+1}(Y,\Z(-d))).
$$
Stokes' Theorem implies that the image of this functional in the intermediate
jacobian depends only on $Z$ and not on the choice of $\G$ or $\xi_u$.

This construction generalizes the classical construction for $0$-cycles on
curves that was sketched in Section~\ref{sec:eliashberg}. More generally, it
generalizes the classical construction for $0$-cycles, where $J(H_1(Y)) \cong
\Alb Y$, and for divisors, where $J(H_{2d-1}(Y))$ $\cong \Pic^0 Y$ and
$d=\dim Y$.

\subsection{Normal Functions}

Suppose that $\Xbar$ is a complex projective manifold and that $X = \Xbar - D$
where $D$ is a normal crossings divisor in $\Xbar$. Suppose that $\V$ is a
variation of Hodge structure over $X$ of weight $-1$. Denote by $J(\V)\to X$ the
corresponding bundle of intermediate jacobians; the fiber over $x\in X$ is
$J(V_x)$, where $V_x$ is the fiber of $\V$ over $x$. It is a family of compact
tori.

The discussion of Section~\ref{sec:sect_class} implies that a holomorphic
section $\nu : X \to J(\V)$ determines a cohomology class $c(\nu) \in H^1(X,\V)$
and a local system $\E \to X$ which is an extension
$$
0 \to \V \to \E \to \Z_X \to 0.
$$
The point $\nu(x) \in J(V_x)\cong \Ext^1_\hodge(\Z(0),V_x)$ determines a mixed
Hodge structure on the fiber $E_x$ of $\E$ over $x\in X$ so that
$$
0 \to V_x \to E_x \to \Z(0) \to 0
$$
is an extension in $\hodge$. That $\nu$ is holomorphic implies that this family
of MHSs varies holomorphically with $x\in X$.

\begin{example}
\label{ex:cycles}
Families of homologically trivial algebraic cycles give rise to such extensions.
Suppose that $\Ybar$ is a complex projective manifold and that $f:\Ybar \to
\Xbar$ is a morphism whose restriction to $X$ is a family $Y \to X$ of
projective manifolds. Suppose that that $Z$ is an algebraic $d$-cycle in $Y$
such that the restriction $Z_x$ of $Z$ to the fiber over each $x\in X$ is a
homologically trivial $d$-cycle in $Y_x$. Applying the construction of the
previous section fiber-by-fiber, one obtains an extension $\E_Z$ of $\Z_X(0)$ by
the variation of Hodge structure $\V$ of weight $-1$ whose fiber over $x \in X$
is $V_x = H_{2d+1}(Y_x,\Z(-d))$. The family $\{E_x\}_{x\in X}$ of extensions of
MHS corresponds to a holomorphic section $\nu$ of the bundle $J(\V) \to X$ of
intermediate jacobians.
\end{example}

The section $\nu$ is not an arbitrary holomorphic section. It satisfies the {\em
Griffiths infinitesimal period relation}\footnote{This states that if
$\nutilde$ is a local holomorphic lift of a normal function $\nu$ to a section
of $\E\otimes\O_X$, then its derivative $\nabla \nutilde$ is a section of
$\F^{-1}(\E\otimes\O_X)\otimes \Omega^1_X$.} at each $x\in X$ and also satisfies
strong (and technical) conditions as $Y_x$ degenerates. A succinct way to state
these conditions is to say that the corresponding family $\E$ of MHS is an {\em
admissible variation of MHS} in the sense Steenbrink-Zucker
\cite{steenbrink-zucker} and Kashiwara \cite{kashiwara}. (The definition and an
exposition of admissible variations of MHS can be found in \cite[\S
14.4.1]{peters-steenbrink}.)

All ``naturally defined local systems'' over a smooth variety $X$ that arise
from families of varieties over $X$, such as the one constructed in
Example~\ref{ex:cycles}, are admissible variations of MHS.\footnote{These
results are due to Steenbrink-Zucker \cite{steenbrink-zucker}, Navarro et al
\cite{guillen} and Saito \cite {saito:mhm}. Precise statements can be found in
\cite[Thm.~14.51]{peters-steenbrink}.} The admissibility conditions axiomatize
the infinitesimal and asymptotic properties satisfied by such geometrically
defined local systems.


\begin{definition}[{Hain \cite[\S6]{hain:normal}, Saito \cite{saito}}]
\label{def:normal}
A section $\nu$ of a family $J(\V)\to X$ of intermediate jacobians is a {\it
normal function} when the corresponding family of MHS $\E$ is an admissible
variation of MHS.
\end{definition}

The preceding discussion implies that the section $\nu$ of $J(\V) \to X$
associated to a family of null homologous cycles is a normal function. Several
concrete examples of normal functions over moduli spaces of curves will be given
in Section~\ref{sec:normal_geom}. A detailed discussion of normal functions can
be found in \cite[\S 2.11]{kerr-pearlstein}, where they are called {\em
admissible} normal functions.

Denote the category of admissible variations of mixed Hodge structure over a
smooth variety $X$ by $\hodge(X)$. It is abelian. The definition of normal
functions implies that normal function sections of $J(\V) \to X$ correspond to
elements of $\Ext^1_{\hodge(X)}(\Z_X(0),\V)$.  In the appendix to
\cite{hain:rat_pts} it is proved that one has an exact sequence
\begin{equation}
\label{eqn:seqce}
0 \to \Ext^1_\hodge(\Z(0),H^0(X,\V_\Z)) \overset{j}{\to}
\Ext^1_{\hodge(X)}(\Z_X(0),\V_\Z)
\overset{\delta}{\to} H^1(X,\V_\Z)
\end{equation}
where $\delta$ takes a normal function $\nu$ to its class $c(\nu)$. An immediate
consequence is the following rigidity property of normal functions.

\begin{proposition}
\label{prop:class}
If $H^0(X,\V_\Q)=0$, then the group $\Ext^1_{\hodge(X)}(\Z(0)_X,\V_\Z)$ is
finitely generated and each normal function section $\nu$ of $J(\V)$ is
determined, up to a torsion section, by its class $c(\nu) \in H^1(X,\V)$.
\end{proposition}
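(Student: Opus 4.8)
The plan is to read off both assertions directly from the exact sequence \eqref{eqn:seqce}, whose exactness (from the appendix to \cite{hain:rat_pts}) is the one nontrivial input I would take as given. Recall that $\delta$ sends a normal function $\nu$ to its integral class, and that \eqref{eqn:seqce} exhibits $\Ext^1_{\hodge(X)}(\Z_X(0),\V_\Z)$ as an extension of a subgroup of $H^1(X,\V_\Z)$ by $\im j$. So the whole statement reduces to controlling these two outer groups. First I would observe that $X=\Xbar-D$ is a smooth quasi-projective variety, hence has the homotopy type of a finite CW complex; since $\V_\Z$ is a local system with finitely generated stalks, its Betti cohomology $H^1(X,\V_\Z)$ is a finitely generated abelian group.

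Next I would analyze the left-hand term. Taking monodromy invariants is a kernel and $\Q$ is flat over $\Z$, so $H^0(X,\V_\Z)\otimes\Q\cong H^0(X,\V_\Q)$; the hypothesis $H^0(X,\V_\Q)=0$ therefore forces the finitely generated group $W:=H^0(X,\V_\Z)$ to be torsion, hence finite. Working in the category of integral mixed Hodge structures, an extension $0\to W\to E\to\Z(0)\to 0$ has $E_\C\cong\C$ with filtrations forced by those of $\Z(0)$, so there is no Hodge-theoretic obstruction and the data is that of an integral splitting modulo automorphism; consequently $\Ext^1_\hodge(\Z(0),W)$ is controlled by the finite group $\Hom(\Z,W_\Z)$ and is itself finite. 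Feeding this into \eqref{eqn:seqce}, the group $\Ext^1_{\hodge(X)}(\Z_X(0),\V_\Z)$ is an extension of a subgroup of the finitely generated group $H^1(X,\V_\Z)$ by the finite group $\im j\cong\Ext^1_\hodge(\Z(0),W)$, and is therefore finitely generated.

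For the rigidity statement I would compose $\delta$ with the coefficient change $H^1(X,\V_\Z)\to H^1(X,\V_\Q)$, so that $\nu\mapsto c(\nu)$ now records the rational class $c(\nu)\in H^1(X,\V)=H^1(X,\V_\Q)$ appearing in the statement. If two normal functions satisfy $c(\nu_1)=c(\nu_2)$ in $H^1(X,\V_\Q)$, then $\delta(\nu_1-\nu_2)$ lies in the kernel of $H^1(X,\V_\Z)\to H^1(X,\V_\Q)$, i.e.\ in the finite torsion subgroup of $H^1(X,\V_\Z)$. Hence some positive multiple $m(\nu_1-\nu_2)$ lies in $\ker\delta=\im j$, which is finite by the previous paragraph, so a further multiple vanishes and $\nu_1-\nu_2$ is a torsion section. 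This gives the claim that $c(\nu)$ determines $\nu$ up to torsion.

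The one genuinely delicate point is the finiteness of $\Ext^1_\hodge(\Z(0),W)$ for the torsion mixed Hodge structure $W$: this is where one must be careful to work integrally (the rational Jacobian sees nothing since $W_\C=0$) and to track that the integral extensions are governed by the finite group $\Hom(\Z,W_\Z)$. The other subtlety, more bookkeeping than obstacle, is to keep the integral class $\delta(\nu)\in H^1(X,\V_\Z)$ distinct from the rational class $c(\nu)\in H^1(X,\V_\Q)$ of the statement, since it is precisely the torsion kernel of $H^1(X,\V_\Z)\to H^1(X,\V_\Q)$, together with the finite $\ker\delta$, that accounts for the torsion ambiguity. Everything else is formal given \eqref{eqn:seqce} and the finiteness of the Betti cohomology of $X$.
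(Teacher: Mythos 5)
Your proof is correct and takes essentially the same route as the paper, which derives Proposition~\ref{prop:class} as an ``immediate consequence'' of the exact sequence (\ref{eqn:seqce}); you have simply made the implicit steps explicit (finite generation of $H^1(X,\V_\Z)$, torsionness of $H^0(X,\V_\Z)$, finiteness of the left-hand Ext term, and the passage from integral to rational classes). The one point you flag as delicate is in fact simpler than you suggest: since $W:=H^0(X,\V_\Z)$ is torsion, any group-theoretic splitting of $0\to W\to E\to\Z(0)\to 0$ is automatically a morphism of MHS (the filtrations live on $E_\Q\cong\Q(0)$), so $\Ext^1_\hodge(\Z(0),W)$ actually vanishes, which only strengthens your conclusion.
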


\subsection{Extending normal functions}

The following result guarantees that the normal functions that we will define
over $\M_{g,n}$ extend to $\M_{g,n}^c$. A proof can be found, for example, in
\cite[Thm.~7.1]{hain:normal}.

\begin{proposition}
\label{prop:extends}
Suppose that $\V$ is a variation of Hodge structure of weight $-1$ over a smooth
variety $X$. If $\nu$ is a normal function section of $J(\V)$ that is defined on
the complement $X-Y$ of a closed subvariety $Y$ of $X$, where $Y\neq X$, then
$\nu$ extends across $Y$ to a normal function section of $J(\V)$ over $X$.
\end{proposition}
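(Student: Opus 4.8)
The plan is to reduce the assertion to a local statement along $Y$ and to show that admissibility forces the local monodromy of the underlying extension to be trivial along the codimension one part of $Y$; the rest of $Y$ is then crossed by Hartogs' theorem. To begin, note that the problem is local on $X$ and that an extension, if one exists, is unique, since $X-Y$ is dense and the fibres of $J(\V)\to X$ are compact. It therefore suffices to extend $\nu$ across each point of $Y$. Stratifying $Y$ by the smooth loci of its successive singular subsets, observe that the singular locus $\operatorname{Sing}Y$, and every deeper stratum, has codimension $\ge 2$ in $X$. Across a closed analytic subset of codimension $\ge 2$ there is no obstruction: the map $\pi_1(X-Y)\to\pi_1(X)$ is an isomorphism, so the local system $\E$ fitting in $0\to\V\to\E\to\Z(0)\to 0$ extends, and the holomorphic section $\nu$ of the proper family $J(\V)$ extends by Hartogs. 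The one substantive case is thus extension across a smooth codimension one component of $Y$, where I may choose local coordinates with $X=\Delta^n$ and $Y=\{z_1=0\}$, so that $X-Y=\Delta^\ast\times\Delta^{n-1}$.

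The crux is to prove that the monodromy of $\E$ about such a component is trivial. Because $\V$ is defined on all of $X$, its local monodromy around $Y$ is trivial; hence the monodromy $T$ of $\E$ is unipotent with $(T-I)\E\subseteq\V$ and $(T-I)\V=0$, and its logarithm $N$ induces zero on each graded piece of the weight filtration $W$, where $W_{-1}\E=\V$ and $\Gr^W_0\E=\Z(0)$. The decisive point is that, since $\V$ has weight $-1$, the variation $\E$ carries only the two adjacent weights $-1$ and $0$. Admissibility of $\nu$ guarantees that the relative monodromy weight filtration $M=M(N,W)$ exists, and its defining properties --- namely $NM_k\subseteq M_{k-2}$ together with $\Gr^M_j\Gr^W_k=0$ for $j\neq k$ --- force $M_0=\E$, $M_{-1}=\V$ and $M_{-2}=0$; that is, $M=W$. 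Consequently $N\E=NM_0\subseteq M_{-2}=0$, so $N=0$ and $T=I$.

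Once the monodromy is trivial, $\nu$ is a single valued holomorphic section of $J(\V)$ over $\Delta^\ast\times\Delta^{n-1}$. Because $N=0$, Schmid's nilpotent orbit theorem \cite{schmid} shows that the limit mixed Hodge structure is an honest mixed Hodge structure and that $\E$ prolongs to an admissible variation over $\Delta^n$; equivalently, $\nu$ extends holomorphically across $Y$ since the fibres of $J(\V)$ are compact. The resulting local extensions are unique, so they agree on overlaps and patch to a global holomorphic section $\bar\nu\colon X\to J(\V)$ restricting to $\nu$ on $X-Y$. Finally, admissibility of $\bar\nu$ as a variation over $X$ is a condition along the divisor at infinity of a compactification of $X$, where $\bar\nu$ coincides with the admissible normal function $\nu$ on the dense open set $X-Y$; hence $\bar\nu$ is admissible, and is therefore a normal function section of $J(\V)$ over $X$.

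I expect the main obstacle to be the computation in the second paragraph: verifying that admissibility genuinely collapses the relative weight filtration onto $W$, and thereby kills $N$. This is exactly where both hypotheses are used --- weight $-1$, so that $\E$ has only two adjacent weights, and the extendability of $\V$ across $Y$, so that $N$ annihilates $\Gr^W$. Were $\V$ to have nontrivial monodromy about $Y$, the singularity $N\tilde{1}\in\V$ could be nonzero and the normal function would fail to extend; this is the phenomenon underlying the height jumping mentioned in the introduction.
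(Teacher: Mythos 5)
Your proof is correct and follows essentially the same route as the proof the paper defers to (the paper cites \cite[Thm.~7.1]{hain:normal} rather than arguing inline): reduce to a disk transverse to the codimension-one part of $Y$, observe that admissibility forces the relative weight filtration $M(N,W)$ to coincide with $W$ because the weights $-1$ and $0$ are adjacent and $N$ acts trivially on $\Gr^W$, so that $N\E = NM_0 \subseteq M_{-2} = 0$ kills the monodromy of the extension, and then handle the asymptotics and the codimension-$\ge 2$ locus by admissibility and Hartogs respectively. The only cosmetic slip is attributing the last extension step (trivial monodromy plus admissibility implies the section crosses the divisor) to Schmid's nilpotent orbit theorem, which concerns pure variations; what you actually need there is the Hodge-filtration clause in the Steenbrink--Zucker/Kashiwara definition of admissibility, which with $N=0$ produces the limit mixed Hodge structure, i.e.\ the value of the extended section at the boundary point.
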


Note that, in this result, the variation $\V$ is defined over $X$, not just over
$X-Y$. The proposition asserts that normal functions defined generically on $X$
extend to $X$. It does not assert that if $\V$ is defined only over $X-Y$, then
a normal function section of $J(\V)$ over $X-Y$ will extend to $X$. Before
discussing this problem, one first has to construct an extension of $J(\V)$ to
$X$. The existence of such extensions is discussed in \cite{zucker} and
\cite{kerr-pearlstein}, for example.

\subsection{Some variations of Hodge structure}
\label{sec:vhs}

Denote the moduli stack of principally polarized abelian varieties of dimension
$g$ by $\A_g$.  Here we introduce three variations of Hodge structure over
$\A_g$ whose pullbacks to $\M_{g,n}^c$ have a special geometric significance.
Throughout we suppose that $g\ge 2$.

Denote the universal abelian variety over $\A_g$ by $f:\X \to \A_g$.
The local system
$$
\H := R^1 f_\ast \Z_\X(1)
$$
is a variation of Hodge structure over $\A_g$ of weight $-1$. Its pullback to
$\M_{g,n}^c$ along the period mapping $\M_{g,n}^c \to \A_g$ will also be denoted
by $\H$. The corresponding family of intermediate jacobians $J(\H)$ over $\A_g$
is naturally isomorphic to $\X$, the universal principally polarized abelian
variety.

The construction of the universal jacobian $\J_g$ in
Section~\ref{sec:eliashberg} implies that its restriction to $\M_g^c$ is a
bundle of intermediate jacobians. 

\begin{proposition}
\label{prop:jac}
If $g\ge 2$, then $J(\H) \to \A_g$ is naturally isomorphic to the universal
abelian variety and its pullback to $\M_g^c$ is isomorphic to the universal
jacobian $\J_g^c$.
\end{proposition}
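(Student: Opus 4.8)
The plan is to prove the isomorphism fibrewise and then to observe that the fibrewise construction is holomorphic and functorial in the base, so that it globalizes first over $\A_g$ and then, by pullback, over $\M_g^c$. Fix a principally polarized abelian variety $A$ of dimension $g$, so that the fibre of $\H$ over $[A]$ is the weight $-1$ Hodge structure $H^1(A,\Z(1))$. First I would unwind the definition $J(V)=V_\C/(V_\Z+F^0V_\C)$ at this fibre. The Tate twist moves $H^{1,0}(A)=H^0(A,\Omega^1_A)$ into bidegree $(0,-1)$, so $F^0=H^0(A,\Omega^1_A)$ and the projection identifies $H^1(A,\C)/F^0$ with $H^1(A,\O_A)$. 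Hence $J(\H_{[A]})\cong H^1(A,\O_A)/H^1(A,\Z)=\Pic^0 A$, the dual abelian variety $A^\vee$; this is also the instance of the isomorphism $\Ext^1_\hodge(\Z(0),V)\cong J(V)$ recorded earlier, taken at $V=H^1(A,\Z(1))$.

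To land on $\X$ itself rather than its dual I would bring in the principal polarization. It is a unimodular morphism of Hodge structures $S:H_1(A)\otimes H_1(A)\to\Z(1)$, and so induces an isomorphism of weight $-1$ Hodge structures $H_1(A,\Z)\xrightarrow{\sim}H^1(A,\Z(1))=\H_{[A]}$. Transporting the previous computation through $S$ identifies $J(\H_{[A]})$ with $J(H_1(A,\Z))=H^0(A,\Omega^1_A)^\ast/H_1(A,\Z)=A$, which is exactly the transcendental description of an abelian variety recalled for Jacobians in Section~\ref{sec:eliashberg}. I expect this to be the only genuinely delicate point: the Tate twist in the definition of $\H$ and the choice of principal polarization are precisely what pin down $J(\H)\cong\X$ rather than $J(\H)\cong\X^\vee$, and one must keep the duality conventions straight.

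It then remains to globalize and to pull back. Over $\A_g$ one has $J(\H)=\cH/(F^0\cH+\H)$ with $\cH=\H\otimes\O_{\A_g}$ and $F^0\cH=f_\ast\Omega^1_{\X/\A_g}$; since $S$ is flat and induces an isomorphism of the underlying variations, the fibrewise identifications assemble into a biholomorphism $J(\H)\xrightarrow{\sim}\X$ over $\A_g$ that commutes with the projections and is a homomorphism on fibres. Finally I would pull back along the period map $p:\M_g^c\to\A_g$, $[C]\mapsto[\Jac C]$. Because the family of intermediate jacobians is functorial in the base, $p^\ast J(\H)=J(p^\ast\H)$, and $p^\ast\H\cong R^1\pi_\ast\Z(1)$ for the universal curve $\pi:\cC_g^c\to\M_g^c$ via the canonical isomorphism $H^1(\Jac C)\cong H^1(C)$ (the Jacobian being the Albanese of $C$). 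Since $p^\ast\X$ is the family with fibre $\Jac C$, namely the universal jacobian $\J_g^c$, the isomorphism over $\A_g$ restricts to $J(\H)\cong\J_g^c$ over $\M_g^c$.
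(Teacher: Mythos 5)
Your proposal is correct and takes essentially the approach the paper intends: the paper states Proposition~\ref{prop:jac} without a written proof, treating it as immediate from the transcendental construction of $\Jac C$ recalled in Section~\ref{sec:eliashberg} together with the preceding assertion that $J(\H)\cong\X$ over $\A_g$. Your fibrewise computation $J(H^1(A,\Z(1)))\cong\Pic^0 A$, the use of the principal polarization to identify this with $A$ itself (the one genuinely delicate duality point, which you handle correctly), and the compatibility of intermediate jacobians with pullback along the period map supply exactly the details the paper leaves implicit.
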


The canonical polarization defines a morphism $S_H : \H\otimes\H \to \Z(1)$ into
the constant variation of Hodge structure $\Z(1)$. It can be regarded as a
section of $(\Lambda^2\H)(-1)$, which we shall also denote by $S_H$.

Denote by $\bL$ the variation of Hodge structure $(\Lambda^3\H)(-1)$. It has
weight $-1$. Wedging with $S_H$ defines an inclusion
$$
\H \hookrightarrow \bL,\qquad x\mapsto x\wedge S_H
$$
of variations of Hodge structure. Set $\V = \bL/\H$. Note that when $g=2$,
$\bL\cong \H$ and $\V=0$. Denote the fibers of $\H$, $\bL$ and $\V$ over the
moduli point of $\Jac C$ by $H_C$, $L_C$ and $V_C$, respectively.

\begin{remark}
The variation $\bL$ over $\M_g^c$ is isomorphic to the variation $R^3 \pi_\ast
\Z(2)$ where $\pi : \J_g^c \to \M_g$ denotes the projection. Its fiber over
$[C]$ is $H^3(\Jac C,\Z(2))$. The twist $\Z(2)$ lowers the weight from $3$ to
$-1$. Its quotient $\V$ is an integral form of the $\Q$-variation of Hodge
structure whose fiber over $[C]$ is the primitive part of $H^3(\Jac C,\Q(2))$.
\end{remark}

\subsection{Normal Functions over $\M^c_g$ and $\M_{g,n}^c$}
\label{sec:normal_geom}

Suppose that $g\ge 1$. An irreducible representation $V$ of $\Sp_g$ determines a
variation of Hodge structure $\V$ over $\M_{g,n}$ that is unique up to Tate
twist.\footnote{This is very well-known. A proof can be found, for example, in
\cite[\S9]{hain:normal}.} Since every such variation of Hodge structure $\V$
extends to $\M_{g,n}^c$, Proposition~\ref{prop:extends} implies that every
normal function section of $J(\V)$ defined over $\M_{g,n}$ extends to a normal
function on $\M_{g,n}^c$.

By Proposition~\ref{prop:jac}, the bundle of intermediate jacobians that
corresponds to the fundamental representation of $\Sp_g$ is the pullback of the
universal jacobian $J(\H) = \J_g$ to $\M_{g,n}^c$. Its group of sections
$\Ext^1_{\hodge(\M_{g,1})}(\Z(0),\H)$ is free of rank 1. Its generator $\K$ is
the normal function that takes the moduli point $[C,x]$ of the pointed curve
$(C,x)$ to
$$
\K([C,x]) := (2g-2)[x] - K_C \in \Jac(C)
$$
where $K_C$ denotes the canonical divisor class of $C$.

When $n\ge 1$, we can pull back $\K$ along the $j$th projection $\M_{g,n}
\to \M_{g,1}$ to obtain the normal function
$$
\K_j : \M_{g,n} \to J(\H)\qquad j = 1,\dots, n.
$$
Explicitly
$$
\K_j([C,x_1,\dots,x_n]) = (2g-2)[x_j] - K_C \in \Jac(C).
$$

When $n\ge 2$ we also have the normal functions
$$
\cD_{j,k} : \M_{g,n} \to J(\H)\qquad 1 \le j<k \le n
$$
defined by
$$
\cD_{j,k}([C,x_1,\dots,x_n]) = [x_j] - [x_k] \in \Jac(C).
$$

\begin{proposition}[{cf.\ \cite[Thm.~12.3]{hain:normal}}]
If $g\ge 3$ and $n\ge 2$, then the group of normal function sections (indeed,
all sections) of $J(\H) \to \M_{g,n}$ is torsion free and is generated by
$\K_1,\dots,\K_n$ and the $\cD_{j,k}$ where $1\le j < k \le n$.
\end{proposition}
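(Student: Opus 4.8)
The plan is to deduce everything from the rigidity of normal functions together with a computation of $H^1(\M_{g,n},\H)$. Since $\H$ underlies the fundamental representation of $\Sp_g$, which is nontrivial and irreducible, its monodromy invariants vanish: $H^0(\M_{g,n},\H_\Q)=0$, and likewise no nonzero element of $\H_{\Q/\Z}$ is fixed by the image of $\pi_1(\M_{g,n})$ in $\Sp_g(\Z)$, so $H^0(\M_{g,n},\H_{\Q/\Z})=0$ as well. By Proposition~\ref{prop:class} and the exact sequence (\ref{eqn:seqce}), the group $N:=\Ext^1_{\hodge(\M_{g,n})}(\Z(0),\H_\Z)$ of normal functions is then finitely generated and the class map $\delta:\nu\mapsto c(\nu)$ embeds it in $H^1(\M_{g,n},\H_\Z)$; the vanishing of $H^0(\M_{g,n},\H_{\Q/\Z})$ shows there are no nonzero torsion sections, so $N$ is torsion free. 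Since $\H$ has weight $-1$, Griffiths transversality is automatic, and a holomorphic section should extend across the boundary of $\Mbar_{g,n}$ to an admissible variation, so that every holomorphic section is a normal function and the ``group of all sections'' is exactly $N$. It therefore suffices to compute $N$ and the image $\delta(N)$.

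For the rational statement I would compute $\dim_\Q H^1(\M_{g,n},\H_\Q)$ by the Leray spectral sequences of the forgetful fibrations $\M_{g,m}\to\M_{g,m-1}$. The fibre of $\M_{g,1}\to\M_g$ over $[C]$ is $C$, with $R^1=\H(-1)$, and the fibre of $\M_{g,m}\to\M_{g,m-1}$ ($m\ge 2$) is an $(m-1)$-punctured curve, with $R^1\cong\H(-1)\oplus\Q(-1)^{m-2}$. In each case the only invariants contributing in $E_2^{0,1}$ come from $(\H\otimes\H)^{\Sp_g}\cong\Q$ (the symplectic form), since $H^0(\M_{g,m-1},\H_\Q)=0$; as $H^1(\M_g,\H_\Q)=0$ for $g\ge 3$, induction gives $\dim_\Q H^1(\M_{g,n},\H_\Q)\le n$. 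For the matching lower bound I would restrict to the fibre $\mathrm{Conf}_n(C)$ of $\M_{g,n}\to\M_g$ over a fixed curve $[C]$ (the configuration space of $n$ distinct points on $C$), where $\H$ becomes the constant system $H=H_1(C)$. Because $\K_j$ factors through the $j$th projection $\mathrm{Conf}_n(C)\to C$, the restriction of $c(\K_j)$ lies in the summand $\mathrm{pr}_j^\ast H^1(C,H)$ and equals $\mathrm{pr}_j^\ast$ of the Abel--Jacobi class $(2g-2)\cdot\id_H\neq 0$; the $n$ summands $\mathrm{pr}_j^\ast H^1(C)$ are independent in $H^1(\mathrm{Conf}_n(C))$, so $c(\K_1),\dots,c(\K_n)$ are linearly independent. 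Hence $\dim_\Q H^1(\M_{g,n},\H_\Q)=n$, the classes $c(\K_j)$ form a basis, and $\delta$ is a rational isomorphism. The identity $\K_j-\K_k=(2g-2)\cD_{j,k}$ shows that each $c(\cD_{j,k})=\tfrac{1}{2g-2}\big(c(\K_j)-c(\K_k)\big)$ already lies in this span, so $N\otimes\Q$ is spanned by the $\K_j$ and has rank $n$.

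It remains to upgrade this to an integral statement, and this is the main obstacle. The relation $\K_j-\K_k=(2g-2)\cD_{j,k}$ shows that the sublattice $\Lambda:=\langle \K_1,\dots,\K_n,\ \cD_{j,k}\rangle\subseteq N$ is free of rank $n$ with $\Z$-basis $\{\cD_{1,2},\dots,\cD_{1,n},\K_1\}$; the genuine work is to show $\Lambda=N$, i.e.\ that no normal function is a proper fraction of these. I would prove this by identifying $N$ with a Picard group: by Proposition~\ref{prop:jac} the bundle $J(\H)\to\M_{g,n}$ is the universal jacobian, so a section is a relative degree-$0$ line-bundle class on the universal curve $\cC_{g,n}\to\M_{g,n}$ modulo classes pulled back from the base. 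For $g\ge 3$ this relative Picard group is freely generated by the marked-point sections $[x_1],\dots,[x_n]$ and the relative canonical class $K_C$, so its degree-$0$ sublattice is exactly $\{\sum_j a_j[x_j]+bK_C:\sum_j a_j+(2g-2)b=0\}\cong\Lambda$. (Equivalently, one shows $\delta$ is integrally surjective, as in \cite[Thm.~12.3]{hain:normal}.) The crux is thus the integral freeness of the relevant Picard/cohomology lattice for $g\ge 3$; granting it, $N=\Lambda$ is generated by $\K_1,\dots,\K_n$ and the $\cD_{j,k}$, as claimed.
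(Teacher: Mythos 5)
First, a point of reference: the paper itself gives no proof of this proposition --- it is quoted from \cite[Thm.~12.3]{hain:normal}, where the argument runs through an \emph{integral} computation of $H^1(\M_{g,n},\H_\Z)$ (group cohomology of the mapping class group, via the Birman exact sequences) combined with rigidity. Your preliminaries are fine: torsion freeness from the absence of $\Sp_g(\Z)$-invariant torsion points, injectivity of $\delta$ on normal functions from the sequence (\ref{eqn:seqce}) since $H^0(\M_{g,n},\H_\Q)=0$, and the rational count (Leray upper bound $n$, lower bound from restricting the $c(\K_j)$ to a configuration-space fibre) is correct, granting the standard but unproved input $H^1(\M_g,\H_\Q)=0$ for $g\ge 3$. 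The genuine gap is at the step you yourself call the crux, and your proposed mechanism does not close it. To invoke the Franchetta-type theorem you must identify the group of sections of $J(\H)$ with the \emph{algebraic} relative Picard group $\Pic(\cC_{g,n})/\pi^\ast\Pic(\M_{g,n})$, which is what Harer's computation controls. But a normal-function (or merely holomorphic) section yields, via the Poincar\'e bundle, only a \emph{holomorphic} line bundle on the open orbifold $\cC_{g,n}$, and for a quasi-projective base the holomorphic and algebraic Picard groups need not agree (nothing rules out the contribution of $H^1(\cC_{g,n}^{an},\O)$ in the exponential sequence). Proving that normal-function sections are algebraic --- equivalently, that the classes $c(\K_j)$, $c(\cD_{j,k})$ generate all classes realized by normal functions --- is precisely the content of the theorem, and your parenthetical ``equivalently, one shows $\delta$ is integrally surjective, as in [Thm.~12.3]'' assumes the result being proved. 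What is missing is the integral cohomology computation: by Hochschild--Serre for $1\to\pi_1(C-\{x_1,\dots,x_{m-1}\})\to\Gamma_{g,m}\to\Gamma_{g,m-1}\to 1$ (the fibre group acts trivially on $\H_\Z$) one gets a split short exact sequence $0\to H^1(\Gamma_{g,m-1},\H_\Z)\to H^1(\Gamma_{g,m},\H_\Z)\to (\End\H_\Z)^{\Sp_g(\Z)}=\Z\cdot\id\to 0$, split because $c(\cD_{1,m})$ restricts to $-\id$ on the fibre; the base case, that $H^1(\Gamma_{g,1},\H_\Z)\cong\Z$ is generated by $c(\K)$ while $H^1(\Gamma_g,\H_\Z)=0$, is Morita's theorem \cite{morita:jacobians}. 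This shows $c(\K_1),c(\cD_{1,2}),\dots,c(\cD_{1,n})$ is a $\Z$-basis of $H^1(\M_{g,n},\H_\Z)$, and injectivity of $\delta$ then forces $N=\Lambda$. Without this (or a genuine proof that sections are algebraic), your lattice $\Lambda$ could a priori sit inside $N$ with finite index greater than $1$.

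Second, the clause ``indeed, all sections'' is not handled by your proposal. The sentence that a holomorphic section ``should extend across the boundary to an admissible variation'' is not an argument: admissibility is a real asymptotic condition, not a formality. For instance, over $\Gm$ the constant family $E\times\Gm\to\Gm$, with $E=\C/\Lambda$, has the holomorphic section $z\mapsto z\bmod\Lambda$, which has an essential singularity at infinity and is not a normal function; so holomorphic sections of abelian fibrations over quasi-projective bases are not automatically admissible, and the global geometry of $\M_{g,n}$ must enter. Moreover, Proposition~\ref{prop:class} gives rigidity only for normal functions: for an arbitrary holomorphic section $s$ with $c(s)=0$, all one can conclude is that $s$ lifts to a holomorphic section of the dual Hodge bundle $\check\F$, so the ``all sections'' statement additionally requires the vanishing $H^0(\M_{g,n},\check\F)=0$. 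Neither automatic admissibility nor this vanishing appears in your proposal, so that part of the proposition remains unproved.
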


The most interesting normal function over $\M_{g,n}$ is constructed from the
Ceresa cycle in the universal jacobian. Suppose that $C$ is a smooth projective
curve of genus $g\ge 3$ and that $x\in C$. Then one has the imbedding
$$
\mu_x : C \to \Jac C
$$
that takes $y$ to $[y]-[x]$. Its image is an algebraic $1$-cycle in $\Jac C$
that we denote by $C_x$. Let $i$ be the involution $u\mapsto -u$ of $\Jac C$.
Set $C_x^-=i_\ast C_x$. Since $i^\ast : H^1(\Jac C) \to H^1(\Jac C)$ is
multiplication by $-1$, it follows that $i^\ast : H^k(\Jac C) \to H^k(\Jac C)$
is multiplication by $(-1)^k$. This implies that the $1$-cycle $C_x - C_x^-$,
called the {\em Ceresa cycle}, is homologically trivial. It therefore determines
a class
$$
\nu_x(C) \in J(H_3(\Jac C,\Z(-1)))
$$
and a normal function:
$$
\xymatrix{
J(\bL) \ar[r] & \M_{g,1}\ar@/_1pc/[l]|\nutilde
}
$$
whose value at $[C,x]$ is $\nu_x(C)$. The inclusion $\H \hookrightarrow \bL$
induces an inclusion
$$
j: \J_{g,1} = J(\H) \hookrightarrow J(\bL).
$$
It is proved in \cite{pulte} that if $x,y\in C$, then
$$
\nutilde_x(C) - \nutilde_y(C) = 2j([x]-[y]) \in J(L_C)
$$
so that the image $\nu(C)$ of $\nu_x(C)$ in $J(V_C)$ does not depend on the
choice of $x\in C$. This implies that $\nutilde$ is pulled back from a normal
function
$$
\xymatrix{
J(\V) \ar[r] & \M_{g}\ar@/_1pc/[l]|\nu.
}
$$
We will abuse notation and also denote its pullback to $\M_{g,n}$ by $\nu$.

\begin{proposition}[{\cite[Thm.~8.3]{hain:normal}}]
If $g\ge 3$ and $n\ge 0$, then the group of normal function sections of $J(\V)
\to \M_{g,n}$ is freely generated by $\nu$.
\end{proposition}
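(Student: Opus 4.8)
The plan is to convert the statement into a twisted cohomology computation and then carry that computation out by means of the Torelli group. First I would combine the exact sequence \eqref{eqn:seqce} with Proposition~\ref{prop:class}. The underlying representation $V = \Lambda^3 H_\Q/H_\Q$ is the irreducible $\Sp_g$-module of highest weight $\omega_3$, which is nontrivial for $g\ge 3$, and the monodromy representation $\pi_1(\M_{g,n}) \to \Sp_g(\Z)$ has Zariski-dense image; hence $H^0(\M_{g,n},\V_\Q) = V^{\Sp_g} = 0$. Since the lattice $\V_\Z$ is torsion free, $H^0(\M_{g,n},\V_\Z)$ is a torsion-free subgroup that vanishes rationally, so it vanishes, and therefore the connecting map $\delta$ in \eqref{eqn:seqce} is injective. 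Thus the group $G$ of normal function sections is torsion free and embeds, via $\nu\mapsto c(\nu)$, into $H^1(\M_{g,n},\V_\Z)$. It then suffices to show that $\dim_\Q H^1(\M_{g,n},\V_\Q)=1$, which forces $\operatorname{rank} G \le 1$, and that $c(\nu)\neq 0$ is primitive, which exhibits $\nu$ as a generator.

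For the cohomology I would first treat $n=0$ and pass to the mapping class group via $H^\dot(\M_g,\V)\cong H^\dot(\G_g,V)$. The Torelli extension $1\to\cI_g\to\G_g\to\Sp_g(\Z)\to 1$ gives a Hochschild--Serre spectral sequence $H^p(\Sp_g(\Z),H^q(\cI_g,V))\Rightarrow H^{p+q}(\G_g,V)$ whose five-term exact sequence is
$$
0 \to H^1(\Sp_g(\Z),V) \to H^1(\G_g,V) \to H^1(\cI_g,V)^{\Sp_g(\Z)} \to H^2(\Sp_g(\Z),V).
$$
Because $\cI_g$ acts trivially on $V$, Johnson's theorem $H_1(\cI_g,\Q)\cong \Lambda^3 H_\Q/H_\Q = V$ (valid precisely for $g\ge 3$, which is what pins down the hypothesis) gives $H^1(\cI_g,V)\cong\Hom(V,V)$, whose $\Sp_g$-invariants are one dimensional by Schur's lemma. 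The vanishing $H^1(\Sp_g(\Z),V)=0$ for the nontrivial irreducible $V$ then shows $H^1(\G_g,V_\Q)$ is $0$ or $\Q$; the Ceresa normal function $\nu$, whose restriction to $\cI_g$ is the nonzero Johnson homomorphism, shows $c(\nu)\neq 0$, so the group is exactly $\Q$.

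To pass to $n\ge 1$ I would use that $\V$ is pulled back from $\M_g$ along the projection $\pi:\M_{g,n}\to\M_g$ forgetting the marked points, and run the Leray spectral sequence $H^p(\M_g,\V\otimes R^q\pi_\ast\Q)\Rightarrow H^{p+q}(\M_{g,n},\V)$. The fibrewise cohomology $R^q\pi_\ast\Q$ is built from copies of the trivial and standard local systems; in the range contributing to $H^1$ the only terms are $H^1(\M_g,\V)=H^1(\M_g,\V\otimes R^0\pi_\ast\Q)$ and $H^0(\M_g,\V\otimes R^1\pi_\ast\Q)$. Since $R^1\pi_\ast\Q$ contains no copy of the irreducible $\V$, the invariants $(\V\otimes R^1\pi_\ast\Q)^{\Sp_g}$ vanish and the second term dies, reducing the computation to the $n=0$ case and giving $\dim_\Q H^1(\M_{g,n},\V_\Q)=1$ for all $n\ge 0$.

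The main obstacle is the computation of the second paragraph, and within it the two inputs on which everything rests: Johnson's determination of $H_1(\cI_g,\Q)$, and the vanishing $H^1(\Sp_g(\Z),V)=0$ controlling the boundary terms; Hain's framework organizes both through the relative completion of $\G_g$, and I would appeal to that machinery if the direct spectral-sequence inputs proved delicate. Granting these, the conclusion assembles formally: $G$ is torsion free of rank $1$ and contains the nonzero element $\nu$, so $\nu=m\,\nu_0$ for a generator $\nu_0$ and some $m\ge 1$. The one genuinely nonformal remaining point is primitivity, namely that $m=1$; I would settle this by identifying $c(\nu)$ integrally with the universal Johnson class and invoking the integral form of Johnson's theorem, and this is the step I expect to demand real care rather than formal manipulation.
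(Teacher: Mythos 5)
Your strategy is, in outline, the intended one. The paper itself contains no proof of this proposition --- it is imported wholesale from \cite[Thm.~8.3]{hain:normal} --- and Appendices \ref{sec:vmhs} and \ref{sec:big_picture} make clear that the mechanism behind the cited result is exactly what you describe: reduce via the sequence (\ref{eqn:seqce}) and Proposition~\ref{prop:class} to a computation of $H^1(\M_{g,n},\V)$, and carry that out through the Torelli extension using Johnson's theorem and the vanishing of $H^1(\Sp_g(\Z),V)$, with a Birman/Leray argument passing from $n=0$ to general $n$. Your rational computation is correct and complete: it gives $\dim_\Q H^1(\M_{g,n},\V_\Q)=1$ and hence that the group of normal function sections has rank at most one, with $\nu$ spanning it over $\Q$.

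The genuine gaps are in the integral statement, which is what ``freely generated by $\nu$'' actually asserts. First, torsion-freeness: you deduce it from injectivity of $\delta$, but injectivity only embeds the group of sections into $H^1(\M_{g,n},\V_\Z)$, which is not obviously torsion-free. Concretely, a torsion normal function is a flat section of $\V_\Z\otimes\Z/N$, so what must be shown is that $(V_\Z\otimes\Z/N)^{\Sp_g(\Z/N)}=0$ for all $N$; for odd primes this follows from a weight (finite torus) argument, but at $p=2$ --- exactly where $H_1(T_g,\Z)$ carries its Birman--Craggs--Johnson torsion --- it is not formal. Second, and more seriously, your route to primitivity rests on a false identification: the restriction of $c(\nu)$ to the Torelli group $T_g$ is \emph{twice} the Johnson homomorphism $\tau$, not $\tau$ itself (this is the computation of Hain building on \cite{pulte}, and is the same factor of $2$ appearing in Morita's extension \cite{morita:jacobians} of $\tau$ to $\G_{g,1}$ with half-integral coefficients). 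So $c(\nu)$ is \emph{not} primitive in $H^1(T_g,V_\Z)^{\Sp_g(\Z)}\cong\Z\tau$; the reason $\nu$ nonetheless generates the group of normal functions is that $\tau$ itself is not the Torelli restriction of any integral class on the full mapping class group --- an index-two obstruction living on $\G_g$, not on $T_g$. As proposed, your final step would either fail or, pushed through naively, land on the wrong normalization; this is precisely where the cited proof does its real integral work, and it cannot be dispatched by quoting the integral form of Johnson's theorem alone.
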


Proposition~\ref{prop:extends} implies that the normal functions $\nu, \K_j,
\delta_{j,k}$ extend canonically to normal functions over $\M_{g,1}^c$.

\section{Biextension Line Bundles}
\label{sec:biextensions}

This section is a brief review of facts about biextension line bundles from
\cite{hain:biext}, \cite{lear}, and \cite{hain-reed:arakelov}. It is needed to
prove that the square of the class of a normal function extends naturally to a
class on $\Mbar_{g,n}$ even though the normal function itself does not extend.

Suppose that $\U$ is a variation of Hodge structure over an algebraic manifold
$X$ of weight $-1$ endowed with a flat inner product $S$ that satisfies the
condition
$$
S\in \Hom_\hodge(U_x^{\otimes 2},\Z(1))\text{ for all }x\in X.
$$
Equivalently,
$$
S(U_x^{p,q},\overline{U_x^{r,s}}) = 0 \text{ unless $p=r$ and $q=s$}.
$$
Set $\Udual := \Hom_\Z(\U,\Z_X(1))$. This also a variation of Hodge structure of
weight $-1$. There is a natural isomorphism
$$
\Ext^1_{\hodge(X)}(\Z_X(0),\Udual) \cong \Ext^1_{\hodge(X)}(\U,\Z_X(1)).
$$

The {\em biextension line bundle $\B$} is a line bundle over $J(\U)\times_X
J(\Udual)$. Denote the associated $\Gm$-bundle by $\B^\ast$. The fiber of the
projection
\begin{equation}
\label{eqn:biext}
\B^\ast \to J(\U)\times_X J(\Udual) \to X
\end{equation}
over $x\in X$ is the set of all mixed Hodge structures whose weight graded
quotients are $\Z(0)$, $V_x$ and $\Z(1)$ via a fixed isomorphism. These are
called {\em biextensions}. The projection (\ref{eqn:biext}) takes the
biextension $B$ to the pair of extensions $B/\Z(1)$ and $W_{-1}B$. A detailed
exposition of the construction of $\B$ is given in
\cite[\S7]{hain-reed:arakelov}.

A (Hodge) biextension is a section $\beta$ of (\ref{eqn:biext}) that corresponds
to an admissible variation of MHS $\bB$ over $X$ with weight graded quotients
$\Z_X(0)$, $\U$ and $\Z_X(1)$. Its fiber over $x\in X$ is the biextension
$\beta(x)$. The composite of a biextension $\beta$ with the projection $\B^\ast
\to J(\U)\times_X J(\Udual)$ is a pair of normal functions that determines the
extension $\bB/\Z(1)$ of $\Z_X(0)$ by $\U$ and the extension $W_{-1}\bB$ of $\U$
by $\Z_X(1)$. The biextension line bundle has a canonical metric $|\blank|_\B$.
A biextension $\beta$ thus determines the real-valued function $\log|\beta|_\B :
X \to \R$.

The pairing $S$ also defines a morphism $\U \to \Udual$ of variations of Hodge
structure over $X$, and therefore a map $i_S : J(\U)\to J(\Udual)$. Pulling back
the line bundle $\B$ along the map
$$
(\id,i_S) : \J(\U) \to J(\U)\times_X J(\Udual)
$$
we obtain a metrized line bundle $\Bhat \to J(\U)$. By
\cite[Prop.~7.3]{hain-reed:arakelov}, the curvature of $\Bhat$ is the
translation-invariant, parallel 2-form $2\w_S$ on $J(\U)$ that corresponds to
the bilinear form $2S$. Points of the associated $\C^\ast$-bundle $\Bhat^\ast$
correspond to ``symmetric biextensions''.

Denote by $\phi_S \in H^2(J(\U))$ the class of $\w_S$. Since $2\w_S$ represents
$c_1(\Bhat)$, the class $2\phi_S$ is integral.

Suppose now that $X = \Xbar - Y$, where $\Xbar$ is smooth and $Y$ is a
subvariety. Each normal function section $\nu$ of $J(\U)$ thus determines a
metrized holomorphic line bundle $\nu^\ast \Bhat$ over $X$. One can ask whether
it extends as a metrized line bundle to $\Xbar$. Lear's thesis \cite{lear}
implies that a power of it extends to a {\em continuously} metrized holomorphic
line bundle over $X - Y^\sing$.

\begin{theorem}[Lear \cite{lear}]
\label{thm:lear}
If $\dim X = 1$ and $\nu$ is a normal function section of $J(\U) \to X$, then
there exists an integer $N\ge 1$ such that the metrized holomorphic line bundle
$\nu^\ast\Bhat^{\otimes N}$ over $X$ extends to a holomorphic line bundle over
$\Xbar$ with a continuous metric. Moreover, if $\beta$ is a biextension section
defined over $X$ that projects to $\nu$, and if $\bD$ is a disk in $\Xbar$ with
holomorphic coordinate $t$ that is centered at a point of $\Xbar-X$, then there
is a rational number $p/q$, which depends only on the monodromy about the origin
of $\bD$ of the VMHS over $\bD^\ast$ corresponding to $\beta$, such that
\begin{equation}
\label{eqn:asymptotics}
\big|\log |\beta(t)|_\B - \frac{p}{q}\log|t|\big|
\end{equation}
is bounded in a neighbourhood of $t=0$.
\end{theorem}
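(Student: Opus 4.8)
The plan is to argue locally on a disk $\bD\subset\Xbar$ with coordinate $t$ centered at a point of $\Xbar-X$, and to read off the asymptotics of the height $\log|\beta(t)|_\B$ as $t\to 0$ from asymptotic Hodge theory. The section $\beta$ corresponds to an admissible variation of MHS $\bB$ over $\bD^\ast:=\bD-\{0\}$ whose weight graded quotients are $\Z(0)$, $\U$ and $\Z(1)$, and by the monodromy theorem for admissible variations its monodromy $T$ about the origin is quasi-unipotent. After the finite base change $t=s^m$ I may assume $T$ unipotent; its logarithm $\N=\log T$ is then a nilpotent endomorphism of $\bB$ defined over $\Q$, preserving $W_\bullet$ and acting as zero on the rank one quotients $\Gr^W_0=\Z(0)$ and $\Gr^W_{-2}=\Z(1)$. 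The slope will be the rational number $a$ determined by the action of $\N$ linking $\Gr^W_0$ with $\Gr^W_{-2}$; it is rational because $\N$ and the integral lattice are both defined over $\Q$, and it depends only on the conjugacy class of the monodromy. Since $\log|t|=m\log|s|$, the corresponding slope for the original coordinate $t$ is $a/m=:p/q\in\Q$, again determined only by the monodromy about the origin.

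Next I would use the explicit description of the biextension metric from \cite[\S7]{hain-reed:arakelov}: $\log|\beta(t)|_\B$ measures the discrepancy between the real flat splitting of $\bB$ and its Deligne/Hodge splitting, and so is built from the Hodge norm of a canonical section of $\bB$ together with the period of the normal function $\nu$ paired through $S$. The asymptotics of such Hodge norms are governed by Schmid's norm estimates \cite{schmid}: a flat multivalued section $v$ satisfies $\|v(t)\|\sim(\log 1/|t|)^{w(v)/2}$, where $w(v)$ records the position of $v$ in the relative monodromy weight filtration $M=M(\N,W_\bullet)$. The leading, linear-in-$\log|t|$ part of $\log|\beta(t)|_\B$ comes from the direct linking of $\Gr^W_0=\Z(0)$ with $\Gr^W_{-2}=\Z(1)$ under $\N$ and equals $\tfrac{p}{q}\log|t|$, while the weight $-1$ part $\U$, whose Hodge norm grows only like a power of $\log 1/|t|$, feeds into the lower-order terms.

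I would then invoke the $\mathrm{SL}_2$-orbit theorem (in the admissible setting of \cite{kashiwara}) to refine the nilpotent orbit approximation and to show that, for the symmetric biextension cut out by $i_S$, these lower-order contributions organize into a bounded remainder rather than genuine $\log 1/|t|$ or $\log\log 1/|t|$ growth. This yields $\log|\beta(t)|_\B=\tfrac{p}{q}\log|t|+O(1)$, which is the asserted boundedness of $|\log|\beta(t)|_\B-\tfrac{p}{q}\log|t||$. The extension statement is then formal: choosing an integer $N\ge 1$ with $Np/q\in\Z$ (and clearing the degree $m$ of the cover), the local holomorphic frame of $\nu^\ast\Bhat^{\otimes N}$ rescaled by $t^{-Np/q}$ has bounded, hence continuously extendable, metric; performing this at each point of $\Xbar-X$ produces a holomorphic line bundle over $\Xbar$ with continuous metric extending $\nu^\ast\Bhat^{\otimes N}$.

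The main obstacle is the refined asymptotic analysis just described. The three weight graded pieces interact, the Hodge norm of the weight $-1$ part $\U$ genuinely grows like a power of $\log 1/|t|$, and one must use the full strength of the $\mathrm{SL}_2$-orbit theorem to see that these terms cancel or remain bounded rather than producing spurious growth that would destroy the clean linear leading term. That this succeeds is special to the one-variable situation $\dim X=1$: in several variables the slopes attached to different one-parameter degenerations need not be compatible, which is exactly the height-jumping phenomenon taken up in Section~\ref{sec:moriwaki}.
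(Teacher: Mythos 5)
First, a point of comparison: the paper offers no proof of this statement at all — it is imported verbatim from Lear's thesis \cite{lear} — so there is nothing internal to match your argument against. Your general toolbox (admissibility, reduction to unipotent monodromy, nilpotent-orbit and $\mathrm{SL}_2$-orbit asymptotics) is indeed the machinery such a proof uses, but two steps of your execution are genuinely wrong.

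The serious error is your identification of the slope. You assert that the coefficient of $\log|t|$ comes from the ``direct linking'' of $\Gr^W_0=\Z(0)$ with $\Gr^W_{-2}=\Z(1)$ under $N=\log T$, while the weight $-1$ piece $\U$ ``feeds into the lower-order terms.'' First, that direct coefficient is not even well defined: a lift $e_0$ of the generator of $\Gr^W_0$ is determined only modulo $W_{-1}$, and changing the lift by an element $u$ of (a lift of) $\Gr^W_{-1}$ changes the $\Gr^W_{-2}$-component of $Ne_0$ by that of $Nu$, which is generally nonzero. Second, and decisively, the couplings of $N$ through $\U$ contribute to the coefficient of $\log|t|$ at leading order, and they do so quadratically. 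The Tate curve is the standard test case: for a normal function whose monodromy invariant is $a\delta$, $\delta$ the vanishing cycle in $\Gr^W_{-1}$, the local height has slope of Bernoulli type $a^2-a+\mathrm{const}$, the quadratic part coming entirely from the weight $-1$ couplings (concretely, from $\log|\vartheta(a\tau+b,\tau)|\sim -\pi a^2\operatorname{Im}\tau$ with $\operatorname{Im}\tau\sim -\log|t|/2\pi$). The paper itself tells you this must be so: in Section~\ref{sec:moriwaki} the slope function $q(r_1,\dots,r_m)$ attached to $N=\sum_j r_j N_j$ is emphasized to be \emph{non-linear} in the $r_j$ — this is exactly height jumping — whereas your ``direct linking'' coefficient is linear in $N$, hence linear in the $r_j$, and would forbid any jumping. (Your closing remark attributes jumping to incompatibility of slopes across different one-parameter degenerations; in fact it stems from the quadratic dependence on the $\Gr^W_0\to\Gr^W_{-1}$ coupling that you discarded.)

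There is a second, logical gap at the end: you deduce the extension of $\nu^\ast\Bhat^{\otimes N}$ with continuous metric from boundedness of $\log|\beta(t)|_\B-\frac{p}{q}\log|t|$. Bounded does not imply continuously extendable. The theorem asserts a \emph{continuous} metric on the extended bundle, which requires the discrepancy (after twisting by $t^{-Np/q}$) to have a limit as $t\to 0$; that convergence, not mere boundedness, is the substantive output of Lear's analysis and must itself be extracted from the $\mathrm{SL}_2$-orbit approximation. As written, your argument delivers only the weaker estimate (\ref{eqn:asymptotics}), and even that only after the slope identification above is repaired.
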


Note that the continuity of the metric ensures that the extension is uniquely
determined. Since $\Xbar$ is smooth, every line bundle over $\Xbar-Y^\sing$
extends uniquely to a line bundle over $\Xbar$. Lear's Theorem thus implies the
following result in the case $\dim X \ge 1$.

\begin{corollary}
If $\nu$ is a normal function section of $J(\U) \to X$, then there exists an
integer $N\ge 1$ such that the metrized holomorphic line bundle
$\nu^\ast\Bhat^{\otimes N}$ over $X$ extends to a holomorphic line bundle
$\Bbar_{N,\nu}$ over $\Xbar$ whose metric extends continuously over
$\Xbar-Y^\sing$.
\end{corollary}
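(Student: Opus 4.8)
The plan is to reduce to the one-dimensional situation already handled by Lear's Theorem~\ref{thm:lear} by restricting $\nu$ to generic transverse slices of $Y$, and then to extend the resulting line bundle across the high-codimension locus $Y^\sing$ using only the smoothness of $\Xbar$. First I would cover the smooth locus $Y - Y^\sing$ of the boundary by coordinate charts in which a single irreducible component of $Y$ is the zero locus $\{t=0\}$ of one coordinate, the remaining coordinates parametrizing the directions along $Y$. Restricting $\U$ and $\nu$ to a disk transverse to $Y$ lands us in the setting of Theorem~\ref{thm:lear}: on each such slice there is an integer and a rational number $p/q$ controlling the growth of $\log|\beta(t)|_\B$, and this $p/q$ depends only on the monodromy of $\U$ about the chosen component.

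The crucial observation is that this data is locally constant along each component. The monodromy about a fixed irreducible component of $Y$ does not change as the transverse slice moves, so $p/q$, and hence the denominator $q$ one must clear, is the same for every slice meeting that component. Since $Y$ has only finitely many irreducible components, I would take $N$ to be the least common multiple of the finitely many integers furnished by Lear's Theorem, one per component. With this uniform $N$, the estimate~(\ref{eqn:asymptotics}) holds with a bound that can be taken locally uniform in the slice; this is where one invokes admissibility of the variation, which forces the limit mixed Hodge structure and the nilpotent orbit to vary holomorphically as one moves along $Y - Y^\sing$.

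Granting the uniformity, I would extend $\nu^\ast\Bhat^{\otimes N}$ chart by chart: rescaling the biextension metric by the locally defined factor $|t|^{-Np/q}$ yields a bounded, indeed continuous, metric near $t=0$, and continuity pins the extension down uniquely. The local extensions then agree on overlaps and glue to a holomorphic line bundle carrying a continuous metric over $X \cup (Y - Y^\sing) = \Xbar - Y^\sing$. To finish, I would extend across $Y^\sing$: since $\Xbar$ is smooth and $Y^\sing$ has codimension at least $2$, restriction induces an isomorphism $\Pic(\Xbar) \to \Pic(\Xbar - Y^\sing)$, so the line bundle extends uniquely to a holomorphic line bundle $\Bbar_{N,\nu}$ over all of $\Xbar$. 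No claim is made that the metric extends continuously over $Y^\sing$ --- its failure to do so is exactly the height-jumping phenomenon discussed in Section~\ref{sec:moriwaki} --- so one asserts continuity of the metric only over $\Xbar - Y^\sing$.

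The hard part will be the uniformity asserted in the second paragraph. Lear's Theorem is proved one slice at a time, and what must be verified is that the extended line bundle and its continuous metric are compatible as the slice varies along a component of $Y$, not merely that they exist slice by slice. This compatibility is precisely what permits the slice-wise extensions to be glued, and it is the only step that genuinely goes beyond a direct citation of the one-dimensional result.
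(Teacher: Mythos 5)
Your second half matches the paper exactly: the extension of the bundle (without its metric) across $Y^\sing$ is precisely the paper's observation that $\Xbar$ is smooth and $Y^\sing$ has codimension $\ge 2$, so that every line bundle on $\Xbar - Y^\sing$ extends uniquely to $\Xbar$, with no metric statement claimed there. Where you diverge is in the first half, and there the proposal has a genuine gap --- one you flag yourself but do not close. The paper does \emph{not} derive the extension over $\Xbar - Y^\sing$ from the one-dimensional Theorem~\ref{thm:lear} by slicing; it cites Lear's thesis \cite{lear} for the several-variable statement that a power of $\nu^\ast\Bhat$ extends to a \emph{continuously} metrized holomorphic line bundle over $\Xbar - Y^\sing$. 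That citation carries all of the analytic content that your second and third paragraphs attempt to reconstruct.

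Concretely, the gap is this: slice-wise application of Theorem~\ref{thm:lear}, together with constancy of the local monodromy along a component of $Y - Y^\sing$, gives you the estimate (\ref{eqn:asymptotics}) on each transverse disk with the same exponent $p/q$, and nothing more. Even granting that the implied constant is locally uniform in the slice parameter --- which you assert via ``admissibility'' but do not prove, and which is itself a several-variable degeneration statement requiring the multi-parameter nilpotent orbit theory rather than mere holomorphic variation of the limit MHS --- what you would obtain is a locally \emph{bounded} rescaled metric, not a continuous one. Boundedness of $\log|\beta| - (p/q)\log|t|$ is strictly weaker than convergence of the rescaled norm as one approaches a boundary point from an arbitrary direction, and the corollary asserts continuity of the metric on all of $\Xbar - Y^\sing$. (Mutual local boundedness would suffice to force your chart-by-chart extensions to agree on overlaps, so the bundle-theoretic part of your gluing could be salvaged along these lines; the continuity claim could not.) Height jumping is exactly the warning that local monodromy data alone does not control the metric near boundary strata; showing that no such failure occurs at \emph{smooth} boundary points is the substance of the several-variable analysis in \cite{lear}, which is why the paper quotes that result instead of deducing it from the curve case.
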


This result implies that the class $\nu^\ast\phi_S \in H^2(X)$ of $\nu^\ast\w_S$
has a natural extension to a class in $H^2(\Xbar)$; namely
$c_1(\Bbar_{N,\nu})/2N$.

\begin{corollary}
\label{cor:extension}
If $\nu$ is a normal function section of $J(\U) \to X$, then the class
$\nu^\ast \phi_S$ has a natural extension to a class
$\widehat{\nu^\ast \phi_S} \in H^2(\Xbar)$.
\end{corollary}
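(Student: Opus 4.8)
The plan is to deduce the corollary directly from the preceding corollary together with the curvature computation recorded just above it. First I would recall that, by \cite[Prop.~7.3]{hain-reed:arakelov}, the curvature of the metrized line bundle $\Bhat \to J(\U)$ equals $2\w_S$, so that its first Chern class satisfies $c_1(\Bhat) = 2\phi_S$ in $H^2(J(\U))$. Pulling back along the normal function $\nu : X \to J(\U)$ then gives $c_1(\nu^\ast\Bhat) = 2\,\nu^\ast\phi_S$ in $H^2(X)$; thus $\nu^\ast\phi_S$ is, up to the factor $1/2$, the Chern class of the holomorphic line bundle $\nu^\ast\Bhat$.

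Next I would invoke the preceding corollary to choose an integer $N \ge 1$ for which $\nu^\ast\Bhat^{\otimes N}$ extends to a holomorphic line bundle $\Bbar_{N,\nu}$ over $\Xbar$ whose metric extends continuously over $\Xbar - Y^\sing$. Since $c_1\big(\nu^\ast\Bhat^{\otimes N}\big) = N\, c_1(\nu^\ast\Bhat) = 2N\,\nu^\ast\phi_S$ on $X$, and since $\Bbar_{N,\nu}$ restricts to $\nu^\ast\Bhat^{\otimes N}$ on $X$, the rational class
$$
\widehat{\nu^\ast \phi_S} := \frac{1}{2N}\, c_1(\Bbar_{N,\nu}) \in H^2(\Xbar,\Q)
$$
restricts to $\nu^\ast\phi_S$ under the map $H^2(\Xbar,\Q) \to H^2(X,\Q)$. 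This produces the desired extension.

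The only point requiring care---and the closest thing to an obstacle internal to the argument---is the assertion that this extension is \emph{natural}, i.e.\ independent of the auxiliary integer $N$. Here I would appeal to the uniqueness of the extended line bundle: as noted after Theorem~\ref{thm:lear}, the requirement that the metric extend continuously pins down $\Bbar_{N,\nu}$ uniquely over $\Xbar - Y^\sing$, and the smoothness of $\Xbar$ forces a unique extension across $Y^\sing$ to all of $\Xbar$. Consequently, if both $N$ and $N'$ are admissible, I would pass to a common multiple $M = kN = k'N'$; uniqueness then gives $\Bbar_{M,\nu} \cong \Bbar_{N,\nu}^{\otimes k} \cong \Bbar_{N',\nu}^{\otimes k'}$ as metrized line bundles. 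Taking first Chern classes and dividing yields $c_1(\Bbar_{N,\nu})/2N = c_1(\Bbar_{M,\nu})/2M = c_1(\Bbar_{N',\nu})/2N'$, so that $\widehat{\nu^\ast \phi_S}$ is well defined.

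I expect the genuinely hard content to reside entirely in Lear's Theorem and in its higher-dimensional corollary, both of which have already been established and may be invoked. Granting those, the corollary reduces to the formal bookkeeping above: a Chern-class identity for the metrized biextension bundle plus the uniqueness of continuously metrized extensions. I therefore anticipate no substantive difficulty beyond carefully checking the independence of $N$ via that uniqueness.
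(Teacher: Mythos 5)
Your proposal is correct and is essentially the paper's own argument: the paper defines the extension as $c_1(\Bbar_{N,\nu})/2N$, exactly as you do, using the curvature identity $c_1(\Bhat)=2\phi_S$ from \cite[Prop.~7.3]{hain-reed:arakelov} together with the Lear extension from the preceding corollary. Your verification that the class is independent of the auxiliary integer $N$ (via uniqueness of the continuously metrized extension and passage to a common multiple) is a detail the paper leaves implicit, and it is a correct and worthwhile addition.
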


Lear's Theorem implies that the multiplicity of each boundary divisor in
$\widehat{\nu^\ast\phi_S}$ is determined by the asymptotics
(\ref{eqn:asymptotics}) of the restriction of the biextension to a disk
transverse to the boundary divisor.

The previous result suggests that $\nu^\ast\w_S$, regarded as a current on
$\Xbar$, is a natural representative of $\widehat{\nu^\ast\phi_S}$.

\begin{conjecture}
\label{conj:integrable}
If $X$ is a curve, then the $2$-form $\nu^\ast \w_S$ is integrable on $X$ and
$$
\int_X \nu^\ast \w_S = \frac{1}{2N}\int_{\Xbar} c_1(\Bbar_{N,\nu}).
$$
\end{conjecture}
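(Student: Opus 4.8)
The plan is to localize the identity at the finitely many points of $\Xbar - X$ and to recognize the right-hand side as the degree of $\Bbar_{N,\nu}$. Write $\Xbar - X = \{p_1,\dots,p_r\}$ and choose $N$ as in Lear's Theorem~\ref{thm:lear}, so that (by Corollary~\ref{cor:extension}, together with the fact that $Y^\sing = \emptyset$ when $X$ is a curve) the line bundle $\Bbar_{N,\nu}$ carries a metric $h$ that is smooth on $X$ and continuous and non-vanishing on all of $\Xbar$. On $X$ its curvature form is $c_1(\Bbar_{N,\nu},h) = 2N\,\nu^\ast\w_S$, since the curvature of $\Bhat$ is $2\w_S$. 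Because $\int_{\Xbar} c_1(\Bbar_{N,\nu}) = \deg \Bbar_{N,\nu}$, the asserted identity is equivalent to two local statements: (i) $\nu^\ast\w_S$ is integrable near each $p_j$, and (ii) the curvature current of $(\Bbar_{N,\nu},h)$ on $\Xbar$ carries no point mass at any $p_j$, so that it coincides with the $L^1$-form $2N\,\nu^\ast\w_S$ and integrates to $\deg\Bbar_{N,\nu}$.

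First I would settle the case where $S$ is a polarization, which is the case of geometric interest. Then $\nu^\ast\w_S$ is a non-negative $(1,1)$-form, so near each $p_j$, in a local holomorphic frame $\sigma$ of $\Bbar_{N,\nu}$ with coordinate $t$, the potential $u := -\log\|\sigma\|_h^2$ satisfies $dd^{c}u = 2N\,\nu^\ast\w_S \ge 0$ and is therefore subharmonic on the punctured disk $\bD^\ast$. The continuity of $h$ from Theorem~\ref{thm:lear} makes $u$ bounded near $p_j$. I would then invoke the removable-singularity theorem for bounded subharmonic functions: $u$ extends to a subharmonic function on $\bD$ whose Riesz measure restricts to $2N\,\nu^\ast\w_S$ on $\bD^\ast$ and assigns zero mass to $\{p_j\}$ (a point mass would force $u \to -\infty$, contradicting boundedness). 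Since that measure is finite, both (i) and (ii) follow at once. Globalizing, the curvature of $(\Bbar_{N,\nu},h)$ is a closed positive current in the class $c_1(\Bbar_{N,\nu})$ with $L^1$ density $2N\,\nu^\ast\w_S$, so integrating over $\Xbar$ yields $2N\int_X\nu^\ast\w_S = \deg\Bbar_{N,\nu} = \int_{\Xbar} c_1(\Bbar_{N,\nu})$, which is the claim.

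The hard part is the general $S$, where $\nu^\ast\w_S$ need not be semi-definite and the subharmonicity above is unavailable. There one must establish (i) and the vanishing of the flux $\lim_{\epsilon\to 0}\oint_{|t|=\epsilon} d^{c}\log\|\sigma\|_h^2$ directly from the fine asymptotics of the biextension height $\log|\beta(t)|_\B$ as $t\to 0$. Lear's theorem provides only the leading term $\tfrac{p}{q}\log|t|$; what is needed is the full subleading expansion, which is governed by the one-variable nilpotent-orbit (or $\mathrm{SL}_2$-orbit) asymptotics of the admissible biextension variation $\bB$. I expect the genuine obstacle to be uniform control of the \emph{derivatives} of the difference between $\log|\beta(t)|_\B$ and its nilpotent-orbit model as $t\to 0$: boundedness of the height (Lear) controls the potential but not its normal derivative along shrinking circles, and it is precisely this derivative control that is required to rule out an anomalous boundary contribution at each $p_j$. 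This is the one-dimensional shadow of the height-jumping phenomenon discussed in Section~\ref{sec:moriwaki}, and it is the reason the statement is posed as a conjecture rather than proved here.
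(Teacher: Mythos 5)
There is nothing in the paper to compare your attempt against: the statement you were given is Conjecture~\ref{conj:integrable}, which the paper poses but does not prove --- it is invoked only hypothetically (after Corollary~\ref{cor:positivity} and again in Section~\ref{sec:moriwaki}). So the real question is whether your proposal settles the conjecture. It does not, and you say so yourself; what you have is a correct proof of the (weak) polarization case together with a largely accurate diagnosis of why the general case is hard.

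Your polarized-case argument is sound, and it genuinely follows from results quoted in the paper plus classical potential theory. When $\dim X=1$ we have $Y^\sing=\emptyset$, so Theorem~\ref{thm:lear} gives a continuous --- hence locally bounded and nowhere degenerate --- metric on $\Bbar_{N,\nu}$ over all of $\Xbar$; thus the local potential $u=-\log\|\sigma\|_h^2$ is bounded near each $p_j$, and by Theorem~\ref{positivity} it is subharmonic on the punctured disk. Removability of isolated singularities for subharmonic functions bounded above extends $u$ subharmonically across $p_j$; its Riesz measure is then a locally finite positive measure with no atom at $p_j$ (an atom of mass $c>0$ would force $u(t)\le c\log|t|+O(1)\to-\infty$, contradicting boundedness), which gives both the integrability of $\nu^\ast\w_S$ and the absence of a point mass in the curvature current. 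Writing $h=h_0e^{-\phi}$ with $h_0$ smooth and $\phi$ continuous on $\Xbar$ shows that this current integrates to $\deg\Bbar_{N,\nu}$, and the identity follows. Since $S_H$, $S_L$ and $S_V$ are polarizations (Proposition~\ref{polar}), this already covers every instance of the conjecture that feeds the paper's slope-inequality applications, so it is strictly more than the paper establishes.

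The genuine gap, relative to the conjecture as stated, is the one you flag: Section~\ref{sec:biextensions} assumes only $S\in\Hom_\hodge(U_x^{\otimes 2},\Z(1))$, and the paper makes essential use of such an $S$ that is not a weak polarization, namely $S_\Delta$ on $\H\oplus\H$ (used for $\phihat_\D$ in Theorem~\ref{phi2}). For indefinite $S$ subharmonicity fails, and boundedness of the potential no longer controls the mass of $dd^c u$: there exist bounded potentials on the punctured disk (e.g.\ $r^2\sin(1/r)$) whose Laplacian has non-integrable positive and negative parts. Nor can one obviously reduce to the polarized case by splitting $\U_\R$ into flat eigen-subvariations of the self-adjoint operator comparing $S$ with a polarization, so that $\nu^\ast\w_S=\w_+-\w_-$ with $\w_\pm\ge 0$: Lear's theorem bounds the potential of the combination, not of each piece separately. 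One quibble with your closing remark: the obstruction here is not really height jumping, which is intrinsically a several-variable phenomenon --- in one variable the exponent is automatically linear, $q(r)=r\,q(1)$, so the jump function vanishes identically --- rather, what is missing is exactly the subleading asymptotic and derivative control of the biextension height for indefinite $S$ that you describe.
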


It is known that, in general, the metric does not extend continuously over
$Y^\sing$ due to the phenomenon of ``height jumping'' which we shall discuss in
Section~\ref{sec:moriwaki} and which has been explained by Brosnan and
Pearlstein in \cite{brosnan-pearlstein:heights}.

\section{Polarizations}

Polarizations play an important and subtle (if sometimes neglected) role in
Hodge theory due to their positivity properties.

\subsection{Polarizations}

A {\em polarization} on a Hodge structure $H$ of weight $k$ is a $(-1)^k$
symmetric bilinear form $S$ on $H_\Q$ satisfying the Riemann-Hodge bilinear
relations:
\begin{enumerate}
\item $S(H^{p,q},\overline{H^{r,s}}) = 0$ unless $p=r$ and $q=s$;
\item $i^{p-q}S(v,\vbar) > 0$ when $v\in H^{p,q}$ and $v\neq 0$.
\end{enumerate}
A bilinear form $S$ is a {\em weak polarization} on $H$ if it satisfies the
first condition and the weaker version $i^{p-q}S(v,\vbar) \ge 0$ for all $v\in
H^{p,q}$ of the second.

Suppose that $Y$ is a smooth projective variety of dimension $n$. Denote
the hyperplane class by $w$. For $k\le n$, define a bilinear form $S$
on $H^k(Y)$ by
\begin{equation}
\label{eqn:polarization}
S(u,v) = \int_Y u\wedge v \wedge w^{n-k}.
\end{equation}
This is a non-degenerate, $(-1)^k$ symmetric bilinear form. However, it is
not a polarization in general. The Riemann-Hodge bilinear relations imply
that the restriction of $(-1)^{k(k-1)/2} S$ to $PH^k(Y)$, the primitive
part of $H^k(Y)$, is a polarization. These provide the principal examples of
polarized Hodge structures.

A variation of Hodge structure $\V$ over a base $X$ is {\it polarized} by $S$
if $S$ is a flat bilinear form on the variation which restricts to a
polarization on each fiber.

\subsection{Some polarized variations of Hodge structure over $\A_g$}

The variations $\H$, $\bL$ and $\V$ defined in Section~\ref{sec:vhs} have
natural polarizations. The Riemann bilinear relations imply that the variation
$\H$ over $\A_g$ is polarized by the inner product $S_H$ introduced in
Section~\ref{sec:vhs}. The corresponding polarization is easily described on the
pullback of $\H$ to $\M_g$. In this case, the fiber $H_C$ of $\H$ over the
moduli point $[C]$ of a smooth projective curve $C$ is $H^1(C,\Z(1))$. Under
this isomorphism, $S_H$ corresponds to the inner product
$$
S(u,v) = \int_C u \wedge v.
$$
on $H^1(C)$.

The intersection form $S_H$ extends to the skew symmetric bilinear form
$$
S_L : \bL \otimes \bL \to \Z(1)
$$
defined by
$$
S_L(x_1\wedge x_2 \wedge x_3,y_1\wedge y_2 \wedge y_3) = \det(S_H(x_i,y_j)).
$$
Note that this is {\em not} dual to the inner product $S$ on $H^3(\Jac C)(1)$
defined in equation (\ref{eqn:polarization}) above as is easily seen by a direct
computation.

Denote the fiber $\Lambda^3 H_C$ of $\bL$ over $[C]$ by $L_C$ and the fiber of
$\V$ over $[C]$ by $V_C$. Define $c: L_C \to H_C$ by
\begin{equation}
\label{eqn:contraction}
c(x\wedge y \wedge z) = S_H(y,z)x + S_H(z,x)y + S_H(x,y)z.
\end{equation}
Regard $S_H$ as an element of $\Lambda^2 H_L$. The projection $p: L_C \to V_C$
has a canonical $\Sp_g$-invariant splitting $j$.  It is defined by
$$
j(p(x\wedge y \wedge z)) = x\wedge y \wedge z -
S_H \wedge c(x\wedge y \wedge z)/(g-1).
$$
A skew symmetric bilinear form on $\V$ can be defined by
$$
S_{V}(u,v) = (g-1) S_L(j(u),j(v)).
$$
This form is integral and primitive.

\begin{proposition}
\label{polar}
The variations $(\H,S_H)$, $(\bL,S_L)$ and $(\V,S_{V})$ are polarized
variations of Hodge structure over $\A_g$, as are their pullbacks to
$\M_{g,n}^c$.
\end{proposition}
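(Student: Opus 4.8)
The plan is to reduce the whole statement to the polarization of $(\H, S_H)$ and then to propagate it through the exterior-power, Tate-twist and sub-quotient constructions that produce $\bL$ and $\V$. Since the three variations over $\M_{g,n}^c$ are pulled back from $\A_g$ along the period map (for curves of compact type the Jacobian is a principally polarized abelian variety, giving $\M_g^c\to\A_g$, and one composes with the forgetful map), and since pulling a polarized variation back along a morphism of smooth varieties again yields a polarized variation (the flat form pulls back to a flat form and the fibrewise Riemann--Hodge conditions are preserved because fibres map to fibres), it suffices to treat $(\H, S_H)$, $(\bL, S_L)$ and $(\V, S_V)$ over $\A_g$.

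First I would record that $(\H, S_H)$ is polarized. Over $\M_g$ the fibre of $\H$ over $[C]$ is $H^1(C,\Z(1))$ and $S_H$ is the cup-product form (\ref{eqn:polarization}) with $Y=C$ and $k=1$; since $H^1(C)$ is entirely primitive, the Riemann--Hodge relations give the positivity directly, and over $\A_g$ this is the standard principal polarization. For $(\bL, S_L)$ I would invoke the principle that tensor product, exterior power and Tate twist carry polarized Hodge structures to polarized Hodge structures: the Hermitian form attached to $S_H$ by the second Riemann--Hodge relation is positive definite, positivity is multiplicative under tensor product, so the form attached to $S_H^{\otimes 3}$ is positive definite on $H_\C^{\otimes 3}$ and remains so upon restriction to the alternating part $\Lambda^3 H_\C$. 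The determinantal form $S_L(x_1\wedge x_2\wedge x_3, y_1\wedge y_2\wedge y_3)=\det(S_H(x_i,y_j))$ is the antisymmetrization of $S_H^{\otimes 3}$, hence a positive rational multiple of this induced polarization on $\Lambda^3\H$ (of weight $-3$); twisting by $\Z(-1)$ moves the target to $\Z(1)$ and the weight to $-1$ while leaving the $i^{p-q}$-positivity untouched, which gives the polarization on $\bL$.

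The substantive point is $(\V, S_V)$, and here the idea is to realize $\V$ as a sub-Hodge structure of $\bL$ rather than as a quotient. The contraction $c$ of (\ref{eqn:contraction}) is built from the flat form $S_H$, hence is a morphism of variations; a short computation in a symplectic basis gives $c(S_H\wedge h)=(g-1)h$ for $h$ in the fibre $H$, from which $c\circ j=0$, so the $\Sp_g$-equivariant splitting $j$ carries $\V$ isomorphically onto the primitive part $\ker(c:\bL\to\H)$ (this uses the semisimple decomposition $\Lambda^3 H=(S_H\wedge H)\oplus\ker c$ of the $\Sp_g$-representation; for $g=2$ the summand $\ker c$ is zero and $\V=0$). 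Being $\Sp_g$-equivariant and assembled from flat maps, $j$ is a morphism of variations of Hodge structure, so $j(\V)=\ker c$ is a sub-Hodge structure of $\bL$ and $j$ is an isomorphism onto it. By definition $S_V=(g-1)\,S_L\circ(j\otimes j)$, that is, $S_V$ is the positive multiple $(g-1)$ of the restriction of $S_L$ to $\ker c$ transported back along $j$. Since the restriction of a polarization to a sub-Hodge structure is again a polarization --- the positivity of the Hermitian form attached to $S_L$ is inherited by the Hodge-compatible subspace $\ker c$, and non-degeneracy follows from positivity --- and $(g-1)>0$, the form $S_V$ polarizes $\V$.

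I expect the last step to be the main obstacle: one must identify $\V$ with the primitive summand $\ker c$ of $\bL$, which rests on the contraction identity $c(S_H\wedge h)=(g-1)h$ and on the $\Sp_g$-decomposition of $\Lambda^3 H$, and then one invokes that restriction of a polarization to a sub-Hodge structure stays a polarization. By contrast the exterior-power normalization in the $\bL$ step and the pullback to $\M_{g,n}^c$ are routine once the statements over $\A_g$ are in hand.
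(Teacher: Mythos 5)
Your proposal is correct, and it shares the paper's overall skeleton --- everything is reduced to showing $\Lambda^3 H_1(C)$ is polarized by $S_L$, with $(\V,S_V)$ then handled by noting that $S_V$ is the positive multiple $(g-1)$ of the restriction of $S_L$ to the sub-variation $j(\V)=\ker(c)$ --- but the key positivity step is argued by a genuinely different route. The paper verifies the second Riemann--Hodge relation for $S_L$ by a direct computation: it chooses a basis $u_1,\dots,u_g$ of $H^{-1,0}$ orthonormal for the hermitian form $i^{-1}S_H(u,\vbar)$ and checks $i^{p-q}S_L(v,\vbar)>0$ on wedge vectors of the two essential Hodge types, $(-3,0)$ and $(-2,-1)$, the rest following by conjugation. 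You instead identify $S_L$ (up to the positive factor $1/3!$ coming from the antisymmetrization embedding $\Lambda^3 H\hookrightarrow H^{\otimes 3}$) with the restriction of the tensor-cube form $S_H^{\otimes 3}$, and then invoke the standard closure properties of polarized Hodge structures under tensor product, passage to sub-Hodge structures, and Tate twist. Both arguments are sound; the paper's computation is self-contained and makes the signs $i^{p-q}$ visible, while yours is more structural, quotes a general lemma the paper never needs, and generalizes immediately (e.g.\ to any $\Lambda^k\H$). Two further points of comparison: you make explicit the step the paper leaves implicit, namely that $j$ is a morphism of variations (because it is built from the morphisms $c$ and $x\mapsto x\wedge S_H$, both of which the paper records) so that $\ker c$ really is a sub-variation inheriting the polarization; and your verification of the contraction identity $c(S_H\wedge h)=(g-1)h$, which also underlies the paper's Lemma~\ref{lem:reln}, is the right way to see that $\bL=(S_H\wedge\H)\oplus\ker c$ and that $\V=0$ when $g=2$.
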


\begin{proof}
We have already seen that $(\H,S_H)$ is a polarized variation of Hodge
structure. For the rest, it suffices to show that $\Lambda^3 H_1(C)$ is
polarized by $S_L$. To do this, choose a basis $u_1,\dots,u_g$ of $H^{-1,0}$
that is orthonormal under the positive definite hermitian inner product
$$
(u,v) = i^{-1}S_H(u,\vbar).
$$
Then, for example,
$$
i^{-3-0}S_L
(u_1\wedge u_2 \wedge u_3, \ubar_1\wedge \ubar_2,\ubar_3)
= \det(i^{-1}S_H(u_j,\ubar_k)) = 1 > 0.
$$
and
$$
i^{-2-(-1)}S_L
(u_1\wedge u_2 \wedge \ubar_3, \ubar_1\wedge \ubar_2,u_3)
= - i^{-1}i^3\det(i^{-1}S_H(u_j,\ubar_k)) = -i^2 = 1 >0 .
$$
The remaining computations follow by taking complex conjugates.
\end{proof}

The contraction (\ref{eqn:contraction}) induces a projection $c : \bL \to \H$ of
variations of Hodge structure. The canonical quotient mapping $p : \bL \to \V$
is also a morphism of variation of Hodge structure. The polarizations $S_H$ of
$\H$ and $S_V$ of $\V$ can be pulled back along these projections to obtain the
invariant inner product $c^\ast S_H + p^\ast S_V$ on $\bL$. For later use, we
record the following fact:

\begin{lemma}[{\cite[Prop.~18]{hain-reed:chern}}]
\label{lem:reln}
If $g\ge 2$, then $c^\ast S_H + p^\ast S_V = (g-1)S_L$. \qed
\end{lemma}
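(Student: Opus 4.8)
The plan is to verify the identity fiberwise, as an equality of $\Sp_g$-invariant skew bilinear forms on $L_C = \Lambda^3 H_C$; this is legitimate because $c^\ast S_H$, $p^\ast S_V$ and $(g-1)S_L$ are all flat morphisms $\bL\otimes\bL\to\Z(1)$ computed by the same universal formulas on every fiber, so it suffices to work in the linear algebra of a single symplectic vector space $(H_C, S_H)$ of dimension $2g$. First I would use the primitive (Lefschetz) decomposition
$$
L_C = \ker c \,\oplus\, (S_H\wedge H_C),
$$
under which the splitting $j$ of (\ref{eqn:contraction}) realizes $\ker c$ as $j(V_C)$. The strategy is then to verify the asserted identity separately on each of these two summands and on the cross terms between them.

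Two computations drive everything. The first is that $c(S_H\wedge w) = (g-1)w$ for $w\in H_C$; this shows that $c$ restricts to an isomorphism on $S_H\wedge H_C$ and that $\pi := j\circ p$ is the projection onto $\ker c$ along $S_H\wedge H_C$, with $\pi(\alpha) = \alpha - (g-1)^{-1}S_H\wedge c(\alpha)$. The second, and the crux of the proof, is the adjointness relation
$$
S_L(S_H\wedge v,\alpha) = S_H(v, c(\alpha)) \qquad (v\in H_C,\ \alpha\in L_C).
$$
To prove it I would expand $S_L(S_H\wedge v,\, x\wedge y\wedge z)$ using the determinant formula defining $S_L$, sum the resulting $3\times 3$ determinants over a symplectic basis $a_1,\dots,a_g,b_1,\dots,b_g$, and reduce via the completeness relation $\sum_i\big(S_H(u,a_i)S_H(b_i,u') - S_H(u,b_i)S_H(a_i,u')\big) = -S_H(u,u')$; the cofactors collapse to exactly $S_H(v, c(x\wedge y\wedge z))$. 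Specializing $\alpha = S_H\wedge w$ and inserting the first computation then gives the scalar value $S_L(S_H\wedge v, S_H\wedge w) = (g-1)S_H(v,w)$.

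With these in hand the three cases are immediate. On $\ker c$ the form $c^\ast S_H$ vanishes and the Lefschetz correction in $\pi$ drops out, so $p^\ast S_V$ equals $(g-1)S_L$ by the very definition $S_V(p\alpha,p\beta) = (g-1)S_L(\pi\alpha,\pi\beta)$. On $S_H\wedge H_C$ one has $p=0$, while the two computations give $c^\ast S_H(S_H\wedge v,S_H\wedge w) = (g-1)^2 S_H(v,w) = (g-1)S_L(S_H\wedge v, S_H\wedge w)$. Finally, for a cross term with $\alpha\in\ker c$ and $\beta\in S_H\wedge H_C$ the left side vanishes because $c$ kills $\ker c$ and $p$ kills $S_H\wedge H_C$, and the right side vanishes because the adjointness relation gives $S_L(S_H\wedge v,\alpha) = S_H(v,c(\alpha)) = 0$ for $\alpha\in\ker c$, i.e.\ the two summands are $S_L$-orthogonal.

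The main obstacle is the bookkeeping in the adjointness relation: keeping the signs consistent through the cofactor expansion of the $3\times 3$ determinants, through the skew-symmetry of $S_L$, and in the completeness relation, and pinning down the factor $g-1$ (rather than $g$) that emerges from the $i=j$ diagonal of the double sum over the symplectic basis. Everything else is formal once the two computations are in place; in particular the $S_L$-orthogonality of the two summands could alternatively be deduced from Schur's lemma, since for $g\ge 3$ the modules $\ker c$ and $S_H\wedge H_C\cong H_C$ are non-isomorphic irreducible $\Sp_g$-representations and so admit no invariant pairing between them.
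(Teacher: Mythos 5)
Your proof is correct --- I checked the two key computations and they hold exactly as you state: with a symplectic basis $a_1,b_1,\dots,a_g,b_g$ one gets $c(S_H\wedge w) = -w + gw = (g-1)w$, and expanding the $3\times 3$ determinants row by row and applying your completeness relation yields $S_L(S_H\wedge v,\alpha)=S_H(v,c(\alpha))$, from which the three cases (on $\ker c$, on $S_H\wedge H_C$, and the cross terms) follow formally as you describe. Note, however, that the paper itself offers no proof of this lemma: it simply cites Proposition~18 of Hain--Reed (\emph{Geometric proofs of some results of Morita}) and closes with a \emph{qed} symbol, so there is no in-paper argument to compare yours against. What your write-up supplies is a self-contained fiberwise verification that the cited reference is being asked to carry; this is a reasonable and arguably preferable route for a reader, since the reduction to a single symplectic vector space, the decomposition $L_C=\ker c\oplus(S_H\wedge H_C)$, and the adjointness of $c$ and $S_H\wedge(\blank)$ with respect to $S_L$ are exactly the structural facts one needs, and they simultaneously explain why the normalizations $1/(g-1)$ in the splitting $j$ and $(g-1)$ in the definition of $S_V$ are forced. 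One small remark: your Schur's-lemma aside for the orthogonality of the two summands needs $g\ge 3$ as you say, but this is harmless since your direct argument via adjointness covers $g=2$ (where $\ker c=0$ and the lemma degenerates to $c^\ast S_H=(g-1)S_L$ on $S_H\wedge H_C$, consistent with $\V=0$ there).
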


\section{Cohomology Classes}

By Lemma~\ref{lem:gen_lifts} each invariant inner product on a variation of MHS
$\U$ over a smooth variety $T$ gives rise to a parallel, translation invariant
$2$-form $\w$ on the associated bundle of intermediate jacobians $J(\U)$ and its
cohomology class $\phi \in H^2(J(\U))$.

The polarizations $S_H$, $S_L$ and $S_V$ of the variations of Hodge structure
$\H$, $\bL$ and $\V$ over $\A_g$ therefore give rise to cohomology classes
$$
\phi_H \in H^2(J(\H)),\quad \phi_L \in H^2(J(\bL)) \text{ and }
\phi_V \in H^2(J(\V)).
$$
Denote their parallel, canonical translation invariant representatives by
$\w_H$, $\w_L$ and $\w_V$.

Recall that $J(\H) = \J_g^c$, the universal jacobian over $\M_g^c$ and that
$\eta_g \in H^{2g}(\J_g^c)$ denotes the Poincar\'e dual of the $0$-section $Z_g$
of $\J_g^c$. A standard and elementary computation shows that if $C$ is a smooth
projective curve of genus $g$, then
$$
\int_{\Jac C} \w_H^g = g!
$$
Combining this with Proposition~\ref{cpact_tori} yields the following result,
which can also be deduced from \cite[Cor.~2.2]{voisin}.

\begin{proposition}
\label{prop:res_phi}
The class $\eta_g$ of the zero section of $\J_g^c$ in $H^{2g}(\J_g^c)$ is
$\phi_H^g/g!$ \qed
\end{proposition}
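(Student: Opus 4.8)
The plan is to factor the statement through the canonical lift $\psi=\sigma(u)\in H^{2g}(\J_g^c)$ produced by Lemma~\ref{lem:gen_lifts}, where $u\in H^0(\M_g^c,R^{2g}f_\ast\R)\cong\R$ is the class taking the value $1$ on each fibre and $f:\J_g^c\to\M_g^c$ is the projection. It then suffices to prove the two equalities
$$
\phi_H^g/g! = \psi \qquad\text{and}\qquad \psi = \eta_g .
$$

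First I would establish $\phi_H^g/g!=\psi$ at the level of the canonical differential forms, which needs no compactness hypothesis. The form $\w_H$ is, by construction (Lemma~\ref{lem:gen_lifts}), parallel with respect to the flat structure, translation-invariant on each fibre, and vanishes on the zero section $Z_g$. Since the Lie derivative is a derivation and pullback commutes with wedge, the power $\w_H^g$ inherits all three properties; in particular $i_{Z_g}^\ast(\w_H^g)=(i_{Z_g}^\ast\w_H)^g=0$. Hence $\w_H^g$ is exactly the canonical representative $w_v$ attached to its fibrewise cohomology class $v\in H^0(\M_g^c,R^{2g}f_\ast\R)$. The elementary computation $\int_{\Jac C}\w_H^g=g!$ identifies $v=g!\,u$, and the linearity of the splitting $\sigma$ gives $\w_H^g=w_{g!u}=g!\,w_\psi$. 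Thus $\phi_H^g/g!=\psi$ in $H^{2g}(\J_g^c)$.

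The identification $\psi=\eta_g$ is the content of Proposition~\ref{cpact_tori}, and the only genuine obstacle is that the base $\M_g^c$ is non-compact, so that proposition does not apply verbatim. I would remove the compactness hypothesis using the eigenvalue argument of its proof, which survives over an arbitrary base. Because the multiplication maps $[e]$ act on $R^kf_\ast\Q$ by $e^k$, the Leray spectral sequence of $f$ degenerates at $E_2$, and $H^{2g}(\J_g^c,\Q)$ carries an $[e]^\ast$-stable filtration whose $p$-th graded piece $H^p(\M_g^c,R^{2g-p}f_\ast\Q)$ is the $e^{2g-p}$-eigenspace of $[e]^\ast$. Both $\eta_g$ and $\psi$ project to $u$ in the top piece $H^0(\M_g^c,R^{2g}f_\ast\Q)$, so their difference lies in the first step $F^1H^{2g}$ of the filtration. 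Now $\psi$ is an $e^{2g}$-eigenvector since $[e]^\ast w_\psi=e^{2g}w_\psi$, and $\eta_g$ is an $e^{2g}$-eigenvector since $[e]$ is a degree-$e^{2g}$ isogeny on fibres. Hence $\eta_g-\psi$ is an $e^{2g}$-eigenvector lying in $F^1H^{2g}$, whose eigenvalues are only $e^{2g-1},e^{2g-2},\dots$; choosing any $e>1$ forces $\eta_g-\psi=0$.

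The step I expect to require the most care is the verification that $[e]^\ast\eta_g=e^{2g}\eta_g$ over the non-compact base, since in the compact case this rested on the cap-product manipulation with the fundamental class $[T]$, which is unavailable here. I would argue it instead by a local Thom-class computation along $Z_g$, valid over any base, or algebraically: for the finite étale isogeny $[e]$ one has $[e]^\ast[Z_g]=[\,[e]^{-1}Z_g\,]=[\J_g^c[e]]$, and the $e^{2g}$ torsion multisections comprising $\J_g^c[e]$ are each rationally homologous to $Z_g$ because their characteristic classes in $H^1(\M_g^c,\H_\Z)$ are torsion. As an independent confirmation of the whole statement one may also invoke the relative Poincaré formula for abelian schemes \cite[Cor.~2.2]{voisin}, which yields $\eta_g=\phi_H^g/g!$ directly over an arbitrary base.
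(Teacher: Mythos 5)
Your proposal is correct, and its first half is essentially the paper's own argument made precise: the paper proves the proposition by combining the fibre integral $\int_{\Jac C}\w_H^g=g!$ with Proposition~\ref{cpact_tori}, and your form-level identification $\w_H^g=g!\,w_\psi$ (a closed form that is parallel and fibrewise translation-invariant is determined by its fibrewise cohomology class) is a clean way of making that combination legitimate. Where you genuinely depart from the paper is in the second half, and the departure is an improvement. Proposition~\ref{cpact_tori} is stated and proved for a \emph{compact} base --- its proof caps against the fundamental class of $(T,\partial T)$ --- whereas $\M_g^c$ is not compact. The paper elides this point (it can be repaired by exhausting $\M_g^c$ by compact manifolds with boundary, or by an argument like yours); you instead rerun the eigenvalue argument directly over the non-compact base, which forces you to prove $[e]^\ast\eta_g=e^{2g}\eta_g$ without the cap-product manipulation, and your route through $[e]^\ast\eta_g=[\J_g^c[e]]$ and the rational classes of torsion multisections is the right replacement. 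What each approach buys: the paper's is shorter but, read literally, invokes a proposition whose hypothesis fails; yours is self-contained over an arbitrary base.

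Two caveats on your second step. First, your suggested alternative, a ``local Thom-class computation along $Z_g$'', does not work: near $Z_g$ the map $[e]$ is multiplication by $e$ on the normal bundle $\H_\R$, a diffeomorphism germ fixing $Z_g$, so it pulls the Thom class back to the Thom class; hence the component $Z_g$ of $[e]^{-1}Z_g$ contributes $\eta_g$ with coefficient $1$, and the factor $e^{2g}$ comes entirely from the other $e^{2g}-1$ sheets of torsion --- it is irreducibly a global statement. Second, the implication ``torsion characteristic class $\Rightarrow$ same rational dual class as the zero section'' is true but is the crux and needs its own proof; your sentence asserts it rather than proves it. One way to complete it: pass to a finite cover $B'\to\M_g^c$ over which $\J_g^c[e]$ splits into genuine $e$-torsion sections (pullback to a finite cover is injective on rational cohomology by the transfer), and for such a section $s$ consider the translation $\tau_s$ of the pulled-back family. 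Then $\tau_s^{\circ e}=\id$ exactly, while $\tau_s$ preserves the Leray filtration and acts trivially on each $H^p(B',R^qf_\ast\Q)$ because fibrewise translations of a torus are homotopic to the identity; so $\tau_s^\ast$ is unipotent, and a unipotent automorphism of finite order of a $\Q$-vector space is the identity. Since $\tau_s$ carries the zero section to the image of $s$, that image has the same rational dual class as the zero section, which is what your argument needs. With these two points addressed, your proof is complete.
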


Denote the restriction of $\J_g$ to $\Mbar_g-\D_0^\sing$ by $\J_g'$. Zucker's
Theorem \cite{zucker} implies that every normal function section $\mu$ of $\J_g$
defined over $\M_{g,n}^c$ extends to a section (also denoted $\mu$) of $\J_g'$
defined over $\Mbar_{g,n}-\D_0^\sing$.

\begin{proposition}
\label{prop:phi_extends}
The class $\phi_H \in H^2(\J_g^c)$ extends naturally to a class $\phihat_H \in
H^2(\J_g')$. It is characterized by the property
$$
[e]^\ast \phihat_H = e^2 \phihat_H\text{ for all } e\in\Z
$$
and has the property that $\widehat{\mu^\ast \phi_H} = \mu^\ast \phihat_H$ for
all normal function sections $\mu$ of $\J_g^c$ defined over $\M_{g,n}^c$.
\end{proposition}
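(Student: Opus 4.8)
The plan is to first construct $\phihat_H$ on the total space $\J_g'$ by an eigenvalue argument modelled on the proof of Proposition~\ref{cpact_tori}, and only afterwards to match it with the biextension extension of Corollary~\ref{cor:extension} to get the naturality statement. Write $W := \J_g' - \J_g^c$ for the locus lying over the smooth boundary $\D_0 - \D_0^\sing$; its fibres are the semi-abelian generalized jacobians of irreducible one-nodal curves, so $W$ is a smooth suborbifold of complex codimension $1$ in $\J_g'$. Fibrewise multiplication $[e]$ is a group endomorphism of every fibre of $\J_g' \to \Mbar_g - \D_0^\sing$ (abelian and semi-abelian alike), so it extends to a holomorphic self-map of $\J_g'$ preserving $W$. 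Exactly as in the proof of Proposition~\ref{cpact_tori}, $[e]^\ast$ multiplies the canonical representative $\w_H = \sigma(S_H)$ of $\phi_H$ by $e^2$, where $S_H \in H^0(\M_g^c, R^2 f_\ast \R)$; thus $\phi_H$ lies in the $e^2$-eigenspace of $[e]^\ast$ on $H^2(\J_g^c)$.

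For existence and uniqueness of the extension I would use the Gysin sequence of the pair $(\J_g', W)$ with $\Q$ coefficients,
$$
H^0(W) \overset{\gamma}{\to} H^2(\J_g') \overset{r}{\to} H^2(\J_g^c)
\overset{\del}{\to} H^1(W),
$$
every map of which is $[e]^\ast$-equivariant since $[e]$ preserves both $\J_g^c$ and $W$. As $W \to \D_0 - \D_0^\sing$ has semi-abelian fibres, $H^\ast$ of each fibre is generated in degree $1$, where $[e]^\ast$ is multiplication by $e$; the Leray spectral sequence degenerates (again by the eigenvalue argument of Proposition~\ref{cpact_tori}) and $[e]^\ast$ acts on $H^m(W)$ with eigenvalues among $1, e, \dots, e^m$. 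Hence the $e^2$-eigenspace of $H^1(W)$ vanishes, and $\im\gamma = \ker r$ lies in the eigenvalue-$1$ eigenspace, because $\gamma$ sends the generator of $H^0(W)\cong\Q$ to the class of $W = \pi^{-1}(\D_0 - \D_0^\sing)$, which is pulled back from the base and so is $[e]^\ast$-fixed. Taking $e\ge 2$, the class $\del\phi_H$ lies in a trivial eigenspace and therefore vanishes, so $\phi_H$ lifts; and any two lifts in the $e^2$-eigenspace differ by an element of $\ker r$ lying in both the eigenvalue-$1$ and eigenvalue-$e^2$ eigenspaces, hence differ by $0$. This produces a unique $\phihat_H$ with $r(\phihat_H)=\phi_H$ and $[2]^\ast\phihat_H = 4\phihat_H$. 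The relation $[e]^\ast\phihat_H = e^2\phihat_H$ for all $e$ then follows formally: the difference $[e]^\ast\phihat_H - e^2\phihat_H$ restricts to $0$ on $\J_g^c$, hence lies in $\ker r$, on which $[2]^\ast$ acts as the identity; but applying $[2]^\ast$ multiplies it by $4$, forcing it to be $0$.

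For the naturality statement I would identify $\phihat_H$ with $\tfrac12 c_1(\Bhat')$, where $\Bhat'$ is the extension of the biextension line bundle $\Bhat\to J(\H)$ across $W$ furnished by Section~\ref{sec:biextensions}, recalling that $c_1(\Bhat)=2\phi_H$ and that the metric of $\Bhat$ extends continuously across the codimension-$1$ stratum $W$. Indeed $\tfrac12 c_1(\Bhat')$ restricts to $\phi_H$ and, by the argument of the previous paragraph applied to $c_1(\Bhat')$, has $[e]^\ast$-eigenvalue $e^2$, so it equals $\phihat_H$ by uniqueness. Now let $\mu$ be a normal function section of $\J_g^c$ over $\M_{g,n}^c$; by Zucker's Theorem it extends to a section over $\Mbar_{g,n} - \D_0^\sing$, again called $\mu$. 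Then $\mu^\ast\phihat_H = \tfrac12 c_1(\mu^\ast\Bhat')$, and $\mu^\ast\Bhat'$ is a continuous-metric extension of $\mu^\ast\Bhat$ across the codimension-$1$ locus $\D_0 - \D_0^\sing$. By the uniqueness of continuous-metric extensions in Lear's Theorem, $(\mu^\ast\Bhat')^{\otimes N}\cong\Bbar_{N,\mu}$, so that over $\Mbar_{g,n} - \D_0^\sing$
$$
\mu^\ast\phihat_H = \tfrac12 c_1(\mu^\ast\Bhat')
= \tfrac{1}{2N}\,c_1(\Bbar_{N,\mu}) = \widehat{\mu^\ast\phi_H},
$$
which is the asserted equality.

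The step I expect to be the main obstacle is the compatibility in the last paragraph: that the total-space extension of the biextension metric is genuinely continuous across $W$ and that its pullback along $\mu$ coincides with Lear's extension on the base rather than being merely a holomorphic extension with a possibly singular metric. This is precisely where the pathology of height jumping could intervene, and it is controlled here only because we have discarded $\D_0^\sing$ and work over a base whose boundary has pure codimension $1$, so that Lear's Theorem applies transversally to each boundary divisor and the continuous extension is unique. Confirming this transversal control, and hence that $\mu^\ast\Bhat'$ really is the continuous-metric extension, is the crux, and is where the codimension-$1$ hypothesis is essential.
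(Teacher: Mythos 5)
Your first two paragraphs are correct and are essentially the paper's own argument: the paper also produces $\phihat_H$ from the Gysin sequence of $W=\J_g'-\J_g^c$ together with $[e]^\ast$-equivariance, and your eigenvalue analysis of $H^1(W)$ is, if anything, more careful than the paper's one-line appeal to the vanishing of $H^1(\Mbar_{g-1,2})$. The genuine gap is in the naturality step. You invoke ``the extension $\Bhat'$ of the biextension line bundle $\Bhat\to J(\H)$ across $W$ furnished by Section~\ref{sec:biextensions}'', but no such result exists there: Theorem~\ref{thm:lear} and Corollary~\ref{cor:extension} extend pullbacks $\nu^\ast\Bhat$ along a normal function $\nu$ over a compactification of the \emph{base}, never the bundle $\Bhat$ itself over a compactification of the \emph{total space} $J(\U)$. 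The missing idea --- which is the centerpiece of the paper's proof --- is to turn the total space into a base: regard $\Bhat\to\J_g^c$ as the pullback of the biextension bundle on $J(\pi^\ast\H)=\J_g^c\times_{\M_g^c}\J_g^c$ along the tautological \emph{diagonal} normal function $\Delta$ over $X=\J_g^c$ (this is why the paper stresses that $\J_g^c$ is an algebraic variety), and only then apply Lear's Theorem with $\Xbar=\J_g'$, whose boundary $W$ is a smooth divisor. Without this device your $\Bhat'$ has no construction.

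A second, related gap: even granting $\Bhat'$, your claim that $\tfrac12 c_1(\Bhat')$ is an $e^2$-eigenvector ``by the argument of the previous paragraph'' is circular. That argument upgrades a known eigenvalue equation for one value of $e$ to all $e$; it gives no way to verify the equation for $c_1(\Bhat')$ in the first place, so a priori $\tfrac12 c_1(\Bhat')=\phihat_H+c\,\pi^\ast\d_0$ with $c$ undetermined. One needs an extra input, for instance that $[e]^\ast\Bhat\cong\Bhat^{\otimes e^2}$ as metrized bundles on $\J_g^c$ (bilinearity of the biextension), whence $[e]^\ast\Bhat'$ and $(\Bhat')^{\otimes e^2}$ are both continuous-metric extensions of the same metrized bundle across the smooth divisor $W$ and therefore agree, by the same Lear uniqueness you use later; alternatively, restrict to the zero section, where $\Bhat$ is metrically trivial and $\phihat_H$ vanishes, to force $c=0$. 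With the diagonal device and one of these fixes inserted, your final paragraph --- pulling back along the Zucker extension of $\mu$ and invoking uniqueness of continuous-metric extensions --- is exactly the paper's naturality argument (\ref{eqn:nat}), and your closing observation about why height jumping cannot intervene (the boundary has pure codimension one once $\D_0^\sing$ is deleted) is indeed the correct reason.
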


\begin{proof}[Sketch of Proof]
The first rational cohomology of the smooth finite orbi-covering $\Mbar_{g-1,2}$
of $\D_0$ vanishes. The Gysin sequence thus gives an exact sequence
$$
\Q\d_0 \to H^2(\J_g') \to H^2(\J_g^c) \to 0
$$
of rational cohomology. For each integer $e>1$, the endomorphism $[e]$ of
$\J_g'$ induces an action on this sequence. It acts trivially on the left-hand
term. Since $[e]^\ast \phi_H = e^2 \phi_H$ in $H^2(\J_g^c)$, it follows that
$\phi_H$ has a unique lift $\phihat_H$ to $H^2(\J_g')$ with the property that
$[e]^\ast\phihat_H = e^2\phihat_H$. The restriction of $\phihat_H$ to the zero
section $\Mbar_g-\D_0^\sing$ of $\J_g'$ vanishes as $[e]$ preserves the zero
section and acts trivially on $H^2(\Mbar_g-\D_0^\sing)$, and since
$[e]^\ast\phihat_H = e^2\phihat_H$.

To complete the proof, we need several facts about biextension line bundles.
Suppose that $f : Y \to X$ is a morphism of smooth varieties and that $\U$ is a
VHS over $X$ polarized by $S$. Then the constructions of
\cite{hain:biext} imply that
one has a commutative diagram
$$
\xymatrix{
\Bhat_Y \ar[r]^{f_\B}\ar[d] & \Bhat_X\ar[d] \cr
J(f^\ast\U) \ar[r]^{J(f)}\ar[d] & J(\U) \ar[d] \cr
Y \ar[r]^f & X
}
$$
of biextension line bundles, where $f_\B$ is a morphism of metrized line
bundles. This implies that $J(f)^\ast\phi_S = \phi_{f^\ast S}$. The next fact,
which follows from Lear's Theorem, is that if $X'$ and $Y'$ are smooth varieties
in which $X$ and $Y$ are Zariski dense, and where $X'-X$ is a smooth divisor in
$X'$, and if $\d$ is a normal function section of $J(\U) \to X$, then
\begin{equation}
\label{eqn:nat}
\widehat{(f^\ast\d)^\ast\phi_{f^\ast S}} = f^\ast(\widehat{\d^\ast \phi_S})
\in H^2(Y').
\end{equation}

We now apply this with $X=\J_g^c$, $X'=\J_g'$, $Y = \M_{g,n}^c$ and
$Y'=\Mbar_{g,n}-\D_0^\sing$. The variation $\U$ is the standard variation $\H$,
so that $\J(\U) = \J_g^c\times_{\M_g^c}\J_g^c$. Important here is the fact that
$\J_g^c$ is an algebraic variety. The normal function $\d$ will be the diagonal
section of $\J_g^c \times_{\M_g^c}\J_g^c \to \J_g^c$. Finally, $f : Y \to X$
will be a normal function $\mu : \M_{g,n}^c \to \J_g$, which is a morphism as
$\J_g^c \to \M_g^c$ is a family of (semi)-abelian varieties:
$$
\xymatrix{
\J_{g,n}' \ar[d] \ar[r]^(.43){(\D,\mu)} &
\J_g'\times_{\Mbar_g}\J_g' \ar[d]_{\pi_1}
\cr
\Mbar_{g,n}-\D_0^\sing \ar[r]_(.55)\mu \ar@/^1pc/[u]^\mu &
\J_g'\ar@/_1pc/[u]_\D
}
$$
The class $\phi_S \in H^2(\J_g'\times_{\Mbar_g}\J_g')$ is $\pi_2^\ast \phi_H$,
where $\pi_1$ and $\pi_2$ are the two projections $\J_g'\times_{\Mbar_g}\J_g'\to
\J_g'$. It is now a tautology that
$
\widehat{\Delta^\ast\phi_S} = \phihat_H \in H^2(\J_g').
$
This and the naturality statement (\ref{eqn:nat}) now imply that
$\widehat{\mu^\ast\phi_H} = \mu^\ast(\widehat{\D^\ast\phi_S}) =
\mu^\ast\phihat_H$.
\end{proof}

It is important to note that Proposition~\ref{prop:res_phi} does not hold over
$\Mbar_g$. This is because the restriction of $\phihat_H$ to the zero section
vanishes, whereas the conormal bundle of the zero section is the Hodge bundle,
whose top Chern class is non-trivial.

We shall also need the invariant inner product $\Delta$ on $\H\oplus\H$ that
is defined by
$$
S_\Delta\big((u_1,v_1),(u_2,v_2)\big) = S_H(u_1,v_2)-S_H(u_2,v_1)
$$
Even though it preserves the Hodge filtration, it is not a weak polarization as
can be seen by restricting it to the diagonal of $H\oplus H$ (where it is
positive) and anti-diagonal (where it is negative). Denote the associated
cohomology class in $H^2(\J(\H\oplus\H))$ by $\phi_\Delta$. The class $\phi_\D$
extends naturally to a class $\phihat_\D \in H^2(\J_g'\times_{\Mbar_g}\J_g')$.
The proof is similar to that of Proposition~\ref{prop:phi_extends} and is left
to the reader.

\subsection{The classes $\K^\ast\phi_H$, $\nu^\ast\phi_V$, $\nutilde^\ast\phi_L$
and $(\K\times\K)^\ast\phi_\D$}

Pulling back the classes $\phi_H$, $\phi_V$, $\phi_L$ and $\phi_\D$ along the
normal functions $\K$, $\nu$, $\nutilde$ and the normal function section
$\K\times\K$
of
$$
J(\H\oplus\H) = J(\H)\times_{\M_{g,2}}J(\H) \to \M_{g,2}
$$
defined by
$$
\K\times \K : [C;x_1,x_2] \mapsto (\K(x_1),\K(x_2)) \in \Jac(C)\times \Jac(C).
$$
we obtain rational cohomology classes $\K^\ast\phi_H \in H^2(\M_{g,1}^c)$,
$\nu^\ast\phi_V \in H^2(\M_g^c)$, $\nutilde^\ast \phi_L \in H^2(\M_{g,1}^c)$,
and $(\K\times\K)^\ast \phi_\D \in H^2(\M_{g,2}^c)$. Lear's Theorem implies (via
Cor.~\ref{cor:extension}) that they extend naturally to classes
$\widehat{\nu^\ast\phi_V} \in H^2(\Mbar_g)$, $\widehat{\nutilde^\ast \phi_L} \in
H^2(\Mbar_{g,1})$, and $\widehat{(\K\times\K)^\ast \phi_\D} \in
H^2(\Mbar_{g,1})$.

\begin{proposition}
\label{prop:reln}
If $g\ge 2$, then
$(g-1)\widehat{\nutilde^\ast \phi_L} =
\widehat{\nu^\ast\phi_V} + \widehat{\K^\ast \phi_H} \in H^2(\Mbar_{g,1})$.
\end{proposition}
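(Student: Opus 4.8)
The plan is to deduce the identity on $\Mbar_{g,1}$ from the corresponding relation among the polarizations on the interior $\M_{g,1}^c$, using the fact (from Corollary~\ref{cor:extension}) that the natural extension of $\widehat{\cdot}$ is compatible with pullback and with the linear-algebra operations on polarizations. First I would work on $\M_{g,1}^c$ and establish the cohomological identity
$$
(g-1)\,\nutilde^\ast\phi_L = \nu^\ast\phi_V + \K^\ast\phi_H \in H^2(\M_{g,1}^c).
$$
The key input here is Lemma~\ref{lem:reln}, which states that $c^\ast S_H + p^\ast S_V = (g-1)S_L$ on $\bL$ for $g\ge 2$. Since the assignment $S\mapsto\phi_S$ is linear in the inner product $S$ and natural under pullback of variations (this is exactly the content of the diagram of biextension line bundles in the proof of Proposition~\ref{prop:phi_extends}, which gives $J(f)^\ast\phi_S = \phi_{f^\ast S}$), applying the normal function $\nutilde:\M_{g,1}^c\to J(\bL)$ and pulling back Lemma~\ref{lem:reln} yields
$$
(g-1)\,\nutilde^\ast\phi_L = \nutilde^\ast\phi_{c^\ast S_H} + \nutilde^\ast\phi_{p^\ast S_V}.
$$

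The next step is to identify the two terms on the right. The contraction $c:\bL\to\H$ and the quotient $p:\bL\to\V$ are morphisms of variations of Hodge structure, so they induce maps $J(c):J(\bL)\to J(\H)$ and $J(p):J(\bL)\to J(\V)$ on the bundles of intermediate jacobians. By naturality, $\nutilde^\ast\phi_{c^\ast S_H} = \nutilde^\ast J(c)^\ast\phi_H = (J(c)\circ\nutilde)^\ast\phi_H$, and similarly for the $S_V$ term. I would then check that $J(c)\circ\nutilde = \K$ and $J(p)\circ\nutilde = \nu$ as normal functions on $\M_{g,1}^c$; the latter is essentially the definition of $\nu$ as the image of $\nutilde$ in $J(\V)$, and the former should follow from the explicit description of the Ceresa normal function together with Pulte's formula $\nutilde_x - \nutilde_y = 2j([x]-[y])$ relating $\nutilde$ to the $\H$-valued normal function, tracing through the contraction $c$. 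This gives the interior identity.

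Finally I would pass to $\Mbar_{g,1}$ by applying $\widehat{\cdot}$ to both sides. The point is that the extension operation of Corollary~\ref{cor:extension} is $\Q$-linear on classes that extend, and more importantly it is compatible with the naturality statement~(\ref{eqn:nat}) used in the proof of Proposition~\ref{prop:phi_extends}. Since all three classes $\widehat{\nutilde^\ast\phi_L}$, $\widehat{\nu^\ast\phi_V}$ and $\widehat{\K^\ast\phi_H}$ are already asserted to exist in $H^2(\Mbar_{g,1})$, and each is characterized by its restriction to the interior together with its multiplicities along the boundary divisors (determined by the biextension asymptotics in Lear's Theorem~\ref{thm:lear}), the extended identity follows once the interior identity holds, provided the boundary multiplicities also match. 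I expect the main obstacle to be precisely this last point: verifying that the natural extensions add correctly across the boundary, i.e.\ that $\widehat{\,\cdot\,}$ applied to the linear combination $(g-1)\nutilde^\ast\phi_L - \nu^\ast\phi_V - \K^\ast\phi_H$ (which vanishes on the interior) really gives zero in $H^2(\Mbar_{g,1})$ rather than a nonzero boundary class. This is exactly where the height-jumping phenomenon could in principle intervene, so the argument must confirm either that the relevant biextension asymptotics are additive in $S$ along codimension-one boundary strata (which is the content of Lear's theorem, since the metric extends across codimension-one strata), or that the difference class, being supported on the boundary and restricting to zero on the interior, is forced to vanish by the structure of $H^2(\Mbar_{g,1})$.
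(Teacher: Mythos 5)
Your proposal is correct and takes essentially the same route as the paper, whose entire proof is the one-liner: the identity follows from Lemma~\ref{lem:reln} together with the facts, due to Pulte, that $c\circ\nutilde = \K$ and $p\circ\nutilde = \nu$. Your interior argument --- linearity and naturality of $S\mapsto\phi_S$ applied to $c^\ast S_H + p^\ast S_V = (g-1)S_L$, then identifying $J(c)\circ\nutilde$ with $\K$ and $J(p)\circ\nutilde$ with $\nu$ --- is exactly this.

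One caution about your final step. Of the two fallback justifications you offer for passing from $\M_{g,1}^c$ to $\Mbar_{g,1}$, the second is false: a class on $\Mbar_{g,1}$ that restricts to zero on the interior is \emph{not} forced to vanish, since the boundary classes $\d_0$ and $\d_h^{\{x\}}$ are nonzero in $H^2(\Mbar_{g,1})$ and restrict to zero there. The argument must go through your first alternative, which does work: the Lear extension is characterized as the unique extension whose metric is continuous over the codimension-one strata, so the isometry of (powers of) pulled-back biextension bundles over the interior furnished by Lemma~\ref{lem:reln} extends to an isomorphism of the Lear extensions over $\Mbar_{g,1}-\D_0^\sing$, giving the relation among first Chern classes there; since $\D_0^\sing$ has codimension two, restriction $H^2(\Mbar_{g,1})\to H^2(\Mbar_{g,1}-\D_0^\sing)$ is an isomorphism, and the relation holds in $H^2(\Mbar_{g,1})$. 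Height jumping is not an obstruction here, precisely because it is a codimension-$\ge 2$ phenomenon affecting pullbacks to curves through $\D_0^\sing$, not the definition of the extended classes themselves.
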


\begin{proof}
This follows immediately from Lemma~\ref{lem:reln} and the fact \cite{pulte}
that $c\circ \nutilde = \K$ and $p \circ \nutilde = \nu$.
\end{proof}

Our next task is to compute each of these classes. First we need to fix notation
for the natural classes in $H^2(\Mbar_{g,n})$.

\section{Divisor Classes} 

Denote the set $\{x_1,x_2,\dots,x_n\}$ of marked points by $I$. Denote the
relative dualizing sheaf of the universal curve $\pi :\cC \to \Mbar_{g,n}$ over
$\Mbar_{g,n}$ by $w$. Its pushforward $\pi_\ast\w$ is locally free of rank $g$.
Recall that $\lambda_1$ denotes the first Chern class of the Hodge bundle
$\pi_\ast w$. The classes $\psi_j,\ x_j\in I$ are defined by
$$
\psi_j := x_j^\ast c_1(w).
$$
When $n=1$, we will denote $\psi_1$ by $\psi$. Note that this definition is
different from the standard definition. With this definition the $\psi$ classes
are natural with respect to the forgetful maps $\Mbar_{g,n+1} \to \Mbar_{g,n}$.

Each component of the boundary divisor of $\M_{g,n}$ in $\Mbar_{g,n}$ has as
its generic point a stable $n$-pointed curve of genus $g$ with exactly one
node. These components are:
\begin{itemize}

\item $\Delta_0$: the generic point is an irreducible, geometrically connected
$n$-pointed curve with one node;

\item $\Delta_0^P$, where $P$ is a subset of $I$ with $|P|\ge 2$: the generic
point is a reducible curve with two geometrically connected components joined at
a single node, one of which has genus 0 --- the points in $P$ lie on the genus 0
component minus its node, the remaining points $P\c := I - P$ lie on the other
(genus $g$) component minus the node;

\item $\Delta_{h}^{P}$, where $0 < h < g$ and $P\subseteq I$ (possibly empty):
the generic point is a reducible curve with exactly one node and two
geometrically connected irreducible components, one of genus $h$, the other of
genus $g-h$; the points in $P$ lie on the genus $h$ component minus the node,
and the rest $P\c$ lie on the other component minus the node. Note that
$\Delta_{g-h}^{P\c} = \Delta_h^P$.

\end{itemize}

Denote the classes of the divisors
$$
\Delta_0,\ \Delta_0^P\ (P \subseteq I,\ |P|\ge 2),\ \Delta_h^P\
(0 < h < g,\ P\subseteq I)
$$
by
$$
\d_0,\ \d_0^P\ (P \subseteq I,\ |P|\ge 2),\ \d_h^P\ (0 < h < g,\
P\subseteq I),
$$
respectively. It is well-known that the classes
$$
\lambda_1,\ \psi_j\ (x_j\in I),\ \d_0,\ \d_0^P\ (P \subseteq I,\ |P|\ge 2),\
\d_h^P\ (0 < h \le g/2,\ P\subseteq I)
$$
comprise a basis of $H^2(\Mbar_{g,n})$.

One also has the Miller-Morita-Mumford classes
$$
\k_j := \pi_\ast c_1(w)^{j+1} \in H^{2j}(\Mbar_g)
$$
which are defined for $j\ge 1$. It follows from Grothendieck-Riemann-Roch (cf.\
\cite{mumford}) that $\kappa_1 = 12 \lambda_1 - \d$, where
$$
\d = \sum_{j=0}^{\lfloor g/2\rfloor} \d_j \in H^2(\Mbar_g).
$$
Define $\k_j, \d \in H^\dot(\Mbar_{g,n})$ to be the pullbacks of $\k_j,\d\in
H^\dot(\Mbar_g)$ under the natural morphisms $\Mbar_{g,n}\to\Mbar_g$. We thus
have the alternate basis
$$
\k_1,\ \psi_j\ (x_j\in I),\ \d_0,\ \d_0^P\ (P \subseteq I,\ |P|\ge 2),\
\d_h^P\ (1\le h \le g/2,\ P\subseteq I)
$$
of $H^2(\Mbar_{g,n})$.

\begin{remark}
All of these divisor classes can be regarded as classes in $H^2(\Mbar_{g,n})$
or in $CH^1(\Mbar_{g,n})$. Note that these two groups are isomorphic, so that
any relation between divisor classes that holds in cohomology also holds in the
Chow ring.
\end{remark}

\section{Formulas for $\nu^\ast \phi_V$ and $\K^\ast\phi_H$}
\label{sec:formulas}

The computation of $F_\dd^\ast \eta_g$ will be reduced to the computation of the
pullbacks of the classes $\phi_H$ and $\phi_\D$ along the normal functions
$\K_j$ and $\delta_{i,j}$. In this section we compute these basic classes. The
formulas reflect the structure of Torelli groups.

The Moriwaki divisor is the class $\widehat{\nu^\ast\phi_V}$:

\begin{theorem}[{\cite[Thm.~1.3]{hain-reed:arakelov}}]
\label{thm:nu}
If $g\ge 2$, then
$$
2\widehat{\nu^\ast\phi_V} =
(8g+4)\lambda_1-g\delta_0-4\sum_{h=1}^{[g/2]}h(g-h)\d_h \in H^2(\Mbar_g,\Z).
$$
\end{theorem}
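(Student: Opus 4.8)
The plan is to compute the divisor class $\widehat{\nu^\ast\phi_V}$ by reading off its coordinates in the basis $\lambda_1,\d_0,\d_1,\dots,\d_{\lfloor g/2\rfloor}$ of $H^2(\Mbar_g,\Q)$ recorded above. The case $g=2$ is degenerate and is best dispatched first: there $\bL\cong\H$ and $\V=0$, so $\nu$ and hence $\widehat{\nu^\ast\phi_V}$ vanish, and the asserted identity reduces to $20\lambda_1-2\d_0-4\d_1=0$, which is twice Mumford's relation $10\lambda_1=\d_0+2\d_1$ on $\Mbar_2$; so assume $g\ge 3$ henceforth, where these classes do form a basis. The coefficient of $\lambda_1$ is detected by restriction to the interior $\M_g$, which annihilates every boundary class; the coefficient of each $\d_h$ with $h\ge 1$ is intrinsic to $\M_g^c\supset\Delta_h$, since $\nu$ and $\phi_V$ live over all of $\M_g^c$; and the coefficient of $\d_0$ is the genuinely new datum produced by the Lear extension of $\nu^\ast\phi_V$ across $\Delta_0=\Mbar_g-\M_g^c$. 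Thus the proof splits into one interior Hodge-theoretic computation and a collection of local degeneration computations at the boundary divisors.

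For the coefficient of $\lambda_1$, I would use that $\nu^\ast\phi_V$ equals the class $S_V\circ c(\nu)^2$ obtained by cup-squaring the characteristic class $c(\nu)\in H^1(\M_g,\V)$ and applying the polarization $S_V$. Since $H^2(\M_g,\Q)$ is one-dimensional for $g\ge 3$, spanned by $\lambda_1$, this class is automatically a multiple $a\lambda_1$, and only the scalar $a$ must be found. The class $c(\nu)$ of the Ceresa normal function is the generator of the rank-one group $H^1(\M_g,\V)$ and is described, via the Johnson homomorphism, by the work of Morita~\cite{morita:taut} and Kawazumi--Morita~\cite{kawa-morita}; their identification of $S_V\circ c(\nu)^2$ with the tautological generator fixes $a=4g+2$, so that the $\lambda_1$-coefficient of $2\widehat{\nu^\ast\phi_V}$ is $8g+4$.

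The boundary coefficients require analyzing the degeneration of the Ceresa data along a disk $\bD$ in $\Mbar_g$ transverse to each boundary divisor. For $\d_0$ the central curve acquires a non-separating node, the local monodromy on $\H$ is a single Picard--Lefschetz transvection, and I would compute the limit mixed Hodge structure on $\V=(\Lambda^3\H)(-1)/\H$ together with the degeneration of the biextension attached to the Ceresa cycle; Lear's Theorem~\ref{thm:lear} then guarantees that $\log|\beta(t)|_\B-\frac{p}{q}\log|t|$ is bounded, and by the discussion following Corollary~\ref{cor:extension} the rational number $p/q$ is exactly the multiplicity of $\d_0$, which the computation yields as $-g/2$. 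For each $\d_h$ with $h\ge 1$ there is no monodromy (compact type persists): the central curve splits as $C_1\cup C_2$ of genera $h$ and $g-h$, the Jacobian splits as $\Jac C_1\times\Jac C_2$, the Hodge structure $\H$ splits orthogonally, and both the Ceresa cycle and the polarization $S_V$ decompose along this product. Tracking the value of the smooth form $\nu^\ast\w_V$ near $\Delta_h$ --- equivalently, intersecting $\nu^\ast\phi_V$ with a test curve meeting $\Delta_h$ --- produces the multiplicity $-2h(g-h)$, so that $2\widehat{\nu^\ast\phi_V}$ acquires the coefficient $-4h(g-h)$ on $\d_h$.

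The hard part will be the boundary analysis: converting the degeneration data into the exact rational multiplicities. This is a genuine limit-mixed-Hodge-structure computation for the biextension of the Ceresa cycle, and it is sensitive to the normalizations built into $\V$ --- the splitting $j$ and the factor $(g-1)$ in $S_V(u,v)=(g-1)S_L(j(u),j(v))$ --- both of which rescale the final coefficients. The $\d_h$ case is the most delicate, since one must confirm that the primitive projection $p:\bL\to\V$ is compatible with the product decomposition of the limiting Jacobian and that the orthogonality of the two Hodge summands produces precisely the factor $h(g-h)$. A good internal consistency check, which I would run in parallel, is to compute $\widehat{\K^\ast\phi_H}$ and $\widehat{\nutilde^\ast\phi_L}$ by the same degeneration methods and verify the relation $(g-1)\widehat{\nutilde^\ast\phi_L}=\widehat{\nu^\ast\phi_V}+\widehat{\K^\ast\phi_H}$ of Proposition~\ref{prop:reln} (itself a consequence of Lemma~\ref{lem:reln}); agreement of all three formulas with this identity both validates the bookkeeping and lets any two of the computations determine the third.
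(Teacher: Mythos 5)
Your proposal and the paper part ways at the very first step: the paper does not reprove this formula at all. Its entire argument consists of the remarks following the theorem statement --- for $g\ge 3$ the assertion \emph{is} \cite[Thm.~1.3]{hain-reed:arakelov}, invoked as a citation, and for $g=2$ one notes that $\V=0$, hence $\nu=0$, and that the right-hand side is twice Mumford's relation $10\lambda_1=\d_0+2\d_1$ \cite{mumford}. Your $g=2$ reduction is word-for-word the paper's. For $g\ge 3$, what you have written is in effect a pr\'ecis of the proof \emph{inside} the cited reference rather than an appeal to it: the interior coefficient extracted from $H^2(\M_g,\Q)=\Q\lambda_1$ together with the Morita/Kawazumi--Morita identification of $S_V\circ c(\nu)^2$ (\cite{morita:jacobians}, \cite{kawa-morita}, reproved Hodge-theoretically in \cite{hain-reed:chern}), and boundary coefficients from the asymptotics of the biextension metric via Lear's Theorem~\ref{thm:lear}. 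That is indeed how \cite{hain-reed:arakelov} proceeds, every multiplicity you assert ($4g+2$, $-g/2$, $-2h(g-h)$) is correct, and the same mechanism is what the present paper deploys when it \emph{does} carry out such computations (Theorems~\ref{thm:phi} and \ref{phi2}). What your route buys is self-containedness; what the paper's buys is brevity.

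The genuine gap is that, as a standalone argument, your two boundary multiplicities are announced rather than derived (``the computation yields $-g/2$'', ``produces the multiplicity $-2h(g-h)$''), and those derivations are the entire technical content of the theorem beyond Morita's interior statement. For $\d_h$, $h\ge 1$, your stated method --- ``tracking the smooth form $\nu^\ast\w_V$ near $\Delta_h$'', equivalently pairing with an unspecified test curve --- does not by itself produce a number: since $\nu^\ast\w_V$ is smooth across $\Delta_h$, pairing with a transverse test curve $T$ only helps if you can independently evaluate $\int_T\nu^\ast\w_V$. One must either run the Torelli-group computation of \cite[\S11]{hain-reed:arakelov}, where the coefficient appears as $-\tauhat(\sigma_h)$ for a quadratic form evaluated on a separating Dehn twist (this is exactly the mechanism recalled in the paper's sketch of Theorem~\ref{thm:phi}), or choose test curves on which $\nu$ is identically zero --- e.g.\ complete curves in the compact-type hyperelliptic locus, where the vanishing of $\nu$ plus the Cornalba--Harris relation \cite{cornalba-harris} pins down the coefficients. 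Similarly the $\d_0$ coefficient requires actually performing the limit-MHS/height asymptotics, not merely citing that Lear's Theorem makes the answer well defined. Finally, the consistency check via Proposition~\ref{prop:reln} is sound but supplies only one linear relation among the three formulas, so it can corroborate, not replace, either boundary computation.
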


The case $g\ge 3$ is \cite[Thm.~1.3]{hain-reed:arakelov}. When $g=2$, $\nu=0$
and the result follows from Mumford's computation \cite{mumford} that
$10\lambda_1 = \delta_0 + 2\delta_1$. Suitably interpreted, it holds in genus 1
as $12\lambda_1=\delta_0$ in $H^2(\Mbar_{1,1},\Z)$.

Proposition~\ref{prop:phi_extends} implies that $\widehat{\K^\ast\phi_H}=
\K^\ast \phihat_H$ and $\widehat{(\K\times\K)^\ast \phi_\D} = (\K\times\K)^\ast
\phihat_\D$.

\begin{theorem}
\label{thm:phi}
If $g\ge 2$, then
$$
2\K^\ast \phihat_H =
4g(g-1)\psi - \k_1 - \sum_{h=1}^{g-1} (2h-1)^2\, \d_{g-h}^{\{x\}}
\in H^2(\Mbar_{g,1},\Z).
$$
\end{theorem}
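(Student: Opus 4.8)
The plan is to expand $2\K^\ast\phihat_H$ in the standard basis $\psi,\lambda_1,\d_0$ and $\d_{g-h}^{\{x\}}$ of $H^2(\Mbar_{g,1})$ (note that on $\Mbar_{g,1}$ there are no $\d_0^P$ classes and the only reducible boundary divisors are the $\d_{g-h}^{\{x\}}$, $1\le h\le g-1$), and to determine the coefficients in two stages: the ``interior'' coefficients of $\psi$ and $\lambda_1$ by restricting to $\M_{g,1}$, and the boundary coefficients by the asymptotic analysis of Lear's Theorem~\ref{thm:lear}. At the end I would rewrite the result in the alternate basis via $\k_1 = 12\lambda_1 - \d$ to obtain the stated form. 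Throughout I use Proposition~\ref{prop:phi_extends}, which gives $\widehat{\K^\ast\phi_H} = \K^\ast\phihat_H$, so that $\K^\ast\phihat_H$ is exactly the Lear extension of the interior class $\K^\ast\phi_H = S_H\circ c(\K)^2$.

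For the interior I would use that $2\phi_H = c_1(\Bhat)$, and that $\phi_H$ restricts on each fibre $\Jac C$ to the symmetric theta class normalized by $\int_{\Jac C}\w_H^g = g!$ (Proposition~\ref{prop:res_phi}). Pulling back the biextension metric along $\K$ realizes $2\K^\ast\phi_H$ as (minus) the Deligne--N\'eron--Tate self-pairing of the degree-zero divisor class $D := (2g-2)[x] - K_C$ on the universal curve $\cC_g = \M_{g,1}\to\M_g$. Using the standard self-intersection values of the tautological sections, namely $\bil xx = -\psi$, $\bil xw = \psi$ and $\bil ww = \k_1$, bilinearity gives
\[
\bil DD = (2g-2)^2\bil xx - 2(2g-2)\bil xw + \bil ww = -4g(g-1)\,\psi + \k_1,
\]
so that $2\K^\ast\phi_H = 4g(g-1)\psi - \k_1$ on $\M_{g,1}$, where $\k_1 = 12\lambda_1$. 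Since the restrictions of $\psi$ and $\k_1$ span $H^2(\M_{g,1};\Q)$ for $g\ge 3$ (the case $g=2$ being checked directly), this pins down the coefficients of $\psi$ and $\lambda_1$ in $2\K^\ast\phihat_H$.

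It remains to compute the boundary multiplicities. By the remark following Corollary~\ref{cor:extension}, the coefficient of each boundary divisor in $\K^\ast\phihat_H$ is the rational number $p/q$ governing the asymptotics~(\ref{eqn:asymptotics}) of $\log|\beta(t)|_\B$ along a disk transverse to that divisor, $\beta$ being a biextension lying over $\K$. Across $\d_0$ the metric extends continuously (no height jumping occurs across a smooth divisor) and the asymptotic agrees with the Deligne-pairing contribution already recorded in $\bil ww = \k_1$. The essential point is the reducible locus: when $C$ degenerates to $C_1\cup C_2$ along $\d_{g-h}^{\{x\}}$, with $C_1$ of genus $h$ carrying no marked point and $C_2$ of genus $g-h$ carrying $x$, the class $D$ acquires multidegree $(-(2h-1),\,2h-1)$, since $\deg(K_C|_{C_1}) = 2h-1$ while $[x]$ has degree $0$ on $C_1$. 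Thus $\K$ does not extend as a section of the identity component $\J_g$ across this divisor --- it runs off to a neighbouring component of $\Pic$ --- and the biextension height grows. Because theta, hence the height, is quadratic in this displacement, the transverse asymptotic is $-(2h-1)^2\log|t|$, contributing $-(2h-1)^2\,\d_{g-h}^{\{x\}}$ and accounting for the squared integers in the formula.

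The \emph{main obstacle} is precisely this last computation: justifying rigorously, for each reducible divisor, that the transverse Lear exponent $p/q$ equals $-(2h-1)^2$. This requires analyzing the limit mixed Hodge structure and the associated nilpotent orbit, and extracting the leading $\log|t|$ term of the biextension height from the degenerating theta function --- the Arakelov-geometric input of \cite{hain-reed:arakelov}. Once the reducible exponents and the (continuous) $\d_0$ contribution are in hand, the remaining work is the routine bookkeeping of passing from $4g(g-1)\psi - 12\lambda_1$ plus the boundary corrections to the compact $\k_1$-form $4g(g-1)\psi - \k_1 - \sum_{h=1}^{g-1}(2h-1)^2\,\d_{g-h}^{\{x\}}$, using $\k_1 = 12\lambda_1 - \d$ and the fact that each $\d_h$ pulls back from $\Mbar_g$ to $\d_h^\emptyset + \d_h^{\{x\}}$ on $\Mbar_{g,1}$. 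As a consistency check, Proposition~\ref{prop:reln} together with Theorem~\ref{thm:nu} relates the answer to $\widehat{\nutilde^\ast\phi_L}$, providing an independent verification of the coefficients.
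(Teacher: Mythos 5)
Your skeleton (interior coefficients by restriction to $\M_{g,1}$, boundary coefficients from the behaviour of the biextension, final bookkeeping via $\k_1=12\lambda_1-\d$) is sound, and the numerical values you assert are all correct: the Deligne-pairing computation $\bil{D}{D}=-4g(g-1)\psi+\k_1$, the multidegree $(-(2h-1),2h-1)$ of $D$ along $\d_{g-h}^{\{x\}}$, and the quadratic correction $-(2h-1)^2$. But as a proof the proposal has two genuine gaps, and they sit exactly where the content of the theorem lies. First, the coefficients of the $\d_{g-h}^{\{x\}}$: you flag this yourself as ``the main obstacle'' and offer only the heuristic that the height is quadratic in the multidegree displacement; nothing in your text supplies the missing argument. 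Note also that the mechanism is more delicate than your description suggests: across $\d_{g-h}^{\{x\}}\setminus\D_0$ the curves remain of compact type, so $\H$, the normal function $\K$ (by Proposition~\ref{prop:extends}), the bundle $J(\H)$, and the smoothly metrized bundle $\K^\ast\Bhat$ all extend --- in particular $\K$ does \emph{not} ``run off to a neighbouring component of $\Pic$'', and the metric does not blow up there. What degenerates is the flat (multivalued) biextension section relative to the holomorphic structure, and its monodromy is an element of the Torelli group (the separating Dehn twist), invisible to $\Sp_g$. The paper computes exactly this: the coefficient is $-\tauhat(\sigma_h)$, the central value of a Heisenberg representation of $T_{g,1}$ on the separating Dehn twist, evaluated explicitly to $4h(h-1)$ (which is $(2h-1)^2-1$; the discrepancy of $1$ is absorbed in passing from $12\lambda_1$ to $\k_1$).

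Second, the coefficient of $\d_0$. You assert that the asymptotic there ``agrees with the Deligne-pairing contribution already recorded in $\bil ww=\k_1$'', justified only by the remark that no height jumping occurs across a smooth divisor. That is a non sequitur: height jumping is a codimension-$\ge 2$ phenomenon, and its absence says nothing about the value of the Lear exponent at $\d_0$, which is nonzero (the monodromy of $\H$ at $\d_0$ is nontrivial and $\K$ does not extend there) and must be shown to match the logarithmic degeneration of the Deligne-pairing metric. The paper pins this coefficient down by a complete and quite different argument: it restricts to a family of hyperelliptic curves pointed at a Weierstrass point, where $\K\equiv 0$ forces $f^\ast\K^\ast\phihat_H=0$, and combines this with the Cornalba--Harris relation on the hyperelliptic locus to solve for the $\d_0$-coefficient. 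So your route (Deligne pairings plus graph-theoretic multidegree corrections) is genuinely different from the paper's (Morita's interior formula, the Torelli/Heisenberg monodromy computation, the hyperelliptic test curve) and could in principle be completed, but as written the two boundary computations that constitute the actual proof are asserted rather than proved.
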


This result holds trivially in genus $1$ as $\K\equiv 0$ and because $\k_1 =
12\lambda_1 - \d_0 = 0$.

\begin{proof}[Sketch of Proof]
Since $\k_1 = 12\lambda_1 - \d$,
$$
\k_1 + \sum_{h=1}^{g-1} (2h-1)^2 \d_{g-h}^{\{x\}}
=
12\lambda_1 - \d_0 + \sum_{h=1}^{g-1} 4h(h-1)\d_{g-h}^{\{x\}}.
$$
So it suffices to prove that
$$
2\K^\ast \phihat_H =
4g(g-1)\psi - 12\lambda_1 + \d_0 - \sum_{h=1}^{g-1} 4h(h-1)\d_{g-h}^{\{x\}}.
$$
This formula, modulo boundary terms, was proved by Morita
\cite[(1.7)]{morita:jacobians}. Another proof is given in
\cite[Thm.~1]{hain-reed:chern}.

The coefficient of $\d_h^{\{x\}}$ is computed using the method of
\cite[\S11]{hain-reed:arakelov}. The Torelli group $T_g$ is replaced by the
Torelli group $T_{g,1}$ associated to a 1-pointed surface. Instead of taking $V
= \Lambda^3 H /(\theta\wedge H)$, we take it to be $H$. The quadratic form $q$
(which is denoted $S_V$ in this paper) is replaced by $c^\ast S_H$, where
$c:\Lambda^3 H \to H$ is the contraction (\ref{eqn:contraction}). We sketch the
monodromy computation using the notation of \cite[\S11]{hain-reed:arakelov}.

The coefficient of $\d_h^{\{x\}}$ is $-\tauhat(\sigma_h)$, where $\sigma_h$ is a
Dehn twist about a separating simple closed curve that divides a pointed, genus
$g$ reference surface into a surface of genus $h$ (that does not contain the
point) and a surface of genus $g-h$, and where $\tauhat$ is a representation of
$T_{g,1}$ into the Heisenberg group associated to $(H,S_H)$.

There is a symplectic basis $a_1,b_1,\dots, a_g,b_g$ of $H_\Z$ such that
$a_1,b_1,\dots,a_h,b_h$ is a basis of $H'$, the first homology of the genus $h$
subsurface. Set $\w' = a_1\wedge b_1 + \dots + a_h\wedge b_h$, the symplectic
form of $H'$. If $u\in H$, then $c(u\wedge \w') = (h-1)u$. Thus
\begin{multline*}
\tauhat_h(\sigma_h)
= \frac{8}{2h-2}\sum_{j=1}^h S_H\big(c(a_j\wedge \w'),c(b_j\wedge\w')\big) \cr
= \frac{8(h-1)^2}{2h-2} \sum_{j=1}^h S_H(a_j,b_j)
= 4h(h-1).
\end{multline*}

It remains to compute the coefficient of $\d_0$. The most direct way to compute
it is by restricting to a curve in the hyperelliptic locus. First note that if
$C$ is a hyperelliptic curve and $x \in C$ is a Weierstrass point, then $\K(C,x)
= 0$. Call such a pair $(C,x)$ a {\em hyperelliptic pointed curve}. Suppose
that  $T$ is a smooth, complete curve and that $f : T \to \Mbar_{g,1}$ is a
morphism where $f(t)$ is the moduli point of an irreducible hyperelliptic
pointed curve for each $t\in T$.\footnote{For example, we can take $T = \P^1$
and $f$ the morphism associated to the family
$$
v^2 = (u-t)u\prod_{j=1}^{2g}(u-a_j),
$$
where $t\in\C$ and the $a_j$ are distinct non-zero complex numbers. A section of
Weierstrass points is given by $x=(0,0)$.} The normal function $f^\ast\K$
vanishes identically on $T$, which implies that $f^\ast\K^\ast\Bhat$ is trivial
as a metrized line bundle over $T-f^{-1}\D_0$. Its extension as a metrized line
bundle to $T$ is therefore trivial. This implies the vanishing of
$$
f^\ast \K^\ast\phihat_H \in H^2(T).
$$
On the other hand, standard techniques can be used to show that
$$
f^\ast(8\lambda_1 + 4g\psi - \d_0) = 0.
$$
The Cornalba-Harris relation \cite[Prop.~4.7]{cornalba-harris} implies that
$$
f^\ast\big((8g+4)\lambda_1 - g\d_0 \big) = 0 \in \Pic T.
$$
It follows that
$$
f^\ast(4g(g-1)\psi - 12\lambda_1 + \d_0)
= (g-1)f^\ast(8\lambda_1 + 4g\psi - \d_0)
-f^\ast\big((8g+4)\lambda_1 - g\d_0 \big) = 0.
$$
These two facts together imply that the coefficient of $\d_0$ in
$\K^\ast\phihat_H$ is $1$.
\end{proof}

Since $\kappa_1 = 12\lambda_1 - \delta$,  Theorems~\ref{thm:nu} and
\ref{thm:phi} and Proposition~\ref{prop:reln} imply the following result when
$g\ge 2$. The case $g=1$ follows from the fact that $\psi = \lambda_1$ and the
well-known relation $\d_0=12\lambda_1$ in $\Pic\Mbar_{1,1}$.

An immediate consequence of Lemma~\ref{lem:reln} and the two previous results is
the following formula for $\widehat{\nutilde^\ast\phi_L}$.

\begin{corollary}
For all $g\ge 1$,
$$
2\widehat{\nutilde^\ast \phi_L} =
8\lambda_1 + 4g\psi - \d_0 - 4\sum_{h=1}^{g-1} h\delta_{g-h}^{\{x\}}
\in H^2(\Mbar_{g,1},\Z).
$$
\end{corollary}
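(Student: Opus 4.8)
The plan is to combine Proposition~\ref{prop:reln} with the explicit formulas of Theorems~\ref{thm:nu} and~\ref{thm:phi} and then divide by $g-1$. Assume first that $g\ge 2$. Proposition~\ref{prop:reln} --- which is the extension to $\Mbar_{g,1}$ of the identity obtained by pulling Lemma~\ref{lem:reln} back along $\nutilde$, using $c\circ\nutilde=\K$ and $p\circ\nutilde=\nu$ --- gives
$$
(g-1)\cdot 2\widehat{\nutilde^\ast\phi_L} = 2\widehat{\nu^\ast\phi_V} + 2\widehat{\K^\ast\phi_H}
\in H^2(\Mbar_{g,1}).
$$
Here $\widehat{\K^\ast\phi_H}=\K^\ast\phihat_H$ by Proposition~\ref{prop:phi_extends}, so the second summand is supplied directly by Theorem~\ref{thm:phi}. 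The first summand is supplied by Theorem~\ref{thm:nu}, but that formula lives on $\Mbar_g$, so the first thing I would do is pull it back along the forgetful map $\pi:\Mbar_{g,1}\to\Mbar_g$.

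The one genuinely non-formal ingredient is the behaviour of the boundary classes under $\pi$. I would record that $\pi^\ast\lambda_1=\lambda_1$, $\pi^\ast\delta_0=\delta_0$, and, for $1\le h< g/2$,
$$
\pi^\ast\delta_h = \delta_h^{\{x\}}+\delta_{g-h}^{\{x\}},
$$
the two terms recording whether the marked point lies on the genus $h$ or the genus $g-h$ component; for $g$ even the remaining value $h=g/2$ contributes a single reduced divisor $\pi^\ast\delta_{g/2}=\delta_{g/2}^{\{x\}}$, since the two would-be branches coincide under $\Delta_{g/2}^{\{x\}}=\Delta_{g/2}^\emptyset$. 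Rewriting $\k_1=12\lambda_1-\delta$ via these pullbacks yields $\k_1 = 12\lambda_1-\delta_0-\sum_{h=1}^{g-1}\delta_{g-h}^{\{x\}}$ uniformly in the parity of $g$, and the symmetry $h(g-h)=(g-h)h$ lets me rewrite the symmetrized sum coming from Theorem~\ref{thm:nu} as $\sum_{h=1}^{g-1}h(g-h)\delta_{g-h}^{\{x\}}$.

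With these substitutions the right-hand side becomes an explicit integer combination of $\lambda_1,\psi,\delta_0$ and the $\delta_{g-h}^{\{x\}}$, and the point is that every coefficient is divisible by $g-1$: the coefficient of $\lambda_1$ is $(8g+4)-12=8(g-1)$, that of $\psi$ is $4g(g-1)$, that of $\delta_0$ is $-g+1=-(g-1)$, and that of $\delta_{g-h}^{\{x\}}$ is
$$
-4h(g-h)-4h(h-1) = -4h\big[(g-h)+(h-1)\big] = -4h(g-1).
$$
Thus the right-hand side equals $(g-1)$ times the claimed expression $8\lambda_1+4g\psi-\delta_0-4\sum_{h=1}^{g-1}h\,\delta_{g-h}^{\{x\}}$.

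Finally I would cancel the factor $g-1$. Working with rational coefficients the displayed divisor classes are linearly independent, so $(g-1)X=(g-1)Y$ forces $X=Y$; since the resulting right-hand side is manifestly an integral class, so is $2\widehat{\nutilde^\ast\phi_L}$, and the identity holds in $H^2(\Mbar_{g,1},\Z)$. The point requiring the most care is precisely this division step together with the edge case it excludes: the argument needs $g\ge 2$, and I would dispose of $g=1$ separately, where $\bL=(\Lambda^3\H)(-1)=0$ (so $\nutilde^\ast\phi_L=0$) and the empty sum together with the genus-one relations $\psi=\lambda_1$ and $\delta_0=12\lambda_1$ make the right-hand side $8\lambda_1+4\lambda_1-12\lambda_1=0$ as well.
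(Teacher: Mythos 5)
Your proposal is correct and follows the paper's own route: the paper likewise obtains this corollary by combining Proposition~\ref{prop:reln} with Theorems~\ref{thm:nu} and~\ref{thm:phi} via $\kappa_1=12\lambda_1-\delta$, dividing by $g-1$, and handling $g=1$ separately using $\psi=\lambda_1$ and $\delta_0=12\lambda_1$. The only quibble is your justification for the division step: no linear independence of the divisor classes is needed (and it in fact fails on $\Mbar_{2,1}$, where Mumford's relation gives $10\lambda_1=\delta_0+2\delta_1^{\{x\}}$); the division is legitimate simply because $H^2(\Mbar_{g,1},\Q)$ is a $\Q$-vector space, in which multiplication by $g-1\neq 0$ is injective.
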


The next result is needed in the solution of Eliashberg's problem.

\begin{theorem}
\label{phi2}
If $g \ge 2$, then in $H^2(\Mbar_{g,2},\Z)$ we have
\begin{multline*}
(\K\times\K)^\ast \phihat_\Delta = (2g-2)(\psi_1 + \psi_2) - \k_1
- (2g-2)^2\d_0^{\{x_1,x_2\}} \cr
- \sum_{h=1}^{g-1} (2h-1)^2\, \d_{g-h}^{\{x_1,x_2\}}
+ (2h-1)(2(g-h)-1)\big(\d_h^{\{x_1\}}+\d_{g-h}^{\{x_2\}}\big)/2.
\end{multline*}
\end{theorem}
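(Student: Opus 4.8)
The plan is to reduce everything to Theorem~\ref{thm:phi} by exploiting that $\nu\mapsto\nu^\ast\phi_S$ is quadratic in $\nu$ and bilinear in pairs of normal functions. Writing $p_1,p_2:\H\oplus\H\to\H$ for the two projections and using that $S_H$ is skew, one has $S_\Delta = S_H\circ(p_1\otimes p_2)+S_H\circ(p_2\otimes p_1)$ as a map $(\H\oplus\H)^{\otimes 2}\to\Z(1)$. Since $c(\K\times\K)=(c(\K_1),c(\K_2))$ and $p_i\circ c(\K\times\K)=c(\K_i)$, I would apply $S_{\Delta,\ast}$ to $c(\K\times\K)\smile c(\K\times\K)$; the graded-commutativity of the cup product $H^1(\cdot,\H)^{\otimes 2}\to H^2(\cdot,\H^{\otimes2})$ combined with $S_H\circ\tau=-S_H$ (for the coefficient swap $\tau$) shows the two cross terms agree, so that $(\K\times\K)^\ast\phi_\Delta = 2\,S_{H,\ast}\big(c(\K_1)\smile c(\K_2)\big)$ in $H^2(\M_{g,2}^c)$. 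Conceptually this is the statement that $\phi_\Delta$ is the Poincar\'e (biextension) class on $J(\H)\times_{\M_g^c}J(\H)$ and that its pullback is twice the height pairing of $\K_1$ and $\K_2$.

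Next I would convert this cross term into a difference of squares. Because $c(-)$ is additive and $\K_1-\K_2=(2g-2)\cD_{1,2}$, expanding $(\K_1-\K_2)^\ast\phi_H$ gives the polarization identity $(\K\times\K)^\ast\phi_\Delta = \K_1^\ast\phi_H + \K_2^\ast\phi_H - (2g-2)^2\,\cD_{1,2}^\ast\phi_H$ over $\M_{g,2}^c$. To promote this to $\Mbar_{g,2}$ I would note that $\K_1,\K_2,\cD_{1,2}$ are sections of the abelian scheme $\J_g^c\to\M_g^c$ which extend over $\Mbar_{g,2}-\D_0^\sing$ by Zucker's theorem, and that the identity $\phi_\Delta = a^\ast\phi_H-\pi_1^\ast\phi_H-\pi_2^\ast\phi_H$ (addition $a$, projections $\pi_i$) persists for the canonical extensions $\phihat_\Delta,\phihat_H$ on $\J_g'\times_{\Mbar_g}\J_g'$ by the $[e]$-action characterization of Proposition~\ref{prop:phi_extends}. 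Hence the same identity holds for the hat-extensions in $H^2(\Mbar_{g,2})$, and $\widehat{\cD_{1,2}^\ast\phi_H}=\cD_{1,2}^\ast\phihat_H$ by Proposition~\ref{prop:phi_extends}.

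It then remains to evaluate the three terms. For $\K_1^\ast\phihat_H$ and $\K_2^\ast\phihat_H$ I would pull back the formula of Theorem~\ref{thm:phi} along the forgetful maps $q_1,q_2:\Mbar_{g,2}\to\Mbar_{g,1}$, using $\K_i=q_i^\ast\K$; the interior classes transfer cleanly because $\psi$ and $\k_1$ were defined to be natural under forgetful maps, while each $\d_{g-h}^{\{x\}}$ pulls back to a sum of boundary classes on $\Mbar_{g,2}$ indexed by the side carrying the forgotten point. For the last term I would compute $\cD_{1,2}^\ast\phihat_H$ by the monodromy method of \cite[\S11]{hain-reed:arakelov} used for Theorem~\ref{thm:phi}: the coefficients of the separating divisors are read off from the Heisenberg representation $\tauhat$ evaluated on the corresponding separating Dehn twists, and the coefficients of the $\d_0$-type divisors (in particular the collision divisor $\d_0^{\{x_1,x_2\}}$) are pinned down by restricting to explicit test families and invoking the triviality of the relevant metrized biextension bundle together with the Cornalba--Harris relation \cite[Prop.~4.7]{cornalba-harris}.

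The main obstacle is the boundary bookkeeping. Expanding $q_i^\ast\d_{g-h}^{\{x\}}$ into the mixed divisors $\d_h^{\{x_1\}}$, $\d_{g-h}^{\{x_2\}}$ and $\d_{g-h}^{\{x_1,x_2\}}$, and matching these against the monodromy computation of $\cD_{1,2}^\ast\phihat_H$, is exactly where the half-integer coefficients $(2h-1)(2(g-h)-1)/2$ arise and where signs and the placement of the forgotten point must be tracked with care; it is a mixed term of precisely this type that is delicate to pin down correctly. Once the diagonal pullbacks and the off-diagonal term are assembled and the $\lambda_1$-contributions rewritten via $\k_1=12\lambda_1-\d$, the coefficients should collapse to the asserted formula.
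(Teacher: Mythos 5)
Your reduction is correct up to its final step, and it is a genuinely different route from the paper's. The decomposition $S_\Delta = S_H\circ(p_1\otimes p_2)+S_H\circ(p_2\otimes p_1)$, the symmetry of $S_{H,\ast}(\,\cdot\smile\cdot\,)$, the resulting polarization identity $(\K\times\K)^\ast\phihat_\Delta = \K_1^\ast\phihat_H + \K_2^\ast\phihat_H - (2g-2)^2\,\cD_{1,2}^\ast\phihat_H$, the $[e]$-scaling argument extending it to $H^2(\Mbar_{g,2})$, and the pullback of Theorem~\ref{thm:phi} along the forgetful maps are all sound. (In fact your identity is exactly Corollary~\ref{cor:pullback} specialized to $n=2$, $\dd=(1,-1)$, $m=0$, derived by a different route.) The paper proceeds differently: it restricts $(\K\times\K)^\ast\phihat_\Delta$ to a fiber $C\times C$ and to the diagonal $\Mbar_{g,1}\to\Mbar_{g,2}$ (where $\phihat_\Delta$ pulls back to $2\phihat_H$), which determines every coefficient except those of the mixed divisors $\d_h^{\{x_1\}}$, and then pins those down by the symmetry $c_h=c_{g-h}$ together with evaluation on an explicit test curve $T\cong\Mbar_{0,4}$ (two fixed pointed curves joined by a $\P^1$ bridge carrying both marked points), on which $\K\times\K$ is constant.

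The genuine gap is your third term. All of the content of the theorem --- the mixed coefficients $(2h-1)\big(2(g-h)-1\big)/2$, precisely the term the acknowledgments record as having originally been in error --- has been pushed into the class $\cD_{1,2}^\ast\phihat_H$, and you never compute it. You cannot quote Theorem~\ref{pullback_phi} for it: in the paper that formula is deduced \emph{from} Theorem~\ref{phi2} via Corollary~\ref{cor:pullback}, so invoking it here would be circular; the direction of implication between the two statements is exactly reversed in your plan. What you offer instead is a list of named techniques (the Heisenberg/monodromy method of \cite[\S11]{hain-reed:arakelov}, unspecified test families, Cornalba--Harris) rather than an executed computation; for $\cD_{1,2}$ this would mean setting up the relevant cocycle for the Torelli group of a twice-pointed surface and evaluating it on the Dehn twists corresponding to each $\D_h^{\{x_1\}}$ --- new work of essentially the same size as the theorem itself. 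The cheapest repair is to borrow the paper's own test curve: both marked points of $T$ lie on the rational bridge, so $\cD_{1,2}$ vanishes identically on $T$ and $\int_T\cD_{1,2}^\ast\phihat_H=0$; combined with the fiberwise computation of the $\psi_j$ and $\d_0^{\{x_1,x_2\}}$ coefficients, the vanishing of the restriction to the diagonal divisor $\D_0^{\{x_1,x_2\}}$, and the symmetries under $x_1\leftrightarrow x_2$ and $h\leftrightarrow g-h$, this pins down the mixed coefficients of $\cD_{1,2}^\ast\phihat_H$, after which your polarization identity does finish the proof. As written, however, you have established a correct reduction, not the theorem.
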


Note that in this and subsequent formulas, we will often sum from $h=1$ to
$h=g-1$ and over all subsets $P$ of $I$. Because of this, some terms will appear
twice as $\d_h^P = \d_{g-h}^P$. We do this to emphasize the symmetry of the
formulas and to facilitate later computations.

\begin{proof}
Modulo the coefficients of the $\d_h^{\{x_1\}}$, this formula can be computed
\begin{enumerate}

\item by restricting $(\K\times\K)^\ast\phi_\D$ to any
fiber $C\times C$ and

\item from Theorem~\ref{thm:phi} by restricting to the diagonal $\Mbar_{g,1} \to
\Mbar_{g,2}$, noting that the restriction of $\phihat_\Delta$ to the diagonal
$\J_g$ of $\J_g\times_{\M_{g,2}^c}\J_g$ is $2\phihat_H$.

\end{enumerate}
These computations are straightforward, once one notes that the divisor
$\D_0^{\{x_1,x_2\}}$ is the ``diagonal'' $\M_{g,1}^c \to \M_{g,2}^c$ in
$\M_{g,2}^c$ and that the Chern class of its normal bundle is $\psi$. If we
restrict to a single curve $C$, then in $H^2(C\times C)$
\begin{align*}
(\K\times\K)^\ast \phihat_\Delta 
&= (2g-2)^2 \sum_{j=1}^g 
\big(a_j^{(1)}\wedge b_j^{(2)}- b_j^{(1)}\wedge a_j^{(2)}\big) \cr
&= (2g-2)^2
\big([\text{point}]^{(1)} + [\text{point}]^{(2)} - [\text{diagonal}]\big) \cr
&= (2g-2)(\psi_1+\psi_2) - (2g-2)^2\d_0^{\{x_1,x_2\}}.
\end{align*}
Here $a_1,\dots, b_g$ is a symplectic basis of $H_1(C)$ and, for $x\in H^1(C)$,
$x^{(k)}$ denotes the pullback of $x$ under the $k$th projection $p_k : C^2 \to
C$.

It remains to compute the coefficient of $\d_h^{x_1}$ when $0<h<g$. This we do
using a test curve suggested by Sam Grushevsky. Since $\phihat_\D$ is invariant
when the two factors of $\J_g^c\times_{\M_{g,2}^c}\J_g^c$ are swapped, it
follows that the formula for $(\K\times\K)^\ast\phihat_\D$. is symmetric in
$x_1$ and $x_2$. Since $\d_h^{\{x_1\}} = \d_{g-h}^{\{x_2\}}$, the formula is
also invariant when $h$ is replaced by $g-h$. We therefore conclude that
\begin{multline}
\label{eqn:expression}
(\K\times\K)^\ast \phihat_\Delta = (2g-2)(\psi_1 + \psi_2) - \k_1
- (2g-2)^2\d_0^{\{x_1,x_2\}} \cr
- \sum_{h=1}^{g-1} (2h-1)^2\, \d_{g-h}^{\{x_1,x_2\}}
+ \sum_{h=1}^{g-1} c_h\d_h^{\{x_1\}}
\end{multline}
where $c_h = c_{g-h}$.

Suppose that $0<h<g$. Fix pointed smooth projective curves $(C',P')$ and
$(C'',P'')$ with $g(C')=h$ and $g(C'') = g-h$. Let $C$ be the nodal genus $g$
curve with three components $C'$, $C''$ and $\P^1$, where $C'$ is attached to
$\P^1$ by identifying $P'\in C'$ with $0\in \P^1$ and $P''\in C''$ with $\infty
\in \P^1$.  For $t\in \P^1-\{0,1,\infty\}$ let $C_t$ be the stable $2$-pointed
curve $(C;1,t)$. The closure $T$ of the curve
$$
\P^1 - \{0,1,\infty\} \to \M_{g,2}^c,\quad t \mapsto [C_t]
$$
\begin{figure}[!htb]
\epsfig{file=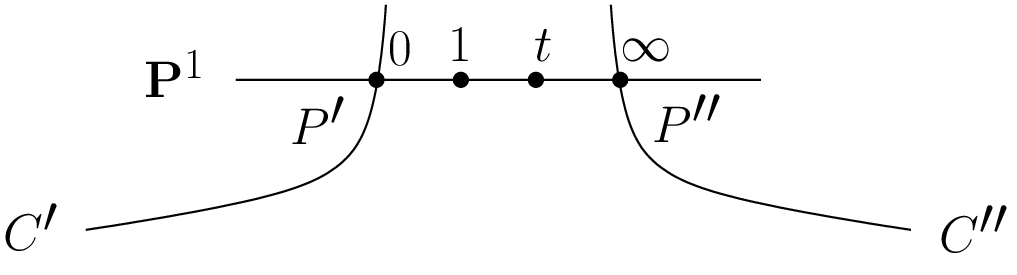, width=3in}
\caption{The $2$-pointed curve $C_t$}
\label{fig:test_curve}
\end{figure}

is a copy of $\Mbar_{0,4} \cong \P^1$ imbedded in $\M_{g,2}^c$.

The restriction of $\K\times\K$ to $T$ takes the constant value
$$
\big((2h-2)P'-K_{C'},(2(g-h)-2)P''-K_{C''}\big)
\in \Jac C'\times \Jac C'' \cong \Jac C,
$$
which implies that $(\K\times\K)^\ast \phi_\D = 0$. The coefficient $c_h$ is
computed by evaluating the right hand side (RHS) of (\ref{eqn:expression}) on
$T$.

The curve $T$ is contained in $\D_h^{\{x_1,x_2\}} \cap \D_{g-h}^{\{x_1,x_2\}}$
and intersects the three boundary divisors $\D_0^{\{x_1,x_2\}}$,
$\D_h^{\{x_1\}}$ and $\D_h^{\{x_2\}}$ transversely in three distinct points.
These are the three boundary points of $\Mbar_{0,4} \cong T$. It does not
intersect any other boundary divisors. Consequently,
$$
\int_T \d_0^{\{x_1,x_2\}} = \int_T \d_h^{\{x_1\}} = \int_T \d_h^{\{x_2\}} = 1.
$$

The projection formula can be used to evaluate the other terms of the RHS of
(\ref{eqn:expression}) on $T$. Let
$$
q: \Mbar_{g,2} \to \Mbar_g, \quad p_j : \Mbar_{g,2} \to \Mbar_{g,1}\quad j =1,2
$$
denote the natural projections, where $p_j([C;x_1,x_2]) = [C,x_j]$. Note that
$q$ and the $p_j$ collapse $T$ to a point. Since $\psi_j = p_j^\ast \psi$ and
$\kappa_1 = q^\ast \kappa_1$, the projection formula implies that
$$
\int_T \kappa_1 = \int_T q^\ast \kappa_1 = \int_{q_\ast T} \kappa_1 = 0
\text{ and }
\int_T \psi_j = \int_T p_j^\ast \psi = \int_{p_j\ast T} \psi = 0.
$$
Since $p_1^\ast \d_h^{\{x\}} = \d_h^{\{x_1,x_2\}} + \d_h^{\{x_1\}}$, the
projection formula implies that
$$
\int_T \d_h^{\{x_1,x_2\}} = - \int_T \d_h^{\{x_1\}} = -1.
$$
Similarly, $\int_T \d_{g-h}^{\{x_1,x_2\}} = -1$.

Evaluating the expression (\ref{eqn:expression}) on $T$ we obtain
$$
0 = 0 + 0 - (2g-2)^2 + (2h-1)^2 + \big(2(g-h)-1\big)^2 + c_h + c_{g-h}.
$$
Since $c_h = c_{g-h}$, this implies that $c_h = (2h-1)\big(2(g-h)-1\big)$.
\end{proof}

\section{Solution of Eliashberg's Problem over $\M_{g,n}^c$ when $g>1$}
\label{sec:cpt_type}

In this section, we solve Eliashberg's problem over $\M_{g,n}^c$ when $g > 2$.
A complete solution in the genus 1 case is given in the following section. The
solution in genus $>1$ is a direct consequence of Proposition~\ref{prop:res_phi}
and Theorem~\ref{pullback_phi} below. Related work on Eliashberg's problem has
been obtained independently by Cavalieri and Marcus \cite{cavalieri-marcus} via
Gromov-Witten theory.

Fix an integral vector $\dd=(d_1,\dots, d_n)$ with $\sum_j d_j=0$. As in the
introduction, we have the section
$$
F_\dd : \M_{g,n}^c \to \J_g
$$
of the universal jacobian defined by
$$
F_\dd : (x_1,\dots,x_d) \mapsto \bigg[\sum_{j=1}^n d_j x_j\bigg] \in \Jac C.
$$
For each subset $P$ of $\{x_1,\dots,x_n\}$, set $d_P = \sum_{j\in P}d_j$. Since
$d_P + d_{P\c} =0$, $d_P^2\d_h^P = d_{P\c}^2\d_{g-h}^{P\c}$.

\begin{theorem}
\label{thm:eliashberg}
If $g\ge 2$, then in $H^{2g}(\M_{g,n}^c)$ we have
$$
F_\dd^\ast \eta_g = \frac{1}{g!}\bigg(\sum_{j=1}^n d_j^2\, \psi_j/2 -
\sum_{P\subseteq I}
\sum_{\{x_j,x_k\}\subseteq P} d_j d_k\, \d_0^P -
\frac{1}{4} \sum_{P\subseteq I}\sum_{h=1}^{g-1} d_P^2\, \d_h^P\bigg)^g.
$$
\end{theorem}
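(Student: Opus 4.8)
The plan is to reduce the entire statement to a single degree-$2$ computation. By Proposition~\ref{prop:res_phi} we have $\eta_g = \phi_H^g/g!$ in $H^{2g}(\J_g^c)$, and since $F_\dd$ maps $\M_{g,n}^c$ into $\J_g^c = J(\H)$ and pullback is a ring homomorphism,
$$
F_\dd^\ast\eta_g = \frac{1}{g!}\big(F_\dd^\ast\phi_H\big)^g \in H^{2g}(\M_{g,n}^c).
$$
Thus the theorem follows at once from the degree-$2$ identity
$$
F_\dd^\ast\phi_H = \sum_{j} d_j^2\,\psi_j/2 - \sum_{P\subseteq I}\sum_{\{x_j,x_k\}\subseteq P} d_jd_k\,\d_0^P - \frac14\sum_{P\subseteq I}\sum_{h=1}^{g-1} d_P^2\,\d_h^P;
$$
no separate work is needed for the $g$th power, which is then automatic. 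So the whole proof is the computation of $F_\dd^\ast\phi_H$.

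First I would use the hypothesis $\sum_j d_j = 0$ to express $F_\dd$ through the basic normal functions $\K_j$. Since $\K_j([C;\mathbf{x}]) = (2g-2)[x_j]-K_C$ and $\sum_j d_j = 0$, evaluating pointwise in $\Jac C$ gives the identity of sections $(2g-2)F_\dd = \sum_{j} d_j\,\K_j$ (this is where $g\ge 2$ enters). Because $\nu\mapsto c(\nu)$ is additive and $\nu^\ast\phi_H = S_H\circ c(\nu)^2$ is quadratic in $c(\nu)$ (by the Proposition that $s^\ast\phi_S$ represents $S\circ c(s)^2$), expanding the bilinear form yields
$$
(2g-2)^2\,F_\dd^\ast\phi_H = \sum_{j} d_j^2\,\K_j^\ast\phi_H + \sum_{j<k} d_jd_k\,(\K_j,\K_k)^\ast\phi_\Delta,
$$
where the mixed term $S_H\big(c(\K_j)\smile c(\K_k)\big)$ is symmetric in $j,k$ and equals $\tfrac12(\K_j,\K_k)^\ast\phi_\Delta$; this last identity is immediate from the definition of $S_\Delta$ and the same representability statement, and it is exactly what makes Theorem~\ref{phi2} the tool that computes the cross terms.

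Next I would substitute the known formulas: pulling Theorem~\ref{thm:phi} back along the $j$th projection gives $\K_j^\ast\phi_H$, and restricting the formula of Theorem~\ref{phi2} for $(\K\times\K)^\ast\phihat_\Delta$ along the map $\M_{g,n}^c\to\M_{g,2}^c$ remembering $x_j,x_k$ gives $(\K_j,\K_k)^\ast\phi_\Delta$. Restriction to $\M_{g,n}^c$ kills $\d_0$ but keeps the compact-type divisors. The coefficient of $\k_1$ is $-\tfrac12$ in $\K_j^\ast\phi_H$ and $-1$ in $(\K_j,\K_k)^\ast\phi_\Delta$, so its total coefficient is
$$
-\tfrac12\sum_j d_j^2 - \sum_{j<k} d_jd_k = -\tfrac12\Big(\sum_j d_j\Big)^2 = 0,
$$
which is precisely why the final formula contains neither $\k_1$ nor $\lambda_1$, and why $\sum_j d_j = 0$ is needed. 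The same bookkeeping shows the diagonal and cross contributions to the $\psi_m$-coefficient of $(2g-2)^2F_\dd^\ast\phi_H$ are $2g(g-1)d_m^2$ and $-(2g-2)d_m^2$, summing to $2(g-1)^2 d_m^2$, hence coefficient $\tfrac12 d_m^2$ for $\psi_m$ in $F_\dd^\ast\phi_H$; and the $\d_0^P$ terms arise solely from the $-(2g-2)^2\d_0^{\{x_1,x_2\}}$ term of Theorem~\ref{phi2}, giving coefficient $-\sum_{\{x_j,x_k\}\subseteq P}d_jd_k$, both matching the target.

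The \emph{main obstacle} is the bookkeeping for the divisors $\d_h^P$ with $1\le h\le g-1$, which receive contributions both from the diagonal terms (through the pullback of $\d_{g-h}^{\{x\}}$) and from the cross terms (through the pullbacks of $\d_{g-h}^{\{x_1,x_2\}}$, $\d_h^{\{x_1\}}$ and $\d_{g-h}^{\{x_2\}}$ in Theorem~\ref{phi2}). The delicate step is the pullback of boundary divisors along the forgetful/projection maps: a divisor such as $\d_h^{\{x_j\}}$ pulls back to $\sum_{P}\d_h^P$ summed over all distributions of the remaining marked points across the two components, so one must track which subsets $P$ carry which points. The paper's nonstandard normalization of the $\psi$ classes (chosen so that $\psi$ is natural under forgetful maps) guarantees no $\psi$-correction terms intrude. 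Writing $d_P^2 = \sum_{j\in P}d_j^2 + 2\sum_{\{j,k\}\subseteq P}d_jd_k$ — the $\sum d_j^2$ part coming from the diagonal and the $\sum d_jd_k$ part from the cross terms — should make all contributions collapse into the coefficient $-\tfrac14 d_P^2$. Verifying this collapse, while correctly handling the symmetry $\d_h^P = \d_{g-h}^{P\c}$ and the deliberate double-counting conventions noted after Theorem~\ref{phi2}, is where the genuine combinatorial labor lies.
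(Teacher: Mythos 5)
Your proposal is correct and takes essentially the same route as the paper: there, Theorem~\ref{thm:eliashberg} is deduced from Proposition~\ref{prop:res_phi} together with the $m=0$ case of Theorem~\ref{pullback_phi}, whose proof is precisely your reduction $(2g-2)F_\dd=\sum_j d_j\K_j$ followed by the quadratic expansion (Corollary~\ref{cor:pullback}, which the paper derives via Lemma~\ref{sum} rather than via bilinearity of $S\circ c(\cdot)^2$, a cosmetic difference) and substitution of Theorems~\ref{thm:phi} and \ref{phi2}. Your coefficient bookkeeping, including the vanishing of the $\k_1$ term from $\sum_j d_j=0$ and the collapse to $-d_P^2/4$ after symmetrizing over $\d_h^P=\d_{g-h}^{P\c}$, matches the paper's computation.
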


Recall that our definition of the $\psi_j$ differs from the commonly used one.
Here $\psi_j := x_j^\ast c_1(\w)$, where $\w$ is the relative dualizing sheaf of
the universal curve. Note, too, that since $\d_h^P=\d_h^{P^c}$ and since we are
summing from $h=1$ to $h=g-1$ in this and other results in this section, some
boundary divisors occur twice in this expression.

I do not know if this formula holds in the Chow ring. Although this formula
makes sense in $H^{2g}(\Mbar_{g,n}-\D_0^\sing)$, it does not hold there. For
example, when $\dd=0$ the left hand side is the non-trivial class
$(-1)^g\lambda_g$, whereas the right-hand side vanishes. A result of Ekedahl and
van der Geer \cite{ekd-vdg} implies that, in $CH^g(\Mbar_g-\D_0^\sing)$,
$\lambda_g$ is $(-1)^g \zeta(1-2g)$ times a natural class, where $\zeta(s)$
denotes the Riemann zeta function. This suggests that the class
$$
F_\dd^\ast\eta_g -
\frac{1}{g!}(F_\dd^\ast \phihat_H)^g \in CH^g(\Mbar_{g,n}-\D_0^\sing)
$$
should be interesting. In particular, does Proposition~\ref{prop:res_phi} hold
in $CH^g(\J_g^c)$? This makes sense, as $\phi_H = \theta - \lambda_1/2$ in $\Pic
\J_g^c$. (Cf.\ the proof of Theorem~\ref{thm:variant} below.)

\subsection{The approach and reduction}

Denote the pullback of $\J_g$ to $\Mbar_{g,n}$ by $\J_{g,n}$ and its restriction
to $\M_{g,n}^c$ by $\J_{g,n}^c$. We will consider a more general situation.
Namely, we'll assume that $\dd\in\Z^n$ and $m\in\Z$ satisfy
$$
\sum_{j=1}^n d_j = (2g-2)m.
$$
Then one has the section
$$
F_\dd : [C;x_1,\dots,x_n] \mapsto \sum_{j=1}^n d_j x_j - m K_C \in \Jac C
$$
of $\J_{g,n}^c$ over $\M_{g,n}^c$. Our goal is to compute $F_\dd^\ast \phi_H$.

Set
$$
\J_{g,n}^m :=
\underbrace{\J_{g,n}\times_{\Mbar_{g,n}} \times \J_{g,n}\times_{\Mbar_{g,n}}
\cdots
\times_{\Mbar_{g,n}} \J_{g,n}}_m.
$$
Denote its restriction to $\M_{g,n}^c$ by $\J_{g,n}^{c,m}$. Let
$$
\K_n : \Mbar_{g,n} - \D_0^\sing \to \J_{g,n}^n
$$
be the $n$th power of $\K$ --- that is, the section of $\J_{g,n}^{c,n} \to
\M^c_{g,n}$ defined by
$$
\K_n : (C;x_1,\dots,x_n) \mapsto \big(\K(x_1),\K(x_2),\dots,\K(x_n)\big)
\in \big(\Jac C\big)^n.
$$
Note that $\K_2=\K\times \K$.

\begin{proposition}
Define $\dd : \J_{g,n}^n \to \J_{g,n}^n$ by
$$
\dd : (u_1,\dots,u_n) \mapsto (d_1 u_1,\dots,d_n u_n)
$$
and $\trace_n : \J_{g,n}^n \to \J_{g,n}$ by
$$
\trace_n : (u_1,\dots,u_n) \mapsto u_1 + \dots + u_n.
$$
Then the mapping
$$
\xymatrix{
\Mbar_{g,n} -\D_0^\sing \ar[r]^(0.6){\K_n} & \J_{g,n}^n \ar[r]^{\dd} &
\J_{g,n}^n \ar[r]^{\trace_n} & \J_{g,n}
}
$$
equals $(2g-2) F_\dd$. \qed
\end{proposition}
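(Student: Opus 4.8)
The plan is to reduce the statement to a fibrewise identity of sections and then verify that identity by unwinding the definitions. Both sides are holomorphic sections of $\J_{g,n}\to\Mbar_{g,n}-\D_0^\sing$: the right-hand side $(2g-2)F_\dd$ is $(2g-2)$ times the section $F_\dd$, while the left-hand side is the composite of the section $\K_n$ with the two fibrewise group endomorphisms $\dd$ and $\trace_n$ of the jacobian bundle, hence again a section. Since two sections of a bundle of abelian varieties that agree on a Zariski-dense subset of the irreducible base agree everywhere, it suffices to verify the equality on the fibres over the dense open locus $\M_{g,n}$ of smooth pointed curves.

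First I would fix a smooth $n$-pointed curve $(C;x_1,\dots,x_n)$ and chase its moduli point through the three maps. By definition $\K_n$ sends it to $\big(\K(x_1),\dots,\K(x_n)\big)\in(\Jac C)^n$, where $\K(x_j)=(2g-2)[x_j]-K_C$. Applying $\dd$ scales the $j$th entry by $d_j$, and $\trace_n$ then adds the entries, yielding
$$
\sum_{j=1}^n d_j\,\K(x_j)=\sum_{j=1}^n d_j\big((2g-2)[x_j]-K_C\big)
=(2g-2)\sum_{j=1}^n d_j[x_j]-\Big(\sum_{j=1}^n d_j\Big)K_C\in\Jac C.
$$
On the other hand, $(2g-2)F_\dd$ sends the same point to $(2g-2)\big(\sum_{j=1}^n d_j[x_j]-mK_C\big)=(2g-2)\sum_{j=1}^n d_j[x_j]-(2g-2)m\,K_C$. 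The two expressions have identical $[x_j]$-coefficients, so the coincidence of these two points of $\Jac C$ amounts precisely to the identity of the $K_C$-coefficients, namely $\sum_{j=1}^n d_j=(2g-2)m$. This is exactly the standing hypothesis relating $\dd$ and $m$, so the two sections agree on this fibre; as the fibre was arbitrary, they agree over $\M_{g,n}$ and hence, by density, over all of $\Mbar_{g,n}-\D_0^\sing$.

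There is no serious obstacle here: the only point requiring attention is the bookkeeping of the coefficient of $K_C$, where the constraint $\sum_j d_j=(2g-2)m$ is exactly what reconciles the two normalizations. If one wishes to be scrupulous about the extension across the boundary, one notes that $\K$ (and hence $\K_n$), the fibrewise maps $\dd$ and $\trace_n$, and the section $F_\dd$ are all defined and holomorphic on $\Mbar_{g,n}-\D_0^\sing$, so the density argument applies verbatim on that locus rather than only on $\M_{g,n}$.
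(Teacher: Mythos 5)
Your proof is correct: the fibrewise computation $\sum_j d_j\K(x_j)=(2g-2)\sum_j d_j[x_j]-\big(\sum_j d_j\big)K_C=(2g-2)F_\dd$, using the standing hypothesis $\sum_j d_j=(2g-2)m$, is exactly the verification the paper has in mind when it states this proposition with no proof (marked \qed as immediate from the definitions). Your additional care about extending the identity across the boundary via density of $\M_{g,n}$ in $\Mbar_{g,n}-\D_0^\sing$ is a reasonable, if optional, refinement.
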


This formula allows the reduction of the computation of $F_\dd^\ast \phi_H$ to
more basic computations. Denote the pullback of $\phi_H$ under the $j$th
projection
$$
p_j : \J_{g,n}^{c,n} \to \J_{g,n}^c
$$
by $\phi_{j,j}$. For $j\neq k$, denote the pullback of $\phi_\Delta$ under
the $(j,k)$th projection
$$
p_{j,k} : \J_{g,n}^{c,n} \to
\J_{g,n}^{c,2} \quad (u_1,\dots,u_n) \mapsto (u_j,u_k)
$$
by $\phi_{j,k}$.

\begin{lemma}
\label{sum}
With notation as above,
$$
\dd^\ast \phi_{j,k} = d_j d_k\, \phi_{j,k} \text{ and }
\trace_n^\ast \phi_H = \sum_{j \le k} \phi_{j,k}.
$$
\end{lemma}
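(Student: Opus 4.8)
The plan is to prove both identities at the level of the invariant skew forms from which the classes $\phi$ are built, using only the linearity and naturality of the construction $S\mapsto\phi_S$ supplied by Lemma~\ref{lem:gen_lifts}. Write $\mathbf{u}=(u_1,\dots,u_n)$ and $\mathbf{v}=(v_1,\dots,v_n)$ for sections of the local system $\H^{\oplus n}$ underlying $\J_{g,n}^{c,n}=J(\H^{\oplus n})$. First I would record the forms on $\H^{\oplus n}$ responsible for $\phi_{j,j}$ and $\phi_{j,k}$: set $S^{(j,j)}(\mathbf{u},\mathbf{v})=S_H(u_j,v_j)$ and, for $j\neq k$, $S^{(j,k)}(\mathbf{u},\mathbf{v})=S_\Delta\big((u_j,u_k),(v_j,v_k)\big)=S_H(u_j,v_k)+S_H(u_k,v_j)$, where the last equality uses the skew-symmetry of $S_H$. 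These are by definition $p_j^\ast S_H$ and $p_{j,k}^\ast S_\Delta$, so $\phi_{S^{(j,j)}}=\phi_{j,j}$ and $\phi_{S^{(j,k)}}=\phi_{j,k}$.

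The one structural input is that $S\mapsto\phi_S$ is linear and natural. Linearity is immediate, since the representative $\w_S$ supplied by Lemma~\ref{lem:gen_lifts} depends linearly on $S$. For naturality, let $\varphi$ be a morphism of families of tori over $\M_{g,n}^c$ that is induced by a map of the underlying local systems (hence flat) and is a homomorphism on each fiber, and let $S$ be an invariant skew form on the target. Then $\varphi^\ast\w_S$ restricts on each fiber to the translation-invariant form attached to $\varphi^\ast S$ --- pulling back a translation-invariant $2$-form along a linear map of tori returns the form of the pulled-back bilinear form --- is again parallel, and vanishes on the zero section because $\varphi$ preserves zero sections. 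The characterization of $\w_S$ in Lemma~\ref{lem:gen_lifts} then forces $\varphi^\ast\w_S=\w_{\varphi^\ast S}$, whence $\varphi^\ast\phi_S=\phi_{\varphi^\ast S}$.

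With this in place both identities reduce to elementary bilinear algebra. The map $\dd$ is induced by the flat homomorphism $(u_a)\mapsto(d_a u_a)$ of $\H^{\oplus n}$, so $\dd^\ast S^{(j,k)}(\mathbf{u},\mathbf{v})=S^{(j,k)}\big((d_a u_a),(d_a v_a)\big)=d_j d_k\,S^{(j,k)}(\mathbf{u},\mathbf{v})$ for all $j,k$ --- both summands of $S^{(j,k)}$ scale by $d_j d_k$, which reads $d_j^2$ when $j=k$ --- and applying $\phi$ gives $\dd^\ast\phi_{j,k}=d_j d_k\,\phi_{j,k}$. The map $\trace_n$ is induced by the summation map $(u_a)\mapsto\sum_a u_a$, so $\trace_n^\ast S_H(\mathbf{u},\mathbf{v})=S_H\big(\sum_a u_a,\sum_b v_b\big)=\sum_{a,b}S_H(u_a,v_b)$. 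Splitting off the diagonal and pairing the indices $(a,b)$ and $(b,a)$ yields $\sum_a S_H(u_a,v_a)+\sum_{a<b}\big(S_H(u_a,v_b)+S_H(u_b,v_a)\big)=\sum_{j\le k}S^{(j,k)}$, and applying $\phi$ gives $\trace_n^\ast\phi_H=\sum_{j\le k}\phi_{j,k}$.

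The computations themselves are routine; the point needing care is the sign in the definition of $S_\Delta$. It is chosen exactly so that $S^{(j,k)}$ is the symmetric combination $S_H(u_j,v_k)+S_H(u_k,v_j)$ rather than the (skew, and hence wrong) difference, which is precisely the combination assembled from the two off-diagonal terms of $S_H(\sum u,\sum v)$ indexed by $(a,b)$ and $(b,a)$; this is what makes the coefficient of each off-diagonal $\phi_{j,k}$ equal to $1$ and not $2$. As a consistency check, pulling $S_\Delta$ back along the diagonal $u\mapsto(u,u)$ gives $2S_H$, matching the remark that $\phihat_\Delta$ restricts to $2\phihat_H$ on the diagonal. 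The only other thing to verify carefully, which I expect to be the main if minor obstacle, is the naturality step for the non-invertible map $\trace_n$: one must confirm that pullback along a fiberwise-surjective homomorphism still lands among the parallel, translation-invariant, zero-section-trivial forms characterized by Lemma~\ref{lem:gen_lifts}.
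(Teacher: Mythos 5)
Your proof is correct, and it is organized differently from the paper's. The paper's proof first notes that every class in sight is represented by a parallel, translation-invariant form, so both identities may be checked after restriction to a single fiber $(\Jac C)^n$; there the computation is carried out in the cohomology ring of the abelian variety, using that pullback along the addition map $J^n \to J$ is a ring homomorphism which on $H^1(J)$ is $x \mapsto \sum_j p_j^\ast x$, and that $[e]^\ast$ is multiplication by $e$ on $H^1(J)$. You never restrict to a fiber: you instead isolate a naturality statement, $\varphi^\ast \w_S = \w_{\varphi^\ast S}$ for flat fiberwise homomorphisms $\varphi$, together with linearity of $S \mapsto \w_S$, and reduce both identities to linear algebra with the invariant forms themselves. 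The two computations are equivalent in substance --- translation-invariant $2$-forms on a fiber are exactly the degree-$2$ cohomology of that fiber, so your manipulations of the $S$'s are the paper's manipulations of fiber classes --- but your packaging has the merit of making explicit a naturality that the paper uses tacitly: one needs it even to know that $\dd^\ast\phi_{j,k}$ and $\trace_n^\ast\phi_H$ are again represented by parallel, translation-invariant forms, which is what licenses the paper's fiber reduction in the first place. Conversely, the paper's version is shorter if one is willing to quote the ring structure of $H^\dot(\Jac C)$ and the effect of addition and multiplication maps on it. Your closing worry about $\trace_n$ being non-invertible is easily dispatched: the verification needs only that $\varphi$ be flat and a homomorphism on each fiber, since in local flat trivializations such a $\varphi$ is the identity on the base times a fixed homomorphism of tori, so the pullback of a parallel, fiberwise translation-invariant, zero-section-trivial form manifestly remains one; neither invertibility nor surjectivity enters. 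Finally, your sign check is exactly the right point to emphasize: the paper's $S_\Delta\big((u_1,v_1),(u_2,v_2)\big)=S_H(u_1,v_2)-S_H(u_2,v_1)$ does unwind, via skew-symmetry of $S_H$, to the symmetric combination $S_H(u_j,v_k)+S_H(u_k,v_j)$, which is why each off-diagonal $\phi_{j,k}$ enters $\trace_n^\ast\phi_H$ with coefficient $1$ rather than $2$.
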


\begin{proof}
Since all of the classes $\phi_{j,k}$ are represented by parallel, translation
invariant forms, to prove the result, it suffices to prove the result in the
cohomology of the jacobian $J := \Jac C$ of a single smooth projective curve
$C$.

Set $J = \Jac C$. Note that the ring homomorphism
$$
H^\dot(J) \to H^\dot(J)^{\otimes n}
$$
induced by the addition map $J^n \to J$ is, in degree 1, given by
$$
x \mapsto \sum_{\substack{a+b=n-1\cr a,b\ge 0}}
1^{\otimes a} \otimes x \otimes 1^{\otimes b}.
$$
The formula for $\trace_n^\ast$ follows using the fact that this is a
ring homomorphism.

The formula for $\dd^\ast$ follows as the map $[e] : J \to J$ is multiplication
by $e$ on $H^1(J)$, and therefore multiplication by $e^k$ on $H^k(J)$. 
\end{proof}

Recall that $\phi_H \in H^2(\J_g^c)$ and $\phi_\D \in
H^2(\J_g^c\times_{\M_g^c}\J_g)$ extend naturally to classes
$$
\phihat_H \in H^2(\J_g')\text{ and }
\phihat_\D \in H^2(\J_g'\times_{\Mbar_g}\J_g'),
$$
where $\J_g'$ denotes the universal jacobian over $\Mbar_g-\D_0^\sing$. Using
the scaling by the action of $(e_1,\dots,e_n) \in \Z^n$ on the cohomology of the
$n$th power of $\J_g' \to \Mbar_g-\D_0^\sing$ one can show that Lemma~\ref{sum}
holds when $\phi_H$ and $\phi_\D$ are replaced by the extended classes
$\phihat_H$ and $\phihat_\D$. We therefore have:

\begin{corollary}
\label{cor:pullback}
If $g\ge 2$, then
$$
(2g-2)^2\,F_\dd^\ast \phihat_H =
\sum_{j=1}^n d_j^2\,\pi_j^\ast \K^\ast \phihat_H
+
\sum_{1\le j<k \le n} d_j d_k\, \pi_{j,k}^\ast(\K\times\K)^\ast \phihat_\Delta
\in H^2(\M_{g,n}^c),
$$
where $\pi_j : \Mbar_{g,n} \to \Mbar_{g,1}$ and $ \pi_{j,k} : \Mbar_{g,n} \to
\Mbar_{g,2}$ denote the natural projections.
\end{corollary}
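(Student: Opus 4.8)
The plan is to pull the factorization $(2g-2)F_\dd = \trace_n\circ\dd\circ\K_n$ of the Proposition preceding Lemma~\ref{sum} back along the extended class $\phihat_H$ and to unwind the three maps one factor at a time. Since $(2g-2)F_\dd$ is $[2g-2]\circ F_\dd$, the composite of $F_\dd$ with fiberwise multiplication by $2g-2$, the characterizing property $[e]^\ast\phihat_H = e^2\phihat_H$ of Proposition~\ref{prop:phi_extends} gives $((2g-2)F_\dd)^\ast\phihat_H = (2g-2)^2\,F_\dd^\ast\phihat_H$. On the other hand, functoriality of pullback gives $((2g-2)F_\dd)^\ast\phihat_H = \K_n^\ast\,\dd^\ast\,\trace_n^\ast\phihat_H$. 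Equating the two reduces the problem to evaluating $\K_n^\ast\dd^\ast\trace_n^\ast\phihat_H$.

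First I would dispose of the two inner factors using the extended form of Lemma~\ref{sum}. Applying $\trace_n^\ast\phihat_H = \sum_{j\le k}\phihat_{j,k}$ followed by $\dd^\ast\phihat_{j,k} = d_jd_k\,\phihat_{j,k}$ yields $\dd^\ast\trace_n^\ast\phihat_H = \sum_{j=1}^n d_j^2\,\phihat_{j,j} + \sum_{1\le j<k\le n} d_jd_k\,\phihat_{j,k}$, where $\phihat_{j,j} = p_j^\ast\phihat_H$ and $\phihat_{j,k} = p_{j,k}^\ast\phihat_\Delta$ for $j<k$.

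It remains to pull the individual terms back along $\K_n$. Here I would use the elementary composition identities $p_j\circ\K_n = \K\circ\pi_j$ and $p_{j,k}\circ\K_n = (\K\times\K)\circ\pi_{j,k}$; these hold because the $j$th component of $\K_n$ sends $[C;x_1,\dots,x_n]$ to $\K(x_j)$, while $\pi_j$ and $\pi_{j,k}$ are exactly the maps that remember $x_j$ and the pair $(x_j,x_k)$. Combined with the naturality of the Lear extension under pullback, namely the identity \eqref{eqn:nat} applied to the forgetful morphisms $\pi_j$ and $\pi_{j,k}$, these give $\K_n^\ast\phihat_{j,j} = \pi_j^\ast\K^\ast\phihat_H$ and $\K_n^\ast\phihat_{j,k} = \pi_{j,k}^\ast(\K\times\K)^\ast\phihat_\Delta$. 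Assembling the surviving terms then produces the stated formula.

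The main obstacle is not the formal computation but the justification for working with the extended (hatted) classes throughout, rather than merely their restrictions to $\M_{g,n}^c$. Lemma~\ref{sum} is proved on the jacobian of a single smooth curve, so its extended version must be established separately; as indicated in the text preceding the statement, this is done by repeating the eigenvalue argument of Proposition~\ref{prop:phi_extends} on the $n$-fold fibre product of $\J_g'$ over $\Mbar_g-\D_0^\sing$, using the action of $(e_1,\dots,e_n)\in\Z^n$ that scales the factors, under which $\phihat_{j,k}$ is characterized as the unique lift transforming with weight $e_je_k$. Once this and the compatibility \eqref{eqn:nat} of the extension with the projections $\pi_j,\pi_{j,k}$ are in place, the chain of pullbacks above is purely mechanical.
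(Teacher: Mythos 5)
Your proposal is correct and follows essentially the same route as the paper: the factorization $(2g-2)F_\dd = \trace_n\circ\dd\circ\K_n$, the scaling property $[e]^\ast\phihat_H = e^2\phihat_H$ of Proposition~\ref{prop:phi_extends}, and the extension of Lemma~\ref{sum} to the hatted classes via the $\Z^n$-scaling eigenvalue argument on the $n$-fold fibre product of $\J_g'$ are exactly the ingredients the paper combines. The one minor quibble is that your appeal to (\ref{eqn:nat}) is superfluous: once the composition identities $p_j\circ\K_n = \K\circ\pi_j$ and $p_{j,k}\circ\K_n = (\K\times\K)\circ\pi_{j,k}$ are in hand, ordinary functoriality of cohomology pullback applied to the already-extended classes gives $\K_n^\ast\phihat_{j,j} = \pi_j^\ast\K^\ast\phihat_H$ and $\K_n^\ast\phihat_{j,k} = \pi_{j,k}^\ast(\K\times\K)^\ast\phihat_\Delta$ directly, whereas (\ref{eqn:nat}) would only be needed to identify $\pi_j^\ast\K^\ast\phihat_H$ with the Lear extension $\widehat{\K_j^\ast\phi_H}$, which the statement does not require.
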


The following result is the main computation of this section.

\begin{theorem}
\label{pullback_phi}
If $g \ge 2$ and $\dd\in \Z^n$ and $m\in\Z$ satisfy $\sum d_j = (2g-2)m$, then
in $H^2(\Mbar_{g,n})$ we have
\begin{multline*}
F_\dd^\ast\phihat_H = -{m^2}\kappa_1/2 +
\sum_{j=1}^n (d_jm+d_j^2/2)\,\psi_j - \sum_{P\subseteq I}
\sum_{\{x_j,x_k\}\subseteq P} d_j d_k\, \d_0^P \cr
-\frac{1}{4} \sum_{P\subseteq I}\sum_{h=1}^{g-1}
\big(d_P - (2h-1)m\big)^2\, \d_h^P.
\end{multline*}
\end{theorem}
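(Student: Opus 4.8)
My plan is to read off both sides of Corollary~\ref{cor:pullback},
$$
(2g-2)^2 F_\dd^\ast\phihat_H=\sum_{j=1}^n d_j^2\,\pi_j^\ast\K^\ast\phihat_H+\sum_{1\le j<k\le n}d_jd_k\,\pi_{j,k}^\ast(\K\times\K)^\ast\phihat_\Delta ,
$$
after inserting the explicit formulas for $\K^\ast\phihat_H$ and $(\K\times\K)^\ast\phihat_\Delta$ from Theorems~\ref{thm:phi} and~\ref{phi2}. Everything then reduces to pulling the generators of $H^2(\Mbar_{g,1})$ and $H^2(\Mbar_{g,2})$ back along the forgetful maps $\pi_j\colon\Mbar_{g,n}\to\Mbar_{g,1}$ and $\pi_{j,k}\colon\Mbar_{g,n}\to\Mbar_{g,2}$ and collecting the coefficient of each element of the basis $\k_1,\psi_j,\d_0^P,\d_h^P$. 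The inputs are the naturality $\pi_j^\ast\psi=\psi_j$ and $\pi_j^\ast\k_1=\k_1$ (which is exactly why the $\psi_j$ were normalized as they were), together with the boundary rules $\pi_j^\ast\d_a^{\{x\}}=\sum_{Q\ni x_j}\d_a^Q$, $\pi_{j,k}^\ast\d_a^{\{x_1,x_2\}}=\sum_{Q\supseteq\{x_j,x_k\}}\d_a^Q$, $\pi_{j,k}^\ast\d_0^{\{x_1,x_2\}}=\sum_{Q\supseteq\{x_j,x_k\}}\d_0^Q$ and $\pi_{j,k}^\ast\d_a^{\{x_1\}}=\sum_{x_j\in Q,\,x_k\notin Q}\d_a^Q$, in each of which $Q$ records the marked points on the distinguished component; these follow by iterating the one-point rule $\pi^\ast\d_a^A=\d_a^A+\d_a^{A\cup\{\ast\}}$.

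The coefficients of $\psi_j$, $\k_1$ and $\d_0^P$ are a direct bookkeeping. The $\psi_j$-coefficient collects the diagonal $\K^\ast\phihat_H$-term and the symmetric piece $(2g-2)(\psi_1+\psi_2)$ of $(\K\times\K)^\ast\phihat_\Delta$, giving $2g(g-1)d_j^2+(2g-2)d_j\bigl((2g-2)m-d_j\bigr)$, which simplifies to $(2g-2)^2(d_jm+d_j^2/2)$ once $\sum_i d_i=(2g-2)m$ is used. The $\k_1$-coefficient collects to $-\frac12\sum_k d_k^2-\sum_{j<k}d_jd_k=-\frac12\bigl(\sum_k d_k\bigr)^2=-\frac12(2g-2)^2m^2$, i.e.\ $-m^2/2$ after dividing. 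Finally $\d_0^P$ is produced only by the $-(2g-2)^2\d_0^{\{x_1,x_2\}}$ term, and only when $\{x_j,x_k\}\subseteq P$, giving $-\sum_{\{x_j,x_k\}\subseteq P}d_jd_k$.

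The delicate step, and the one I expect to carry the weight of the argument, is the coefficient of a separating class $\d_h^P$ with $0<h<g$, where the two summands of Corollary~\ref{cor:pullback} genuinely interact. Writing $a=2h-1$, $b=2(g-h)-1$ (so $a+b=2g-2$) and $S_P=\sum_{x_j\in P}d_j^2$, I expect the $\K^\ast\phihat_H$-part to contribute $-\frac12 b^2 S_P-\frac12 a^2 S_{P^c}$ (according to whether $x_j\in P$ or $x_j\in P^c$), and the $(\K\times\K)^\ast\phihat_\Delta$-part to contribute $-\frac12 b^2(d_P^2-S_P)-\frac12 a^2(d_{P^c}^2-S_{P^c})+ab\,d_Pd_{P^c}$, according to whether both of $x_j,x_k$ lie in $P$, both in $P^c$, or are split, using $\sum_{x_j,x_k\in P}d_jd_k=\frac12(d_P^2-S_P)$ and $\sum_{\mathrm{split}}d_jd_k=d_Pd_{P^c}$. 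The key phenomenon is that the ``diagonal'' contributions $S_P,S_{P^c}$ cancel between the two parts, leaving $-\frac12 b^2 d_P^2-\frac12 a^2 d_{P^c}^2+ab\,d_Pd_{P^c}$; substituting $d_{P^c}=(2g-2)m-d_P$ and using $a+b=2g-2$ collapses this exactly to the perfect square $-\frac12(2g-2)^2\bigl(d_P-(2h-1)m\bigr)^2$. Dividing by $(2g-2)^2$ gives the actual coefficient $-\frac12\bigl(d_P-(2h-1)m\bigr)^2$, which the stated formula records as $-\frac14\bigl(d_P-(2h-1)m\bigr)^2$ on each of the two listings $\d_h^P$ and $\d_{g-h}^{P^c}$ of this divisor (their coefficients agreeing since $d_{P^c}-(2(g-h)-1)m=-(d_P-(2h-1)m)$). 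Two points must be handled with care to make the square appear: the orientation of every boundary pullback (which component carries $P$ rather than $P^c$, equivalently the interchange $a\leftrightarrow b$ under $h\leftrightarrow g-h$), and the sign of the separated term $(\d_h^{\{x_1\}}+\d_{g-h}^{\{x_2\}})$ in Theorem~\ref{phi2} — it is precisely the positivity of this term that supplies $+ab\,d_Pd_{P^c}$ and hence a perfect square rather than a genuinely $\dd$-dependent expression.
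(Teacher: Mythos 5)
Your proposal is correct and follows essentially the same route as the paper's own proof: feed Theorems~\ref{thm:phi} and~\ref{phi2} into Corollary~\ref{cor:pullback}, pull back along the forgetful maps using the stated boundary rules and the naturality of $\psi$ and $\k_1$, and collect coefficients, with the $\d_h^P$ coefficient collapsing to the perfect square $-\tfrac{1}{4}\big(d_P-(2h-1)m\big)^2$ exactly as you describe. The only (cosmetic) difference is bookkeeping: the paper computes ``formal'' coefficients of $\d_h^P$ and $\d_{g-h}^{P^c}$ separately and averages them, whereas you compute the honest coefficient of each divisor class (observing the cancellation of the $S_P$, $S_{P^c}$ terms between the two summands) and then split it over the two listings --- the same arithmetic either way.
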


Recall that $\d_h^P=\d_{g-h}^{P^c}$. Note that, in this expression, the
coefficients of $\d_h^P$ and $\d_{g-h}^{P^c}$ are equal as $d_P + d_{P^c} =
(2g-2)m$. Note, too, that one recovers Theorem~\ref{thm:phi} when $n=m=1$ and
$d_1=2g-2$.

Theorem~\ref{thm:eliashberg} follows directly from
Proposition~\ref{prop:res_phi} and the case $m=0$. The corresponding genus 1
statement is proved in the following section. The proof below fails in genus 1
as we cannot divide by $2g-2$.

\begin{proof}
This follows from Theorems~\ref{thm:phi} and \ref{phi2} and
Corollary~\ref{cor:pullback}. First observe that the coefficient of $\k_1$ in
$(2g-2)^2F_\dd^\ast\phihat_H$ is
$$
-\sum_j  d_j^2/2 - \sum_{j<k} d_j d_k = -(d_1 + \cdots + d_n)^2/2
= -(2g-2)^2m^2/2.
$$
Next, the coefficient of $\psi_j$ in $(2g-2)^2\,F_\dd^\ast \phihat_H$ is
\begin{align*}
(2g-2)\sum_{k\neq j} d_jd_k + \frac{1}{2}\, 4g(g-1)d_j^2 
&= (2g-2)\big[(d_1 + \cdots + d_n - d_j)d_j + g\,d_j^2\big] \cr
&= (2g-2)^2 d_j m + (2g-2)(g-1)d_j^2 \cr
&= (2g-2)^2(d_j m +d_j^2/2).
\end{align*}

Since
$$
\pi_{j,k}^\ast \d_0^{\{x_1,x_2\}} = \sum_{\{x_j,x_k\}\subseteq P} \d_0^P,
\quad j \neq k
$$
the coefficient of $\d_0^P$ in the expression for $(2g-2)^2 F_\dd^\ast
\phihat_H$ is
$$
-(2g-2)^2 \sum_{\{x_j,x_k\}\subseteq P} d_jd_k\, \d_0^P.
$$

When $1\le h \le g-1$ and $j\neq k$
$$
\pi_j^\ast \d_h^{\{x\}} = \sum_{x_j \in P} \d_h^P,\quad
\pi_{j,k}^\ast \d_h^{\{x_1,x_2\}} = \sum_{\{x_j,x_k\} \subseteq P} \d_h^P,
\text{ and }
\pi_{j,k}^\ast \d_h^{x_1} = \sum_{\substack{x_j \in P\cr x_k \in P^c}} \d_h^P.
$$
Then, computing formally, we see that the coefficient of $\d_h^P$ in
$(2g-2)^2F_\dd^\ast\phihat_H$ is the coefficient of $\d_h^P$ in 
\begin{multline*}
-\big(2(g-h)-1\big)^2\sum_j d_j^2\, \pi_j^\ast\, \d_h^{\{x\}}/2
-\big(2(g-h)-1\big)^2 \sum_{j<k} d_jd_k\,\pi_{j,k}^\ast\, \d_h^{\{x_1,x_2\}}
\cr
+ (2h-1)\big(2(g-h)-1\big)\sum_{j < k}
d_j d_k \pi_{j,k}^\ast \big(\d_h^{x_1}+ \d_{g-h}^{x_2}\big)/2,
\end{multline*}
which is
\begin{align*}
&-\frac{\big(2(g-h)-1\big)^2}{2}
\bigg(\sum_{x_j \in P} d_j^2 +
\sum_{\substack{x_j,x_k\in P\cr x_j \neq x_k}} d_j d_k\bigg)
+ \frac{(2h-1)\big(2(g-h)-1\big)}{2}
\sum_{\substack{x_j\in P\cr x_k \in P^c}} d_j d_k \cr
&= -\frac{1}{2}\Big(\big(2(g-h)-1\big)^2 d_P^2
-\big(2(g-h)-1\big)(2h-1)d_Pd_{P^c}\Big).
\end{align*}
Since $\d_h^P = \d_{g-h}^{P^c}$, the coefficient of $\d_h^P$ can be chosen (and
will be chosen) to be the average of the formally computed coefficients of
$\d_h^P$ and $\d_{g-h}^{P^c}$, which is
\begin{align*}
&-\frac{1}{4}\Big(\big(2(g-h)-1\big)^2 d_P^2
-\big(2(g-h)-1\big)(2h-1)d_Pd_{P^c}+(2h-1)^2 d_{P^c}^2\Big)
\cr
&= -\frac{1}{4}\Big(\big(2(g-h)-1\big)d_P
-(2h-1)d_{P^c}\Big)^2
\cr
&= -\frac{1}{4}\Big(\big(2(g-h)-1\big)d_P
+(2h-1)\big(d_P -(2g-2)m\big)\Big)^2
\cr
&=  -\frac{(2g-2)^2}{4}\big(d_P -(2h-1)m\big)^2
\end{align*}
as $d_P + d_{P^c}=(2g-2)m$.
\end{proof}

\subsection{Variants}

Theorem~\ref{pullback_phi} can be adapted to establish more general results.
Suppose that $r$ divides $2g-2$ and suppose that $\dd = (d_1,\dots,d_n)\in \Z^n$ and $m\in
\Z$ satisfy
$$
\sum_{j=1}^n d_j = m(2g-2)/r.
$$
Let $f : X \to \Mbar_{g,n}-\D_0^\sing$ be a morphism over which there is a
globally defined $r$th root $\alpha$ of the relative dualizing sheaf. Then one
has the section $E_\dd$ of $\J_{g,n}^c$ over $X$ defined by
$$
E_\dd : x \mapsto - m\alpha + \sum_{j=1}^n d_j x_j \in \Jac C.
$$
where $f(x) = [C;x_1,\dots,x_n]$. As above, $d_P = \sum_{x_j \in P} d_j$.

\begin{theorem}
\label{thm:variant}
The class $E_\dd^\ast \phihat_H \in H^2(X)$ is the pullback along $f$ of the
class
\begin{multline*}
-(m/r)^2\kappa_1/2 +
\sum_{j=1}^n (d_j(m/r)+d_j^2/2)\,\psi_j - \sum_{P\subseteq I}
\sum_{\{x_j,x_k\}\subseteq P} d_j d_k\, \d_0^P \cr
-\frac{1}{4} \sum_{P\subseteq I}\sum_{h=1}^{g-1}
\big(d_P - (2h-1)(m/r)\big)^2\, \d_h^P \in H^2(\Mbar_{g,n}).
\end{multline*}
\end{theorem}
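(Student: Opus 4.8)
The plan is to deduce Theorem~\ref{thm:variant} from Theorem~\ref{pullback_phi} by rescaling $\dd$ so that the hypothesis $\sum_j d_j=(2g-2)m$ of that theorem is satisfied, and then correcting by the scaling property of $\phihat_H$ recorded in Proposition~\ref{prop:phi_extends}. Set $\dd'=r\dd$. Since $\sum_j d_j = m(2g-2)/r$, we have $\sum_j rd_j = (2g-2)m$, so the pair $(\dd',m)$ meets the hypothesis of Theorem~\ref{pullback_phi}, and $F_{\dd'}$ is the section $[C;x_1,\dots,x_n]\mapsto \sum_j rd_j\,x_j - mK_C$ of $\J_{g,n}'$ over $\Mbar_{g,n}-\D_0^\sing$.

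First I would identify $E_\dd$ with $F_{\dd'}$ after multiplication by $r$ in the fibres. Let $[r]:\J_{g,n}'\to\J_{g,n}'$ be fibrewise multiplication by $r$. Because $\alpha^{\otimes r}\cong\omega$ along $f$, the class $r\alpha$ equals $K_C$ in each jacobian, so
$$[r]\circ E_\dd = -mr\alpha + \sum_{j=1}^n rd_j\,x_j = -mK_C + \sum_{j=1}^n rd_j\,x_j = F_{\dd'}\circ f$$
as sections of $\J_{g,n}'$ over $X$; here the marked points $x_j$ and the root $\alpha$ are globally defined over $X$, which is exactly what makes $E_\dd$, and hence this identity, available. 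I would then pull back $\phihat_H$: using $[r]^\ast\phihat_H=r^2\phihat_H$ from Proposition~\ref{prop:phi_extends} and functoriality,
$$r^2\,E_\dd^\ast\phihat_H = E_\dd^\ast[r]^\ast\phihat_H = ([r]\circ E_\dd)^\ast\phihat_H = f^\ast\big(F_{\dd'}^\ast\phihat_H\big)\in H^2(X).$$

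Thus $E_\dd^\ast\phihat_H = r^{-2}f^\ast(F_{\dd'}^\ast\phihat_H)$, and it remains to insert the formula of Theorem~\ref{pullback_phi} for $F_{\dd'}^\ast\phihat_H$ (that is, the displayed formula with each $d_j$ replaced by $rd_j$, each $d_P$ by $rd_P$, and $m$ unchanged) and divide by $r^2$. The $\k_1$-coefficient becomes $-m^2/(2r^2)=-(m/r)^2/2$; the $\psi_j$-coefficient becomes $(rd_jm+r^2d_j^2/2)/r^2 = d_j(m/r)+d_j^2/2$; the $\d_0^P$-coefficient becomes $r^2d_jd_k/r^2=d_jd_k$; and the $\d_h^P$-coefficient becomes $-\tfrac14(rd_P-(2h-1)m)^2/r^2 = -\tfrac14(d_P-(2h-1)(m/r))^2$. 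This is precisely the asserted formula, read as the pullback along $f$ of a class in $H^2(\Mbar_{g,n})$.

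The step I expect to be the main obstacle is the first one: justifying the identity $[r]\circ E_\dd=F_{\dd'}\circ f$ as sections of the \emph{extended} family $\J_{g,n}'$ over $\Mbar_{g,n}-\D_0^\sing$, and not merely over $\M_{g,n}^c$. Over the smooth locus this is just the relation $r\alpha=K_C$ in the jacobian, but one must check that both sides extend to the same section across the codimension-one boundary, so that the subsequent use of $[r]^\ast\phihat_H=r^2\phihat_H$ (an identity of \emph{extended} classes coming from the Deligne/Lear extension) is legitimate. Once this compatibility with the extension is in place, the remainder is the routine bookkeeping displayed above.
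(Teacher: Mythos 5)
Your proposal is correct and is essentially identical to the paper's own proof: the paper likewise observes that $F_{r\dd}\circ f = r\,E_\dd$ as sections over $\Mbar_{g,n}-\D_0^\sing$, invokes the scaling property $[e]^\ast\phihat_H = e^2\phihat_H$ of the extended class to get $E_\dd^\ast\phihat_H = r^{-2}f^\ast F_{r\dd}^\ast\phihat_H$, and then applies Theorem~\ref{pullback_phi} to $r\dd$. Your explicit coefficient bookkeeping and your remark that the identity of sections need only be checked on a dense open set (the family being separated over $\Mbar_{g,n}-\D_0^\sing$) merely spell out what the paper leaves implicit.
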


\begin{proof}
Since the diagram
$$
\xymatrix{
& \J_{g,n}\ar[d] \cr
X \ar[r]_(.3)f\ar[ur]^{rE_\dd} & \Mbar_{g,n}-\D_0^\sing \ar@/_1pc/[u]_{F_{r\dd}}
}
$$
commutes (i.e., $F_{r\dd}\circ f = r E_\dd$), and since the extended class
$\phihat_H$ satisfies $[e]^\ast\phihat_H = e^2\phihat_H$, we have
$$
E_\dd^\ast \phihat_H = r^{-2} E_{r\dd}\phihat_H
= r^{-2} f^\ast F_{r\dd}\phihat_H.
$$
The result now follows from Theorem~\ref{pullback_phi}.
\end{proof}

This result can be used to give a partial solution to a problem posed to me by
Joe Harris. Suppose that $\dd = (d_1,\dots,d_n) \in \Z^n$ satisfies
$\sum_{j=1}^n d_j = g-1$. Then one has the section
$$
G_\dd : [C;x_1,\dots,x_n] \mapsto \sum_j d_j x_j \in \Pic^{g-1} C
$$
of the relative Picard bundle
$$
\cP_{g,n} := \Pic^{g-1}_{\cC_{g,n}/\M_{g,n}}
$$
over $\M_{g,n}$. This contains the divisor $W$ of effective divisor classes of
degree $g-1$. The pullback $G_\dd^\ast W$ is the divisor in $\M_{g,n}$
consisting of those $[C;x_1,\dots,x_n]$ where $h^0(C,\sum d_j x_j)>0$. Denote
its closure in $\Mbar_{g,n}$ by $W_\dd$. Harris' problem is to compute the class
of $W_\dd$ in terms of standard classes. This is a subtle problem as
$\cP_{g,n}$ is not separated over $\M_{g,n}^c$.

Although we cannot solve this problem, we can solve the following closely
related problem. Let $X \to \Mbar_{g,n}-\D_0^\sing$ be any dominant morphism on
which there is a globally defined theta characteristic $\alpha$. Denote the
inverse image of $\M_{g,n}^c$ in $X$ by $X^c$ and the inverse image in $X$ of
$\M_{g,n}$ by $X^o$. Denote the universal jacobian over $X$ by $\J_X$ and its
restrictions to $X^c$ and $X^o$ by $\J_X^c$ and $\J_X^o$, respectively. Denote
the pullback of $\cP_{g,n}$ to $X^o$ by $\cP_X$. Then $\alpha$ defines an
isomorphism of $\cP_X$ with $\J_X^o$. Under this isomorphism $G_\dd$ corresponds
to the section
$$
F_\dd : [C;x_1,\dots,x_n] \mapsto -\alpha + \sum_{j=1}^n d_j x_j \in \Jac C
$$
and the pullback of the divisor $W$ to $\cP_X$ corresponds to the divisor
$\Theta_\alpha$ which is defined locally by the theta function
$\vartheta_\alpha(z,\Omega)$ that corresponds to $\alpha$.

The section $F_\dd$ extends to a section of $\J_X$ over $X$ and the divisor
$\Theta_\alpha$ extends to $\J_X$. One therefore has the class
$F_\dd^\ast\Theta_\alpha$ in $H^2(X)$. Its restriction to $H^2(X^o)$ is the
pullback of the class of $W_\dd$.

\begin{theorem}
\label{thm:harris}
The class of $F_\dd^\ast \Theta_\alpha$ in $H^2(X)$ is
\begin{multline*}
\d_0/8 -\lambda_1 +
\sum_{j=1}^n (d_j+d_j^2)\,\psi_j/2 - \sum_{P\subseteq I}
\sum_{\{x_j,x_k\}\subseteq P} d_j d_k\, \d_0^P \cr
-\frac{1}{4} \sum_{P\subseteq I}\sum_{h=1}^{g-1}
\big(d_P^2 - (2h-1)d_P + h^2 - h\big)\, \d_h^P.
\end{multline*}
\end{theorem}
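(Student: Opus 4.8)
The plan is to reduce the computation to Theorem~\ref{thm:variant} by means of the relation $\theta = \phi_H + \lambda_1/2$ in $\Pic\J_g^c$ recorded in Section~\ref{sec:cpt_type}. The section $F_\dd$ is exactly the section $E_\dd$ of Theorem~\ref{thm:variant} in the case $r=2$, $m=1$: a theta characteristic $\alpha$ is a square root of the dualizing sheaf, so $r=2$, and the hypothesis $\sum_j d_j = g-1 = (2g-2)/2$ is precisely $\sum_j d_j = m(2g-2)/r$ with $m=1$. Hence $m/r = 1/2$, and Theorem~\ref{thm:variant} computes $F_\dd^\ast\phihat_H$. Granting that the relation extends to the universal jacobian over $X$ as $\Theta_\alpha = \phihat_H + \lambda_1/2$ (discussed below), one gets $F_\dd^\ast\Theta_\alpha = F_\dd^\ast\phihat_H + \lambda_1/2$, since $\lambda_1$ is pulled back from the base and $F_\dd$ is a section.

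It then remains to rewrite $F_\dd^\ast\phihat_H + \lambda_1/2$ in the stated basis. First I would insert the output of Theorem~\ref{thm:variant} with $m/r = 1/2$, whose $\kappa_1$-coefficient is $-\tfrac18$. Using $\kappa_1 = 12\lambda_1 - \d$ together with the pullback formula $\d = \d_0 + \tfrac12\sum_{h=1}^{g-1}\sum_{P\subseteq I}\d_h^P$ for the total boundary along $\Mbar_{g,n}\to\Mbar_g$ (the genus-$0$ tails $\d_0^P$ do not occur, being contracted), one finds
\begin{equation*}
-\tfrac18\kappa_1 + \tfrac12\lambda_1 = -\lambda_1 + \tfrac18\d_0 + \tfrac{1}{16}\sum_{h=1}^{g-1}\sum_{P\subseteq I}\d_h^P .
\end{equation*}
The $\psi_j$- and $\d_0^P$-coefficients are already in final form. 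Combining the residual $\tfrac{1}{16}\d_h^P$ with the coefficient $-\tfrac14\big(d_P - (2h-1)/2\big)^2$ coming from Theorem~\ref{thm:variant}, and using
\begin{equation*}
\big(d_P - \tfrac{2h-1}{2}\big)^2 - \tfrac14 = d_P^2 - (2h-1)d_P + h^2 - h ,
\end{equation*}
yields exactly the coefficient $-\tfrac14\big(d_P^2 - (2h-1)d_P + h^2 - h\big)$ of $\d_h^P$ in the statement. This step is routine bookkeeping once the extension relation is in hand.

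The main obstacle is justifying the extended relation $\Theta_\alpha = \phihat_H + \lambda_1/2$ across the boundary. Over the compact-type locus this is the cited relation on $\J_g^c$, so the difference $\Theta_\alpha - \phihat_H - \lambda_1/2$ restricts to $0$ on $\J_X^c$; by the Gysin sequence it is therefore supported over the non-compact-type boundary, which in $X$ is the inverse image of $\d_0$. Pulling back along the section $F_\dd$, this difference can only be a multiple $c\,\d_0$, and the crux is to show $c=0$, i.e.\ that the algebraic extension of the symmetric theta divisor and the Lear extension $\phihat_H$ agree across $\d_0$. I would establish this by identifying $\phihat_H$ with (half of) the extended biextension line bundle $\Bhat$ attached to the principal polarization $S_H$, whose extension across codimension-one strata is governed by Lear's Theorem~\ref{thm:lear}; the symmetric theta line bundle defined by $\alpha$ is this same metrized bundle up to the $\lambda_1/2$ normalization. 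Alternatively, $c$ can be pinned down directly by the test-curve method used for the $\d_0$-coefficient in the proof of Theorem~\ref{thm:phi}, restricting to the hyperelliptic locus and invoking the Cornalba--Harris relation. Either route localizes the analysis to the semiabelian degeneration along $\d_0$, which is where the genuine content lies; the remaining $\d_h^P$ strata with $h\ge1$ are of compact type and are already accounted for by Theorem~\ref{thm:variant}.
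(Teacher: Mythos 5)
Your reduction to Theorem~\ref{thm:variant} (with $r=2$, $m=1$, hence $m/r=1/2$) and the subsequent bookkeeping using $\kappa_1 = 12\lambda_1 - \d_0 - \tfrac12\sum_{h=1}^{g-1}\sum_{P\subseteq I}\d_h^P$ are exactly what the paper does, and your coefficient computations are correct. The problem is the step you yourself flag as the crux: proving $\theta_\alpha = \phihat_H + \lambda_1/2$ in $H^2(\J_g')$, i.e.\ showing $c=0$ in $\theta_\alpha = \phihat_H + \lambda_1/2 + c\,\d_0$. Your route (a) is circular: asserting that the symmetric theta line bundle ``is this same metrized bundle up to the $\lambda_1/2$ normalization'' across $\d_0$ is precisely the statement $c=0$ in disguise. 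The identification is fine over $\J_g^c$, but the whole issue is whether the \emph{algebraic} extension of the divisor $\Theta_\alpha$ across $\d_0$ agrees with the \emph{Lear} (continuously metrized) extension; that comparison requires controlling the asymptotics of the theta function and its norm near the semiabelian degeneration, which is genuine content you have not supplied. Your route (b), a hyperelliptic test-curve computation with Cornalba--Harris, could in principle pin down $c$, but you have not carried it out, and it is more work than necessary.

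The paper's resolution is different from both of your routes and much simpler: restrict the identity $\theta_\alpha = \phihat_H + \lambda_1/2 + c\,\d_0$ to the zero section of $\J_g'$. By Proposition~\ref{prop:phi_extends}, $\phihat_H$ restricts to zero there (forced by $[e]^\ast\phihat_H = e^2\phihat_H$, since $[e]$ preserves the zero section and acts trivially on its cohomology), while $\theta_\alpha$ restricts to $\lambda_1/2$ because the theta null $\vartheta_\alpha(0,\Omega)$ is a modular form of weight $1/2$ for a finite-index subgroup of $\Sp_g(\Z)$. Since $\d_0\neq 0$ in $H^2(\Mbar_g-\D_0^\sing)\cong H^2(\Mbar_g)$, this forces $c=0$. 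Note also that the same fiber-plus-zero-section argument is what establishes the compact-type relation $\theta_\alpha = \phi_H + \lambda_1/2$ on $\J_g^c$ in the first place; you cite it from Section~\ref{sec:cpt_type}, but it is only stated there in passing, and the paper actually proves it inside the proof of Theorem~\ref{thm:harris}. Replacing your routes (a)/(b) with this zero-section restriction would complete your argument.
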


The class $-\d_0/8+F_\dd^\ast\Theta_\alpha$ is the pullback of an integral class
from $\Mbar_{g,n}$ as $\d_h^P = \d_{g-h}^{P^c}$ and because each of $d_j +
d_j^2$, $d_P^2 - (2h-1)d_P$ and $h^2 - h$ is even.

\begin{proof}
The first step is to show that the class $\theta_\alpha$ of $\Theta_\alpha$
satisfies
\begin{equation}
\label{eqn:theta}
\theta_\alpha = \phihat_H + \lambda_1/2 \in H^2(\J_g'),
\end{equation}
where $\J_g'$ denotes the universal jacobian over $\Mbar_g-\D_0^\sing$. Granted
this, the result follows from Theorem~\ref{thm:variant} as
$$
F_\dd^\ast(\theta_\alpha) = F_\dd^\ast(\phihat_H + \lambda_1/2)
= \lambda_1/2 + F_\dd^\ast(\phihat_H) 
$$
and
$$
\k_1 = 12\lambda_1 - \d =
12\lambda_1 - \d_o -\frac{1}{2} \sum_{j=1}^h \sum_{P\subseteq I} \d_h^P.
$$

To prove (\ref{eqn:theta}), first note that the relation
$\theta_\alpha = \phi_H + \lambda_1/2$
holds in $H^2(\J_g^c)$. This is because the restrictions of $\theta_\alpha$ and
$\phi_H$ to each fiber of $\J_g^c \to \M_g^c$ are equal and because the
restriction of $\phi_H$ to the zero section vanishes, while the restriction of
$\theta_\alpha$ to the zero section has class $\lambda_1/2$ as the corresponding
theta null $\vartheta_\alpha(0,\Omega)$ is a modular form of weight $1/2$ for
some finite index subgroup of $\Sp_g(\Z)$. Since $\phihat_H$, $\theta_\alpha$
and $\lambda_1$ are all classes of line bundles, and since $\J_g - \J_g^c$ is
the restriction of $\J_g'$ to $\Delta_0$, it follows that
$$
\theta_\alpha = \phihat_H + \lambda_1/2 + c\d_0 \in H^2(\J_g')
$$
Restricting both sides to the zero section implies that
$$
\lambda_1/2 = 0 + \lambda_1/2 + c\d_0
\in H^2(\Mbar_g-\D_0^\sing) \cong H^2(\Mbar_g),
$$
which implies that $c=0$, as required.
\end{proof}

\section{Eliashberg's Problem in Genus $1$}

The solution of Eliashberg's problem over $\M_{g,n}^c$ in genus $\ge 2$ given in
the previous section fails when $g=1$ as we cannot divide by $2g-2$. However, a
variant of the methods of the previous section gives a complete solution in
genus $1$.

When $g=1$, the class of $F_\dd^\ast\eta_1$ naturally lives in
$H^2(\Mbar_{1,n+1})$ as the locus of indeterminacy $\D_0^\sing$ of $F_\dd$ has
codimension $\ge 2$, whereas $F_\dd \eta_1$ is the class of a divisor, and thus
extends uniquely from $\Mbar_{1,n+1}-\D_0^\sing$ to $\Mbar_{1,n+1}$.

\begin{theorem}
\label{thm:elliptic}
If $\dd=(d_0,\dots,d_n)\in \Z^{n+1}$ satisfies $\sum_j d_j = 0$, then
$$
F_\dd^\ast \eta_1 =
\big(-1+(d_0^2 + \dots + d_n^2)/2\big)\lambda_1
 - \sum_{P\subseteq I}
\sum_{\{x_j,x_k\}\subseteq P} d_j d_k\, \d_0^P
\in H^2(\Mbar_{1,n+1}).
$$
\end{theorem}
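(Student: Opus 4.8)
The plan is to reduce the genus $1$ computation to a single universal class on $\Mbar_{1,2}$, exactly as the previous section reduced the case $g\ge 2$ to $\K^\ast\phihat_H$ and $(\K\times\K)^\ast\phihat_\Delta$, but with the normal function $\K$ (which vanishes identically in genus $1$) replaced by the difference normal functions $\cD_{j,k}$. This substitution is possible precisely because $\sum_j d_j=0$, so that $F_\dd$ can be built directly out of the $\cD_{j,k}$.

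First I would split off the $\lambda_1$ term by passing from the zero-section class $\eta_1$ to the extended biextension class $\phihat_H$. Over the compact-type locus one has $\eta_1=\phi_H$ by the genus $1$ case of Proposition~\ref{prop:res_phi}, so on $\J_1'\to\Mbar_{1,1}$ the difference $\eta_1-\phihat_H$ is supported over the single boundary point $\D_0$; a localization argument forces it to be a multiple $c\,\pi^\ast\lambda_1$, and restricting to the zero section (whose normal bundle is $\check\F$, so that $\eta_1$ restricts to $-\lambda_1$ by Proposition~\ref{prop:res_lambda} while $\phihat_H$ restricts to $0$) gives $c=-1$. Pulling back to $\Mbar_{1,n+1}$ and using that the indeterminacy locus $\D_0^\sing$ has codimension $\ge2$, I obtain the identity of divisor classes
\[
F_\dd^\ast\eta_1=F_\dd^\ast\phihat_H-\lambda_1\in H^2(\Mbar_{1,n+1}).
\]
This is where the constant $-1$ in front of $\lambda_1$ originates: it is the genus $1$ avatar of the failure of Proposition~\ref{prop:res_phi} over $\Mbar_g$.

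Next I would compute $F_\dd^\ast\phihat_H$. Writing $c(F_\dd)=\sum_j d_j\,c(\cD_{j,0})$ and invoking the extended-class version of Lemma~\ref{sum} provided by the scaling argument of Corollary~\ref{cor:pullback}, the pullback of $\phihat_H$ is a quadratic function of the section, so the polarization identity (using $\sum_j d_j=0$ to cancel the diagonal terms) yields
\[
F_\dd^\ast\phihat_H=-\sum_{0\le j<k\le n} d_j d_k\,\cD_{j,k}^\ast\phihat_H.
\]
Since $\cD_{j,k}$ depends only on $(C;x_j,x_k)$, it factors as the projection $p_{j,k}\colon\Mbar_{1,n+1}\to\Mbar_{1,2}$ followed by the universal difference section $\cD$, so $\cD_{j,k}^\ast\phihat_H=p_{j,k}^\ast(\cD^\ast\phihat_H)$ and the whole problem is reduced to the single class $\cD^\ast\phihat_H\in H^2(\Mbar_{1,2})$. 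The core evaluation is $\cD^\ast\phihat_H=\lambda_1+\d_0^{\{x_0,x_1\}}$: restricting to a fibre $E\times E$, the class $\cD^\ast\phi_H$ is $\cD^{-1}(o)=\{x_0=x_1\}$, the diagonal, which pins the coefficient of $\d_0^{\{x_0,x_1\}}$ to $1$ and kills those of $\psi_0,\psi_1$, while the $\lambda_1$ coefficient follows from $\cD^\ast\phihat_H=\cD^\ast\eta_1+\lambda_1$ together with $\cD^\ast\eta_1=\d_0^{\{x_0,x_1\}}$. Assembling with $p_{j,k}^\ast\lambda_1=\lambda_1$, $p_{j,k}^\ast\d_0^{\{x_0,x_1\}}=\sum_{\{x_j,x_k\}\subseteq P}\d_0^P$ and $\sum_{j<k}d_jd_k=-\tfrac12\sum_j d_j^2$ gives $F_\dd^\ast\phihat_H=\tfrac12(\sum_j d_j^2)\lambda_1-\sum_{P}\sum_{\{x_j,x_k\}\subseteq P}d_jd_k\,\d_0^P$, and subtracting $\lambda_1$ produces the stated formula.

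I expect the main obstacle to be the boundary bookkeeping for $\cD^\ast\phihat_H$ on $\Mbar_{1,2}$: controlling the extension of $\cD^\ast\phi_H$ across the strata where the universal jacobian degenerates and where $\cD$ meets the zero section. This is exactly the non-naturality (height jumping) flagged after Corollary~\ref{cor:extension} --- the naive extension of $\eta_1$ by the divisor $\{x_0=x_1\}$ misses a multiple of $\lambda_1$, and it is only after replacing $\eta_1$ by the canonically extended $\phihat_H$, and carrying the explicit $-\pi^\ast\lambda_1$ correction of the first paragraph, that the quadratic reduction of the second paragraph applies legitimately. Making the evaluation $\cD^\ast\phihat_H=\lambda_1+\d_0^{\{x_0,x_1\}}$ rigorous (in particular verifying that no extra $\d_0$ term appears) will require either a test-curve computation on $\Mbar_{1,2}$ in the spirit of Theorem~\ref{phi2} or the Néron-model description of the extended zero section.
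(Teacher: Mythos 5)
Your proposal is correct and takes essentially the same route as the paper: your identity $\eta_1=\phihat_H-\lambda_1$ is exactly the paper's Lemma~\ref{lem:phihat} ($\phihat_H=\d+\lambda_1$, proved by the same Gysin/$[e]^\ast$-eigenvector/zero-section restriction argument), and your polarization-identity step is precisely the paper's reduction, via $\trace_n$ and the decomposition $\phihat_\D^{j,k}=\phihat_{0,j}+\phihat_{0,k}-\phihat_{j,k}$, to $F_\dd^\ast\phihat_H=-\sum_{0\le j<k\le n}d_jd_k\,\phihat_{j,k}$. The only real difference is that the paper works on $\cE^n$ and settles the extension questions you flag (your third paragraph) purely cohomologically through the $[e]^\ast$-eigenvalue characterization, so no height-jumping analysis, test curve, or N\'eron-model input is actually needed in genus $1$.
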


The restriction of this class to $\M_{1,n+1}$ has been computed independently by
Cavalieri and Marcus \cite{cavalieri-marcus} using different methods.

Denote the universal elliptic curve $\J_1 \to \Mbar_{1,1}$ by $\cE$. Note that
$\cE = \Mbar_{1,2}-\D_0^\sing$. Its restriction to $\M_{1,1}$ is the universal
elliptic curve $\cE^c$ of ``compact type''. Since $\cE$ and $\Mbar_{1,2}$ differ
in codimension $2$, their second cohomology and Picard groups are isomorphic:
$$
H^2(\Mbar_{1,2}) \cong (\Pic \Mbar_{1,2})\otimes\Q \cong (\Pic \cE)\otimes\Q
\cong H^2(\cE).
$$
These groups are 2-dimensional with basis $\d:=\d_0^{\{x_1,x_2\}}$, the class of
the zero section $D$ of $\cE \to \Mbar_{1,1}$, and $\d_0$, the class of the
fiber over the cusp of $\Mbar_{1,1}$. The class $\lambda_1$ of the Hodge bundle
is $\d_0/12$.

We will deduce Theorem~\ref{thm:elliptic} from the corresponding result for
the $n$th power
$$
\cE^n = \underbrace{\cE\times_{\Mbar_{1,1}} \cdots \times_{\Mbar_{1,1}} \cE}_n.
$$
of the universal elliptic curve over $\Mbar_{1,1}$. A point in $\cE^n$
corresponds to the isomorphism class of an $n$-pointed elliptic curve
$(E;x_0,x_1,\dots,x_n)$ where $x_0 = 0$, the identity.\footnote{Note that an
$n$-pointed elliptic curves is an $(n+1)$-pointed genus $1$ curve.} Let $p :
\cE^n \to \Mbar_{1,1}$ be the canonical projection. For each $\dd =
(d_0,\dots,d_n)$ as above, there is a section of the pullback $p^\ast \cE \to
\cE^n$ of the universal elliptic curve defined by
$$
F_\dd(E;x_0,\dots,x_n) = \sum_{j=0}^n d_j x_j = \sum_{j=1}^n d_j x_j \in E.
$$

For $0\le j < k \le n$ let $D_{j,k}$ be the divisor in $\cE^n$ where $x_j =
x_k$. Denote its class in $H^2(\cE^n)$ by $\d_{j,k}$. Let $\D_0$ be the inverse
image of the moduli point of the nodal cubic under the projection $\cE^n \to
\Mbar_{1,1}$. Then
$$
H^2(\cE^n) \cong (\Pic \cE^n) \otimes \Q =
\Q\d_0 \oplus \bigoplus_{0\le j < k \le n} \Q \d_{j,k}.
$$
The pullback of $\d_{j,k}$ under the morphism $\pi : \Mbar_{1,n+1}-\D_0^\sing
\to \cE^n$ is
$$
\pi^\ast \d_{jk} = \sum_{\{x_j,x_k\}\subseteq P} \d_0^P \in \Pic \Mbar_{1,n+1}.
$$
Since $d_0 + \dots + d_n = 0$, $d_0^2 + \dots + d_n^2 = -2\sum_{0\le j < k \le n
} d_jd_k$. Since $F_\dd^\ast \lambda_1 = \lambda_1$, to prove
Theorem~\ref{thm:elliptic} it suffices to prove that
\begin{equation}
\label{eqn:reduction}
F_\dd^\ast(\d + \lambda_1) =
- \sum_{0\le j<k\le n} d_jd_k (\d_{j,k}+\lambda_1)
\in H^2(\cE^n).
\end{equation}

The key step in the proof of this statement is to show that $\delta + \lambda_1$
is a parallel, translation invariant class. This statement is made precise in
the following lemma. For a positive integer $e$, let $[e] : \cE \to \cE$ be
multiplication by $e$.

\begin{lemma}
\label{lem:phihat}
The class $\phi_H \in H^2(\cE^c)$ extends uniquely to a class $\phihat_H$ in
$H^2(\cE)$ that vanishes on the zero-section $D$. It is given by
$$
\phihat_H = \d + \lambda_1 \in H^2(\cE). 
$$
and is characterized by the two properties
$$
\int_{\D_0} \phihat_H = 1 \text{ and } [e]^\ast \phihat_H = e^2 \phihat_H
\text{ for all integers $e>1$.}
$$
\end{lemma}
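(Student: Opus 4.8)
The plan is to work entirely inside the two-dimensional space $H^2(\cE) \cong \Q\d \oplus \Q\d_0$ and to pin down $\phihat_H$ by its restrictions to a general fiber, to the zero-section $D$, and to the cusp fiber $\D_0$. First I would settle existence and uniqueness of the extension. The restriction map $H^2(\cE)\to H^2(\cE^c)$ is surjective with kernel $\Q\d_0$, since $\d_0$ is the class of the fiber over the cusp and $\cE^c$ is precisely the complement of that fiber; as $H^2(\cE^c)$ is one-dimensional, $\phi_H$ lifts and any two lifts differ by a multiple of $\d_0$. Restriction to $D\cong\Mbar_{1,1}$ sends $\d_0$ to the class of the cusp, namely $12\lambda_1\neq 0$, so exactly one lift restricts to zero on $D$, and this is the desired $\phihat_H$.

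Next I would compute the coefficients in $\phihat_H = a\,\d + b\,\d_0$. Integrating over a general elliptic fiber $E$ gives $a = \int_E\phihat_H = \int_E\w_H = 1$, using $\int_E\d = 1$ (the zero-section meets each fiber once) and $\int_E\d_0 = 0$ (distinct fibers are disjoint). To find $b$ I would impose the vanishing on $D$ together with the two intersection computations $\d|_D = c_1(\check\F) = -\lambda_1$ (the normal bundle of the zero-section is $\check\F$, the dual of the Hodge bundle) and $\d_0|_D = 12\lambda_1$. These give $0 = \phihat_H|_D = (-a + 12b)\lambda_1$, whence $b = 1/12$ and $\phihat_H = \d + \d_0/12 = \d + \lambda_1$, as claimed. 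The property $\int_{\D_0}\phihat_H = a = 1$ is then immediate from $\int_{\D_0}\d = D\cdot\D_0 = 1$ and $\int_{\D_0}\d_0 = \D_0^2 = 0$.

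The eigenvalue property is the crux. On $\cE^c$ one has $[e]^\ast\phi_H = e^2\phi_H$ because $[e]$ acts as multiplication by $e$ on $H^1$ of each fiber; since $\lambda_1$ restricts to zero on $\cE^c$, this reads $[e]^\ast\d = e^2\d$ on $\cE^c$, so globally $[e]^\ast\d = e^2\d + c\,\d_0$ for some rational $c$. I would determine $c$ by restricting to $D$: because $[e]$ fixes the zero-section pointwise, $([e]^\ast\d)|_D = \d|_D = -\lambda_1$, while the right side restricts to $(-e^2 + 12c)\lambda_1$, forcing $c = (e^2-1)/12$, i.e.\ $[e]^\ast\d = e^2\d + (e^2-1)\lambda_1$. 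Combined with $[e]^\ast\lambda_1 = \lambda_1$ and $[e]^\ast\d_0 = \d_0$, this yields $[e]^\ast\phihat_H = e^2\phihat_H$. Running the same bookkeeping on a general class $a\d + b\d_0$ shows that the two displayed properties force $a=1$ and $a/12 = b$ (for $e>1$), giving the uniqueness in the characterization.

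The main obstacle is the boundary behavior encoded in the constant $c$: one must verify that $[e]$ extends across $\D_0$ (it does, as multiplication by $e$ on the N\'eron model, acting by $z\mapsto z^e$ on the cusp fiber $\cong\C^\ast$), that it acts trivially on both $\d_0$ and $\lambda_1$, and then correctly account for the correction term $(e^2-1)\lambda_1$ that distinguishes $\d$ from a naive eigenclass. Everything else reduces cleanly to the two intersection numbers $\d|_D = -\lambda_1$ and $\d_0|_D = 12\lambda_1$, so once these are in hand the computation is routine.
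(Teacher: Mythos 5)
Your proof is correct, but it runs the paper's logic in the opposite direction, so it is worth comparing the two. The paper constructs $\phihat_H$ as the unique lift of $\phi_H$ that is an $e^2$-eigenvector of $[e]^\ast$: this exists and is unique because $[e]^\ast$ acts on the exact sequence $0\to\Q\d_0\to H^2(\cE)\to H^2(\cE^c)\to 0$ trivially on the kernel and by $e^2$ on the quotient, and the vanishing on $D$ then comes for free from $[e]_\ast[D]=[D]$, via $(e^2-1)\int_D\phihat_H=\int_{([e]_\ast-1)D}\phihat_H=0$; the identification with $\d+\lambda_1$ is then a check of two intersection numbers. You instead normalize the lift by the geometric condition $\phihat_H|_D=0$ (legitimate, since $\d_0|_D=12\lambda_1\neq 0$, so exactly one lift vanishes on $D$), compute it directly to be $\d+\lambda_1$ using $\int_E\w_H=1$, $\d|_D=-\lambda_1$ and $\d_0|_D=12\lambda_1$, and only afterwards establish the eigenproperty; this forces you to compute $[e]^\ast\d=e^2\d+(e^2-1)\lambda_1$ by restricting to $D$ and using that $[e]$ fixes $D$ pointwise. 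What each route buys: the paper's eigenvector argument is softer, never needs $[e]^\ast\d$ itself, and is exactly the mechanism of Proposition~\ref{prop:phi_extends}, so it generalizes to all $g$; your route produces the explicit and independently useful formula $[e]^\ast\d=e^2\d+(e^2-1)\lambda_1$ (the class of the $e$-torsion multisection) and reduces the final characterization to mechanical bookkeeping on a general class $a\d+b\d_0$. The one extra obligation your order of argument creates --- that $[e]$ actually extends as a morphism across the nodal fiber, so that $[e]^\ast$ acts on $H^2(\cE)$ at all --- you correctly discharge via the N\'eron model; note the paper needs this too but leaves it implicit.
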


\begin{proof}
The exact sequence
$$
0 \to \Q\d_0 \to H^2(\cE) \to H^2(\cE^c) \to 0.
$$
is invariant under $[e]^\ast$, which acts trivially on the kernel and by
multiplication by $e^2$ on the quotient. Since $\phi_H$ spans the right-hand
group, it follows that it has a unique lift $\phihat_H$ to $H^2(\cE)$ with the
property that $[e]^\ast\phihat_H = e^2\phihat_H$.

Since $[e]_\ast [D] = [D]$, we have
$$
(e^2 - 1)\int_D\phihat_H = \int_D [e]^\ast\phihat_H - \int_D\phihat_H
= \int_{([e]_\ast - 1)D} \phihat_H = 0.
$$
Since $e > 1$, this implies the vanishing $\int_D \phihat_H = 0$ of $\phihat_H$
on $D$. Since $\D_0$ is the class of the fiber of $\cE \to \Mbar_{1,1}$,
$\int_{\D_0}\phihat_H = 1$. Since $\d$ and $\d_0$ span $H^2(\cE)$ and since its
intersection pairing is non-singular, these two properties characterize
$\phihat_H$.

To prove that $\phihat_H = \delta + \lambda_1$, it suffices to show that
$(\d+\lambda_1)\cdot \d = 0$ and that $(\d+\lambda_1)\cdot \d_0 = 1$. Since the
Chen class of the normal bundle of $D$ in $\cE$ is $-\lambda_1$, we have
$$
\int_D\d = D^2 =  -\int_D \lambda_1,
$$
which implies that $(\d+\lambda_1)\cdot\d = 0$. Since $\d_0$ is the class of a
fiber of $\cE \to \Mbar_{1,1}$, $\d_0 \cdot \lambda_1 = 0$, so that
$\d_0\cdot(\d+\lambda_1) = \d_0\cdot \d = 1$.
\end{proof}

Each unordered pair $\{j,k\}$ of integers in $[0,n]$ determines a parallel class
$\phihat_{j,k} \in H^2(\cE^n)$ as follows. Let $p_{j,k} : \cE^n \to \cE$ be the
projection that takes $[E:x_0,\dots,x_n]$ to $[E;x_j,x_k]=[E;0,x_k-x_j]$. 
Observe that $D_{j,k} = p_{j,k}^\ast D$. Set $\phihat_{j,k} = p_{j,k}^\ast
\phihat_H$. Lemma~\ref{lem:phihat} implies that $\phihat_{j,k} = \d_{j,k} +
\lambda_1$. Combining this with (\ref{eqn:reduction}), we deduce that, to prove
Theorem~\ref{thm:elliptic}, it suffices to show that
\begin{equation}
\label{eqn:reduction2}
F_\dd^\ast \phihat_H = - \sum_{0\le j<k\le n} d_jd_k \phihat_{j,k}.
\end{equation}

\begin{lemma}
The class $\phi_\D \in H^2(\cE^c\times_{\M_{1,1}}\cE^c)$ extends to a class
$\phihat_\D$ in $H^2(\cE^2)$ with the property that $[e]^\ast \phihat_\D =
e^2\phihat_\D$. Specifically,
\begin{equation}
\label{eqn:phiD}
\phihat_\D =
p_{0,1}^\ast\phihat_H + p_{0,2}^\ast\phihat_H - p_{1,2}^\ast\phihat_H
\in H^2(\cE^2).
\end{equation}
\end{lemma}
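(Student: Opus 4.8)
The plan is to read the asserted formula as the \emph{definition} of $\phihat_\Delta$ and then verify the two required properties; the uniqueness of such an extension will follow, exactly as in Lemma~\ref{lem:phihat}, from the $[e]^\ast$-eigenvalue argument applied to the restriction sequence $H^2(\cE^2)\to H^2(\cE^c\times_{\M_{1,1}}\cE^c)$. The first thing I would record is that the three maps $p_{0,1},p_{0,2},p_{1,2}:\cE^2\to\cE$ are morphisms over $\Mbar_{1,1}$: the first two are the factor projections, and $p_{1,2}$ is the fibrewise difference $(E;0,x_1,x_2)\mapsto(E;0,x_2-x_1)$, which is a morphism because $\cE\to\Mbar_{1,1}$ is a semi-abelian group scheme (its fibre over the cusp, the node having been removed with $\D_0^\sing$, is $\Gm$). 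Hence the right-hand side $p_{0,1}^\ast\phihat_H+p_{0,2}^\ast\phihat_H-p_{1,2}^\ast\phihat_H$ is a well-defined class in $H^2(\cE^2)$.

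Next I would check the eigenvalue property. Each $p_{j,k}$ commutes with multiplication by $e$ — for $p_{1,2}$ this is the identity $e(x_2-x_1)=(ex_2)-(ex_1)$ — and $[e]^\ast\phihat_H=e^2\phihat_H$ by Lemma~\ref{lem:phihat}, so $[e]^\ast p_{j,k}^\ast\phihat_H=p_{j,k}^\ast[e]^\ast\phihat_H=e^2 p_{j,k}^\ast\phihat_H$. Thus each summand, and therefore the whole right-hand side, is an $e^2$-eigenvector of $[e]^\ast$.

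The substantive point is the identification of the restriction to $\cE^c\times_{\M_{1,1}}\cE^c$ with $\phi_\Delta$. All the classes in sight are parallel and translation invariant in the sense of Lemma~\ref{lem:gen_lifts}: $\phi_\Delta$ by its definition, and each $p_{j,k}^\ast\phihat_H$ because $p_{j,k}$ is fibrewise a homomorphism induced by a map of the underlying local systems, and so carries parallel translation-invariant forms to parallel translation-invariant forms. By Lemma~\ref{lem:gen_lifts} such classes are determined by their restriction to a single fibre, so it suffices to prove the identity on one $E\times E$. There it reduces to the linear-algebra statement $\mathrm{pr}_1^\ast S_H+\mathrm{pr}_2^\ast S_H-d^\ast S_H=S_\Delta$ for the bilinear forms on $\H\oplus\H$ attached to the maps $\mathrm{pr}_1,\mathrm{pr}_2,d:(u,v)\mapsto u,\,v,\,v-u$; expanding $d^\ast S_H\big((u_1,v_1),(u_2,v_2)\big)=S_H(v_1-u_1,v_2-u_2)$ and using the skew-symmetry of $S_H$ collapses the left-hand side to $S_H(u_1,v_2)-S_H(u_2,v_1)$, which is exactly $S_\Delta\big((u_1,v_1),(u_2,v_2)\big)$.

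Combining these three points shows that the right-hand side is an extension of $\phi_\Delta$ satisfying $[e]^\ast(-)=e^2(-)$, which proves the lemma. The one step requiring genuine geometric care, rather than formal manipulation, is the claim that $p_{1,2}$ extends across the cusp as a morphism $\cE^2\to\cE$ — equivalently, that the semi-abelian group law on $\cE/\Mbar_{1,1}$ is defined on the full fibre product; this is precisely where one uses that $\D_0^\sing$ has been removed so that the boundary fibres are $\Gm$ rather than nodal cubics. Everything else is formal. As a consistency check I would restrict to the diagonal $x_1=x_2$: there $p_{1,2}$ lands on the zero section, on which $\phihat_H$ vanishes by Lemma~\ref{lem:phihat}, so the formula gives $2\phihat_H$, in agreement with the value of $\phihat_\Delta$ on the diagonal used in the proof of Theorem~\ref{phi2}.
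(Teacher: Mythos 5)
Your proof is correct and follows essentially the same route as the paper's: the $[e]^\ast$-eigenvalue property is formal, and the identification of the restriction with $\phi_\Delta$ is reduced, via parallel translation-invariant representatives, to a computation on a single fibre $E\times E$. The only cosmetic difference is that you verify the fibre identity as the bilinear-form identity $\mathrm{pr}_1^\ast S_H+\mathrm{pr}_2^\ast S_H-d^\ast S_H=S_\Delta$, where the paper expands $p_{1,2}^\ast\phi_H$ in a symplectic basis of $H^1(E)$ --- the same computation in coordinate-free form; your added observations (that $p_{1,2}$ extends over the cusp because $\cE\to\Mbar_{1,1}$ is semi-abelian, and the diagonal consistency check) are sound but not needed beyond what the paper already assumes.
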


\begin{proof}
The class given by (\ref{eqn:phiD}) is clearly an eigenvector of $[e]$ with
eigenvalue $e^2$. To prove the result, it suffices to prove that its restriction
to $(\cE^c)^2 := \cE^c\times_{\M_{1,1}}\cE^c$ is $\phi_\D$. Since the
restriction to $(\cE^c)^2$ of all classes in the formula are represented by
parallel, translation invariant forms, it suffices to check the formula for a
single smooth elliptic curve $E$. Let $a,b$ be a symplectic basis of $H^1(E)$.
Identify $H^\dot(E^2)$ with $H^\dot(E)\otimes H^\dot(E)$. Then a routine
computation shows that the restriction of $p_{1,1}^\ast\phi_H$ to $E^2$ is
$$
(a\wedge b)\otimes 1 + 1\otimes(a\wedge b) - (a\otimes b - b\otimes a)
= \big(p_{0,1}^\ast \phi_H + p_{0,2}^\ast \phi_H - \phi_\D\big)|_{E^2},
$$
as required.
\end{proof}

For each $0\le j < k \le n$ define $\phihat^{j,k}_\D = p_{j,k}^\ast \phihat_\D$.

\begin{corollary}
If $0 < j < k\le n$, then
$\phihat^{j,k}_\D = \phihat_{0,j} + \phihat_{0,k} - \phihat_{j,k}$.
\end{corollary}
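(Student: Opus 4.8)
The plan is to derive the formula by pulling back the identity (\ref{eqn:phiD}) of the preceding lemma along the projection $p_{j,k} : \cE^n \to \cE^2$ that records only the pair $(x_j,x_k)$, and then invoking functoriality of pullback. Since $\phihat^{j,k}_\D = p_{j,k}^\ast \phihat_\D$ by definition, applying $p_{j,k}^\ast$ to (\ref{eqn:phiD}) and using $p_{j,k}^\ast p_{a,b}^\ast = (p_{a,b}\circ p_{j,k})^\ast$ gives
$$
\phihat^{j,k}_\D = (p_{0,1}\circ p_{j,k})^\ast \phihat_H + (p_{0,2}\circ p_{j,k})^\ast \phihat_H - (p_{1,2}\circ p_{j,k})^\ast \phihat_H.
$$
So the entire content of the corollary is the identification of these three composite projections $\cE^n \to \cE$.

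The key step is this bookkeeping of composites. Writing a point of $\cE^n$ as $[E;x_0,x_1,\dots,x_n]$ with $x_0=0$ and a point of $\cE^2$ as $[E;x_0,y_1,y_2]$, the map $p_{j,k}:\cE^n\to\cE^2$ sends $[E;x_0,\dots,x_n]$ to $[E;x_0,x_j,x_k]$, that is, $(y_1,y_2)=(x_j,x_k)$. Composing with the three structure projections on $\cE^2$ then yields $p_{0,1}\circ p_{j,k}=p_{0,j}$, $p_{0,2}\circ p_{j,k}=p_{0,k}$, and $p_{1,2}\circ p_{j,k}=p_{j,k}$, now read as the projections $\cE^n\to\cE$ whose pullbacks of $\phihat_H$ define the classes $\phihat_{a,b}$. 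The hypothesis $0<j<k$ is used precisely here: it guarantees that $j$ and $k$ are distinct from the base index $0$, so each composite is a genuine two-point projection of the type $p_{a,b}:\cE^n\to\cE$ rather than the degenerate diagonal $p_{0,0}$ onto the zero section.

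Substituting these composites and recalling $\phihat_{a,b}=p_{a,b}^\ast\phihat_H$ then gives $\phihat^{j,k}_\D=\phihat_{0,j}+\phihat_{0,k}-\phihat_{j,k}$ at once. I expect the only real obstacle to be notational rather than mathematical: the symbol $p_{j,k}$ does double duty, denoting both a map $\cE^n\to\cE$ (with $p_{j,k}^\ast\phihat_H=\phihat_{j,k}$) and the map $\cE^n\to\cE^2$ being pulled back here, so one must keep the two usages rigorously distinct while checking the three composition identities. Once that is done there is nothing left to compute, since every class involved is already expressed as a pullback of the single class $\phihat_H$.
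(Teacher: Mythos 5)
Your proof is correct and is precisely the argument the paper intends: the corollary is stated without proof because it follows immediately from pulling back the lemma's identity (\ref{eqn:phiD}) along $p_{j,k}:\cE^n\to\cE^2$ and identifying the composites $p_{0,1}\circ p_{j,k}=p_{0,j}$, $p_{0,2}\circ p_{j,k}=p_{0,k}$, $p_{1,2}\circ p_{j,k}=p_{j,k}$, exactly as you do. Your care in separating the two uses of the symbol $p_{j,k}$ (the map to $\cE^2$ defining $\phihat^{j,k}_\D$ versus the map to $\cE$ defining $\phihat_{j,k}$) resolves the only genuine subtlety in the paper's notation.
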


To prove (\ref{eqn:reduction2}), factor $F_\dd : \cE^n \to \cE$ as follows:
$$
\xymatrix{
\cE^n \ar[r]_(0.45){(d_1,\dots,d_n)} \ar@/^1pc/[rr]^{F_\dd} &
\cE^n \ar[r]_{\trace_n} & \cE 
}
$$
where $(d_1,\dots,d_n) : [E;0,x_1,\dots,x_n] \mapsto [E:0,d_1x_1,\dots,d_nx_n]$.
Note that the formula
$$
\trace_n^\ast \phihat_H
= \sum_{j=1}^n \phihat_{0,j} + \sum_{1\le j < k\le n} \phihat_\D^{j,k}
$$
holds in $H^2(\cE^n)$ as it holds when restricted to $(\cE^c)^n$, because both
sides are eigenvectors of $[e]$ with eigenvalue $e^2$, and because both sides
vanish on the divisor $x_1 + \dots + x_n = 0$ by an argument similar to the one
used in the proof of Lemma~\ref{lem:phihat}.

Adapting the arguments of the previous section to this case, we see that
\begin{align*}
F_\dd^\ast \phihat_H &=
\sum_{j=1}^n d_j^2 \phihat_{0,j}
+ \sum_{1\le j < k\le n} d_jd_k \phihat_\D^{j,k} \cr
& = \sum_{j=1}^n d_j^2 \phihat_{0,j} +
\sum_{1\le j < k\le n} d_jd_k (\phihat_{0,j} + \phihat_{0,k} - \phihat_{j,k})\cr
& = \sum_{j=1}^n \big(d_j^2 + (d_1 + \dots + d_n - d_j)d_j\big)\d_{0,j}
- \sum_{1\le j < k\le n} d_jd_k \phihat_{j,k} \cr
& = \sum_{j=1}^n -d_0d_j \d_{0,j} -
 \sum_{1\le j < k\le n} d_jd_k \phihat_{j,k} \cr
 & = -\sum_{0\le j < k\le n} d_jd_k \phihat_{j,k}.
\end{align*}

\section{Normal Functions and Positivity}

Suppose that $\V$ is a polarized variation of Hodge structure over
$X$ of weight $-1$ endowed with a weak polarization $S$.

\begin{theorem}
\label{positivity}
If $\nu : X \to J(\V)$ is a normal function, then $\nu^\ast\w_S$ is a
non-negative $(1,1)$-form on $X$.
\end{theorem}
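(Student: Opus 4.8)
The plan is to compute $\nu^\ast\w_S$ explicitly in a flat local trivialization and then read off both its type and its sign directly from the two Riemann--Hodge bilinear relations satisfied by $S$. Over a contractible $U\subseteq X$ I would choose a $C^\infty$ real lift $\stilde : U \to \V_\R$ of $\nu$ to the flat bundle, exactly as in the de~Rham description of $c(\nu)$ in Section~\ref{sec:sect_class}, so that the $\V_\R$-valued $1$-form $d\stilde$ represents $c(\nu)$. In the flat trivialization the form $\w_S$ of Lemma~\ref{lem:gen_lifts} is purely vertical with constant coefficients in the flat frame---this is forced by the three properties characterizing it (vertical restriction equal to $S$, parallel with respect to the flat structure, vanishing on the zero section)---so pulling back along $\stilde$ gives
$$
\nu^\ast \w_S = \tfrac{1}{2}\, S(d\stilde \wedge d\stilde),
$$
where $S$ is applied to the $\V_\R$-valued coefficients and the scalar parts are wedged.

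Next I would split $d\stilde = \del\stilde + \delbar\stilde$ into types and bring in holomorphicity. Since $\nu$ is a holomorphic section of $J(\V) = (\cV/\F^0)/\V_\Z$, its lift to the holomorphic quotient $\cV/\F^0$ is holomorphic, which forces the $(0,1)$-part $\delbar\stilde$ to take values in $\F^0 = V^{0,-1}$; reality of $\stilde$ then gives $\del\stilde = \overline{\delbar\stilde}$, valued in $\overline{V^{0,-1}} = V^{-1,0}$. The first Riemann--Hodge relation says that $S$ is a morphism of Hodge structures into $\Q(1)$, hence $S(V^{-1,0},V^{-1,0}) = S(V^{0,-1},V^{0,-1}) = 0$ and $S$ pairs $V^{-1,0}$ with $V^{0,-1}$. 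Consequently the $(2,0)$ and $(0,2)$ contributions $S(\del\stilde\wedge\del\stilde)$ and $S(\delbar\stilde\wedge\delbar\stilde)$ vanish, the two surviving cross terms coincide, and
$$
\nu^\ast\w_S = S(\del\stilde\wedge\delbar\stilde)
$$
is real of pure type $(1,1)$.

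Finally I would invoke positivity. Writing $\del\stilde = \sum_\alpha \eta_\alpha\, w_\alpha$ in a basis $\{w_\alpha\}$ of $V^{-1,0}$ (so that $\delbar\stilde = \sum_\alpha \overline{\eta_\alpha}\,\overline{w_\alpha}$ by reality) turns the display into $i\sum_{\alpha,\beta} H_{\alpha\beta}\,\eta_\alpha\wedge\overline{\eta_\beta}$, where $H_{\alpha\beta} = i^{-1}S(w_\alpha,\overline{w_\beta})$. The second, weak, Riemann--Hodge relation is precisely the statement that the Hermitian form $H$ on $V^{-1,0}$ is positive semi-definite, and a $(1,1)$-form of this shape with $H\ge 0$ is pointwise non-negative; this yields the theorem.

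The conceptual content is thus just the two Riemann--Hodge relations, one controlling the type and one the sign. I note that only holomorphicity of $\nu$ enters, not the full infinitesimal period relation, which is consistent with Griffiths transversality being vacuous in weight $-1$. The one genuinely delicate point---and the step I expect to be the main obstacle---is justifying the local formula $\nu^\ast\w_S = \tfrac{1}{2}S(d\stilde\wedge d\stilde)$, that is, verifying that the canonical representative of Lemma~\ref{lem:gen_lifts} really has no horizontal or mixed components in the flat trivialization. Once that normalization is pinned down, the remainder is a short type-and-sign computation.
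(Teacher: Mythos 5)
Your strategy is the paper's own: pull back $\w_S$ along a real flat lift, decompose $d\stilde$ by type, and invoke the two bilinear relations; and your explicit use of the holomorphicity of $\nu$ (placing $\delbar\stilde$ in $\F^0$ via the quotient holomorphic structure on $\cV/\F^0$) is a point the paper's proof leaves largely implicit, while the normalization of $\w_S$ that you worry about is not in fact the issue. The genuine gap is your identification $\F^0 = V^{0,-1}$, $\Fbar^0 = V^{-1,0}$, which is valid only when the weight $-1$ variation has exactly the two Hodge types $(-1,0)$ and $(0,-1)$. The theorem is stated, and for this paper is needed, for arbitrary weight $-1$ variations: the variations $\bL = (\Lambda^3\H)(-1)$ and $\V = \bL/\H$ carrying the Ceresa normal function have the four types $(-2,1)$, $(-1,0)$, $(0,-1)$, $(1,-2)$. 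In that generality, holomorphicity and reality give only that $\del\stilde$ takes values in $\Fbar^0 = V^{-1,0}\oplus V^{-2,1}\oplus\cdots$. This still kills the $(2,0)$ and $(0,2)$ parts, since $\Fbar^0$ is $S$-isotropic by the first bilinear relation, so your type statement survives; but the sign does not. The Hermitian form $u\mapsto i^{-1}S(u,\ubar)$ is positive semi-definite on $V^{-1,0}$ but \emph{negative} semi-definite on $V^{-2,1}$, because the sign $i^{p-q}$ in the second bilinear relation alternates. So your last step fails for general $\V$.

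This is exactly where your closing remark, that only holomorphicity enters and that Griffiths transversality is vacuous in weight $-1$, goes wrong: transversality is vacuous only in the two-type case, where $\F^{-1} = \cV$. In general the infinitesimal period relation is the essential second input. It forces $\del\stilde$ to take values in $\F^{-1}$, and then
$$
\F^{-1}\cap\Fbar^0 = V^{-1,0},
$$
which is precisely what licenses your positivity computation. This is how the paper argues: writing the real lift as $s=p+n$ with $p\in\F^0$, $n\in\Fbar^0$, it uses the period relation to place the $t$-derivatives in $\F^{-1}$ and holomorphicity plus reality to place them in $\Fbar^0$, concluding that $v=\partial s/\partial t$ lies in $H^{-1,0}$ before invoking the second relation. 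That the period relation cannot be dropped is shown by a constant polarized Hodge structure $V$ with $V^{-2,1}\neq 0$ over the disk $\bD$: the holomorphic section $t\mapsto tw \bmod (F^0+V_\Z)$ with $w\in V^{-2,1}$ and $S(w,\overline{w})\neq 0$ pulls $\w_S$ back to a strictly \emph{negative} $(1,1)$-form; of course it is not a normal function, precisely because it violates the infinitesimal period relation. So your argument is complete for variations of abelian-variety type, such as $\H$ and the universal jacobian, but to prove the theorem as stated (in particular for the Ceresa normal function and the Moriwaki inequality) you must add the period relation to cut $\Fbar^0$ down to $V^{-1,0}$.
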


\begin{proof}
As previously remarked, $J(\V)$ is isomorphic, as a bundle of 
tori, to $\V_\R/\V_\Z$. The normal function $\nu$ thus corresponds
to a section $s : X \to \V_\R/\V_\Z$. Locally
this lifts to a section (also denoted by s) of $\V_\R$. We can view this
as as section of
$$
\cV := \V\otimes \O_X
$$
This is a flat holomorphic vector bundle. Denote its Hodge filtration by
$$
\cV \supseteq \cdots \supseteq \F^p \supseteq \F^{p+1} \supseteq \cdots
$$
Each $\F^p$ is a holomorphic sub-bundle of $\cV$. Since $\V$ has weight $-1$, it
splits as the sum
$$
\cV = \F^0 \oplus \Fbar^0,
$$
where $\Fbar^p$ denotes the complex conjugate of $\F^p$ in $\cV$. Note that
$\F^0$ is a holomorphic sub-bundle, while $\Fbar^0$ is not, in general,
holomorphic. Decompose $s$ as
$$
s = p + n
$$
where $p$ is a smooth section of $\F^0$ and $n$ is a smooth section of
$\Fbar^0$. Since $s$ is real, $p$ and $n$ are complex conjugates of each
other.

We can compute the differentials of $s$, $n$ and $p$ with respect to the
flat structure on $\V_\R$. Since $\w_S$ is parallel, we have
$$
\nu^\ast \w_S = s^\ast\w_S = S(ds,ds) =
S(\del s + \delbar s,\del s + \delbar s) = 2 S(\del s,\delbar s).
$$
This is clearly of type $(1,1)$.

Next we prove that $\nu^\ast \w_S$ is non-negative.
Since a 2-form is positive if and only if it is positive on every
holomorphic arc in $X$, we may assume that $X$ is the unit disk. Let
$t$ be a holomorphic coordinate in $X$.

The Griffiths infinitesimal period relation for normal functions implies that
$\partial f/\partial t \in \F^{-1}$ for any smooth local lift $f : X \to \cV$ of
the normal function. Here, and in what follows, the partial derivatives are
taken with respect to the natural flat connection on $\cV$. Since $n$ and $p+n$
are both smooth local lifts of $\nu$,
\begin{equation}
\label{griff_inf}
\frac{\partial p}{\partial t}\in \F^{-1} \text{ and }
\frac{\partial n}{\partial t}
\in \F^{-1}.
\end{equation}

Since $\F^0$ is a holomorphic sub-bundle of $\cV$, $\partial p/\partial \tbar
\in \F^0$. Since $n$ is the conjugate of $p$, we have $\partial n/{\partial t}
\in \Fbar^0$. Combining this with Griffiths infinitesimal period relation
(\ref{griff_inf}), we conclude that ${\partial n}/{\partial t} \in \F^{-1}\cap
\Fbar^0$ and, by taking complex conjugates, that ${\partial p}/{\partial \tbar}
\in \F^0\cap \Fbar^{-1}$.

Next compute the pullback of $\w_S$: as above,
\begin{align*}
\nu^\ast \w_S
&= 2 S(\del s,\delbar s) \cr
&= 2 S(\del n + \del p, \delbar n + \delbar p) \cr
&= 2 S\left(\frac{\partial n}{\partial t} + \frac{\partial p}{\partial t},
\frac{\partial n}{\partial \tbar} + \frac{\partial p}{\partial \tbar}\right)
dt \wedge d\tbar \cr
&= 2S(v(t),\overline{v(t)}) dt \wedge d\tbar
\end{align*}
where $v(t) := {\partial n}/{\partial t} + {\partial p}/{\partial t}$. Set $t =
x + i y$. Since $v(t) \in H^{-1,0}_t$ for all $t$, it follows that $\nu^\ast
\w_S = 2i^{-1-0}S(v,\vbar) dx\wedge dy$, which is non-negative.
\end{proof}

\begin{corollary}
\label{cor:positivity}
Suppose that $\V$ is a variation of Hodge structure of weight $-1$ over a smooth
complex algebraic variety $X$. If $S$ is a weak polarization of $\V$ and $\nu$
is a normal function section of $J(\V) \to X$, then for all complete curves $T$
in
$X$
$$
\int_T \nu^\ast \w_S \ge 0
$$
with equality if and only if the infinitesimal invariant of the normal function
vanishes on $T$.
\end{corollary}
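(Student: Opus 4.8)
The plan is to obtain this as an essentially immediate consequence of Theorem~\ref{positivity}. For the inequality I would first restrict the form $\nu^\ast\w_S$ to the complete curve $T$. Theorem~\ref{positivity} guarantees that $\nu^\ast\w_S$ is a non-negative $(1,1)$-form on $X$, and non-negativity is detected precisely on holomorphic tangent directions, so the restriction of $\nu^\ast\w_S$ to the complex curve $T$ is again a non-negative $(1,1)$-form. Since $T$ is complete, hence a compact Riemann surface, the integral of a non-negative real $2$-form over $T$ is non-negative, giving $\int_T\nu^\ast\w_S\ge 0$ at once.

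For the equality clause I would return to the explicit local formula established in the proof of Theorem~\ref{positivity}. In a holomorphic coordinate $t=x+iy$ on a smooth model of $T$, that proof exhibits
$$
\nu^\ast\w_S = 2\,i^{-1-0}\,S\big(v(t),\overline{v(t)}\big)\,dx\wedge dy,
\qquad v(t)=\frac{\del n}{\del t}+\frac{\del p}{\del t}\in H^{-1,0}_t,
$$
where $p+n$ is the decomposition of a local real lift of $\nu$ with $p$ a section of $\F^0$ and $n$ a section of $\Fbar^0$. The weak polarization condition gives $i^{-1-0}S(v,\overline{v})\ge 0$, so the integrand is pointwise non-negative; on the compact curve $T$ the integral therefore vanishes if and only if $i^{-1-0}S\big(v(t),\overline{v(t)}\big)\equiv 0$ along $T$.

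The step requiring the most care is identifying this pointwise vanishing with the vanishing of the infinitesimal invariant of $\nu$. I would argue that $v(t)$ is exactly the local representative, in the direction $\del/\del t$, of Griffiths' infinitesimal invariant $\delta\nu$: since $v(t)=\del s/\del t$ and $v(t)\in H^{-1,0}_t$, its image under the projection $\cV\to\cV/\F^0\cong\Fbar^0$ is $v(t)$ itself, and this image is precisely how $\delta\nu$ is computed locally. When $S$ is a genuine polarization, the second Riemann--Hodge relation forces $v(t)=0$ whenever $S\big(v(t),\overline{v(t)}\big)=0$, so the two conditions coincide and the equivalence is clean. For a merely weak polarization the equivalence persists once the infinitesimal invariant is read in the quotient variation $\V/\ker S$, on which $S$ descends to a genuine polarization: the Hermitian form $v\mapsto i^{-1-0}S(v,\overline v)$ on $H^{-1,0}_t$ is positive semidefinite, with radical equal to the $(-1,0)$-part of $\ker S$, so $S(v,\overline v)=0$ is equivalent to the vanishing of the image of $v$ in $(\V/\ker S)^{-1,0}_t$, i.e.\ to the vanishing of the ($S$-)infinitesimal invariant. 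Handling this weak-versus-strict distinction cleanly is where I expect the only real subtlety to lie; granting it, the equality clause follows and completes the proof.
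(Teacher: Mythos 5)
Your proposal is correct and takes essentially the same approach the paper intends: the corollary is stated there without a separate proof, precisely because it is an immediate consequence of Theorem~\ref{positivity}, whose proof already exhibits $\nu^\ast\w_S = 2\,i^{-1-0}S\big(v(t),\overline{v(t)}\big)\,dx\wedge dy$ with $v(t)\in H^{-1,0}_t$, so restricting to $T$, integrating, and analyzing when the pointwise non-negative integrand vanishes is exactly the intended argument. One small caution: your identification of $v$ with the infinitesimal invariant via the projection $\cV\to\cV/\F^0$ is legitimate only because you work with the canonical real lift $s$ (unique up to a local section of $\V_\Z$, so that $v\,dt=\del s$ is well defined); for an arbitrary smooth lift that projection is lift-dependent, since $\nabla\F^0\not\subseteq\F^0\otimes\Omega^1_X$, and your reading of the equality clause through the quotient variation by the kernel of $S$ is the right way to make the weak-polarization case precise.
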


Suppose that $\Xbar$ is a smooth completion of $X$. If
Conjecture~\ref{conj:integrable} holds, then we can conclude that the natural
extension of $\nu^\ast\phi_S$ to a class in $H^2(\Xbar)$ has non-negative degree
on all complete curves $T$ in $\Xbar$ that do not lie in $\Xbar-X$.

\section{Slope Inequalities}
\label{sec:moriwaki}

As an immediate consequence of Corollary~\ref{cor:positivity} with the
computations in Section~\ref{sec:formulas}, we obtain the following versions
Moriwaki's inequalities \cite{moriwaki,moriwaki:new}. The second assertion below
was obtained independently by the author in the late 1990s (cf.\
\cite[p.~195]{moriwaki:new}).

\begin{theorem}
\label{thm:moriwaki_weak}
If $g\ge 2$, then
\begin{enumerate}
\item the divisor
$$
(8g+4)\lambda_1 - 4\sum_{h=1}^{[g/2]} h(g-h)\d_h
$$
has non-negative degree on each complete curve in $\M^c_g$,

\item the divisors
\begin{align*}
4g(g-1)\psi - \kappa_1 - \sum_{h=1}^{g-1} (2h-1)^2\delta_{g-h}^{\{x\}} \cr
8\lambda_1 + 4g\psi - \d_0 - \sum_{h=1}^{g-1} h\delta_{g-h}^{\{x\}}
\in H^2(\M_{g,1}^c)
\end{align*}
have non-negative degree on each complete curve in $\M_{g,1}^c$.

\end{enumerate}
\end{theorem}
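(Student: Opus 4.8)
The plan is to recognize each of the three divisors as (twice) the pullback $\nu^\ast\phi_S$ of a canonical polarization two-form along one of the geometric normal functions of Section~\ref{sec:normal_geom}, and then to apply the semipositivity of Corollary~\ref{cor:positivity}. The required ingredients are already in place: by Proposition~\ref{polar} the forms $S_H$, $S_L$, $S_V$ are honest polarizations of $\H$, $\bL$, $\V$ (in particular weak polarizations), and by Proposition~\ref{prop:extends} the associated normal functions $\K$, $\nutilde$, $\nu$ are defined over $\M_{g,1}^c$ and $\M_g^c$. For each of them Theorem~\ref{positivity} produces a pointwise non-negative $(1,1)$-form $\nu^\ast\w_S$ representing $\nu^\ast\phi_S$, so that $\int_T\nu^\ast\w_S\ge 0$ on every complete curve $T$ of the compact-type locus.

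For the divisor in (i) I would take $X=\M_g^c$ and $\nu$ the Ceresa normal function into $J(\V)$, polarized by $S_V$. Theorem~\ref{thm:nu} computes the extended class $\widehat{\nu^\ast\phi_V}$ on $\Mbar_g$; since a complete curve $T\subset\M_g^c=\Mbar_g-\D_0$ never meets $\D_0$, one has $\int_T\d_0=0$ and the extended class restricts on $T$ to the honest pullback $\nu^\ast\phi_V$. Hence
$$
\int_T\big[(8g+4)\lambda_1-4\textstyle\sum_{h=1}^{[g/2]}h(g-h)\d_h\big]
=2\int_T\nu^\ast\w_V\ge 0,
$$
which is precisely (i).

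For (ii) the same recipe is run over $X=\M_{g,1}^c$. The first divisor is the restriction of $2\K^\ast\phihat_H=2\K^\ast\phi_H$, whose value is given by Theorem~\ref{thm:phi} (with Proposition~\ref{prop:phi_extends} identifying $\widehat{\K^\ast\phi_H}$ with $\K^\ast\phihat_H$); applying $\int_T(-)$ against the non-negative form $\K^\ast\w_H$ gives the claim. The second divisor is, up to the factor $2$, the pullback $\nutilde^\ast\phi_L$ along the lifted Ceresa normal function into $J(\bL)$ polarized by $S_L$; its explicit class is obtained by combining Proposition~\ref{prop:reln} with Theorems~\ref{thm:nu} and~\ref{thm:phi}, and positivity of $\nutilde^\ast\w_L$ again yields non-negative degree on every complete $T\subset\M_{g,1}^c$.

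Because the hard analytic and combinatorial work is already contained in Corollary~\ref{cor:positivity} and the formulas of Section~\ref{sec:formulas}, the argument is genuinely immediate; the only point demanding care --- the ``main obstacle'' in spirit --- is the bookkeeping in passing from the extended classes on $\Mbar_{g,n}$, where the formulas live, to the honest pullbacks on the compact-type locus, where positivity is asserted. The two things to keep straight are that a complete $T$ inside $\M_g^c$ (resp.\ $\M_{g,1}^c$) has $\int_T\d_0=0$, so the extended class agrees with $\nu^\ast\phi_S$ on $T$, while the separating classes $\d_h$ and $\d_{g-h}^{\{x\}}$ do meet $T$ and must be retained; and that positivity is applied to the smooth form $\nu^\ast\w_S$ on the compact-type locus itself, so that the height-jumping pathology along $\D_0^\sing$ never enters.
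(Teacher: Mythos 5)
Your proposal is correct and is essentially the paper's own proof: the paper derives Theorem~\ref{thm:moriwaki_weak} as an ``immediate consequence'' of Corollary~\ref{cor:positivity} combined with the formulas of Section~\ref{sec:formulas} (Theorem~\ref{thm:nu}, Theorem~\ref{thm:phi}, and the corollary expressing $2\nutilde^\ast\phi_L$ via Proposition~\ref{prop:reln}), which are precisely the identifications you make, and your bookkeeping point that $\int_T\d_0=0$ for a complete curve $T$ in the compact-type locus is exactly the (implicit) reason the extended classes on $\Mbar_{g,n}$ restrict to the honest pullbacks there. The one caveat, inherited from the paper itself rather than introduced by you, is that this argument literally yields non-negativity of $8\lambda_1+4g\psi-\d_0-4\sum_{h=1}^{g-1}h\,\d_{g-h}^{\{x\}}$ (coefficient $4h$, as in the displayed formula for $2\nutilde^\ast\phi_L$), while the theorem's second divisor is printed with coefficient $h$.
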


The semi-positivity of the $2$-forms representing these classes implies that
their powers are also semi-positive.

\begin{corollary} If $g\ge 2$ and $k\ge 1$, then
\begin{enumerate}

\item the cohomology class
$$
\Big((8g+4)\lambda_1 - 4\sum_{h=1}^{[g/2]} h(g-h)\d_h\Big)^k
\in H^{2k}(\M^c_g)
$$
has non-negative degree on each complete $k$-dimensional subvariety of
$\M^c_g$.

\item the cohomology classes
$$
\Big(4g(g-1)\psi - \kappa_1 - \sum_{h=1}^{g-1} (2h-1)^2\delta_h^{\{x\}}\Big)^k
$$
and
$$
\Big(8\lambda_1 + 4g\psi - \d_0 - \sum_{h=1}^{g-1} h\delta_h^{\{x\}}\Big)^k
$$
in $H^{2k}(\M^c_{g,1})$ have non-negative degree on each complete
$k$-dimensional subvariety of $\M^c_{g,1}$.
\end{enumerate}
\end{corollary}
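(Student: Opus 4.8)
The plan is to observe that each divisor class appearing in Theorem~\ref{thm:moriwaki_weak} is represented, on the relevant compact-type moduli space, by a pointwise non-negative $(1,1)$-form of the shape $\nu^\ast\w_S$, and then to use that the exterior powers of a semi-positive $(1,1)$-form remain semi-positive, so that their integrals over complex subvarieties are non-negative.

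First I would pin down the representing forms. Restricting to $\M_g^c = \Mbar_g - \D_0$ annihilates $\d_0$, so Theorem~\ref{thm:nu} identifies the class in part (i) with $2\,\nu^\ast\phi_V$ in $H^2(\M_g^c)$; likewise the computations of Section~\ref{sec:formulas} (Theorem~\ref{thm:phi} and the accompanying corollary for $\nutilde^\ast\phi_L$) exhibit the two classes in part (ii) as positive rational multiples of $\K^\ast\phi_H$ and $\nutilde^\ast\phi_L$ on $\M_{g,1}^c$. By Proposition~\ref{polar} each of $S_H$, $S_L$, $S_V$ is a polarization, hence a fortiori a weak polarization, and $\K$, $\nutilde$, $\nu$ are normal function sections of the respective families of intermediate jacobians. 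Theorem~\ref{positivity} then produces, for each class $c$ in question, a smooth representative $\w_c$ — a positive multiple of the relevant $\nu^\ast\w_S$ — that is a non-negative $(1,1)$-form.

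Next I would pass to powers. A semi-positive $(1,1)$-form $\w_c$ has the property that $\w_c^k$ is a semi-positive $(k,k)$-form representing $c^k$ in cohomology. For a complete $k$-dimensional subvariety $V$, the degree of $c^k$ on $V$ equals $\int_V \w_c^k$; pulling back along a resolution $\pi : \tilde V \to V$ with $\tilde V$ smooth and complete gives $\int_V \w_c^k = \int_{\tilde V}(\pi^\ast\w_c)^k$. Since $\pi^\ast\w_c$ is again a non-negative $(1,1)$-form, its top power $(\pi^\ast\w_c)^k$ is a non-negative volume form on $\tilde V$, whence the integral is $\ge 0$. As the multiplying scalars are positive, this yields the asserted non-negativity in both (i) and (ii).

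The main (if mild) obstacle is purely pointwise and linear-algebraic: one must verify that wedging and pullback preserve semi-positivity, so that $(\pi^\ast\w_c)^k$ really is a non-negative top-degree form on the $k$-dimensional $\tilde V$. This is the standard fact that a non-negative Hermitian $(1,1)$-form evaluates non-negatively in its $k$-fold wedge power on every complex $k$-plane, applied fiberwise; no Hodge-theoretic input beyond Theorem~\ref{positivity} is needed. One should also note that passing to the resolution is harmless because $V$ is complete, so that $\int_V \w_c^k$ is well defined and agrees with the integral over $\tilde V$.
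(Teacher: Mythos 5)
Your proposal is correct and is essentially the paper's own argument: the classes restrict on the compact-type loci to positive multiples of $\nu^\ast\phi_V$, $\K^\ast\phi_H$ and $\nutilde^\ast\phi_L$, which by Theorem~\ref{positivity} are represented by semi-positive $(1,1)$-forms, and semi-positivity of the $k$-fold wedge powers gives the result. The paper states this in one line; your added care about resolving a possibly singular $k$-dimensional subvariety before integrating is a harmless elaboration of the same proof.
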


This result also follows from Kleiman's criterion \cite[Thm.~1]{kleiman}, as
J\'anos Koll\'ar pointed out to me.

The statements in Theorem~\ref{thm:moriwaki_weak} are weaker than Moriwaki's in
the sense that they apply only to complete curves in $\M_{g,n}^c$ where $n=0,1$,
but stronger than Moriwaki's as his versions apply only to complete curves in
$\Mbar_{g,n}$ that do not lie in the boundary divisor $\D$. These two versions
suggest the following stronger version of Moriwaki's inequalities, which would
follow from Conjecture~\ref{conj:integrable} if it were true.

\begin{conjecture}
\label{conj:moriwaki} For all $g\ge 2$:
\begin{enumerate}
\item the divisor
$$
M := (8g+4)\lambda_1 - 4\sum_{h=1}^{[g/2]} h(g-h)\d_h
$$
has non-negative degree on each complete curve in $\Mbar_g$ that does not
lie in $\D_0$;

\item the divisors
\begin{align*}
W_H := 4g(g-1)\psi - \kappa_1 - \sum_{h=1}^{g-1} (2h-1)^2\delta_{g-h}^{\{x\}}\cr
W_L := 8\lambda_1 + 4g\psi - \d_0 - \sum_{h=1}^{g-1} h\delta_{g-h}^{\{x\}}
\in H^2(\M_{g,1}^c)
\end{align*}
have non-negative degree on each complete curve in $\Mbar_{g,1}$ that does
not lie in $\D_0$;

\item and for all $\dd = (d_1,\dots,d_n) \in \Z^d$ with $\sum d_j = m$, the
class
\begin{multline*}
-{m^2}\kappa_1/2 +
\sum_{j=1}^n (d_jm+d_j^2/2)\,\psi_j - \sum_{P\subseteq I}
\sum_{\{x_j,x_k\}\subseteq P} d_j d_k\, \d_0^P \cr
-\frac{1}{4} \sum_{P\subseteq I}\sum_{h=1}^{g-1}
\big(d_P - (2h-1)m\big)^2\, \d_h^P
\end{multline*}
has non-negative degree on all complete curves in $\Mbar_{g,n}$ that do not lie
in $\D_0$.

\end{enumerate}
\end{conjecture}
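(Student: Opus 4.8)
The plan is to recognize each divisor in the conjecture as the Lear extension $\widehat{\nu^\ast\phi_S}$ of a polarization class pulled back along a normal function, and then to convert the degree of this extension on a complete curve into an integral of the semi-positive form $\nu^\ast\w_S$ of Theorem~\ref{positivity}. The statement is a refinement of Theorem~\ref{thm:moriwaki_weak}: that result already settles the case of complete curves contained in the open locus $\M_{g,n}^c$, so the genuinely new content is to treat complete curves $T \subset \Mbar_{g,n}$ that meet, but are not contained in, the boundary divisor $\D_0$.

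First I would make the identification of classes explicit. By Theorem~\ref{thm:nu} the full Moriwaki divisor equals $2\widehat{\nu^\ast\phi_V}$ for the Ceresa normal function $\nu$ and the polarization $S_V$, and the divisor $M$ of part (i) exceeds it by $g\d_0$; since $T \not\subset \D_0$ we have $\deg_T \d_0 \ge 0$, so $\deg_T M \ge 2\deg_T \widehat{\nu^\ast\phi_V}$ and it is enough to bound the right-hand side below. Part (iii) is cleaner still: the displayed class is, by Theorem~\ref{pullback_phi}, literally $\widehat{F_\dd^\ast\phi_H}$ for the section $F_\dd$ and the polarization $S_H$. For part (ii) the divisor $W_H$ is $2\widehat{\K^\ast\phi_H}$ by Theorem~\ref{thm:phi}, and $W_L$ is governed by $\widehat{\nutilde^\ast\phi_L}$ and $S_L$ through the corollary to that theorem. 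In every case the problem reduces to proving
$$
\deg_T\,\widehat{\nu^\ast\phi_S} \ge 0
$$
for a complete curve $T \subset \Mbar_{g,n}$ with $T \not\subset \D_0$, where $(\nu,S)$ is one of the pairs above and $S$ is a (weak) polarization.

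Second, given such a $T$, I would pass to its normalization $\pi : \tilde T \to T$. Because $T \not\subset \D_0$, the open set $\tilde T^c := \pi^{-1}(\M_{g,n}^c)$ is dense in $\tilde T$, and by Proposition~\ref{prop:extends} the normal function $\nu$ is defined on all of $\M_{g,n}^c$, hence pulls back to a normal function on $\tilde T^c$, which I again denote by $\nu$. The naturality of the biextension construction under pullback --- the commuting diagram and formula~(\ref{eqn:nat}) from the proof of Proposition~\ref{prop:phi_extends} --- identifies $\pi^\ast\Bbar_{N,\nu}$ with the Lear extension $\Bbar_{N,\nu}$ computed directly on $\tilde T$, so that
$$
\deg_T\,\widehat{\nu^\ast\phi_S} = \frac{1}{2N}\,\deg_{\tilde T}\, c_1(\Bbar_{N,\nu}).
$$
Granting Conjecture~\ref{conj:integrable} for the curve $\tilde T$, this algebraic degree equals $\int_{\tilde T^c}\nu^\ast\w_S$, and Theorem~\ref{positivity} shows the integrand is a non-negative $(1,1)$-form. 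Hence the degree is non-negative, which is exactly the inequality sought.

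The hard part is Conjecture~\ref{conj:integrable} itself, namely that $\nu^\ast\w_S$ is integrable on $\tilde T$ and that its total mass coincides with the algebraic degree of the Lear extension $\Bbar_{N,\nu}$. Where $\tilde T$ meets $\D_0$ transversally in its codimension-one stratum, Lear's Theorem~\ref{thm:lear} supplies the precise logarithmic asymptotics~(\ref{eqn:asymptotics}) and the analytic and algebraic quantities agree. The real obstacle lies at boundary points of $\tilde T$ mapping into the deep strata $\D_0^\sing$ of codimension $\ge 2$, where the metric on $\nu^\ast\Bhat$ need not extend continuously and the height-jumping phenomenon of Brosnan--Pearlstein~\cite{brosnan-pearlstein:heights} can produce a discrepancy between the current $\nu^\ast\w_S$ and the extension. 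Completing the proof requires showing that these height-jump contributions either vanish or correct the degree in a way compatible with positivity; establishing this --- equivalently, proving Conjecture~\ref{conj:integrable} --- is the crux, and is the reason the statement remains conjectural.
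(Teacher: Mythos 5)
Your proposal is correct and takes essentially the same route as the paper: the statement is a conjecture there precisely because the paper only claims it ``would follow from Conjecture~\ref{conj:integrable} if it were true,'' i.e., from Corollary~\ref{cor:positivity} together with the identification of the divisors with the Lear-extended classes $\widehat{\nu^\ast\phi_V}$, $\widehat{\K^\ast\phi_H}$, $\widehat{\nutilde^\ast\phi_L}$ and $F_\dd^\ast\phihat_H$ computed in Section~\ref{sec:formulas}. You correctly locate the irreducible gap at the integrability/height-jumping issue over $\D_0^\sing$ (Brosnan--Pearlstein), which is exactly why the statement remains conjectural rather than a theorem.
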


Other slope inequalities of this type may be proved more directly, rather than
deducing them from the fundamental slope inequalities above. For example, since
$F_\dd$ is a normal function, the class $F_\dd^\ast \phi_H$ is represented by a
non-negative $(1,1)$-form on $\M_{g,n}^c$. Theorem~\ref{pullback_phi} thus
implies the following positivity statement:

\begin{proposition}
If $g\ge 2$ and $(d_1,\dots,d_n)\in \Z^n$ satisfies $\sum_{j=1}^n d_j =
(2g-2)m$, then for each $k\ge 1$,
\begin{multline*}
\bigg(-{m^2}\kappa_1/2 +
\sum_{j=1}^n (d_jm+d_j^2/2)\,\psi_j - \sum_{P\subseteq I}
\sum_{\{x_j,x_k\}\subseteq P} d_j d_k\, \d_0^P \cr
-\frac{1}{4} \sum_{P\subseteq I}\sum_{h=1}^{g-1}
\big(d_P - (2h-1)m\big)^2\, \d_h^P\bigg)^k
\end{multline*}
has non-negative degree on all complete $k$-dimensional subvarieties of
$\M_{g,n}^c$.
\end{proposition}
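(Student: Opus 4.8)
Write $\Phi$ for the degree-two class displayed in the proposition. The plan is to recognize $\Phi$ as the pullback $F_\dd^\ast\phihat_H$ computed in Theorem~\ref{pullback_phi}, and then to exploit the semi-positivity of the natural de~Rham representative of $F_\dd^\ast\phi_H$ on $\M_{g,n}^c$ supplied by Theorem~\ref{positivity}. The crucial observation is that, although $\Phi$ lives on the compactification $\Mbar_{g,n}$, the degree of $\Phi^k$ on a complete subvariety $T$ \emph{contained in} $\M_{g,n}^c$ depends only on the restriction of $\Phi$ to $\M_{g,n}^c$, where a non-negative $(1,1)$-form representative is available.

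First I would note that $F_\dd$, being defined by the family of degree-zero divisors $[C;x_1,\dots,x_n]\mapsto \sum_j d_j x_j - m K_C$, is a normal function section of $J(\H)=\J_g$ over $\M_{g,n}^c$ in the sense of Example~\ref{ex:cycles} and Definition~\ref{def:normal}. By Proposition~\ref{polar} the intersection form $S_H$ is a polarization of $\H$, hence in particular a weak polarization. Applying Theorem~\ref{positivity} with $\V=\H$, $S=S_H$ and $\nu=F_\dd$, the pulled-back form $F_\dd^\ast\w_H$ is a non-negative $(1,1)$-form on $\M_{g,n}^c$ whose cohomology class is $F_\dd^\ast\phi_H$. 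By Theorem~\ref{pullback_phi} the class $\Phi$ equals $F_\dd^\ast\phihat_H$ in $H^2(\Mbar_{g,n})$, so its restriction to $\M_{g,n}^c$ is $F_\dd^\ast\phi_H=[F_\dd^\ast\w_H]$.

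It then remains to pass from pointwise non-negativity of a $(1,1)$-form to non-negativity of integrals of its powers. Here I would invoke the elementary fact from Kähler geometry that the $k$th exterior power of a semi-positive $(1,1)$-form is a non-negative $(k,k)$-form, a property preserved under restriction to, and holomorphic pullback to, any complex submanifold or resolution (working, as in Section~2, equivariantly on a finite cover that is a variety). Concretely, given a complete, possibly singular, $k$-dimensional subvariety $T\subseteq\M_{g,n}^c$, I would choose a resolution $\pi:\widetilde{T}\to T$ with $\widetilde{T}$ smooth and compact; then
\[
\deg_T \Phi^k \;=\; \int_{\widetilde{T}} \pi^\ast\big(F_\dd^\ast\w_H\big)^k,
\]
and the integrand is a non-negative top-degree form on $\widetilde{T}$, so the integral is $\ge 0$.

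I do not expect a genuine analytic obstacle beyond Theorem~\ref{positivity}; the only point that requires care is the bookkeeping at the boundary. The representing form $F_\dd^\ast\w_H$ exists only over $\M_{g,n}^c$, since $\H$, and hence $F_\dd$, does not extend across $\D_0$. Thus the argument applies precisely to those complete subvarieties lying inside $\M_{g,n}^c$, and for such $T$ the degree of $\Phi^k$ is insensitive to the particular extension $\phihat_H$ of $\phi_H$ and is computed by the integral above. This is exactly why the statement is restricted to complete $k$-dimensional subvarieties of $\M_{g,n}^c$ rather than of $\Mbar_{g,n}$.
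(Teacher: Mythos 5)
Your proposal is correct and follows essentially the same route as the paper: identify the displayed class with $F_\dd^\ast\phihat_H$ via Theorem~\ref{pullback_phi}, use the fact that $F_\dd$ is a normal function together with Theorem~\ref{positivity} (with the polarization $S_H$ of Proposition~\ref{polar}) to represent its restriction to $\M_{g,n}^c$ by a non-negative $(1,1)$-form, and then observe that powers of a semi-positive $(1,1)$-form are semi-positive $(k,k)$-forms, so their integrals over complete $k$-dimensional subvarieties of $\M_{g,n}^c$ are non-negative. Your added details --- passing to a resolution of a possibly singular $T$ and noting that the degree is insensitive to the choice of extension across $\D_0$ --- are exactly the bookkeeping the paper leaves implicit.
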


When $k=g$ and $m=g$, this implies that the pullback $F_\dd^\ast \eta_g$ of the
zero section has non-negative degree on all complete, $g$-dimensional
subvarieties of $\M_{g,n}^c$.

\subsection{The jumping divisor}
\label{sec:jump_div}

To better understand the behaviour of the degree of the Moriwaki divisors on
curves in $\Mbar_{g,n}$ that pass through $\D_0^\sing$ but are not contained in
$\D_0$, we need to consider  the phenomenon of ``height jumping'' and the
associated notion of a {\em jumping divisor}.

Suppose that $X=\Xbar-D$ where $\Xbar$ is a smooth projective variety and $D$ is
a normal crossings divisor in $\Xbar$. Suppose that $\U$ is a weight $-1$
variation of Hodge structure over $X$ that is polarized by $S$. Suppose that
$\nu$ is a normal function section of $J(\U)$. Then one has the symmetric
biextension line bundle $\nu^\ast\Bhat$ over $X$. Lear's Theorem implies that a
positive power $\Bhat_{N,\nu}$ of $\nu^\ast\Bhat$ extends naturally to a line
bundle over $\Xbar$. It is characterized by the property that the metric on
$\nu^\ast\Bhat$ extends to a continuous metric on this line bundle over
$\Xbar-D^\sing$. For clarity of exposition, we suppose that the power is $1$, so
that $\nu^\ast\Bhat$ itself extends.\footnote{Otherwise, replace it by the power
that does extend. In all known examples the power is $1$.} Denote the extended
line bundle $\Bhat_{1,\nu}$ by $\B_\Xbar$.

Now suppose that $f : T \to \Xbar$ is a morphism from a smooth projective curve
to $\Xbar$ whose image is not contained in $D$. Set $T'=T-f^{-1}(D)$. Applying
Lear's Theorem to the normal function section $f^\ast\nu$ of $J(f^\ast \U) \to
T'$, we obtain the Lear extension $\B_T$ of $(\nu\circ f)^\ast\Bhat$ to $T$. If
the image of $T$ avoids $D^\sing$, then $f^\ast \B_\Xbar \cong \B_T$. This is
because $\B_\Xbar$ is metrized over $\Xbar - D^\sing$, so that its pullback to
$T$ is the unique metrized extension to $T$ of $(\nu\circ f)^\ast\Bhat$. In
general, there is a $0$-cycle $J$ on $T$, supported on $T-T'$, such that
$$
f^\ast\B_\Xbar \cong \B_T(J).
$$
If $f(T)\cap D^\sing$ is empty, then $J=0$. We will call $J$ the {\em jumping
divisor of $\nu$ on $T$}. It is not always trivial, as we shall explain below.
The jumping divisor encodes ``height jumping''.

\subsection{Height jumping}
Set $d=\dim X$. Assume that $\bD^d$ is a polydisk in $\Xbar$ with coordinates
$(t_1,\dots,t_d)$, and that its intersection with $D$ is the divisor $t_1 \dots
t_m = 0$. Assume that the monodromy of $\U$ about the branch $t_j = 0$ of $D$ is
unipotent for each $j\in \{1,\dots,m\}$.\footnote{This condition is satisfied by
the variations $\H$, $\bL$ and $\V$ over $\M_{g,n}$.} Suppose that
$$
\beta \in H^0\big((\bD^\ast)^m\times \bD^{d-m},\nu^\ast\Bhat\big)
$$
is a biextension section of $\nu^\ast\Bhat$ defined over the punctured polydisk.
The associated {\em height function} $(\bD^\ast)^m\times \bD^{d-m} \to \R^+$
is the function
$$
(t_1,\dots,t_d)\mapsto \log|\beta(t_1,\dots,t_d)|.
$$

Suppose that $P\in T$ and that $f(P)$ is the origin of $\bD^d$. Suppose that
$\bD$ is a disk in $T$ with coordinate $t$ with $t(P)=0$. The restriction of $f$
to $\bD$ is a holomorphic arc $f : \bD \to \bD^d$. Set
$$
r_j := \ord_{t=0} f^\ast t_j,\quad j = 1,\dots, m.
$$
There is a rational number $q(r_1,\dots,r_m)$, which depends only on the
exponents $r_j$, such that
$$
\log|\beta(f(t))|_\B \sim q(r_1,\dots,r_m) \log|t|.
$$
One might expect that $q(r_1,\dots,r_m)$ is linear. Surprisingly, this is not
the case.

To better understand this, write
$$
q(r_1,\dots,r_m) = q_0(r_1,\dots,r_m) + j(r_1,\dots,r_m)
$$
where $q_0$ is the linear function
$$
q_0(r_1,\dots,r_m) = \sum_{j=1}^m r_j q(\ee_j)
$$
and $\ee_1,\dots,\ee_m$ is the standard basis of $\Z^m$. We shall call $j$ the
{\em jump function} of $\beta$ at $P$. When $j$ vanishes, the height behaves as
expected. Surprisingly, the height can jump. I first observed this when trying
to understand Moriwaki's inequality, as explained in the following example.
Although I was aware of height jumping through this example, I had no
explanation for it. Recently Brosnan and Pearlstein
\cite{brosnan-pearlstein:heights} have given a complete explanation of this
phenomenon.

For the curve $f :T \to \Xbar$ and $P\in T$, define $j_P= j(r_1,\dots,r_m)$.
The jumping function determines the jumping divisor.

\begin{proposition}
The jumping divisor $J$ of $\nu$ on $T$ associated to $f:T\to \Xbar$ is the
$0$-cycle
$$
J = \sum_{P\in T} j_P P
$$
on $T$.
\end{proposition}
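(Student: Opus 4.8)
The plan is to work locally at each point $P\in f^{-1}(D)$, since off $f^{-1}(D)$ the bundles $f^\ast\B_\Xbar$ and $\B_T$ are canonically identified and contribute nothing to $J$. Fix such a $P$, choose a polydisk $\bD^d\subseteq\Xbar$ about $f(P)$ in which $D=\{t_1\cdots t_m=0\}$, and a coordinate disk $\bD\subseteq T$ about $P$ with $t(P)=0$ and $r_j=\ord_P f^\ast t_j$. Over the punctured disk $\bD^\ast=\bD\cap T'$ the biextension section $\beta$ is a holomorphic frame of $(\nu\circ f)^\ast\Bhat$, hence a common meromorphic section of both extensions across $P$. Since $f^\ast\B_\Xbar\cong\B_T(J)$, the multiplicity $\mathrm{mult}_P(J)$ is exactly the difference between the order of vanishing of $\beta$ measured in $\B_T$ and its order measured in $f^\ast\B_\Xbar$; the whole proof reduces to computing these two orders. (I assume, as in the discussion above, that the Lear power is $N=1$, so that all extensions are honest line bundles; otherwise one divides by $N$ at the end.)

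The order in $\B_T$ is essentially the definition of $q$. The extension $\B_T$ is the Lear extension of $(\nu\circ f)^\ast\Bhat$ across $P$, characterized by its metric extending continuously over $P$. Theorem~\ref{thm:lear}, applied to the normal function $f^\ast\nu$ on the one-dimensional base $\bD$, gives $\log|\beta(f(t))|_\B\sim q(r_1,\dots,r_m)\log|t|$, which is precisely how $q(r_1,\dots,r_m)$ was defined. Writing $\beta=h\,\sigma_T$ with $\sigma_T$ a holomorphic frame of $\B_T$ whose norm is continuous and nonzero at $P$, we get $\log|\beta|\sim\log|h|$, so $\ord_P(\beta;\B_T)=\ord_P h=q(r_1,\dots,r_m)$.

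The order in $f^\ast\B_\Xbar$ sees only the \emph{linear} part $q_0$, and this is the heart of the matter. Because $\B_\Xbar$ is a genuine line bundle on the \emph{smooth} variety $\Xbar$, the divisor $\mathrm{div}_{\B_\Xbar}(\beta)$ is Cartier, hence determined by its coefficients along the codimension-one branches $\{t_j=0\}$. Each such coefficient is read off at a \emph{smooth} point of $D$ lying on that branch alone; there $f(P)\notin D^\sing$, the metric of $\B_\Xbar$ is continuous, $f^\ast\B_\Xbar$ agrees with the Lear extension, and the transverse one-variable rate is $q(\ee_j)$. Hence $\mathrm{div}_{\B_\Xbar}(\beta)=\sum_{j=1}^m q(\ee_j)\,[t_j=0]$ near $\bD^d$. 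Pulling back along $f$, with $\ord_P f^\ast t_j=r_j$, produces a local frame $\widetilde\sigma$ of $f^\ast\B_\Xbar$ with $\beta=\prod_{j}(f^\ast t_j)^{q(\ee_j)}\,\widetilde\sigma$, so that $\ord_P(\beta;f^\ast\B_\Xbar)=\sum_{j=1}^m r_j\,q(\ee_j)=q_0(r_1,\dots,r_m)$.

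Subtracting the two orders gives $\mathrm{mult}_P(J)=\pm\bigl(q(r_1,\dots,r_m)-q_0(r_1,\dots,r_m)\bigr)=\pm\,j(r_1,\dots,r_m)=\pm\,j_P$, the sign being fixed to $+$ by the convention in $f^\ast\B_\Xbar\cong\B_T(J)$ (equivalently, by the fact that the height jumps \emph{up}, so the effective part of $J$ carries the jump). Summing over the finitely many $P\in f^{-1}(D)$ yields $J=\sum_{P\in T}j_P\,P$. I expect the main obstacle to be the third step: one must cleanly separate the algebraic data of $\B_\Xbar$---its Cartier divisor on smooth $\Xbar$, which is necessarily \emph{linear} in the $r_j$ since it is detected only along the smooth codimension-one locus of $D$---from the transcendental height data along $f$, namely the true rate $q(r_1,\dots,r_m)$, which is genuinely \emph{nonlinear} because the metric on $\B_\Xbar$ is in general discontinuous across $D^\sing$. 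That very discrepancy, $q-q_0$, is height jumping, and making the several-variable asymptotics of $\log|\beta|$ along $f$ precise---so that the rate along a curve through $D^\sing$ is legitimately compared with the sum of the transverse rates---is where the real work lies.
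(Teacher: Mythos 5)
Your proof follows the same route as the paper's own (very terse) proof: Hain's argument is to take the section $s$ of $f^\ast \B_\Xbar\otimes\B_T^{-1}$ trivializing it over $T'$ and read off the multiplicity of $J$ at $P$ from the asymptotics of the ratio of the two metrics near $P$, which is exactly your comparison of the orders of the common meromorphic section $\beta$ in the two extensions. Your two order computations --- $\ord_P(\beta;\B_T)=q(r_1,\dots,r_m)$ from Lear's theorem on the curve together with continuity of the metric of $\B_T$ at $P$, and $\ord_P(\beta;f^\ast\B_\Xbar)=q_0(r_1,\dots,r_m)$ because the Cartier divisor of $\beta$ in $\B_\Xbar$ on the \emph{smooth} variety $\Xbar$ is determined along the codimension-one branches of $D$, where the extended metric is continuous --- are precisely the content the paper delegates to ``techniques similar to those of \cite[\S8]{hain-reed:arakelov}'', and you correctly identify the second as the crux.

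The one genuine gap is the sign, which you settle by fiat, and it cannot be settled that way. With your own two orders and the stated convention $f^\ast\B_\Xbar\cong\B_T(J)$, the canonical section $s=\beta\otimes\beta^\vee$ satisfies
$$
\ord_P(s)\;=\;\ord_P(\beta;f^\ast\B_\Xbar)-\ord_P(\beta;\B_T)\;=\;q_0-q\;=\;-j_P,
$$
so the subtraction, as you have set it up, yields $J=-\sum_P j_P P$ rather than $+\sum_P j_P P$; the convention you invoke is already used up in defining $q$, $j$ and $J$, so it is not free to absorb a sign. Moreover, appealing to ``the height jumps up, so the effective part of $J$ carries the jump'' is circular: effectivity of $J$ is exactly Conjecture~\ref{conj:jump_weak} later in the paper, not a fact available in the proof. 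Closing this requires honest bookkeeping of the sign conventions relating the biextension metric (whether the height is $\log|\beta|_\B$ or $-\log|\beta|_\B$), the asymptotic rate in Lear's theorem, and the order of vanishing in a Lear extension, calibrated against the hyperelliptic example of Section~\ref{sec:moriwaki}, where $J$ and $j_P$ are both pinned to the value $2h(g-h-1)$. In fairness, the paper's own sketch is equally silent on this point, but since the statement being proved asserts a specific sign, a complete proof must derive it rather than posit it.
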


This is proved, using techniques similar to those described in
\cite[\S8]{hain-reed:arakelov}, by considering the asymptotics as one approaches
$P$ of the length
$$
|s|_\Xbar/|s|_T
$$
of a section $s$ of $f^\ast \B_\Xbar\otimes \B_T^{-1}$ that trivializes it over
$T'$. Here $|\blank|_\Xbar$ denote the natural metric of $(\nu\circ f)^\ast
\Bhat$ over $T'$ and $|\blank|_T$ denotes the natural metric on $\B_T$.

\subsection{An example of height jumping}

In this example, $g\ge 3$, $X = \M_g^c$, $\U$ is the variation $\V$ and $\nu$ is
the normal function associated to the Ceresa cycle. Denote the Lear extension of
$\nu^\ast\Bhat$ to $\Mbar_g$ by $\B_\Mbar$. The main result of
\cite{hain-reed:arakelov} implies that $c_1(\B_\Mbar)$ is the Moriwaki divisor
$$
M:= (8g+4)\lambda_1 - g\d_0 - \sum_{h=1}^{\lfloor g/2\rfloor} 4h(g-h)\d_h
$$

Let $\Hbar_g :=\{[C] \in \Mbar_{g} : C \text{ is hyperelliptic}\}$ be the locus
of hyperelliptic curves in $\Mbar_g$. Set $\cH^c_g = \Hbar_g \cap \M_g^c$. The
normal function $\nu$ vanishes identically on $\cH_g^c$. This implies that the
line bundle $\nu^\ast\Bhat$ over $\cH^c_g$ is trivial as metrized holomorphic
line bundle. Consequently, its Lear extension to $\Hbar_g$, which we denote by
$\B_\Hbar$, is a trivial as a metrized holomorphic line bundle over $\Hbar_g$.

Recall that the boundary $\Hbar_g - \cH_g$ of $\Hbar_g$ is a union of divisors
$$
\D_h,\quad 1\le h \le g/2 \text{ and } \Xi_h,\quad 0\le h \le (g-1)/2,
$$
where $\D_h$ is the restriction of the boundary divisor $\D_h$ of $\Mbar_g$ to
$\Hbar_g$; where $\Xi_0$ is the divisor whose generic point is an irreducible,
geometrically connected hyperelliptic curve with one node; and where $\Xi_h$,
($h\neq 0$) is the locus whose generic point is a hyperelliptic curve with two
nodes that are conjugate under the hyperelliptic involution and whose
normalization is the union of two smooth, geometrically connected hyperelliptic
curves, one of genus $h$ and the other of genus $g-h-1$. Denote the class of
$\Xi_h$ in $H^2(\Hbar_g)$ by $\xi_h$.

Since $\B_\Xbar$ is metrized over $\Mbar_g-\D_0^\sing$, the restrictions of
$\B_\Mbar$ and $\B_\Hbar$ to
$$
\Hbar_g - \bigcup_{h>0} \Xi_h
$$
are isomorphic as metrized line bundles. This implies that
$$
j^\ast \B_\Mbar \otimes \B_\Hbar^{-1} \cong \O(J)
$$
where $j$ denotes the inclusion $\Hbar_g \hookrightarrow \Mbar_g$ and where $J$
is a linear combination of the $\Xi_h$, $h>0$. Note that, since $\B_\Hbar$ is
trivial,
$$
\O(J) \cong j^\ast \B_\Mbar \cong \O(M)|_{\Hbar_g} \in \Pic \Hbar_g.
$$
The restriction of the Moriwaki divisor to $\Hbar_g$ is easily seen to be
$$
j^\ast M =
(8g+4)\lambda_1 - g\xi_0 - \sum_{h=1}^{\lfloor(g-1)/2 \rfloor} 2g \xi_h
- 4\sum_{h=1}^{\lfloor g/2\rfloor} h(g-h)\d_h.
$$
On the other hand, Cornalba and Harris \cite{cornalba-harris} have shown that
$$
(8g+4)\lambda_1 - g\xi_0 - \sum_{h=1}^{\lfloor(g-1)/2 \rfloor} 2(h+1)(g-h)\xi_h 
- 4\sum_{h=1}^{\lfloor g/2\rfloor} h(g-h)\d_h = 0
$$
in $\Pic \Hbar_g$. Together these imply that
$$
J = \sum_{h=1}^{\lfloor(g-1)/2 \rfloor} 2h(g-h-1)\Xi_h.
$$

It is now easy to construct an example of height jumping. Suppose that $f : T
\to \Mbar_g$ is a curve whose image lies in the hyperelliptic locus and is not
contained in $\D_0$. If $h>0$ and if the image of $f$ intersects $\Xi_h$
transversely at a smooth point $f(P)$, then the computations above imply that
$$
j_P = 2(h+1)(g-h)-2g = 2h(g-h-1) > 0.
$$
This implies that
$$
\deg_T M = \deg_T J > 0.
$$

Moriwaki's inequality and positivity in Hodge theory suggest that the jumping
divisor associated to any curve in $\Mbar_g$ should be effective. Denote by
$J_T$ the jumping divisor associated to a morphism $f:T \to \Mbar_g$ whose image
is not contained in $\D_0$.

\begin{conjecture}[weak form]
\label{conj:jump_weak}
For all projective curves $f: T\to \Mbar$ whose image is not contained in
$\D_0$, the jumping divisor $J_T$ is effective.
\end{conjecture}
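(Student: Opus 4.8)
The plan is to prove effectivity by localizing the problem and then converting it into a single \emph{concavity} statement about a local height coefficient, which I would attack through the positivity of the polarization. Since the jumping divisor $J_T=\sum_P j_P P$ has each coefficient $j_P=j(r_1,\dots,r_m)$ determined entirely by the germ of $f$ at $P$ --- that is, by the local monodromy logarithms $N_1,\dots,N_m$ of $\U$ about the branches of $\D_0$ through $f(P)$, by the local extension data of the Ceresa normal function $\nu$, and by the multiplicities $r_j=\ord_{t=0}f^\ast t_j$ --- effectivity is equivalent to the purely local assertion that $j(r_1,\dots,r_m)\ge 0$ for every such local model. So I would work in a punctured polydisk, with unipotent monodromy $N_1,\dots,N_m$ and a fixed admissible biextension $\beta$ lifting $\nu$, and study the coefficient $q(r_1,\dots,r_m)$ in the asymptotics $\log|\beta(f(t))|_\B\sim q(r_1,\dots,r_m)\log|t|$, recalling that $j=q-q_0$ with $q_0(r)=\sum_j r_j\,q(\ee_j)$.

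The first technical step is to record the shape of $q$: it is positively homogeneous of degree $1$ (by reparametrizing the arc $t\mapsto t^k$) but \emph{not} linear, being piecewise rational --- a ratio of a quadratic form to a linear form on each chamber, the chambers being cut out by the loci where the relative weight filtration $W\big(\sum_j r_j N_j\big)$ jumps. The simplest genuine model, $q(r_1,r_2)=r_1r_2/(r_1+r_2)$, already exhibits the phenomenon: it vanishes on the coordinate rays yet is positive in the interior. I would obtain the explicit expression for $q$ on each chamber from the several-variable asymptotics of the biextension height, combining Schmid's nilpotent- and $SL_2$-orbit estimates \cite{schmid} with the description of the biextension metric in \cite{hain-reed:arakelov}; the leading coefficient should be expressible through $S$ evaluated on the image of the extension vector in the graded piece of $W\big(\sum r_j N_j\big)$ that it populates. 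The recent analysis of the jump function by Brosnan and Pearlstein \cite{brosnan-pearlstein:heights} is precisely the input that makes such a formula usable.

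The crux is then the single inequality that $q$ is \emph{concave} on the monodromy cone $\sigma=\{\sum_j r_j N_j:r_j\ge 0\}$. Granting this, homogeneity of degree $1$ immediately yields superlinearity, $q(r)\ge\sum_j r_j\,q(\ee_j)=q_0(r)$, hence $j(r)\ge 0$, which is exactly effectivity; this is the global shadow, read off one degeneration at a time, of the pointwise inequality $\nu^\ast\w_S\ge 0$ of Theorem~\ref{positivity}. Concavity I would extract from the monotonicity of the relative weight filtration: as $r$ moves from a coordinate ray into the interior of $\sigma$, the filtration $W\big(\sum r_j N_j\big)$ can only refine, and each refinement pushes a further component of the extension vector into a strictly lower weight, contributing a term whose sign is fixed --- positively --- by the fact that $S_V$ restricts to a \emph{genuine} polarization on the primitive parts of the limit mixed Hodge structure. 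The superadditive behaviour visible in the hyperelliptic example, where the interior value exceeds the sum of the ray values by $2h(g-h-1)>0$, is exactly this mechanism, and matches the concavity of the harmonic-mean model above.

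The main obstacle is the several-variable asymptotic bookkeeping needed to pin the sign of each wall-crossing contribution. In one variable Lear's Theorem already delivers the coefficients $q(\ee_j)$, but the jump is irreducibly a higher-codimension effect: the filtration governing the interior of $\sigma$ differs discontinuously from the one-variable filtrations $W(N_j)$ that compute $q_0$, and the difficulty is to compare the two and to show that the polarization forces every discrepancy to have the effective sign rather than to cancel. I expect the hardest point to be establishing concavity globally on $\sigma$, as opposed to across a single wall, since this requires the positivity of the limit mixed Hodge structure to be compatible across all the relative weight filtrations attached to the faces of the cone simultaneously --- which is why I expect the genuine polarization (not merely a weak one) to be essential, and why the abstract statement may well fail outside the geometric, polarized setting. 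As a soft consistency check, one can compare with the integrated statement: Theorem~\ref{positivity} gives $\int_T\nu^\ast\w_S\ge 0$, and under Conjecture~\ref{conj:integrable} this controls the boundary-corrected degree along $T$, so that test curves isolating a single local type should recover the individual inequalities $j_P\ge 0$ from the integrated one.
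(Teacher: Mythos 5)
What you are trying to prove is stated in the paper as a \emph{conjecture}: the paper offers no proof of Conjecture~\ref{conj:jump_weak}, only supporting evidence (the explicit hyperelliptic computation giving $j_P=2h(g-h-1)>0$, and the heuristic link to Moriwaki's inequality and Hodge-theoretic positivity). So there is no argument of the paper to compare yours against; the only question is whether your proposal closes the problem, and it does not. Your reduction of effectivity to the local statement $j(r_1,\dots,r_m)\ge 0$ is fine (it is essentially the paper's definition of $J_T$), and the implication ``$q$ concave and positively homogeneous of degree $1$ $\Rightarrow$ $q(r)\ge\sum_j r_j q(\ee_j)$'' is correct. But for a positively homogeneous degree-$1$ function on a cone, concavity is \emph{equivalent} to superadditivity, and superadditivity against arbitrary decompositions is at least as strong as the comparison with the coordinate rays that you need. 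So ``prove $q$ is concave on the monodromy cone'' is not a reduction of the conjecture to something more tractable --- it is a restatement of it (in fact a slightly stronger one). Everything therefore rests on your third paragraph, and there the argument is heuristic rather than mathematical: no chamber-by-chamber formula for $q$ is actually derived from \cite{schmid} and \cite{hain-reed:arakelov}; the assertion that the relative weight filtration $W\big(\sum_j r_j N_j\big)$ ``can only refine'' as $r$ moves into the interior of the cone is neither made precise nor proved (the whole subtlety of height jumping is that $W\big(\sum_j r_j N_j\big)$ is constant on the open cone but need not be comparable, in any monotone sense, with the filtrations $W(N_j)$ attached to the faces); and the claim that each discrepancy contributes a term whose sign is pinned down by the polarization on primitive parts of the limit MHS is exactly the hard point one would have to establish. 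Naming the needed inequality is not the same as proving it.

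Two further cautions. First, your closing ``consistency check'' leans on Conjecture~\ref{conj:integrable}, which is itself an open conjecture of the paper, so it yields no unconditional information; and even granting it, the integrated inequality $\int_T\nu^\ast\w_S\ge 0$ from Theorem~\ref{positivity} controls a global degree, not the individual coefficients $j_P$, so it cannot be ``localized'' to recover $j_P\ge 0$ without already knowing something like the effectivity you are trying to prove. Second, your overall strategy --- get an exact local formula for the jump from \cite{brosnan-pearlstein:heights} and then prove its positivity by a convexity-type argument rooted in the polarization --- is a reasonable research plan, and is in the spirit of how this circle of questions was subsequently attacked in the literature; but as written, the proposal defers precisely the step in which the entire content of the conjecture resides.
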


This and Conjecture~\ref{conj:integrable}, if true, imply a stronger version of
Moriwaki's inequalities as, for example,
$$
\deg_T M = \deg_T \B_T + \deg_T J_T \ge \deg_T J_T \ge 0.
$$
This would imply that the degree of Moriwaki's divisor on most curves not
contained in $\D_0$ that pass through $\D_0^\sing$ would be strictly positive as
$\deg_T M$ would be bounded below by the degree of its jumping divisor.

Similarly, one can conjecture that for all projective curves $f: T \to
\Mbar_{g,1}$ whose image is not contained in $\D_0$, the jumping divisor
associated to the biextension line bundle associated to the  normal function
section $\K$ of $J(\H)$ is always effective.

In general, one might hope that in the situation described in
Paragraph~\ref{sec:jump_div}, the jumping divisor $J$ associated to a curve $f:
T \to \Xbar$ whose image is not contained in $D$, is effective.

\appendix

\section{Normal Functions over $\M_{g,n}$}
\label{sec:vmhs}

For completeness, we state the classification (mod torsion) of normal functions
over $\M_{g,n}$ associated to variations of Hodge structure whose monodromy
representation factors through a rational representation of $\Sp_g$. It follows
quite directly from results proved in \cite[\S8]{hain:normal}.\footnote{Note
that there are two typos on page~121. Line~4 should begin $\dim \Gamma
H^1(\G_{g,r}^n,V(\lambda))$, and there is a $2$ missing from the right-hand side
of line $-7$.}

The isomorphism classes of irreducible rational representations of the
$\Q$-group $\Sp_g$ are indexed by partitions $\lambda$
$$
n = \lambda_1 + \dots + \lambda_h,\quad \lambda_1\ge \lambda_2 \ge \dots\ge
\lambda_h, \quad h\le g
$$
of integers $n$ into at most $g$ parts. Denote the local system over $\A_g$ that
corresponds to the partition $\lambda$ by $\V_\lambda$. It underlies a
$\Q$-variation of Hodge structure of weight $-|\lambda|$, where
$$
|\lambda| := \lambda_1 + \dots + \lambda_h.
$$
These can be pulled back to variations of Hodge structure over $\M_{g,n}$ along
the period map. Note that $\H = \V_{[1]}$ and that $\V=\V_{[1^3]}(-1)$.

Recall that $\G A$ denotes the set of rational $(0,0)$ classes of a $\Q$-Hodge
structure $A$. If $A$ is polarized, then $A = \G A \oplus (\G A)^\perp$ in the
category of $\Q$-Hodge structures.

\begin{theorem}
\label{thm:vmhs}
Suppose that $2g-2+n>0$. If $\U$ is a polarized variation of Hodge structure
over $\M_{g,n}$ whose monodromy representation factors through a rational
representation of $\Sp_g$, then there is an isomorphism of variations of
$\Q$-Hodge structure
$$
\U \cong \bigoplus_\lambda A_\lambda \otimes_\Q \V_\lambda,
$$
where $A_\lambda$ is the Hodge structure $H^0(\M_{g,n},\Hom(\V_\lambda,\U))$.
Moreover, if $A$ is a polarized Hodge structure, then
\begin{enumerate}

\item $\Ext^1_{\hodge(\M_{g,n})}(\Q(0),A \otimes \V_\lambda)$
vanishes unless $\lambda=[1]$ or $\lambda=[1^3]$,

\item $\Ext^1_{\hodge(\M_{g,n})}(\Q(0),A \otimes \H)$ vanishes when $\G A=0$.

\item $\Ext^1_{\hodge(\M_{g,n})}(\Q(0),\H)$ has basis $\K_1,\dots,\K_n$,

\item $\Ext^1_{\hodge(\M_{g,n})}(\Q(0),A \otimes \V)$ vanishes when $\G A=0$,

\item $\Ext^1_{\hodge(\M_{g,n})}(\Q(0), \V)$ is one-dimensional, spanned by
$\nu$.

\end{enumerate}
\end{theorem}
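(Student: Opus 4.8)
The plan is to deduce the theorem from the cohomology computations of \cite[\S8]{hain:normal} by two formal reductions: an isotypic decomposition of $\U$, and the exact sequence (\ref{eqn:seqce}) relating Ext groups to twisted cohomology. The only substantial input is the determination of the Hodge structures $H^1(\M_{g,n},\V_\lambda)$, which is where the real work lies.

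First I would establish the decomposition $\U\cong\bigoplus_\lambda A_\lambda\otimes_\Q\V_\lambda$. As $\U$ is polarized, the category of polarized $\Q$-variations of Hodge structure over $\M_{g,n}$ whose monodromy factors through $\Sp_g$ is semisimple, its simple objects being the $\V_\lambda$ up to Tate twist. The variation $\Hom(\V_\lambda,\U)$ then has a sheaf of monodromy invariants that is a constant sub-variation, i.e., a Hodge structure; since the monodromy is Zariski dense in $\Sp_g$, these invariants are the $\Sp_g$-invariant homomorphisms, so by Schur's lemma they form the multiplicity space. Identifying this with $A_\lambda:=H^0(\M_{g,n},\Hom(\V_\lambda,\U))$ and using evaluation yields the asserted isomorphism.

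Because $\hodge(\M_{g,n})$ is abelian and $\Ext^1$ is additive, the remaining assertions reduce to computing $\Ext^1_{\hodge(\M_{g,n})}(\Q(0),A\otimes\V_\lambda)$ for a polarized Hodge structure $A$ and each $\lambda$. For $\lambda\neq 0$ the Zariski density of the monodromy gives $H^0(\M_{g,n},\V_\lambda)=(V_\lambda)^{\Sp_g}=0$, so the first term of (\ref{eqn:seqce}) vanishes and the class map $\delta:\nu\mapsto c(\nu)$ embeds this Ext group into $A\otimes H^1(\M_{g,n},\V_\lambda)$; by \cite[\S8]{hain:normal} its image is exactly the group of Hodge classes $\G\big(A\otimes H^1(\M_{g,n},\V_\lambda)\big)$. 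Everything is thereby reduced to the Hodge structure on $H^1(\M_{g,n},\V_\lambda)$.

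This computation is the step I expect to be the real obstacle. Carried out in \cite[\S8]{hain:normal} via Johnson's description of the abelianization of the Torelli group and the mixed Hodge structure on the relative completion of the mapping class group, it shows that $H^1(\M_{g,n},\V_\lambda)$ vanishes unless $\lambda=[1]$ or $\lambda=[1^3]$, and that in those two cases---where $\V_\lambda$ Tate-twists to the weight $-1$ variations $\H$ and $\V$---it is pure of Hodge type $(0,0)$, of dimension $n$ and $1$ respectively. Granting this, purity gives $\G\big(A\otimes H^1(\M_{g,n},\V_\lambda)\big)=(\G A)^{\oplus r_\lambda}$ with $r_\lambda\in\{0,1,n\}$, which immediately yields (i) (here $r_\lambda=0$ for $\lambda\notin\{[1],[1^3]\}$), the vanishing statements (ii) and (iv) (since $\G A=0$ forces $(\G A)^{\oplus r_\lambda}=0$), and the rank computations (iii) and (v) on taking $A=\Q(0)$. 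Finally I would identify the resulting bases with the normal functions $\K_1,\dots,\K_n$ and $\nu$ of \S\ref{sec:normal_geom}: by the rigidity of Proposition~\ref{prop:class} a weight $-1$ normal function is determined mod torsion by its class, and these explicit sections are visibly nontrivial and independent, so their classes span the relevant $(0,0)$ spaces.
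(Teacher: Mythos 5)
Your proposal is correct and follows essentially the route the paper itself takes: the paper offers no self-contained proof but states that the theorem ``follows quite directly from results proved in \cite[\S8]{hain:normal}'', and your argument is precisely that derivation spelled out --- isotypic decomposition via semisimplicity and the theorem of the fixed part, reduction to $\G\big(A\otimes H^1(\M_{g,n},\V_\lambda)\big)$ through the exact sequence (\ref{eqn:seqce}) and the rigidity of Proposition~\ref{prop:class}, with the computation of $H^1(\M_{g,n},\V_\lambda)$ and the realization of its Hodge classes by $\K_j$ and $\nu$ imported from \cite[\S8]{hain:normal}. (The paper's Appendix~\ref{sec:big_picture} also sketches an alternative, more structural proof via relative completion and Lie algebra cohomology, but your reduction matches the paper's primary cited argument.)
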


Crudely stated, this result says that all normal functions over $\M_{g,n}$
associated to variations of Hodge structure that are representations of $\Sp_g$
can be expressed, modulo torsion, as rational linear combinations of the basic
normal functions $\nu$ and $\K_1,\dots,\K_n$. For example, $(2g-2)\delta_{j,k} =
\K_j - \K_k$.

\section{The Big Picture}
\label{sec:big_picture}

The philosophy behind this work is that a significant amount of the geometry of
$\Mbar_{g,n}$ is encoded in the category $\hodge_{g,n}$ of those admissible
variations of $\Z$-MHS over $\Mbar_{g,n}$ whose weight graded quotients are
subquotients of Tate twists $\H^{\otimes m}(d)$ of tensor powers of the
fundamental local system
$$
\H := R^1\pi_\ast \Z(1)
$$
associated to the universal curve $\pi : \cC \to \M_{g,n}$. The normal functions
discussed in this paper are objects of $\hodge_{g,n}$.

The pure objects of $\hodge_{g,n}$ are the variations of Hodge structure that
correspond to the irreducible representations of the group $\GSp_g$ of
symplectic similitudes.\footnote{This is well known. An explanation can be found
in \cite[\S8.1]{hain:rat_pts}.} These are all pulled back from variations of
Hodge structure over $\A_g$ along the period mapping $\M_{g,n} \to \A_g$.
Theorem~\ref{thm:vmhs} implies that the only non-trivial extensions between
these (mod torsion) are of the form
\begin{equation}
\label{eqn:extension}
0 \to \V_b \to \E \to \V_a \to 0
\end{equation}
where the weights $w_a$ and $w_b$ of the pure variations $\V_a$ and $\V_b$
satisfy $w_a = 1 + w_a$. This is because the extension (\ref{eqn:extension}) is
determined by the extension
$$
0 \to \Hom_\Z(\V_a,\V_b) \to \E' \to \Z_{\M_{g,n}}(0) \to 0
$$
obtained by applying $\Hom_\Z(\V_a,\blank)$ to (\ref{eqn:extension}) and then
pulling back along the identity $\Z(0) \to \Hom_\Z(\V_a,\V_a)$. That is, the
$1$-extensions in $\hodge_{g,n}$ correspond to normal functions (mod torsion)
over $\M_{g,n}$.

The question arises as to how one can understand $\hodge_{g,n}$. A first
approximation is to understand the monodromy representations of objects of
$\hodge_{g,n}$. The (orbifold) fundamental group of $\M_{g,n}$ is the mapping
class group:
$$
\pi_1(\M_{g,n},[C;P]) \cong \G_{C,P} := \pi_0\Diff^+(C,P),
$$
where $P=\{x_1,\dots,x_n\}$ and $\pi_0\Diff^+(C,P)$ denotes the group of
connected components of the group of orientation preserving diffeomorphisms of
$C$ that fix $P$ pointwise.\footnote{This group depends only on $g$ and $n$ and 
is often denoted by $\G_{g,n}$.} The action of $\G_{C,P}$ on $H_1(C,\Z)$ induces
a homomorphism
$$
\rho : \G_{C,P} \to \Aut\big(H_1(C,\Z),\text{intersection form}\big)
=: \Sp(H_1(C,\Z)) \cong \Sp_g(\Z)
$$
which is well known to be surjective. Its kernel is the {\em Torelli
group}, $T_{C,P}$.

Because each object $\V$ of $\hodge_{g,n}$ is filtered by its weight filtration
$$
\cdots \subseteq W_{j-1}\V \subseteq W_j \V \subseteq W_{j+1}\V \subseteq \cdots
$$
where each $\Gr^W_j \V := W_j\V/W_{j-1}\V$ is a pure variation of Hodge
structure,
the Zariski closure (over $\Q$) of its monodromy representation
$$
\rho_V : \G_{C,P} \to \Aut(V_{C,P})
$$
is an extension
$$
1 \to U_V \to G_V \to \Sp(H_1(C)) \to 1
$$
of algebraic $\Q$-groups where $U_V$ is unipotent. It is thus natural to
consider all Zariski dense representations $\G_{C,P} \to G(\Q)$ where $G$ is a
$\Q$ algebraic group that is an extension of $\Sp(H_1(C))$ by a unipotent group
and where $\G_{C,P} \to G \to \Sp(H_1(C))$ is the standard representation
$\rho$. These form an inverse system. Their inverse limit is known as the {\em
completion of $\G_{C,P}$ relative to $\rho$}; it is studied in
\cite{hain:torelli}.

The completion of $\G_{C,P}$ relative to $\rho$ is an extension
$$
1 \to \cU_{C,P} \to \cG_{C,P} \to \Sp(H_1(C)) \to 1
$$
of affine $\Q$-groups, where $\cU_{C,P}$ is prounipotent. There is a canonical
homomorphism $\G_{C,P} \to \cG_{C,P}(\Q)$. Denote the corresponding sequence of
Lie algebras by
\begin{equation}
\label{eqn:ses}
0 \to \u_{C,P} \to \g_{C,P} \to \sp(H_1(C)) \to 0.
\end{equation}
It is proved in \cite{hain:malcev} and \cite{hain:torelli} that for each choice
of a base point $[C,P]$ of $\M_{g,n}$, the sequence (\ref{eqn:ses}) is a
short exact sequence of Lie algebras in $\hodge$.

Define a {\em Hodge representation} of $\G_{C,P}$ to be a MHS $V$ and a pair of
representations $\phi_\Z : \G_{C,P} \to \Aut_\Z V$ and $\phi : \cG_{C,P} \to
\Aut V$ such that  the diagram
$$
\xymatrix{
\G_{C,P} \ar[r]^{\phi_\Z}\ar[d] & \Aut_\Z V \ar[d] \cr
\cG_{C,P}(\Q) \ar[r]^\phi & \Aut_\Q V
}
$$
commutes and the induced homomorphism $d\phi : \g_{C,P} \to \End V$ is a
morphism of MHS. The main result of \cite{vmhs} implies that $\hodge_{g,n}$ is
equivalent to the category of Hodge representations of $\g_{C,P}$, a statement
informally conjectured by Deligne. Because of this, extensions in $\hodge_{g,n}$
are determined up to isogeny by Lie algebra cohomology via the following
isomorphisms (cf.\ \cite[Cor.~3.7]{hain:nab}):
$$
\Ext^\dot_{\hodge_{g,n}}(\Q(0),\V) \cong H^\dot(\cG_{C,P},V_{C,P})
\cong H^\dot(\u_{C,P},V_{C,P})^{\sp(H_1(C))}.
$$
Although Theorem~\ref{thm:vmhs} was originally proved by a direct argument, it
is most natural to regard it as a consequence of this general result and known
facts about the cohomology of mapping class and Torelli groups.

The relevance of the algebra $A_{g,n}^\dot$ defined in the introduction is that
there is a natural $\Sp_g$-equivariant algebra homomorphism
$$
H^\dot(\u_{C,P}) \to A_{g,n}^\dot
$$
which is an isomorphism in degrees $0$ and $1$ and injective in degree $2$. The
construction is described in \cite{hain:malcev} in a more general context. The
algebra $T_{g,n}^\dot$ is simply the subalgebra of $A_{g,n}^\dot$ generated by
the image of $H^1(\u)$ --- that is, by normal functions.

The coefficients of the boundary component $\d_h^P$, ($h>0$) in the formulas in
Section~\ref{sec:formulas} are determined by the image in the second weight
graded quotient of $\u_{C,P}$ of the Dehn twist corresponding to a loop about
$\D_h^P$. The coefficients of $\d_0^P$ can be determined similarly, although one
has to work with the appropriate relative weight filtration.

One remaining question is whether more information can be obtained by
considering other natural categories of MHS over $\M_{g,n}$ (possibly with a
level structure), such as the category of variations of MHS obtained from the
Prym construction. At present not enough is know about the topology of the Prym
construction to understand this problem, although recent progress has been
made by Putman \cite{putman}.

\end{document}